\def \wideubar{\underaccent{{\cc@style\underline{\mskip15mu}}}}
\def \widebar{\accentset{{\cc@style\underline{\mskip10mu}}}}
\definecolor{blue}{rgb}{0,0,0}
\definecolor{red}{rgb}{0.9,0,0}
\definecolor{green}{rgb}{0,0.9,0}
\definecolor{brown}{rgb}{0.6,0.1,0.1}
\definecolor{lightgreen}{rgb}{0.1,0.5,0.1}
\newcommand{\blue}[1]{\begin{color}{blue}#1\end{color}}
\begin{document}

\newtheorem{property}{Property}[section]
\newtheorem{proposition}{Proposition}[section]
\newtheorem{append}{Appendix}[section]
\newtheorem{definition}{Definition}[section]
\newtheorem{lemma}{Lemma}[section]
\newtheorem{corollary}{Corollary}[section]
\newtheorem{theorem}{Theorem}[section]
\newtheorem{remark}{Remark}[section]
\newtheorem{problem}{Problem}[section]
\newtheorem{example}{Example}[section]
\newtheorem{assumption}{Assumption}
\renewcommand*{\theappend}{\Alph{append}}
\renewcommand*{\theassumption}{\Alph{assumption}}

\title{An Inexact Variable Metric Proximal Gradient-subgradient Algorithm for a Class of Fractional Optimization Problems}

\author{Lei Yang\thanks{School of Computer Science and Engineering, and Guangdong Province Key Laboratory of Computational Science, Sun Yat-sen University ({\tt yanglei39@mail.sysu.edu.cn}). 
},~~
Xiangrui Kong\thanks{School of Computer Science and Engineering, Sun Yat-sen University ({\tt kongxr5@mail2.sysu.edu.cn}).},~~
Min Zhang\thanks{School of Mathematics and Information Science, Guangzhou University, Guangzhou 510006, P. R. China ({\tt zhangmin1206@gzhu.edu.cn}). 
},~~
Yaohua Hu\thanks{(Corresponding author) School of Mathematical Sciences, Shenzhen University, Shenzhen 518060, P. R. China ({\tt mayhhu@szu.edu.cn}). 
}
}


\maketitle

\begin{abstract}
In this paper, we study a class of fractional optimization problems, in which the numerator of the objective is the sum of a convex function and a differentiable function with a Lipschitz continuous gradient, while the denominator is a nonsmooth convex function. \blue{This model captures ratio-type formulations arising in scale-invariant sparse learning and related applications.} To address this class of problems, we propose an inexact variable metric proximal gradient-subgradient algorithm (iVPGSA), which, to the best of our knowledge, is the first inexact proximal algorithm specifically designed for such type of fractional problems. By incorporating a variable metric proximal term and allowing for approximate subproblem solutions under a flexible error criterion, the proposed algorithm is highly adaptable to a broader range of problems while achieving favorable computational efficiency. Under suitable assumptions, we establish that any accumulation point of the generated sequence is a critical point of the target problem. Moreover, we develop a new Kurdyka-{\L}ojasiewicz (KL)-based analysis framework, \blue{relying only on the classical KL property and its associated exponent}, to prove the global convergence of the entire sequence and characterize its convergence rate, \textit{without} requiring a strict sufficient descent property. Our results clarify how the classical KL exponent and inexactness jointly influence the convergence rate. Finally, numerical experiments on the $\ell_1/\ell_2$ Lasso problem and the constrained $\ell_1/\ell_2$ sparse optimization problem demonstrate the computational advantages of the iVPGSA over existing representative algorithms.

\vspace{5mm}
\noindent {\bf Keywords:}~~Fractional optimization; inexact proximal algorithm; error criterion; variable metric; approximate sufficient descent property; Kurdyka-{\L}ojasiewicz property.
\end{abstract}

\section{Introduction}

\blue{
Fractional minimization models arise naturally in a variety of settings, for example, when a normalized or scale-invariant criterion is desired, when a ratio-type regularizer is used to encode structural information such as sparsity, or when the target quantity has an intrinsic ratio form. Such models have found many applications in machine learning \cite{rpdcp2015euclid,yuan2023coordinate,yuan2025admm}, signal processing \cite{elx2013method,rwdl2019scale,tao2022minimization,tz2023study,tzx2024partly, yex2014ratio,zyp2021analysis}, image reconstruction \cite{jlsw2012image,wtnl2021limited,wtcnl2022minimizing}, wireless communication \cite{zbsj2017globally,zj2015energy,zsd2017energy}, and finance \cite{chz2011all}. In sparse learning and regression, ratio structures are especially useful for promoting scale-invariant sparsity and reducing sensitivity to the magnitude of the decision variable.

Motivated by these fractional models, in this paper, we study the following composite fractional optimization problem:
\begin{equation}\label{mainpro}
\min_{\bm{x} \in \Omega} ~~ \frac{f(\bm{x}) + h(\bm{x})}{g(\bm{x})},
\end{equation}
where $f:\mathbb{R}^n\to(-\infty, \infty]$ is a proper closed convex function, $h: \mathbb{R}^n \rightarrow \mathbb{R}$ is a continuously differentiable function, $g:\mathbb{R}^n\to\mathbb{R}$ is a convex function, and the set $\Omega := \left\{ \bm{x} \in \mathbb{R}^n \colon g(\bm{x}) \neq 0 \right\}$ is assumed to be nonempty. More specific assumptions on model \eqref{mainpro} are summarized in Assumption \ref{assumA} as stated in Section \ref{sec-notation}. To connect model \eqref{mainpro} with scale-invariant sparse learning and regression, we present two concrete examples below, which are also used in our numerical experiments.}
\begin{itemize}
\item ({\em Constrained $\ell_1/\ell_2$ sparse optimization}) The use of the $\ell_1/\ell_2$ norm in sparse signal recovery and sparse learning has garnered significant attention due to its scale-invariance and sparsity-inducing properties, especially compared to the traditional $\ell_1$-norm \cite{elx2013method,rwdl2019scale,tao2022minimization,tz2023study,tzx2024partly,yex2014ratio,zyp2021analysis}. The inverse problem of recovering a sparse signal from Gaussian noisy observations, using the $\ell_1/\ell_2$ norm as a measure of sparsity \cite{rwdl2019scale,zyp2021analysis}, can be formulated as
    \begin{equation}\label{ex1}
    \min_{\bm{x}\in\mathbb{R}^n}~~
    \frac{\|\bm{x}\|_1}{\|\bm{x}\|}
    \qquad \mathrm{s.t.} \qquad
    \|A\bm{x}-\bm{b}\| \leq \sigma,
    \end{equation}
    where $A\in\mathbb{R}^{m \times n}$ is a matrix with full row rank, $\bm{b}\in\mathbb{R}^m$ is an observation vector, and $\sigma > 0$ is a parameter controlling the noise level \blue{such that $\|\bm{b}\|>\sigma$}. Model \eqref{ex1} fits \eqref{mainpro}, with $\Omega := \mathbb{R}^n \setminus \{0\}$, by taking
    \begin{equation*}
    f(\bm{x}) := \|\bm{x}\|_1
    + \iota_{\sigma}(A\bm{x} - \bm{b}),
    \quad h(\bm{x}) \equiv 0,
    \quad g(\bm{x}) := \|\bm{x}\|,
    \end{equation*}
    where $\iota_{\sigma}$ denotes the indicator function on the set $\left\{\bm{y}\in\mathbb{R}^m:\|\bm{y}\|\leq\sigma\right\}$.

\item \blue{({\em Scale-invariant sparse regression}) A broad class of scale-invariant sparse regression models can be written as
    \begin{equation}\label{ex2}
    \min_{\bm{x}\in\mathbb{R}^n}~~
    \frac{\lambda\|\bm{x}\|_1+\sum_{i=1}^{m}
    \ell_i(\bm{a}_i^{\top}\bm{x}-b_i)}{\|\bm{x}\|}
    \qquad \mathrm{s.t.} \qquad
    \bm{\alpha} \leq \bm{x} \leq \bm{\beta},
    \end{equation}
    where $\lambda>0$, $\bm{\alpha},\bm{\beta}\in\mathbb{R}^n$ are finite bound vectors with $\bm{\alpha}\leq\bm{\beta}$, $\bm{b}:=(b_1,\cdots,b_m)\neq\bm{0}$, and $\bm{a}_i^{\top}$ denotes the $i$-th row of $A$. The loss functions $\ell_i$ ($i=1,\cdots,m$) are assumed to be nonnegative and continuously differentiable with Lipschitz continuous derivatives. This formulation includes the $\ell_1/\ell_2$ Lasso problem \cite{lszz2022proximal,zl2022first} by taking the least-squares loss $\ell_i(t):=\frac{1}{2}t^2$. It can also accommodate robust sparse regression models with nonconvex smooth losses, such as the Cauchy-type loss $\ell_i(t):=\log\left(1+\sigma^{-2}t^2\right)$ with $\sigma>0$.
    Model \eqref{ex2} fits \eqref{mainpro}, with $\Omega := \mathbb{R}^n \setminus \{0\}$, by taking
    \begin{equation*}
    f(\bm{x}) := \lambda\|\bm{x}\|_1+\iota_{[\bm{\alpha},\,\bm{\beta}]}(\bm{x}),
    \quad
    h(\bm{x}) := {\textstyle\sum_{i=1}^{m}}\ell_i(\bm{a}_i^{\top}\bm{x}-b_i),
    \quad
    g(\bm{x}) := \|\bm{x}\|,
    \end{equation*}
    where $\iota_{[\bm{\alpha},\,\bm{\beta}]}$ denotes the indicator function on the set $\left\{\bm{x}\in\mathbb{R}^n : \bm{\alpha}\leq\bm{x}\leq\bm{\beta}\right\}$. Moreover, when the loss functions $\ell_i$ ($i=1,\cdots,m$) are convex, model \eqref{ex2} also fits \eqref{mainpro} by taking
    \begin{equation*}
    f(\bm{x}) := \lambda\|\bm{x}\|_1
    + \iota_{[\bm{\alpha},\,\bm{\beta}]}(\bm{x})
    + {\textstyle\sum_{i=1}^{m}}\ell_i(\bm{a}_i^{\top}\bm{x}-b_i),
    \quad
    h(\bm{x}) \equiv 0,
    \quad
    g(\bm{x}) := \|\bm{x}\|.
    \end{equation*}
    Thus, the convex loss case admits two distinct reformulations that lead to different algorithmic frameworks with significantly different numerical behaviors. Numerical comparisons and related discussions are provided in Section \ref{sec-num-l12lasso}.
    }

\end{itemize}

\subsection{Related work}

Fractional optimization problems of the form \eqref{mainpro} and their variants have attracted considerable attention due to their modeling flexibility. Meanwhile, their inherent nonconvexity poses significant algorithmic challenges and has motivated growing efforts to develop efficient solution methods; see, for example, \cite{bc2017proximal,bdl2022extrapolated,bdl2023inertial,blt2023full,lszz2022proximal,yuan2023coordinate,yuan2025admm,zl2022first,zzl2023equivalent}.
Among existing methods, Dinkelbach's parametric algorithm is arguably the most classical one. It employs an iterative scheme to transform a fractional problem into a sequence of non-fractional subproblems; see \cite{dinkelbach1967nonlinear} and \cite[Section 7.2.4]{cp2021modern}. Specifically, given an initial point $\bm{x}^0\in\Omega$, at the $k$-th iteration, the following subproblem is solved:
\begin{equation}\label{DPA}
\bm{x}^{k+1} = \arg\min_{\bm{x}\in\Omega}
\big\{ f(\bm{x}) + h(\bm{x}) - c_k g(\bm{x}) \big\},
\end{equation}
where $c_k := \frac{f(\bm{x}^k) + h(\bm{x}^k)}{g(\bm{x}^k)}$.
However, the resulting subproblem \eqref{DPA} remains nonconvex and can still be challenging to solve.

Recently, by further exploring the structure of the problem, Bo{\c{t}} and Csetnek \cite{bc2017proximal} introduced a proximal-gradient-type method to solve a class of fractional optimization problems in the form of \eqref{mainpro}, where the numerator $f+h$ is treated as a convex function, and the denominator $g$ is a convex and smooth function. Their method iteratively replaced $g$ in \eqref{DPA} with its linear approximation and incorporated a proximal term, resulting in the following subproblem:
\begin{equation}\label{BC-subprob}
\bm{x}^{k+1}
= \arg\min\limits_{\bm{x} \in \Omega} \left\{ f(\bm{x}) + h(\bm{x}) - c_k \langle \nabla g(\bm{x}^k), \,\bm{x} - \bm{x}^k \rangle + \frac{\gamma_k}{2} \|\bm{x} - \bm{x}^k\|^2 \right\},
\end{equation}
where $\gamma_k>0$ is the proximal parameter. Although the subproblem \eqref{BC-subprob} is convex under their assumptions, computing an exact solution remains challenging for a general $f+h$. Subsequently, Zhang and Li \cite{zl2022first} considered problem \eqref{mainpro} under assumptions similar to Assumption \ref{assumA}. 
To make the subproblem more tractable, at each iteration, they not only replaced $g$ in \eqref{DPA} with its linear approximation, but also replaced $h$ with its quadratic majorant approximation. This led to a so-called proximity-gradient-subgradient algorithm (PGSA), whose basic iterative step is given by:
\begin{equation}\label{PGSA}
\bm{x}^{k+1}
= \arg\min\limits_{\bm{x}\in\Omega} \left\{ f(\bm{x})
+ \langle \nabla h(\bm{x}^k) - c_k\bm{y}^k, \,\bm{x} - \bm{x}^k \rangle + \frac{\gamma_k}{2}\|\bm{x} - \bm{x}^k\|^2 \right\},
\end{equation}
where $\bm{y}^k \in \partial g(\bm{x}^k)$ and $\gamma_k>L_h$ can be determined by a (non)monotone line search. Notably, solving subproblem \eqref{PGSA} reduces to computing the proximal mapping of $f$ (rather than  $f+h$ in \eqref{BC-subprob}) and can be computationally efficient for many common choices of $f$. This makes the PGSA a promising solution method for tackling problem \eqref{mainpro}. More recently, inspired by Nesterov's acceleration techniques (see, e.g., \cite{n1983a,n2004introductory,n2013gradient}), Bo{\c{t}} et al. \cite{bdl2022extrapolated} developed an extrapolated proximal subgradient algorithm (ePSA) for solving problem \eqref{mainpro} under suitable assumptions and reported promising numerical performance. Later, Li et al. \cite{lszz2022proximal} introduced a proximal-gradient-subgradient algorithm with backtracked extrapolation (PGSA\_BE), which offers an alternative extrapolated proximal algorithm and has shown improved performance over the ePSA in solving the $\ell_1/\ell_2$ Lasso problem. Finally, we refer readers to \cite{bdl2023inertial,blt2023full,yuan2023coordinate,yuan2025admm,zzl2023equivalent} for other recent advancements in fractional optimization.

\subsection{Our approach and contributions}

Although the aforementioned proximal algorithms (e.g., PGSA, ePSA, PGSA\_BE) provide efficient approaches for solving problem \eqref{mainpro}, they all require computing an \textit{exact} solution to the associated subproblem within their algorithmic frameworks. This, in turn, requires either a closed-form solution or an inner solver that computes a highly accurate solution, which is often unrealistic or computationally expensive in many applications, thereby limiting the applicability of these proximal algorithms. To overcome this inadequacy, it is desirable to allow for approximate subproblem solutions with progressively improving accuracy, while ensuring that the associated error criterion remains practically verifiable. However, to the best of our knowledge, research on inexact proximal algorithms for solving the fractional optimization problem \eqref{mainpro} remains largely unexplored. This gap is particularly relevant when the subproblem must be solved by an iterative method, as in the constrained $\ell_1/\ell_2$ sparse optimization problem considered in Section \ref{sec-num-l12cso}.

\blue{To address the lack of inexact proximal algorithms for solving the fractional optimization problem \eqref{mainpro}, in this paper, we develop an inexact variable metric proximal gradient-subgradient algorithm (iVPGSA).} Specifically, as presented in Section \ref{sec-iVPGSA}, our inexact proximal algorithm relaxes the requirement of an exact solution to the subproblem by introducing a flexible inexact condition \eqref{inexcondH-x} and its associated error criterion \eqref{stopcritH-x}, which are well-defined and practically verifiable in the two test examples considered in Section \ref{sec-num}. Moreover, inspired by recent advances in the variable metric proximal gradient method (see, e.g., \cite{bpr2016variable,s2017variable,zpl2024vmipg}), we incorporate a variable metric proximal term into our inexact algorithmic framework to enhance its adaptability. A suitable choice of the variable metric allows the algorithm to exploit problem structure and may yield a more tractable subproblem that can be efficiently solved by a second-order method; see the test example in Section \ref{sec-num-l12cso} for more details.

\blue{
While the proposed inexact proximal algorithm offers potential computational advantages, it also raises additional difficulties in convergence analysis. Indeed, unlike exact proximal algorithms, the iVPGSA does not generally satisfy a strict sufficient descent property for the original objective function. Instead, due to the inexact subproblem solution, it satisfies only an \textit{approximate} sufficient descent property, where the decrease in the objective function is guaranteed only up to a certain error term; see Proposition \ref{prop-obj}(i). Such perturbations destroy the monotonic decrease of the objective function values, and hence the standard KL analyses (see, e.g., \cite{ab2009on,abrs2010proximal,abs2013convergence,bst2014proximal,fgp2015splitting}) for exact descent methods are not directly applicable, especially when deriving convergence rates.

Several recent works have studied KL-based convergence analysis for inexact or stochastic algorithms under \textit{approximate} sufficient-descent-type conditions; see \cite{hl2023convergence,lmq2023convergence,qmlm2024kl,sun2021sequence}. Specifically, Sun \cite{sun2021sequence} established global sequential convergence by introducing an auxiliary function similar to \eqref{defpofun}, but did not address convergence rates. Such rate analysis is more challenging, as it requires a quantitative characterization of the interaction between the KL geometry and the error terms. Subsequent studies \cite{hl2023convergence,lmq2023convergence,qmlm2024kl} established global sequential convergence and convergence-rate estimates by employing \textit{strengthened} variants of the KL property and its associated exponent, in which the function-value differences in Definitions \ref{defKLfun} and \ref{defKLexpo} are replaced by their absolute values, respectively. In contrast, our analysis relies only on the classical KL property and its associated exponent. By combining an auxiliary function with a careful control of the inexactness errors, we establish the global convergence of the entire sequence and characterize how the classical KL exponent and the error decay jointly determine the convergence rate.
}

The key contributions and findings of this paper are summarized as follows: \vspace{-1mm}
\begin{itemize}
\item We develop an \textit{inexact variable metric proximal gradient-subgradient algorithm} (denoted by iVPGSA), which, to the best of our knowledge, is the first inexact proximal algorithm designed to solve the fractional optimization problem \eqref{mainpro}. \blue{The proposed algorithm accommodates inexact subproblem solutions through a flexible and verifiable error criterion, and incorporates a variable metric proximal term that can exploit problem structure and lead to more tractable subproblems.}

\vspace{-2mm}
\item We conduct a comprehensive study of the convergence properties of the iVPGSA, including subsequential convergence, global convergence of the entire sequence, and convergence-rate estimates. In particular, we develop a new KL-based analysis framework under an \textit{approximate} sufficient descent property, relying only on the classical KL property and its associated exponent rather than strengthened KL inequalities. \blue{The key mechanism is to absorb the accumulated errors into an auxiliary function, which converts the perturbed descent relation for the original objective into an exact descent relation for the auxiliary function. 
    We also establish an explicit relationship between the KL exponents of the original objective and the auxiliary function, which clarifies how the original KL geometry and the error decay jointly determine the convergence rate. Our analysis framework may also provide useful insights for studying other inexact methods with similar perturbed descent relations.}

\vspace{-2mm}
\item We conduct numerical experiments on the $\ell_1/\ell_2$ Lasso problem and the constrained $\ell_1/\ell_2$ sparse optimization problem, comparing the iVPGSA with representative existing algorithms. The results demonstrate the computational advantages of the proposed inexact variable-metric framework for solving problem \eqref{mainpro}.

\end{itemize}

The rest of this paper is organized as follows. We present notation and preliminaries in Section \ref{sec-notation}. We then describe our iVPGSA for solving problem \eqref{mainpro} and establish its preliminary convergence properties in Section \ref{sec-iVPGSA}. The extended convergence analysis based on the classical KL property and its associated exponent is presented in Section \ref{sec-conv}. Implementation details and numerical comparisons are reported in Section \ref{sec-num}, with concluding remarks given in Section \ref{sec-conc}.

\section{Notation and preliminaries}\label{sec-notation}

In this paper, we present scalars, vectors, and matrices in lowercase letters, bold lowercase letters, and uppercase letters, respectively. We use $\mathbb{R}$, $\mathbb{R}^n$ ($\mathbb{R}^n_+$), and $\mathbb{R}^{m\times n}$ to denote the set of real numbers, $n$-dimensional real (nonnegative) vectors, and $m\times n$ real matrices, respectively. For a vector $\bm{x}\in\mathbb{R}^n$, $x_i$ denotes its $i$-th entry, $\|\bm{x}\|$ denotes its Euclidean norm, $\|\bm{x}\|_1:=\sum^n_{i=1}|x_i|$ denotes its $\ell_1$ norm, and $\|\bm{x}\|_H:=\sqrt{\langle\bm{x},\,H\bm{x}\rangle}$ denotes its weighted norm associated with a symmetric positive definite matrix $H$.
For a closed convex set $\mathcal{X}\subseteq\mathbb{R}^n$, its indicator function $\iota_{\mathcal{X}}$ is defined by $\iota_{\mathcal{X}}(\bm{x})=0$ if $\bm{x}\in\mathcal{X}$ and $\iota_{\mathcal{X}}(\bm{x})=+\infty$ otherwise.

For an extended-real-valued function $h: \mathbb{R}^n \rightarrow [-\infty,\infty]$, we say that it is \textit{proper} if $h(\bm{x}) > -\infty$ for all $\bm{x} \in \mathbb{R}^n$ and its domain ${\rm dom}\,h:=\{\bm{x} \in \mathbb{R}^n: h(\bm{x}) < \infty\}$ is nonempty. A proper function $h$ is said to be closed if it is lower semicontinuous. Recall from \cite[Definition 8.3]{rw1998variational} that, for a proper closed function $h$, the regular (or Fr\'{e}chet) subdifferential and the (limiting) subdifferential of $h$ at $\bm{x}\in{\rm dom}\,h$ are defined as
\begin{equation*}
\begin{aligned}
\widehat{\partial} h(\bm{x}) &:= \left\{\bm{d} \in \mathbb{R}^{n} : \liminf\limits_{\bm{y} \rightarrow \bm{x}, \,\bm{y} \neq \bm{x}}\, \frac{h(\bm{y})-h(\bm{x})-\langle \bm{d}, \bm{y}-\bm{x}\rangle}{\|\bm{y}-\bm{x}\|} \geq 0\right\}, \\[3pt]
\partial h(\bm{x}) &:= \left\{ \bm{d} \in \mathbb{R}^{n}: \exists \,\bm{x}^k \xrightarrow{h} \bm{x}, ~\bm{d}^k \rightarrow \bm{d} ~~\mathrm{with}~~\bm{d}^k\in\widehat{\partial} h(\bm{x}^k) \right\},
\end{aligned}
\end{equation*}
respectively. When $h$ is continuously differentiable or convex, the above subdifferential coincides with the classical concept of derivative or convex subdifferential of $h$; see, e.g., \cite[Exercise~8.8]{rw1998variational} and \cite[Proposition~8.12]{rw1998variational}. For a proper closed convex function $h: \mathbb{R}^n \rightarrow (-\infty, \infty]$ and a given $\delta \geq 0$, the $\delta$-subdifferential of $h$ at $\bm{x}\in{\rm dom}\,h$ is defined by $\partial_\delta h(\bm{x}):=\{\bm{d}\in\mathbb{R}^n: h(\bm{y}) \geq h(\bm{x}) + \langle \bm{d}, \,\bm{y}-\bm{x} \rangle -\delta, ~\forall\,\bm{y}\in\mathbb{R}^n\}$, and when $\delta=0$, $\partial_\delta h$ simply reduces to the classical convex subdifferential $\partial h$. For any $\gamma>0$,
the proximal mapping of $\gamma h$ at $\bm{x}\in\mathbb{R}^n$ is defined by $\mathtt{prox}_{\gamma h}(\bm{x}):=\arg\min_{\bm{y}} \big\{h(\bm{y}) + \frac{1}{2\gamma}\|\bm{y} - \bm{x}\|^2\big\}$.

We next recall the Kurdyka-{\L}ojasiewicz (KL) property (see \cite{ab2009on,abrs2010proximal,abs2013convergence,bdl2007the,bst2014proximal} for more details), which is now a common technical condition for establishing the convergence of the whole sequence. A large number of functions such as proper closed semialgebraic functions satisfy the KL property \cite{abrs2010proximal,abs2013convergence}. For notational simplicity, let $\Xi_{\nu}$, where $\nu\in(0,+\infty]$, denote a class of concave functions $\varphi:[0,\nu) \rightarrow \mathbb{R}_{+}$ satisfying: (i) $\varphi(0)=0$; (ii) $\varphi$ is continuously differentiable on $(0,\nu)$ and continuous at $0$; (iii) $\varphi'(t)>0$ for all $t\in(0,\nu)$. Then, the KL property can be described as follows.

\begin{definition}[\textbf{KL property and KL function}]\label{defKLfun}
Let $h: \mathbb{R}^n \rightarrow \mathbb{R} \cup \{+\infty\}$ be a proper closed function.
\begin{itemize}
\item[(i)] For $\tilde{\bm{x}}\in{\rm dom}\,\partial h:=\{\bm{x} \in \mathbb{R}^{n}: \partial h(\bm{x}) \neq \emptyset\}$, if there exist a $\nu\in(0, +\infty]$, a neighborhood $\mathcal{V}$ of $\tilde{\bm{x}}$ and a function $\varphi \in \Xi_{\nu}$ such that for all $\bm{x} \in \mathcal{V} \cap \{\bm{x}\in \mathbb{R}^{n} : h(\tilde{\bm{x}})<h(\bm{x})<h(\tilde{\bm{x}})+\nu\}$, it holds that
    \begin{eqnarray*}
    \varphi'(h(\bm{x})-h(\tilde{\bm{x}}))\,\mathrm{dist}(0, \,\partial h(\bm{x})) \geq 1,
    \end{eqnarray*}
    then $h$ is said to have the \textbf{Kurdyka-{\L}ojasiewicz (KL)} property at $\tilde{\bm{x}}$.

\item[(ii)] If $h$ satisfies the KL property at each point of ${\rm dom}\,\partial h$, then $h$ is called a KL function.
\end{itemize}
\end{definition}

Based on the above definition, we introduce the KL exponent \cite{abrs2010proximal,lp2017calculus}.

\begin{definition}[\textbf{KL exponent}]\label{defKLexpo}
Suppose that $h: \mathbb{R}^n \rightarrow \mathbb{R} \cup \{+\infty\}$ is a proper closed function satisfying the KL property at $\tilde{\bm{x}}\in{\rm dom}\,\partial h$ with $\varphi(t)=\tilde{a} t^{1-\theta}$ for some $\tilde{a}>0$ and $\theta\in[0,1)$, i.e., there exist $a, \,\mu, \,\nu > 0$ such that
\begin{eqnarray*}
\mathrm{dist}(0, \,\partial h(\bm{x})) \geq a \left(h(\bm{x}) - h(\tilde{\bm{x}})\right)^{\theta}
\end{eqnarray*}
whenever $\bm{x} \in {\rm dom}\,\partial h$, $\|\bm{x}-\tilde{\bm{x}}\| \leq \mu$ and $h(\tilde{\bm{x}})<h(\bm{x})<h(\tilde{\bm{x}})+\nu$. Then, $h$ is said to have the KL property at $\tilde{\bm{x}}$ with an exponent $\theta$. If $h$ is a KL function and has the same exponent $\theta$ at any $\tilde{\bm{x}}\in{\rm dom}\,\partial h$, then $h$ is said to be a KL function with an exponent $\theta$.
\end{definition}

We also recall the uniformized KL property, which was established in \cite[Lemma 6]{bst2014proximal}.

\begin{proposition}[\textbf{Uniformized KL property}]\label{uniKL}
Suppose that $h: \mathbb{R}^n \rightarrow \mathbb{R} \cup \{+\infty\}$ is a proper closed function and $\Gamma$ is a compact set. If $h \equiv\kappa$ on $\Gamma$ for some constant $\kappa$ and satisfies the KL property at each point of $\Gamma$, then there exist $\mu,\,\nu>0$ and $\varphi \in \Xi_{\nu}$ such that
\begin{eqnarray*}
\varphi'(h(\bm{x})-\kappa)\,\mathrm{dist}(0, \,\partial h(\bm{x}))
\geq 1
\end{eqnarray*}
for all $\bm{x} \in \{\bm{x}\in\mathbb{R}^{n}: \mathrm{dist}(\bm{x},\,\Gamma)<\mu\} \cap \{\bm{x}\in \mathbb{R}^{n} : \kappa < h(\bm{x}) < \kappa + \nu\}$.
\end{proposition}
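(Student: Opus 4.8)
The plan is to prove Proposition \ref{uniKL} by a standard compactness argument that reduces the pointwise KL property on $\Gamma$ to a single uniform desingularizing function. Since $h$ satisfies the KL property at every $\tilde{\bm{x}} \in \Gamma$, Definition \ref{defKLfun} supplies, for each such point, a radius $\epsilon_{\tilde{\bm{x}}}>0$ (small enough that the open ball $B(\tilde{\bm{x}},\epsilon_{\tilde{\bm{x}}})$ is contained in the neighborhood $\mathcal{V}$ of Definition \ref{defKLfun}), a threshold $\nu_{\tilde{\bm{x}}}>0$, and a function $\varphi_{\tilde{\bm{x}}} \in \Xi_{\nu_{\tilde{\bm{x}}}}$ such that the KL inequality holds on the intersection of $B(\tilde{\bm{x}},\epsilon_{\tilde{\bm{x}}})$ with the band $\{\bm{x}:\zeta < h(\bm{x}) < \zeta + \nu_{\tilde{\bm{x}}}\}$. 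Here I would use crucially that $h \equiv \zeta$ on $\Gamma$, so the condition $h(\tilde{\bm{x}}) < h(\bm{x}) < h(\tilde{\bm{x}}) + \nu_{\tilde{\bm{x}}}$ in the definition becomes exactly $\zeta < h(\bm{x}) < \zeta + \nu_{\tilde{\bm{x}}}$, independent of the chosen base point.

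First I would extract a finite subcover. The open balls $\{B(\tilde{\bm{x}},\epsilon_{\tilde{\bm{x}}})\}_{\tilde{\bm{x}} \in \Gamma}$ cover the compact set $\Gamma$, so finitely many of them, say $B(\tilde{\bm{x}}_i, \epsilon_i)$ for $i = 1,\dots,p$, already cover $\Gamma$. Because $\Gamma$ is compact and contained in the open set $U := \bigcup_{i=1}^p B(\tilde{\bm{x}}_i, \epsilon_i)$, a routine tube argument yields a $\mu > 0$ such that the whole $\mu$-neighborhood $\{\bm{x} : \mathrm{dist}(\bm{x},\Gamma) < \mu\}$ is contained in $U$. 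I would then set $\nu := \min_{1 \le i \le p} \nu_i > 0$, which guarantees that every $\varphi_i$ is defined on $[0,\nu)$ and that the band $\{\bm{x}:\zeta < h(\bm{x}) < \zeta + \nu\}$ lies inside each individual band $\{\bm{x}:\zeta < h(\bm{x}) < \zeta + \nu_i\}$.

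Next I would glue the finitely many desingularizing functions into one. Define $\varphi := \sum_{i=1}^p \varphi_i$ on $[0,\nu)$. Each summand lies in $\Xi_\nu$, so $\varphi(0) = 0$, $\varphi$ is concave and continuously differentiable on $(0,\nu)$, and $\varphi'(s) = \sum_{i=1}^p \varphi_i'(s) > 0$; hence $\varphi \in \Xi_\nu$. Now take any $\bm{x}$ with $\mathrm{dist}(\bm{x},\Gamma) < \mu$ and $\zeta < h(\bm{x}) < \zeta + \nu$. By the choice of $\mu$, one has $\bm{x} \in B(\tilde{\bm{x}}_j, \epsilon_j)$ for some index $j$, and since $\zeta < h(\bm{x}) < \zeta + \nu \le \zeta + \nu_j$, the pointwise KL inequality at $\tilde{\bm{x}}_j$ applies and gives $\varphi_j'(h(\bm{x}) - \zeta)\,\mathrm{dist}(0,\partial h(\bm{x})) \ge 1$. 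Because all the derivatives are positive, $\varphi'(h(\bm{x}) - \zeta) \ge \varphi_j'(h(\bm{x}) - \zeta)$, and multiplying by the nonnegative quantity $\mathrm{dist}(0,\partial h(\bm{x}))$ preserves the inequality, so $\varphi'(h(\bm{x}) - \zeta)\,\mathrm{dist}(0,\partial h(\bm{x})) \ge 1$, as required.

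The genuinely substantive points are the two places where compactness of $\Gamma$ enters: extracting the \emph{finite} subcover (so that the sum defining $\varphi$ has finitely many terms and $\nu = \min_i \nu_i$ is strictly positive) and producing the uniform tube radius $\mu$ (so that no point of the $\mu$-neighborhood escapes all the balls). I expect the tube argument to be the main obstacle to state cleanly; the cleanest route is to argue by contradiction, since if no such $\mu$ existed there would be a sequence $\bm{x}^k$ with $\mathrm{dist}(\bm{x}^k,\Gamma) \to 0$ but $\bm{x}^k \notin U$, together with a companion sequence in $\Gamma$ converging, along a subsequence by compactness, to a point of $\Gamma \setminus U$, contradicting $\Gamma \subseteq U$. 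Everything else is bookkeeping, with the final domination step relying only on the positivity of the $\varphi_i'$ built into the definition of $\Xi_\nu$.
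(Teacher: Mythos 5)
Your proof is correct: the finite-subcover-plus-tube argument, the choice $\nu := \min_{1\leq i\leq p}\nu_i$, the summed desingularizer $\varphi := \sum_{i=1}^p \varphi_i$, and the final domination $\varphi'(h(\bm{x})-\zeta) \geq \varphi_j'(h(\bm{x})-\zeta)$ are all sound, and using $h \equiv \zeta$ on $\Gamma$ to make the sublevel bands independent of the base point is exactly the right observation. The paper does not prove Proposition \ref{uniKL} itself but simply cites \cite[Lemma 6]{bst2014proximal}, and your argument is essentially the proof given in that reference, so you have reconstructed the same approach.
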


Moreover, we give three supporting lemmas that will be used in the subsequent analysis.

\begin{lemma}[{\cite[Lemma 2.2]{lp2017calculus}}]\label{normineq1}
Let $\alpha > 0$. Then, for any $\bm{w} = (w_1, \cdots, w_n)^{\top}\in\mathbb{R}^n_{+}$, there exist $0 < c_1 \leq c_2$ such that $c_1\|\bm{w}\| \leq \left(w_1^{\alpha}+\cdots+w_n^{\alpha}\right)^{\frac{1}{\alpha}} \leq c_2 \|\bm{w}\|$.
\end{lemma}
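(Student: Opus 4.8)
The plan is to exploit the positive homogeneity of both sides together with a compactness argument, which avoids any appeal to the triangle inequality. This is the point worth flagging: for $\alpha \in (0,1)$ the functional $\bm{w}\mapsto\left(w_1^\alpha+\cdots+w_n^\alpha\right)^{1/\alpha}$ is only a quasi-norm and not a genuine norm, so one cannot simply invoke ``equivalence of all norms on $\mathbb{R}^n$''; the homogeneity-plus-compactness route sidesteps this entirely and covers all $\alpha>0$ uniformly.

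First I would define $\phi(\bm{w}):=\left(w_1^\alpha+\cdots+w_n^\alpha\right)^{1/\alpha}$ on $\mathbb{R}^n_+$ and record its two basic properties: (a) it is continuous on $\mathbb{R}^n_+$, since $t\mapsto t^\alpha$ is continuous on $[0,\infty)$ for every $\alpha>0$; and (b) it is positively homogeneous of degree one, i.e.\ $\phi(t\bm{w})=t\,\phi(\bm{w})$ for all $t\ge 0$ and $\bm{w}\in\mathbb{R}^n_+$. The Euclidean norm $\|\cdot\|$ shares property (b).

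Next I would reduce to the unit sphere. Let $S:=\{\bm{w}\in\mathbb{R}^n_+:\|\bm{w}\|=1\}$, which is closed and bounded, hence compact. On $S$ the function $\phi$ is continuous and strictly positive, since any $\bm{w}\in S$ has some $w_i>0$ and therefore $\sum_i w_i^\alpha>0$. By the extreme value theorem, $\phi$ attains a minimum $c_1>0$ and a maximum $c_2<\infty$ over $S$, with $0<c_1\le c_2$; these constants depend only on $n$ and $\alpha$, not on any particular $\bm{w}$. Finally, for an arbitrary $\bm{w}\in\mathbb{R}^n_+\setminus\{\bm{0}\}$, writing $\bm{u}:=\bm{w}/\|\bm{w}\|\in S$ and using homogeneity gives $\phi(\bm{w})=\|\bm{w}\|\,\phi(\bm{u})$, whence $c_1\|\bm{w}\|\le\phi(\bm{w})\le c_2\|\bm{w}\|$; the case $\bm{w}=\bm{0}$ is trivial since both sides vanish. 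This yields the claim, so I do not anticipate any genuine obstacle.

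As an alternative that also produces explicit constants, one could instead invoke the standard quasi-norm inequalities $\|\bm{w}\|_q\le\|\bm{w}\|_p\le n^{1/p-1/q}\|\bm{w}\|_q$, valid for $0<p\le q$. Comparing the exponents $p,q\in\{\alpha,2\}$ (and noting $\phi(\bm{w})=\|\bm{w}\|_\alpha$ on $\mathbb{R}^n_+$) gives the effective values $c_1=\min\{1,\,n^{1/\alpha-1/2}\}$ and $c_2=\max\{1,\,n^{1/\alpha-1/2}\}$, which is consistent with the compactness argument above.
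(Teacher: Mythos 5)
Your proof is correct, but note that the paper itself offers no proof of this lemma at all: it is imported verbatim as \cite[Lemma 2.2]{lp2017calculus}, so there is no internal argument to compare against. On its own merits, your homogeneity-plus-compactness argument is sound: $\phi(\bm{w})=\left(w_1^{\alpha}+\cdots+w_n^{\alpha}\right)^{1/\alpha}$ is continuous on $\mathbb{R}^n_+$ and positively homogeneous of degree one, it is strictly positive on the compact set $S=\{\bm{w}\in\mathbb{R}^n_+:\|\bm{w}\|=1\}$, and the extreme value theorem plus rescaling yields uniform constants $0<c_1\leq c_2$ depending only on $n$ and $\alpha$. Your cautionary remark is also well taken: for $\alpha\in(0,1)$ the map $\phi$ fails the triangle inequality, so the blanket statement ``all norms on $\mathbb{R}^n$ are equivalent'' cannot be invoked, and your route needs only continuity and homogeneity. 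One small point of interpretation you handled correctly: the lemma is phrased as ``for any $\bm{w}$ there exist $c_1,c_2$,'' which read literally is vacuous; the version actually needed (and used later in Proposition 4.1 of the paper, where the constant $b_0$ must be uniform in the point) is the uniform one you proved. Your closing alternative with the explicit constants $c_1=\min\{1,\,n^{1/\alpha-1/2}\}$ and $c_2=\max\{1,\,n^{1/\alpha-1/2}\}$, obtained from the standard comparison $\|\bm{w}\|_q\le\|\bm{w}\|_p\le n^{1/p-1/q}\|\bm{w}\|_q$ for $0<p\le q$, is also correct and arguably preferable, since it makes the dependence on $n$ and $\alpha$ explicit.
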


\begin{lemma}[{\cite[Section 2.2]{p1987introduction}}]\label{lemseqcond}
Suppose that $\{\alpha_k\}_{k=0}^{\infty}\subseteq\mathbb{R}$ and $\{\beta_k\}_{k=0}^{\infty}\subseteq\mathbb{R}_+$ are two sequences such that $\{\alpha_k\}$ is bounded from below, $\sum_{k=0}^{\infty} \beta_k < \infty$, and $\alpha_{k+1} \leq \alpha_{k} + \beta_k$ holds for all $k$. Then, $\{\alpha_k\}$ is convergent.
\end{lemma}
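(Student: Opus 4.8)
The plan is to convert the \emph{almost}-monotone recursion $\alpha_{k+1} \leq \alpha_k + \beta_k$ into a genuinely monotone relation by absorbing the summable perturbation into an auxiliary sequence, and then to invoke the standard fact that a monotone sequence bounded on the appropriate side converges. The only idea with any content is the right choice of this auxiliary sequence.

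Concretely, I would introduce the tail sums $b_k := \sum_{j=k}^{\infty} \beta_j$, which are well-defined and finite because $\sum_{j=0}^{\infty} \beta_j < \infty$; note that $b_k \geq 0$, that $b_k \to 0$ as $k \to \infty$, and that $b_k - b_{k+1} = \beta_k$. Setting $\gamma_k := \alpha_k + b_k$, the hypothesis together with this telescoping identity gives
\[
\gamma_{k+1} = \alpha_{k+1} + b_{k+1} \leq \alpha_k + \beta_k + b_{k+1} = \alpha_k + b_k = \gamma_k,
\]
so $\{\gamma_k\}$ is non-increasing. Moreover, if $m$ is a lower bound for $\{\alpha_k\}$, then $\gamma_k = \alpha_k + b_k \geq \alpha_k \geq m$ because $b_k \geq 0$, so $\{\gamma_k\}$ is bounded from below. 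Hence $\{\gamma_k\}$ is a non-increasing sequence bounded below and therefore converges to some $\gamma^\ast \in \mathbb{R}$. Since $\alpha_k = \gamma_k - b_k$ and $b_k \to 0$, we conclude $\alpha_k \to \gamma^\ast$, which is exactly the claimed convergence of $\{\alpha_k\}$.

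I do not expect any genuine obstacle here, as the result is classical (and is quoted from \cite{p1987introduction}); the single point deserving a moment's thought is the construction of the monotone surrogate. One adds the tail sum $b_k$ precisely so that the cancellation $b_k - b_{k+1} = \beta_k$ removes the error term and exposes monotonicity. An equivalent route would be to subtract the partial sums $s_k := \sum_{j=0}^{k-1}\beta_j$ and argue that $\alpha_k - s_k$ is non-increasing and bounded below, after which $\alpha_k = (\alpha_k - s_k) + s_k$ converges because both summands do; either device works, and the tail-sum version has the minor advantage of yielding the limit of $\{\alpha_k\}$ directly.
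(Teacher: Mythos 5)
Your proof is correct: the tail sums $b_k := \sum_{j=k}^{\infty}\beta_j$ are finite and nonnegative, the telescoping identity $b_k - b_{k+1} = \beta_k$ makes $\gamma_k := \alpha_k + b_k$ non-increasing and bounded below, and $b_k \to 0$ transfers the limit of $\{\gamma_k\}$ to $\{\alpha_k\}$. Note that the paper itself gives no proof of this lemma — it is quoted directly from \cite[Section 2.2]{p1987introduction} — so there is nothing to compare against; your argument is the standard one for this classical quasi-monotonicity result.
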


\begin{lemma} \label{lemma-convrate}
Let $\left\{u_k\right\}_{k \geq 1}$ be a sequence of nonnegative scalars, and let $b \geq 0$, $d,\,p,\,\omega>0$, and $\rho\in(0,1)$ be given constants.
\begin{itemize}
\item[{\rm (i)}] Suppose that $\left\{u_k\right\}_{k \geq 1}$ satisfies $u_{k+1} \leq \rho u_k + \frac{d}{k^p}$ for any $k\geq1$. Then, for any $\epsilon>0$, we have that $u_k \leq \frac{d+\epsilon}{1-\rho} \cdot \frac{1}{k^{p}}$ for all sufficiently large $k$.

\item[{\rm (ii)}] Suppose that $\left\{u_k\right\}_{k \geq 1}$ satisfies
    \begin{equation*}
    u_{k+1} \leq\left(1-\frac{\omega}{k+b}\right) u_k+\frac{d}{(k+b)^{p+1}}, \quad \forall\,k \geq 1.
    \end{equation*}
    Then, if $\omega>p$, we have $u_k \leq \frac{d}{\omega-p} \cdot\frac{1}{(k+b)^{p}}+o\left(\frac{1}{(k+b)^{p}}\right)$ for all sufficiently large $k$.
\end{itemize}
\end{lemma}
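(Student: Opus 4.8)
The plan is to reduce both parts to an upper bound on the limit superior of a suitably rescaled sequence, since the two claimed estimates are exactly statements about $\limsup_k k^p u_k$ in part (i) and $\limsup_k (k+b)^p u_k$ in part (ii). In each case I would multiply the given recursion by the appropriate power of the (shifted) index, read off a recursion for the rescaled sequence, first establish that this rescaled sequence is \emph{bounded}, and only then pass to the limit superior.

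For part (i), set $v_k := k^p u_k \ge 0$. Multiplying the hypothesis $u_{k+1} \le \rho u_k + d/k^p$ by $(k+1)^p$ gives
\[
v_{k+1} \le \rho\,(1+\tfrac1k)^p\, v_k + d\,(1+\tfrac1k)^p .
\]
Since $(1+1/k)^p \to 1$ and $\rho<1$, the coefficient of $v_k$ is eventually bounded by some $\rho'<1$, so a standard contraction argument yields boundedness of $\{v_k\}$. Writing $L:=\limsup_k v_k<\infty$ and passing to the limit superior (using $(1+1/k)^p\to1$ and $v_k\ge0$, so that $\limsup \rho(1+1/k)^p v_k = \rho L$) gives $L \le \rho L + d$, i.e.\ $L \le d/(1-\rho)$. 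As $d/(1-\rho) < (d+\epsilon)/(1-\rho)$, the bound $u_k \le \frac{d+\epsilon}{1-\rho}k^{-p}$ then holds for all large $k$ and every $\epsilon>0$.

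For part (ii), set $t_k := (k+b)^p u_k$ and $s:=k+b$. Multiplying the recursion by $(k+1+b)^p$ and Taylor-expanding the factor $(1+1/s)^p(1-q/s)$ produces
\[
t_{k+1} \le \Bigl(1 - \tfrac{q-p}{s} + O(\tfrac1{s^2})\Bigr) t_k + \tfrac{d}{s} + O(\tfrac1{s^2}),
\]
whose discrete equilibrium is $d/(q-p)$. Boundedness of $\{t_k\}$ follows as before (any $B>d/(q-p)$ is eventually invariant), and this lets me absorb the multiplicative $O(1/s^2)\,t_k$ term into the additive remainder. The crux is then to show $\limsup_k t_k \le d/(q-p)$. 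I would set $\delta_k := t_k - d/(q-p)$, for which the equilibrium cancels and the recursion simplifies to $\delta_{k+1} \le (1-\tfrac{q-p}{s})\delta_k + O(1/s^2)$; subtracting the summable tail $\sum_{j\ge k}O(1/(j+b)^2)$ reduces matters to $\hat\delta_{k+1}\le(1-\tfrac{q-p}{s})\hat\delta_k + \tfrac{q-p}{s}\eta_k$ with $\eta_k\to0$, and the further shift $y_k:=\hat\delta_k-\epsilon'$ turns this into the clean inequality $y_{k+1}\le(1-\tfrac{q-p}{s})y_k$. Since $\sum_k \frac{q-p}{k+b}=\infty$, the product $\prod(1-\frac{q-p}{s})\to0$, forcing $\limsup_k y_k\le0$, hence $\limsup_k\hat\delta_k\le\epsilon'$ for arbitrary $\epsilon'>0$ and thus $\limsup_k t_k\le d/(q-p)$. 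Re-expressing this as $u_k\le \frac{d}{q-p}(k+b)^{-p}+o((k+b)^{-p})$ (take the remainder to be $(u_k-\frac{d}{q-p}(k+b)^{-p})_+$) is then immediate.

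The main obstacle lies in part (ii): unlike part (i), the coefficient $1-\frac{q-p}{s}$ tends to $1$ rather than staying bounded away from it, so a direct limit-superior argument on the recursion fails and the more delicate equilibrium-shift/telescoping-product argument is needed. The hypothesis $q>p$ is exactly what supplies both the positive shift $\frac{q-p}{s}$ toward the equilibrium and the divergence $\sum_k \frac{q-p}{k+b}=\infty$ that drives the contraction product to zero; the remaining care is in verifying that the $O(1/s^2)$ perturbations (the multiplicative one, tamed via boundedness, and the additive one, tamed via its summable tail) are indeed harmless.
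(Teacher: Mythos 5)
Your proposal is correct, but it takes a genuinely different route from the paper's. For part (i), the paper also rescales by $k^p$, but then shifts by the equilibrium, setting $a_k := k^p u_k - \frac{d}{1-\rho}$, derives $a_{k+1}\le \tilde{\rho}\, a_k + \frac{dp}{1-\rho}\cdot\frac{1}{k}+o\left(\frac{1}{k}\right)$ with $\tilde{\rho}\in(\rho,1)$, and then invokes an external result (Lemma 3 in Section 2.2 of \cite{p1987introduction}) to conclude $\limsup_k a_k\le 0$; your boundedness-plus-$\limsup$ algebra reaches the same conclusion without any outside lemma. For part (ii), the paper gives no proof at all—it simply cites Lemma 3.9(a) of \cite{lmq2023convergence}—whereas you supply a complete, self-contained argument: the equivalence of the claim with $\limsup_k (k+b)^p u_k \le \frac{d}{q-p}$, the equilibrium shift $\delta_k = t_k - \frac{d}{q-p}$ that cancels the $\frac{d}{s}$ term exactly, the rewriting of the residual $O(1/s^2)$ perturbation as $\frac{q-p}{s}\eta_k$ with $\eta_k\to0$, and the divergence of $\sum_k \frac{q-p}{k+b}$ driving the telescoping product to zero. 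Your approach buys self-containedness and makes transparent where $q>p$ is used; the paper's buys brevity. One step you should tighten: in part (ii), the parenthetical ``any $B>d/(q-p)$ is eventually invariant'' does not by itself give boundedness of $\{t_k\}$, since the sequence might never enter $[0,B]$. Either observe that $t_{k+1}\le t_k$ whenever $t_k\ge \frac{2(d+C)}{q-p}$ and $s$ is large (so that $t_k\le\max\{t_{K},M\}$ for an explicit $M$), or note that $\bigl[0,\max\{t_{K},B\}\bigr]$ is itself invariant for suitable $K$; alternatively, bypass boundedness altogether by keeping the multiplicative perturbation inside the contraction factor via $1-\frac{q-p}{s}+\frac{C}{s^2}\le 1-\frac{q-p}{2s}$ for all large $s$, after which your shift-and-product argument runs unchanged.
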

\begin{proof}
Statement (i) is adapted from \cite[Lemma 4 in Section 2.2]{p1987introduction} with its detailed proof provided in Appendix \ref{apd-lem-convrate}. Statement (ii) follows directly from \cite[Lemma 3.9(a)]{lmq2023convergence}.
\end{proof}

\blue{
We further make the following blanket technical assumptions throughout the paper, which will be used in the subsequent convergence analysis. These assumptions are standard and frequently used in the design and convergence analysis of fractional optimization algorithms; see, for example, \cite{bdl2022extrapolated,blt2023full,lszz2022proximal}.

\begin{assumption}\label{assumA}
The functions $f$, $h$, $g$ in problem \eqref{mainpro} satisfy the following assumptions.
\begin{itemize}
    \item[{\bf A1.}] The function $f \colon \mathbb{R}^n \rightarrow \mathbb{R} \cup \{+\infty\}$ is proper, closed, and convex.
    \item[{\bf A2.}] The function $h \colon \mathbb{R}^n \rightarrow \mathbb{R}$ is continuously differentiable (possibly nonconvex) with a Lipschitz continuous gradient. That is, there exists a constant $L_h > 0$ such that
        \begin{align*}
        \|\nabla h(\bm{x}) - \nabla h(\bm{y})\| \leq L_h \|\bm{x} - \bm{y}\|, \quad \forall\,\bm{x}, \bm{y} \in \mathbb{R}^n.
        \end{align*}
    \item[{\bf A3.}] The function $g \colon \mathbb{R}^n \rightarrow \mathbb{R}$ is continuous, convex, and nonnegative over $\mathrm{dom}\,f$.
    \item[{\bf A4.}] The sum $f + h$ is nonnegative on $\mathrm{dom}\,f$, and $f(\bm{x})+h(\bm{x}) > 0$ when $g(\bm{x})=0$.
    \item[{\bf A5.}] The function $F \colon \mathbb{R}^n \rightarrow \mathbb{R} \cup \{+\infty\}$ defined as
        \begin{equation}\label{defFun}
        F(\bm{x}):=\left\{\begin{aligned}
        &\dfrac{f(\bm{x}) + h(\bm{x})}{g(\bm{x})},
        &&\text{if} ~~\bm{x}\in\mathrm{dom}\,f \cap \Omega, \\
        &+\infty, && \text{otherwise},
        \end{aligned}\right.
        \end{equation}
        is level-bounded, which means that for every $\alpha \in \mathbb{R}$, the level set $\left\{ \bm{x} \in \mathbb{R}^n : F(\bm{x}) \leq \alpha \right\}$ is bounded (and possibly empty).
\end{itemize}
\end{assumption}

}

Finally, we define a critical point of $F$, where $F$ is defined in \eqref{defFun}. Note that, since $f$ is convex in our setting, $\partial f=\widehat{\partial} f$, and hence the following definition is consistent with \cite[Definition 3.4]{zl2022first}. Therefore, one can follow \cite[Section 3]{zl2022first} to show that, if $\bm{x}^*$ is a local minimizer of problem \eqref{mainpro}, then $\bm{x}^*$ is a critical point of $F$.

\begin{definition}[\textbf{Critical point}]\label{definition-critical}
Let $\bm{x}^*\in \mathrm{dom}\,F$ and $c^* =F(\bm{x}^*)$. We say that $\bm{x}^*$ is a critical point of $F$ if
\begin{equation*}
0 \in  \partial f(\bm{x}^*) + \nabla h(\bm{x}^*) - c^*\partial g(\bm{x}^*).
\end{equation*}
\end{definition}

\section{An inexact variable metric proximal gradient-subgradient algorithm}\label{sec-iVPGSA}

In this section, we develop an inexact variable metric proximal gradient-subgradient algorithm (iVPGSA) for solving problem \eqref{mainpro} and study its preliminary convergence properties. The complete framework is presented in Algorithm \ref{algo-iVPGSA}.

\begin{algorithm}[ht]
\caption{An inexact variable metric proximal gradient-subgradient algorithm (iVPGSA) for solving problem \eqref{mainpro}}\label{algo-iVPGSA}
\textbf{Input:} Choose $\bm{x}^0 \in \Omega\cap\mathrm{dom}\,f$, a sequence of nonnegative scalars $\{\varepsilon_k\}_{k=0}^{\infty}$, and a sequence of positive definite matrices $\{H_k\}_{k=0}^{\infty}$. Set $k=0$. \\[2pt]
\textbf{while} a termination criterion is not met, \textbf{do} \vspace{-1mm}
\begin{itemize}[leftmargin=2cm]
\item[\textbf{Step 1}.] Compute $c_k = \frac{f(\bm{x}^k)+h(\bm{x}^k)}{g(\bm{x}^k)}$, and take any $\bm{y}^k \in \partial g(\bm{x}^k)$.

\item[\textbf{Step 2}.] Find a point $\bm{x}^{k+1}\in\Omega\cap\mathrm{dom}\,f$ and an associated error pair $(\Delta^k, \delta_k)$ by approximately solving
    \begin{equation}\label{subproH-x}
    \min\limits_{\bm{x}} ~f(\bm{x}) + \langle \nabla h(\bm{x}^{k})-c_k \bm{y}^{k},\,\bm{x}-\bm{x}^{k} \rangle + \frac{1}{2}\| \bm{x}-\bm{x}^{k}\|^2_{H_k}
    \end{equation}
    such that
    \begin{equation}\label{inexcondH-x}
    \Delta^{k} \in \partial_{\delta_k} f(\bm{x}^{k+1}) + \nabla h(\bm{x}^{k})-c_k \bm{y}^{k} + H_k(\bm{x}^{k+1}-\bm{x}^{k}),
    \end{equation}
    satisfying the following error criterion:
    \begin{equation}\label{stopcritH-x}
    \|\Delta^{k}\|^2 + |\langle \Delta^{k},\, \bm{x}^{k+1}-\bm{x}^{k} \rangle| + \delta_k \leq \varepsilon_kg(\bm{x}^{k+1}).
    \end{equation}

\item[\textbf{Step 3}.] Set $k = k+1$ and go to \textbf{Step 1}. \vspace{-1.5mm}
\end{itemize}
\textbf{end while}  \\
\textbf{Output}: $\bm{x}^k$
\end{algorithm}

Note from the iVPGSA in Algorithm \ref{algo-iVPGSA} that, at each iteration, our inexact framework allows one to \textit{approximately} solve the subproblem \eqref{subproH-x} under condition \eqref{inexcondH-x} satisfying the error criterion \eqref{stopcritH-x}. Under the condition $H_k\succ\frac{L_h}{2}I_n$, we first show that \textbf{Step 2} is well-defined in the sense that the exact solution of \eqref{subproH-x}, whenever $\bm{x}^k$ is not already optimal, belongs to $\Omega\cap\mathrm{dom}\,f$ and satisfies the error criterion with zero error. Specifically, starting from $\bm{x}^k\in\Omega\cap\mathrm{dom}\,f$, if $\bm{x}^k$ is not a solution of the subproblem \eqref{subproH-x}, we can always find $\bm{x}^{k+1}\in\Omega\cap\mathrm{dom}\,f$ which approximately solves \eqref{subproH-x}, satisfying both \eqref{inexcondH-x} and \eqref{stopcritH-x}.
In fact, since $f$ is convex and $H_k$ is positive definite, the objective function in \eqref{subproH-x} is strongly convex and hence the subproblem \eqref{subproH-x} admits a unique solution $\bm{x}^{k,*}\in\mathrm{dom}\,f$, such that
\begin{equation*}
0 \in \partial f(\bm{x}^{k,*}) + \nabla h(\bm{x}^{k}) - c_k \bm{y}^{k}
+ H_k(\bm{x}^{k,*}-\bm{x}^{k}).
\end{equation*}
Then, using a similar derivation as for \eqref{ineq-VMsuffdes}, one can obtain that
\begin{equation}\label{suffdessubopt}
f(\bm{x}^{k,*}) + h(\bm{x}^{k,*})
+ \frac{1}{2}\|\bm{x}^{k,*}-\bm{x}^k\|_{2H_k-L_hI_n}^2
\leq c_kg(\bm{x}^{k,*}),
\end{equation}
where $L_h$ denotes the Lipschitz constant of $\nabla h$ and $I_n$ denotes the identity matrix in $\mathbb{R}^{n\times n}$. This, together with the nonnegativity of $f+h$ on $\mathrm{dom}\,f$ (by Assumption \ref{assumA}4), implies that, if $H_k\succ\frac{L_h}{2}I_n$ and $\bm{x}^k\in\Omega\cap\mathrm{dom}\,f$ is not the solution of \eqref{subproH-x} satisfying $c_k>0$\footnote{Note that when $c_k=0$, it means that $\bm{x}^k$ is already an optimal solution of the original problem \eqref{mainpro}, as guaranteed by Assumptions \ref{assumA}3 and \ref{assumA}4.}, we have that $g(\bm{x}^{k,*})>0$ and hence $\bm{x}^{k,*}\in\Omega$. Therefore, in this case, condition \eqref{inexcondH-x} and its associated error criterion \eqref{stopcritH-x} always hold at $\bm{x}^{k+1}=\bm{x}^{k,*}\in\Omega\cap\mathrm{dom}\,f$ with $\|\Delta^k\|=\delta_k=0$, ensuring that they are achievable. Moreover, if we set $\varepsilon_k\equiv0$ for all $k\geq0$, it means that $\bm{x}^{k+1}$ is always an exact solution of the subproblem. As a result, the iVPGSA reduces to an exact algorithm, which can serve as a variable metric generalization of the PGSA studied in \cite{zl2022first}.

One can also observe that, in our inexact framework, an inexact solution is characterized by an error term $\Delta^k$ appearing on the left-hand side of the approximate optimality condition \eqref{inexcondH-x} and $\partial_{\delta_k}f$ serving as an approximation of $\partial f$. This makes our inexact framework rather flexible to accommodate different scenarios when approximately solving the subproblem. Additionally, the incorporation of a suitable variable metric $H_k$ can further facilitate the computation of the subproblem in practice, as evidenced by the examples in our numerical section. Therefore, the features of inexactness and variable metric increase the flexibility of the proposed algorithmic framework and broaden its applicability to problems for which the subproblem is solved only approximately.

We now start our convergence analysis by establishing a useful result regarding the function values along the sequence generated by the iVPGSA.

\begin{lemma}\label{lemma-VMsuffdes}
Suppose that Assumption \ref{assumA} holds. Let $\{\bm{x}^k\}$ be the sequence generated by the iVPGSA in Algorithm \ref{algo-iVPGSA}. Then, for any $\bm{x}\in \Omega\cap\mathrm{dom}\,f$, we have
\begin{equation}\label{ineq-VMsuffdesgen}
\begin{aligned}
&\quad f(\bm{x}^{k+1}) + h(\bm{x}^{k+1})-c_kg(\bm{x}^{k+1})  \\
&\leq f(\bm{x}) + h(\bm{x})-c_kg(\bm{x})
+ \langle \Delta^k,\, \bm{x}^{k+1} - \bm{x}\rangle +\delta_k
+ L_h\|\bm{x}^{k+1}-\bm{x}\|\|\bm{x}^{k}-\bm{x}\|    \\
&\qquad
+ \frac{1}{2}\|\bm{x}^{k}-\bm{x}\|_{H_k}^2
- \frac{1}{2}\|\bm{x}^{k+1}-\bm{x}\|_{H_k-L_hI_n}^2
- \frac{1}{2}\|\bm{x}^{k+1}-\bm{x}^k\|_{H_k}^2 \\
&\qquad
+ c_k\big(g(\bm{x})-g(\bm{x}^k)+\langle \bm{y}^k, \,\bm{x}^k-\bm{x}\rangle\big),
\end{aligned}
\end{equation}
where $L_h$ denotes the Lipschitz constant of $\nabla h$ and $I_n$ denotes the identity matrix in $\mathbb{R}^{n\times n}$. Moreover, when taking $\bm{x}=\bm{x}^k$, we have
\begin{equation}\label{ineq-VMsuffdes}
f(\bm{x}^{k+1}) + h(\bm{x}^{k+1})
+ \frac{1}{2}\|\bm{x}^{k+1}-\bm{x}^k\|_{2H_k-L_hI_n}^2
\leq c_kg(\bm{x}^{k+1}) + \langle \Delta^k, \,\bm{x}^{k+1}-\bm{x}^{k}\rangle
+ \delta_k.
\end{equation}
\end{lemma}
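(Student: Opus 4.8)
The plan is to derive the general inequality \eqref{ineq-VMsuffdesgen} directly from the inexact optimality condition \eqref{inexcondH-x} for an arbitrary $\bm{x}\in\Omega\cap\mathrm{dom}\,f$, and then recover \eqref{ineq-VMsuffdes} as the special case $\bm{x}=\bm{x}^k$. The starting point is to unpack \eqref{inexcondH-x}: there exists $\bm{\xi}^k\in\partial_{\delta_k}f(\bm{x}^{k+1})$ with $\bm{\xi}^k=\Delta^k-\nabla h(\bm{x}^k)+c_k\bm{y}^k-H_k(\bm{x}^{k+1}-\bm{x}^k)$. Applying the definition of the $\delta_k$-subdifferential at $\bm{x}^{k+1}$ and testing it against $\bm{x}$ gives $f(\bm{x}^{k+1})\le f(\bm{x})+\langle\bm{\xi}^k,\,\bm{x}^{k+1}-\bm{x}\rangle+\delta_k$; substituting the expression for $\bm{\xi}^k$ yields a ``raw'' inequality containing the four terms $\langle\Delta^k,\,\bm{x}^{k+1}-\bm{x}\rangle$, $-\langle\nabla h(\bm{x}^k),\,\bm{x}^{k+1}-\bm{x}\rangle$, $c_k\langle\bm{y}^k,\,\bm{x}^{k+1}-\bm{x}\rangle$ and $-\langle H_k(\bm{x}^{k+1}-\bm{x}^k),\,\bm{x}^{k+1}-\bm{x}\rangle$, which I then process one structural block at a time.

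For the metric block I use the polarization identity for the $H_k$-weighted inner product, $\langle H_k(\bm{x}^{k+1}-\bm{x}^k),\,\bm{x}^{k+1}-\bm{x}\rangle=\tfrac12\|\bm{x}^{k+1}-\bm{x}^k\|_{H_k}^2+\tfrac12\|\bm{x}^{k+1}-\bm{x}\|_{H_k}^2-\tfrac12\|\bm{x}^k-\bm{x}\|_{H_k}^2$, which simultaneously accounts for all three weighted-norm terms in \eqref{ineq-VMsuffdesgen}. For the smooth block, after adding $h(\bm{x}^{k+1})$ to both sides I split $-\langle\nabla h(\bm{x}^k),\,\bm{x}^{k+1}-\bm{x}\rangle=-\langle\nabla h(\bm{x}),\,\bm{x}^{k+1}-\bm{x}\rangle-\langle\nabla h(\bm{x}^k)-\nabla h(\bm{x}),\,\bm{x}^{k+1}-\bm{x}\rangle$; bounding the last inner product by Cauchy--Schwarz together with Assumption \ref{assumA}2 produces the cross term $L_h\|\bm{x}^{k+1}-\bm{x}\|\|\bm{x}^k-\bm{x}\|$, while the descent lemma (a consequence of Assumption \ref{assumA}2), $h(\bm{x}^{k+1})\le h(\bm{x})+\langle\nabla h(\bm{x}),\,\bm{x}^{k+1}-\bm{x}\rangle+\tfrac{L_h}{2}\|\bm{x}^{k+1}-\bm{x}\|^2$, converts the remaining piece into $h(\bm{x})+\tfrac{L_h}{2}\|\bm{x}^{k+1}-\bm{x}\|^2$. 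This curvature term is precisely what merges with $-\tfrac12\|\bm{x}^{k+1}-\bm{x}\|_{H_k}^2$ to form $-\tfrac12\|\bm{x}^{k+1}-\bm{x}\|_{H_k-L_hI_n}^2$. For the denominator block, I first record that $c_k=\frac{f(\bm{x}^k)+h(\bm{x}^k)}{g(\bm{x}^k)}\ge0$ by Assumptions \ref{assumA}3--\ref{assumA}4; this permits scaling the convexity inequality $\langle\bm{y}^k,\,\bm{x}^{k+1}-\bm{x}^k\rangle\le g(\bm{x}^{k+1})-g(\bm{x}^k)$ (valid since $\bm{y}^k\in\partial g(\bm{x}^k)$) by $c_k$ after the split $c_k\langle\bm{y}^k,\,\bm{x}^{k+1}-\bm{x}\rangle=c_k\langle\bm{y}^k,\,\bm{x}^{k+1}-\bm{x}^k\rangle+c_k\langle\bm{y}^k,\,\bm{x}^k-\bm{x}\rangle$; moving $c_kg(\bm{x}^{k+1})$ to the left-hand side and rewriting $-c_kg(\bm{x}^k)=-c_kg(\bm{x})+c_k\big(g(\bm{x})-g(\bm{x}^k)\big)$ reproduces the final bracketed line of \eqref{ineq-VMsuffdesgen}. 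Collecting all pieces then establishes \eqref{ineq-VMsuffdesgen}.

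Finally, \eqref{ineq-VMsuffdes} follows by setting $\bm{x}=\bm{x}^k$: the terms $L_h\|\bm{x}^{k+1}-\bm{x}^k\|\|\bm{x}^k-\bm{x}^k\|$, $\tfrac12\|\bm{x}^k-\bm{x}^k\|_{H_k}^2$ and $c_k\big(g(\bm{x}^k)-g(\bm{x}^k)+\langle\bm{y}^k,\,\bm{x}^k-\bm{x}^k\rangle\big)$ all vanish, the quantity $f(\bm{x}^k)+h(\bm{x}^k)-c_kg(\bm{x}^k)$ equals $0$ by the very definition of $c_k$ (using $g(\bm{x}^k)>0$), and the two surviving weighted-norm terms combine through $-\tfrac12\|\cdot\|_{H_k-L_hI_n}^2-\tfrac12\|\cdot\|_{H_k}^2=-\tfrac12\|\cdot\|_{2H_k-L_hI_n}^2$; a final rearrangement gives the claim.

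I expect the main obstacle to be the bookkeeping in the smooth block: one must split $-\langle\nabla h(\bm{x}^k),\,\bm{x}^{k+1}-\bm{x}\rangle$ around the anchor gradient $\nabla h(\bm{x})$ in exactly the right way so that the nonconvexity of $h$ is absorbed into the single Cauchy--Schwarz/Lipschitz cross term $L_h\|\bm{x}^{k+1}-\bm{x}\|\|\bm{x}^k-\bm{x}\|$, while the descent lemma supplies precisely the curvature correction $\tfrac{L_h}{2}\|\bm{x}^{k+1}-\bm{x}\|^2$ needed to shift $H_k$ to $H_k-L_hI_n$. The sign of $c_k$ is the other point requiring care, since the convexity inequality for $g$ may be scaled by $c_k$ only after $c_k\ge0$ has been confirmed.
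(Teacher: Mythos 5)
Your proposal is correct and follows essentially the same route as the paper's own proof: the same $\delta_k$-subdifferential inequality tested at $\bm{x}$, the same three-point (polarization) identity for the $H_k$-term, the same split of $-\langle\nabla h(\bm{x}^k),\,\bm{x}^{k+1}-\bm{x}\rangle$ around $\nabla h(\bm{x})$ combined with Cauchy--Schwarz and the descent lemma, and the same convexity-plus-$c_k\ge 0$ treatment of the $g$ block. The specialization $\bm{x}=\bm{x}^k$, including the cancellation $f(\bm{x}^k)+h(\bm{x}^k)-c_kg(\bm{x}^k)=0$ and the merge into $\|\cdot\|_{2H_k-L_hI_n}^2$, also matches the paper exactly.
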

\begin{proof}
First, from condition \eqref{inexcondH-x}, there exists a $\bm{d}^{k+1}\in \partial_{\delta_k}f(\bm{x}^{k+1})$ such that $\Delta^k = \bm{d}^{k+1} + \nabla h(\bm{x}^k)- c_k\bm{y}^k + H_k (\bm{x}^{k+1}-\bm{x}^k)$. Then, from the definition of $\partial_{\delta_k}f$, we have that, for any $\bm{x}\in\Omega\cap\mathrm{dom}\,f$,
\begin{equation*}
\begin{aligned}
f(\bm{x})
&\geq f(\bm{x}^{k+1})+\langle \bm{d}^{k+1}, \, \bm{x}-\bm{x}^{k+1} \rangle - \delta_k \\
& = f(\bm{x}^{k+1}) +\langle \Delta^k-\nabla h(\bm{x}^k)+c_k\bm{y}^k-H_k (\bm{x}^{k+1}-\bm{x}^k), \,\bm{x}-\bm{x}^{k+1} \rangle - \delta_k,
\end{aligned}
\end{equation*}
which implies that
\begin{equation}\label{ineq-VMf}
\begin{aligned}
f(\bm{x}^{k+1})
&\leq f(\bm{x}) - \langle \nabla h(\bm{x}^k),\, \bm{x}^{k+1} - \bm{x}\rangle + c_k\langle \bm{y}^k,\, \bm{x}^{k+1}-\bm{x} \rangle + \langle \Delta^k,\, \bm{x}^{k+1} - \bm{x}\rangle +\delta_k \\
&\qquad + \langle H_k (\bm{x}^{k+1}-\bm{x}^k), \, \bm{x}-\bm{x}^{k+1} \rangle  \\
&= f(\bm{x}) - \langle \nabla h(\bm{x}^k),\, \bm{x}^{k+1} - \bm{x}\rangle + c_k\langle \bm{y}^k,\, \bm{x}^{k+1}-\bm{x} \rangle + \langle \Delta^k,\, \bm{x}^{k+1} - \bm{x}\rangle +\delta_k \\
&\qquad + \frac{1}{2} \|\bm{x}^{k}-\bm{x}\|_{H_k}^2
- \frac{1}{2} \|\bm{x}^{k+1}-\bm{x}\|_{H_k}^2
- \frac{1}{2} \|\bm{x}^{k+1}-\bm{x}^k\|_{H_k}^2,
\end{aligned}
\end{equation}
where the last equality follows from $\langle H(\bm{a}-\bm{b}), \,\bm{c}-\bm{a} \rangle
= \frac{1}{2}\|\bm{b}-\bm{c}\|_H^2
- \frac{1}{2}\|\bm{a}-\bm{c}\|_H^2
- \frac{1}{2}\|\bm{a}-\bm{b}\|_H^2$.

Moreover, due to the convexity of $g$ and $\bm{y}^k \in \partial g(\bm{x}^k)$, we see that
\begin{equation*}
\begin{aligned}
g(\bm{x}^{k+1})
\geq g(\bm{x}^k) + \langle \bm{y}^k, \,\bm{x}^{k+1}-\bm{x}^k \rangle
= g(\bm{x}) + \langle \bm{y}^k, \,\bm{x}^{k+1}-\bm{x}\rangle
+ \big(g(\bm{x}^k)-g(\bm{x})+\langle \bm{y}^k, \,\bm{x}-\bm{x}^k\rangle\big),
\end{aligned}
\end{equation*}
which, together with the nonnegativity of $c_k$, implies that
\begin{equation}\label{ineq-VMg}
c_k\langle \bm{y}^k, \,\bm{x}^{k+1}-\bm{x}\rangle
\leq c_k\big(g(\bm{x}^{k+1}) - g(\bm{x})\big)
+ c_k\big(g(\bm{x})-g(\bm{x}^k)+\langle \bm{y}^k, \,\bm{x}^k-\bm{x}\rangle\big).
\end{equation}
On the other hand, using the fact that $\nabla h$ is Lipschitz continuous with a Lipschitz constant $L_h$ (by Assumption \ref{assumA}2), we have from
\cite[Lemma 1.2.3]{n2004introductory} that
\begin{equation*}
\begin{aligned}
&h(\bm{x}^{k+1})\leq h(\bm{x})+ \langle \nabla h(\bm{x}),\,\bm{x}^{k+1}-\bm{x} \rangle +\frac{L_h}{2} \Vert \bm{x}^{k+1}-\bm{x} \Vert^2,  \\
\Longrightarrow & \quad - \langle \nabla h(\bm{x}),\, \bm{x}^{k+1} - \bm{x}\rangle \leq h(\bm{x})-h(\bm{x}^{k+1}) + \frac{L_h}{2}\|\bm{x}^{k+1}-\bm{x}\|^2,
\end{aligned}
\end{equation*}
and hence
\begin{equation}\label{ineq-VMh}
\begin{aligned}
&- \langle \nabla h(\bm{x}^k),\, \bm{x}^{k+1} - \bm{x}\rangle
= - \langle \nabla h(\bm{x}),\, \bm{x}^{k+1} - \bm{x}\rangle
+ \langle \nabla h(\bm{x})-\nabla h(\bm{x}^k), \,\bm{x}^{k+1}-\bm{x}\rangle  \\
&\leq h(\bm{x}) - h(\bm{x}^{k+1})
+ \frac{L_h}{2}\|\bm{x}^{k+1}-\bm{x}\|^2
+ L_h\|\bm{x}^{k+1}-\bm{x}\|\|\bm{x}^{k}-\bm{x}\|.
\end{aligned}
\end{equation}

By summing \eqref{ineq-VMf}, \eqref{ineq-VMg}, \eqref{ineq-VMh} and rearranging the resulting terms, we can obtain \eqref{ineq-VMsuffdesgen}. Moreover, when taking $\bm{x}=\bm{x}^k$, we can further obtain \eqref{ineq-VMsuffdes} by using $c_k g(\bm{x}^k) = f(\bm{x}^k) + h(\bm{x}^k)$ and rearranging the resulting terms.
\end{proof}

Based on the above Lemma, we further have the following results.

\begin{proposition}\label{prop-iVPGSA}
Suppose that Assumption \ref{assumA} holds and there exist $\underline{\gamma}, \,\overline{\gamma}>0$ such that $\frac{L_h}{2}<\underline{\gamma}\leq\overline{\gamma}$ and $\underline{\gamma}I_n \preceq H_k \preceq \overline{\gamma}I_n$ for all $k\geq0$. Let $\{\bm{x}^k\}$ be the sequence generated by the iVPGSA in Algorithm \ref{algo-iVPGSA}. Then, for any $k\geq0$, we have
\begin{equation}\label{decreaseFVMeps}
F(\bm{x}^{k+1}) \leq F(\bm{x}^{k})
- \frac{\|\bm{x}^{k+1}-\bm{x}^k\|_{2H_k-L_hI_n}^2}{2g(\bm{x}^{k+1})}
+ \varepsilon_k.
\end{equation}
Moreover, if $\sum_{k=0}^{\infty}\varepsilon_k<+\infty$, the following statements hold.
\begin{itemize}
\item[(i)] $c^*:=\lim\limits_{k\rightarrow\infty}{c_k}= \lim\limits_{k\rightarrow\infty}{F(\bm{x}^k)}$ exists;

\item[(ii)] The sequence $\{\bm{x}^k\}$ is bounded;

\item[(iii)]  $\lim\limits_{k\rightarrow\infty}\cfrac{\|\bm{x}^{k+1}-\bm{x}^k\|^2}{g(\bm{x}^{k+1})}=0$ and $\lim\limits_{k\rightarrow\infty}\|\bm{x}^{k+1}-\bm{x}^k\|=0$;

\item[(iv)] Any accumulation point $\bm{x}^*$ of $\{\bm{x}^k\}$ satisfies that $\bm{x}^*\in \mathrm{dom}\,F$ and $F(\bm{x}^*)=c^*$;

\item[(v)] There exist $0<\underline{g}<\overline{g}$ such that $\underline{g}\leq g(\bm{x}^k)\leq \overline{g}$ for all $k\geq0$.
\end{itemize}
\end{proposition}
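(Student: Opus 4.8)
The plan is to first establish the descent-type inequality \eqref{decreaseFVMeps}, from which all the remaining statements follow. Starting from inequality \eqref{ineq-VMsuffdes} of Lemma \ref{lemma-VMsuffdes}, I would divide through by $g(\bm{x}^{k+1})>0$ (which is positive because $\bm{x}^{k+1}\in\Omega\cap\mathrm{dom}\,f$ by construction and by Assumption \ref{assumA}3), use the identity $c_k=F(\bm{x}^k)$ together with $F(\bm{x}^{k+1})=(f+h)(\bm{x}^{k+1})/g(\bm{x}^{k+1})$, and bound the error contribution $\langle\Delta^k,\bm{x}^{k+1}-\bm{x}^k\rangle+\delta_k$ from above by $\varepsilon_k g(\bm{x}^{k+1})$ via the error criterion \eqref{stopcritH-x}. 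This produces \eqref{decreaseFVMeps}, and the crucial structural observation is that $2H_k-L_hI_n\succeq(2\underline{\gamma}-L_h)I_n\succ0$, so the subtracted term is nonnegative.

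For statements (i)--(iii), note first that $f+h\geq0$ on $\mathrm{dom}\,f$ and $g>0$ give $F\geq0$. Dropping the nonnegative middle term in \eqref{decreaseFVMeps} yields $F(\bm{x}^{k+1})\leq F(\bm{x}^k)+\varepsilon_k$; since $\sum_k\varepsilon_k<\infty$, Lemma \ref{lemseqcond} gives convergence of $\{F(\bm{x}^k)\}=\{c_k\}$, proving (i). Statement (ii) then follows from the level-boundedness in Assumption \ref{assumA}5, since $\{F(\bm{x}^k)\}$ is bounded above. For (iii), I would telescope \eqref{decreaseFVMeps} and use $F\geq0$ together with $\sum_k\varepsilon_k<\infty$ to obtain $\sum_k\|\bm{x}^{k+1}-\bm{x}^k\|^2_{2H_k-L_hI_n}/g(\bm{x}^{k+1})<\infty$; the summand therefore tends to $0$, and bounding $\|\cdot\|^2_{2H_k-L_hI_n}$ below by $(2\underline{\gamma}-L_h)\|\cdot\|^2$ gives the first limit, while the second limit follows because $g$ is continuous and $\{\bm{x}^k\}$ is bounded, so $g(\bm{x}^{k+1})$ is bounded above.

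The main obstacle is statement (iv), since $f$ is only lower semicontinuous. Let $\bm{x}^{k_j}\to\bm{x}^*$; as $\|\bm{x}^{k+1}-\bm{x}^k\|\to0$, also $\bm{x}^{k_j+1}\to\bm{x}^*$. I would argue in three steps. First, I rule out $g(\bm{x}^*)=0$: if it held, then $(f+h)(\bm{x}^{k_j+1})=c_{k_j+1}g(\bm{x}^{k_j+1})\to c^*\cdot0=0$, and lower semicontinuity of $f+h$ would force $(f+h)(\bm{x}^*)\leq0$ (with $\bm{x}^*\in\mathrm{dom}\,f$ since $f(\bm{x}^{k_j+1})\to-h(\bm{x}^*)$ is finite), contradicting Assumption \ref{assumA}4; hence $g(\bm{x}^*)>0$ and $\bm{x}^*\in\Omega$. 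Second, from $f(\bm{x}^{k_j+1})\to c^*g(\bm{x}^*)-h(\bm{x}^*)$ and lower semicontinuity, $f(\bm{x}^*)\leq c^*g(\bm{x}^*)-h(\bm{x}^*)$, so $\bm{x}^*\in\mathrm{dom}\,F$ and $F(\bm{x}^*)\leq c^*$. Third, for the reverse inequality I would invoke \eqref{ineq-VMf} with $\bm{x}=\bm{x}^*$ at iteration $k_j$ and take $\limsup_j$: all cross terms vanish because $\bm{x}^{k_j+1}-\bm{x}^*\to0$ and $\bm{x}^{k_j+1}-\bm{x}^{k_j}\to0$, the multipliers $\nabla h(\bm{x}^{k_j})$ and $c_{k_j}\bm{y}^{k_j}$ stay bounded (using boundedness of $\{\bm{x}^k\}$ and local boundedness of $\partial g$), and $\Delta^{k_j},\delta_{k_j}\to0$ by the error criterion. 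This gives $\limsup_j f(\bm{x}^{k_j+1})\leq f(\bm{x}^*)$, hence $c^*g(\bm{x}^*)-h(\bm{x}^*)\leq f(\bm{x}^*)$ and $F(\bm{x}^*)\geq c^*$, so $F(\bm{x}^*)=c^*$.

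Finally, for statement (v) the upper bound $\overline{g}$ is immediate from continuity of $g$ on the bounded sequence $\{\bm{x}^k\}$. For the positive lower bound I would argue by contradiction: if $\inf_k g(\bm{x}^k)=0$, then some subsequence satisfies $g(\bm{x}^{k_j})\to0$, and by boundedness a further subsequence converges to an accumulation point $\bar{\bm{x}}$ with $g(\bar{\bm{x}})=0$; but statement (iv) guarantees $\bar{\bm{x}}\in\mathrm{dom}\,F\subseteq\Omega$, i.e. $g(\bar{\bm{x}})>0$, a contradiction. Thus $\underline{g}:=\inf_k g(\bm{x}^k)>0$, and shrinking $\underline{g}$ slightly if necessary gives $0<\underline{g}<\overline{g}$.
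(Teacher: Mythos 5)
Your proposal is correct, and for the main inequality and statements (i), (ii), (iii), (v) it coincides with the paper's proof step for step (divide \eqref{ineq-VMsuffdes} by $g(\bm{x}^{k+1})$ and invoke \eqref{stopcritH-x}; Lemma \ref{lemseqcond} for (i); level-boundedness for (ii); telescoping for (iii); the same compactness/contradiction argument for (v)). The only genuine divergence is in statement (iv). The paper establishes $\bm{x}^*\in\mathrm{dom}\,f\cap\Omega$ and $F(\bm{x}^*)\leq c^*$ via \eqref{ineq-VMsuffdes} plus lower semicontinuity, and then proves $F(\bm{x}^*)\geq c^*$ by setting $\bm{x}=\bm{x}^*$ in the full inequality \eqref{ineq-VMsuffdesgen}, rearranging into \eqref{refsuffdesgen-VM}, and passing to the limit along the shifted subsequence $\{\bm{x}^{k_j-1}\}$, which requires handling the linearization term $c_k\big(g(\bm{x}^*)-g(\bm{x}^k)+\langle\bm{y}^k,\bm{x}^k-\bm{x}^*\rangle\big)$. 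You instead exploit the exact identity $(f+h)(\bm{x}^{k_j+1})=c_{k_j+1}\,g(\bm{x}^{k_j+1})$, which gives a true limit $f(\bm{x}^{k_j+1})\to c^*g(\bm{x}^*)-h(\bm{x}^*)$; this simultaneously rules out $g(\bm{x}^*)=0$ via Assumption \ref{assumA}4, yields $F(\bm{x}^*)\leq c^*$ by lower semicontinuity, and reduces the reverse inequality to the upper-semicontinuity bound $\limsup_j f(\bm{x}^{k_j+1})\leq f(\bm{x}^*)$, obtained from the ``$f$-part'' inequality \eqref{ineq-VMf} alone (all cross terms vanish by boundedness of $\{\bm{y}^k\}$, $\nabla h(\bm{x}^{k_j})$, vanishing errors, and $\bm{x}^{k_j+1}\to\bm{x}^*$). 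Your route is slightly leaner, since it never needs the $g$-linearization and $h$-majorization terms of \eqref{ineq-VMsuffdesgen}; the paper's route has the advantage of reusing only the packaged statement of Lemma \ref{lemma-VMsuffdes} rather than an inequality internal to its proof. Both rest on the same ingredients, so the proofs are of comparable depth.
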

\begin{proof}
Due to Assumption \ref{assumA}3 and $\bm{x}^{k+1}\in\Omega\cap\mathrm{dom}\,f$ for all $k\geq0$, we can divide $g(\bm{x}^{k+1})$ on both sides of \eqref{ineq-VMsuffdes} and obtain that
\begin{equation*}
F(\bm{x}^{k+1}) + \frac{\|\bm{x}^{k+1}-\bm{x}^k\|_{2H_k-L_hI_n}^2}{2g(\bm{x}^{k+1})}
\leq  F(\bm{x}^{k}) + \frac{1}{g(\bm{x}^{k+1})}\left(\langle \Delta^k,\,\bm{x}^{k+1} - \bm{x}^{k}\rangle+\delta_k\right).
\end{equation*}
This, together with criterion \eqref{stopcritH-x} and the positivity of $g(\bm{x}^{k+1})$ (by Assumption \ref{assumA}3), readily yields \eqref{decreaseFVMeps}. In the following, we assume that $\sum_{k=0}^{\infty}\varepsilon_k<+\infty$.

\textit{Statement (i)}. We see from \eqref{decreaseFVMeps} and $H_k\succeq\underline{\gamma}I_n$ with $\underline{\gamma}>\frac{L_h}{2}$ for all $k\geq0$ that $F(\bm{x}^{k+1}) \leq F(\bm{x}^{k}) + \varepsilon_k$. Since $F(\bm{x}^{k})$ is nonnegative for all $k\geq0$ (by Assumptions \ref{assumA}3 and \ref{assumA}4), it then follows from Lemma \ref{lemseqcond} that $\{F(\bm{x}^k)\}$ is convergent and hence $c^*:=\lim\limits_{k\rightarrow\infty}c_k= \lim\limits_{k\rightarrow\infty}F(\bm{x}^k)$ exists.

\textit{Statement (ii)}. It is clear from statement (i) that $\{F(\bm{x}^k)\}$ is bounded. This, together with Assumption \ref{assumA}5, readily leads to the boundedness of $\{\bm{x}^k\}$.

\textit{Statement (iii)}. Summing \eqref{decreaseFVMeps} from $k=0$ to $k=K$, we obtain that
\begin{equation*}
F(\bm{x}^{K+1})
+ \sum_{k=0}^{K}\frac{\|\bm{x}^{k+1}-\bm{x}^k\|_{2H_k-L_hI_n}^2}{2g(\bm{x}^{k+1})}
\leq  F(\bm{x}^{0}) + \sum_{k=0}^{K}\varepsilon_k,
\end{equation*}
which, together with the nonnegativity of $\{F(\bm{x}^{k})\}$ (by Assumptions \ref{assumA}3 and \ref{assumA}4) and $H_k\succeq\underline{\gamma}I_n$ (with $\underline{\gamma}>\frac{L_h}{2}$) for all $k\geq0$, implies that
\begin{equation*}
\left(\underline{\gamma}-\frac{L_h}{2}\right)\sum_{k=0}^{K}
\frac{\|\bm{x}^{k+1}-\bm{x}^k\|^2}{g(\bm{x}^{k+1})}
\leq \sum_{k=0}^{K}\frac{\|\bm{x}^{k+1}-\bm{x}^k\|_{2H_k-L_hI_n}^2}{2g(\bm{x}^{k+1})}
\leq F(\bm{x}^{0}) + \sum_{k=0}^{K}\varepsilon_k.
\end{equation*}
Thus, when $\sum_{k=0}^{\infty}\varepsilon_k<+\infty$, we have that $\sum_{k=0}^{\infty}\frac{\|\bm{x}^{k+1}-\bm{x}^k\|^2}{g(\bm{x}^{k+1})}<+\infty$ and hence $\lim\limits_{k\rightarrow\infty}{\frac{\|\bm{x}^{k+1}-\bm{x}^k\|^2}{g(\bm{x}^{k+1})}}=0$. Moreover, from the boundedness of $\{\bm{x}^k\}$ and the continuity of $g$, we see that $g(\bm{x}^k)\leq \bar{g}$ for some $\bar{g}>0$. This together with $\lim\limits_{k\rightarrow\infty}{\frac{\|\bm{x}^{k+1}-\bm{x}^k\|^2}{g(\bm{x}^{k+1})}}=0$ further implies that $\lim\limits_{k\to\infty}\|\bm{x}^{k+1}-\bm{x}^k\|=0$.

\textit{Statement (iv)}. Since $\{\bm{x}^k\}$ is bounded, there exists at least one accumulation point. Let $\bm{x}^*$ be an arbitrary accumulation point of $\{\bm{x}^k\}$ and $\{\bm{x}^{k_j}\}$ be a subsequence such that $\lim\limits_{j\rightarrow \infty}{{\bm{x}}^{k_j}}=\bm{x}^*$. We first prove $\bm{x}^*\in \mathrm{dom}\,F$, i.e., $\bm{x}^*\in \mathrm{dom}\,f\cap\Omega$. From \eqref{ineq-VMsuffdes}, we see that
\begin{equation*}
\begin{aligned}
&\quad f(\bm{x}^{k_j+1}) + h(\bm{x}^{k_j+1})
+ \frac{1}{2}\|\bm{x}^{k_j+1}-\bm{x}^{k_j}\|_{2H_{k_j}-L_hI_n}^2  \\
&\leq c_{k_j}g(\bm{x}^{k_j+1})
+ \langle \Delta^{k_j}, \,\bm{x}^{k_j+1}-\bm{x}^{k_j}\rangle
+ \delta_{k_j}
\leq c_{k_j}g(\bm{x}^{k_j+1})
+ \varepsilon_{k_j} g(\bm{x}^{k_j+1}),
\end{aligned}
\end{equation*}
where the last inequality follows from criterion \eqref{stopcritH-x}. Passing to the limit in the above relation along $\{\bm{x}^{k_j}\}$ and recalling that $f$ is lower semicontinuous (by Assumption \ref{assumA}1), $c_k \to c^*$ (by statement (i)), $\|\bm{x}^{k+1}-\bm{x}^k\|\to0$ (by statement (iii)), $h$ and $g$ are continuous (by Assumptions \ref{assumA}2 and \ref{assumA}3), $g(\bm{x}^k)\leq \bar{g}$ for some $\bar{g}>0$ and $\varepsilon_k\to0$, we obtain that $f(\bm{x}^*) + h(\bm{x}^*) \leq c^*g(\bm{x}^*)$, which indicates that $\bm{x}^*\in\mathrm{dom}\,f$.
It then follows from the nonnegativity of $f+h$ on $\mathrm{dom}\,f$ (by Assumption \ref{assumA}4) that
\begin{equation*}
0 \leq f(\bm{x}^*) + h(\bm{x}^*) \leq c^*g(\bm{x}^*).
\end{equation*}
This together with the fact that $f(\bm{x}) + h(\bm{x}) > 0$ when $g(\bm{x})=0$ (by Assumption \ref{assumA}4) implies that $g(\bm{x}^*)>0$ and hence $\bm{x}^*\in\Omega$. Consequently, we have that $\bm{x}^*\in\mathrm{dom}\,f\cap\Omega$.

Next, we prove that $F(\bm{x}^*)=c^*$ by showing $F(\bm{x}^*)\leq c^*$ and $F(\bm{x}^*)\geq c^*$. On one hand, we have that
\begin{equation*}
\begin{aligned}
F(\bm{x}^*)
= \frac{f(\bm{x}^*)+h(\bm{x}^*)}{g(\bm{x}^*)}
\leq \liminf\limits_{j\to\infty}\,\frac{f(\bm{x}^{k_j})+h(\bm{x}^{k_j})}{g(\bm{x}^{k_j})}
= \liminf\limits_{j\to\infty}\,F(\bm{x}^{k_j})
= c^*,
\end{aligned}
\end{equation*}
where the inequality follows from Assumption \ref{assumA} that $f$ is lower semicontinuous, and $h$ and $g$ are continuous. On the other hand, by setting $\bm{x}=\bm{x}^*$ in \eqref{ineq-VMsuffdesgen}, we see that
\begin{equation*}
\begin{aligned}
&\quad f(\bm{x}^{k+1})+h(\bm{x}^{k+1})-c_kg(\bm{x}^{k+1})  \\
&\leq f(\bm{x}^*)+h(\bm{x}^*)-c_kg(\bm{x}^*)
+ \langle \Delta^k,\, \bm{x}^{k+1} - \bm{x}^*\rangle +\delta_k
+ L_h\|\bm{x}^{k+1}-\bm{x}^*\| \|\bm{x}^{k}-\bm{x}^*\|    \\
&\qquad
+ \frac{1}{2}\|\bm{x}^{k}-\bm{x}^*\|_{H_k}^2
- \frac{1}{2}\|\bm{x}^{k+1}-\bm{x}^*\|_{H_k-L_hI_n}^2
- \frac{1}{2}\|\bm{x}^{k+1}-\bm{x}^k\|_{H_k}^2 \\
&\qquad
+ c_k\Big(g(\bm{x}^*)-g(\bm{x}^k)+\langle \bm{y}^k, \,\bm{x}^k-\bm{x}^*\rangle\Big),
\end{aligned}
\end{equation*}
which implies that
\begin{equation}\label{refsuffdesgen-VM}
\begin{aligned}
F(\bm{x}^{k+1}) - c_k
&\leq \frac{g(\bm{x}^*)}{g(\bm{x}^{k+1})}
\left(\frac{f(\bm{x}^*)+h(\bm{x}^*)}{g(\bm{x}^*)}-c_k\right)  \\
&\qquad + \frac{1}{g(\bm{x}^{k+1})}\Big( \langle \Delta^k,\, \bm{x}^{k+1} - \bm{x}^*\rangle + \delta_k
+ L_h\|\bm{x}^{k+1}-\bm{x}^*\|\|\bm{x}^{k}-\bm{x}^*\|  \\[3pt]
&\qquad + \frac{1}{2}\|\bm{x}^{k}-\bm{x}^*\|_{H_k}^2
- \frac{1}{2}\|\bm{x}^{k+1}-\bm{x}^*\|_{H_k-L_hI_n}^2
- \frac{1}{2}\|\bm{x}^{k+1}-\bm{x}^k\|_{H_k}^2\Big) \\
&\qquad + \frac{c_k}{g(\bm{x}^{k+1})}\left(g(\bm{x}^*)-g(\bm{x}^k)+\langle \bm{y}^k, \,\bm{x}^k-\bm{x}^*\rangle\right).
\end{aligned}
\end{equation}
Moreover, since $g(\bm{x}^k)\leq \bar{g}$ for some $\bar{g}>0$ and $\varepsilon_k\to0$, we have from the error criterion \eqref{stopcritH-x} that $\Delta^k\to0$ and $\delta_k\to0$. Additionally, since $g$ is a continuous convex function and $\{\bm{x}^{k}\}$ is bounded, we know from
\cite[Theorem 23.4]{r1970convex} that $\{\bm{y}^{k}\}$ is bounded. Thus, passing to the limit in \eqref{refsuffdesgen-VM} along $\{\bm{x}^{k_j-1}\}$, and invoking $\lim\limits_{k\rightarrow\infty}{c_k}= \lim\limits_{k\rightarrow\infty}{F(\bm{x}^{k})}=c^*$ (by statement (i)), $\lim\limits_{k\rightarrow\infty}\|\bm{x}^{k+1}-\bm{x}^k\|=0$ (by statement (iii)) and $g(\bm{x}^*)>0$, we can obtain that $F(\bm{x}^*) \geq c^*$. 

\textit{Statement (v)}. It is easy to see from the boundedness of $\{\bm{x}^k\}$ and the continuity of $g$ that $g(\bm{x}^k)\leq\bar{g}$ for some $\bar{g}>0$. On the other hand, note that $g(\bm{x}^k)>0$ (since $\bm{x}^k\in\Omega$) for all $k\geq0$ and any accumulation $\bm{x}^*$ of $\{\bm{x}^k\}$ satisfies that $\bm{x}^*\in\Omega$ (by statement (iv)) and thus $g(\bm{x}^*)>0$. Using these facts together with the boundedness of $\{\bm{x}^k\}$ and the continuity of $g$, one can verify that $g(\bm{x}^k)\geq\underline{g}$ for some $\underline{g}>0$. This completes the proof.
\end{proof}

With the above preparations, we are now ready to establish the subsequential convergence for the iVPGSA.

\begin{theorem}[\textbf{Subsequential convergence}]\label{Thm-subseqconv}
Suppose that Assumption \ref{assumA} holds, $\sum_{k=0}^{\infty}\varepsilon_k<+\infty$, and there exist $\underline{\gamma}, \,\overline{\gamma}>0$ such that $\frac{L_h}{2}<\underline{\gamma}\leq\overline{\gamma}$ and $\underline{\gamma}I_n \preceq H_k \preceq \overline{\gamma}I_n$ for all $k\geq0$. Let $\{\bm{x}^k\}$ be the sequence generated by the iVPGSA in Algorithm \ref{algo-iVPGSA}. Then, any accumulation point of $\{\bm{x}^k\}$ is a critical point of $F$.
\end{theorem}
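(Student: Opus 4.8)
The plan is to pass to the limit in the approximate optimality condition \eqref{inexcondH-x} along a convergent subsequence and invoke the closedness of the relevant subdifferentials. Let $\bm{x}^*$ be any accumulation point of $\{\bm{x}^k\}$, and let $\{\bm{x}^{k_j}\}$ be a subsequence with $\bm{x}^{k_j}\to\bm{x}^*$. By Proposition \ref{prop-iVPGSA}(iv), $\bm{x}^*\in\mathrm{dom}\,F$ and $F(\bm{x}^*)=c^*$, so it remains to verify the inclusion in Definition \ref{definition-critical} with $c^*=F(\bm{x}^*)$. Applying condition \eqref{inexcondH-x} at index $k=k_j$, I would extract a vector $\bm{d}^{k_j+1}\in\partial_{\delta_{k_j}}f(\bm{x}^{k_j+1})$ satisfying
\[
\bm{d}^{k_j+1}=\Delta^{k_j}-\nabla h(\bm{x}^{k_j})+c_{k_j}\bm{y}^{k_j}-H_{k_j}(\bm{x}^{k_j+1}-\bm{x}^{k_j}),
\]
and then analyze the right-hand side term by term as $j\to\infty$.

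Second, I would collect the limiting behavior of each piece. From the error criterion \eqref{stopcritH-x} together with the uniform bound $g(\bm{x}^k)\le\overline{g}$ (Proposition \ref{prop-iVPGSA}(v)) and $\varepsilon_k\to0$, one gets $\Delta^{k_j}\to0$ and $\delta_{k_j}\to0$; continuity of $\nabla h$ (Assumption \ref{assumA}2) gives $\nabla h(\bm{x}^{k_j})\to\nabla h(\bm{x}^*)$; Proposition \ref{prop-iVPGSA}(i) gives $c_{k_j}\to c^*$; and $\|H_{k_j}(\bm{x}^{k_j+1}-\bm{x}^{k_j})\|\le\overline{\gamma}\|\bm{x}^{k_j+1}-\bm{x}^{k_j}\|\to0$ by Proposition \ref{prop-iVPGSA}(iii) and the bound $H_{k_j}\preceq\overline{\gamma}I_n$. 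Since $\bm{y}^{k_j}\in\partial g(\bm{x}^{k_j})$ and $\{\bm{y}^k\}$ is bounded (by \cite[Theorem 23.4]{r1970convex}), I would pass to a further subsequence so that $\bm{y}^{k_j}\to\bm{y}^*$; the outer semicontinuity (closed graph) of the convex subdifferential $\partial g$, combined with $\bm{x}^{k_j}\to\bm{x}^*$, then yields $\bm{y}^*\in\partial g(\bm{x}^*)$. Consequently $\bm{d}^{k_j+1}\to\bm{d}^*:=-\nabla h(\bm{x}^*)+c^*\bm{y}^*$.

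Third, the crux is to show $\bm{d}^*\in\partial f(\bm{x}^*)$. Writing out the definition of the $\delta$-subdifferential, for every $\bm{y}\in\mathbb{R}^n$,
\[
f(\bm{y})\ge f(\bm{x}^{k_j+1})+\langle\bm{d}^{k_j+1},\,\bm{y}-\bm{x}^{k_j+1}\rangle-\delta_{k_j}.
\]
Taking the $\liminf$ as $j\to\infty$ and using $\bm{d}^{k_j+1}\to\bm{d}^*$, $\bm{x}^{k_j+1}\to\bm{x}^*$ (which holds since $\bm{x}^{k_j}\to\bm{x}^*$ and $\|\bm{x}^{k_j+1}-\bm{x}^{k_j}\|\to0$), $\delta_{k_j}\to0$, together with the lower semicontinuity of $f$ (Assumption \ref{assumA}1), which supplies $\liminf_j f(\bm{x}^{k_j+1})\ge f(\bm{x}^*)$, I obtain $f(\bm{y})\ge f(\bm{x}^*)+\langle\bm{d}^*,\,\bm{y}-\bm{x}^*\rangle$ for all $\bm{y}$; that is, $\bm{d}^*\in\partial f(\bm{x}^*)$. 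Finally, combining $\bm{d}^*=-\nabla h(\bm{x}^*)+c^*\bm{y}^*\in\partial f(\bm{x}^*)$ with $\bm{y}^*\in\partial g(\bm{x}^*)$ gives $0\in\partial f(\bm{x}^*)+\nabla h(\bm{x}^*)-c^*\partial g(\bm{x}^*)$, so $\bm{x}^*$ is a critical point of $F$ by Definition \ref{definition-critical}.

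The main obstacle I anticipate is the limiting argument for the $\delta$-subdifferential in the third paragraph: because $f$ is only lower semicontinuous (not continuous), one cannot simply pass $f(\bm{x}^{k_j+1})$ to $f(\bm{x}^*)$ through an equality, and must instead exploit the one-sided inequality so that lower semicontinuity points the favorable way. The accompanying technical care lies in securing $\bm{y}^*\in\partial g(\bm{x}^*)$ via outer semicontinuity of $\partial g$, rather than relying merely on the boundedness of $\{\bm{y}^k\}$.
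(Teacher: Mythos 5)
Your proposal is correct and follows essentially the same route as the paper's proof: pass to the limit in the inexact optimality condition \eqref{inexcondH-x} along a convergent subsequence, using Proposition \ref{prop-iVPGSA} for $\Delta^{k_j}\to0$, $\delta_{k_j}\to0$, $c_{k_j}\to c^*$, $\|\bm{x}^{k_j+1}-\bm{x}^{k_j}\|\to0$, boundedness of $\{\bm{y}^{k_j}\}$, and the closedness of $\partial g$. The only difference is that where the paper cites the outer semicontinuity of $\partial_{\delta}f$ (via \cite[Proposition 4.1.1]{hl1993convex}), you verify that closedness property by hand from the definition of the $\delta$-subdifferential together with the lower semicontinuity of $f$ — a valid, self-contained substitute for the same step.
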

\begin{proof}
By Proposition \ref{prop-iVPGSA}(ii), $\{\bm{x}^k\}$ is bounded. Hence, there exists at least one accumulation point. Let $\bm{x}^*$ be an accumulation point of $\{\bm{x}^k\}$ and $\{\bm{x}^{k_j}\}$ be a subsequence such that $\lim\limits_{j\to \infty}{{\bm{x}}^{k_j}}=\bm{x}^*$.  Then, $\bm{x}^*\in \mathrm{dom}\,F$ and $F(\bm{x}^*)=c^*$ based on Proposition \ref{prop-iVPGSA}(iv). Since $g$ is a continuous convex function and $\{\bm{x}^{k_j}\}$ is bounded, we know from \cite[Theorem 23.4]{r1970convex} that $\{\bm{y}^{k_j}\}$ is bounded. Without loss of generality, let $\lim\limits_{j\rightarrow \infty}{\bm{y}^{k_j}}  = \bm{y}^*\in\partial g(\bm{x}^*)$ due to the closedness of operator $\partial g$. Now, recall condition \eqref{inexcondH-x}, we have that
\begin{equation}\label{optsubseq-VM}
\Delta^{k_j} \in \partial_{\delta_{k_j}} f(\bm{x}^{k_j+1}) + (\nabla h(\bm{x}^{k_j})-c_{k_j} \bm{y}^{k_j}) + H_{k_j} (\bm{x}^{k_j+1}-\bm{x}^{k_j}).
\end{equation}
Then, passing to the limit in \eqref{optsubseq-VM} along $\{\bm{x}^{k_j}\}$ and invoking $\Delta^{k_j}\to0$, $\delta_{k_j}\to0$, $\lim\limits_{k\to\infty}{c_k}=c^*$ (by Proposition \ref{prop-iVPGSA}(i)), the outer semicontinuity of $\partial_{\delta_{k_j}}{f}$ (see, e.g.,
\cite[Proposition 4.1.1]{hl1993convex}) with $\delta_{k_j}\to0$, $\bm{x}^{k_j+1}-\bm{x}^{k_j}\to0$ (by Proposition \ref{prop-iVPGSA}(iii)), the Lipschitz continuity of $\nabla h$, and $\underline{\gamma}I_n \preceq H_k \preceq \overline{\gamma}I_n$ (with some $\frac{L_h}{2}<\underline{\gamma}\leq\overline{\gamma}$) for all $k\geq0$, we see that
\begin{equation*}
0 \in \partial f(\bm{x}^*) + \nabla h(\bm{x}^*) - c^* \bm{y}^*,
\end{equation*}
which implies that $\bm{x}^*$ is a critical point of $F$ and completes the proof.
\end{proof}

\section{The KL-based convergence analysis}\label{sec-conv}

In this section, under the Kurdyka-{\L}ojasiewicz (KL) property and its associated exponent, together with additional suitable conditions, we study the global convergence of the entire sequence $\{\bm{x}^k\}$ generated by our iVPGSA and establish its convergence rate. While the KL property has become a fundamental tool in analyzing the global sequential convergence of nonconvex algorithms, standard analyses rely on a strict sufficient descent property to ensure that the objective function decreases by a sufficient amount at each iteration. This property plays a crucial role in applying the KL inequality in Definition \ref{defKLfun} or \ref{defKLexpo}; see, for example, \cite{ab2009on,abrs2010proximal,abs2013convergence,bst2014proximal,fgp2015splitting}. However, for many inexact algorithms, including our iVPGSA and those studied in \cite{hl2023convergence,sun2021sequence}, \blue{this sufficient descent property may fail because of the errors introduced by inexact subproblem solutions.} Instead, these inexact algorithms only satisfy an \textit{approximate} sufficient descent property, for example, as characterized in Proposition \ref{prop-obj}(i), where the decrease in the objective function is only guaranteed up to a certain error term. The presence of such errors introduces additional challenges in global convergence analysis, particularly in deriving the convergence rate.

\blue{
The KL-based convergence analysis for inexact or stochastic algorithms under such \textit{approximate} sufficient-descent-type conditions has been explored only in several recent works \cite{hl2023convergence,lmq2023convergence,qmlm2024kl,sun2021sequence}. Specifically, Sun \cite{sun2021sequence} developed an auxiliary-function-based framework for establishing global sequential convergence, but did not address convergence rates. Such rate analysis is more challenging because it requires a quantitative characterization of the interaction between the KL geometry and the error terms. Subsequent studies \cite{hl2023convergence,lmq2023convergence,qmlm2024kl} established global sequential convergence and convergence-rate estimates by employing \textit{strengthened} variants of the KL property and its associated exponent, in which the function-value differences in Definitions \ref{defKLfun} and \ref{defKLexpo} are replaced by their absolute values, respectively. These works show that, under suitable assumptions on the errors, desirable convergence properties can still be obtained despite the absence of a strict sufficient descent property, although the rate analyses rely on strengthened KL inequalities.

In the analysis below, we build on these insights and introduce the auxiliary function $\Phi_{\tau}$ in \eqref{defpofun} to absorb the accumulated errors caused by inexact subproblem solutions. This converts the approximate descent relation \eqref{appdeFval} for the original objective function $F$ into the exact descent relation \eqref{Hvalineq} for $\Phi_{\tau}$. Based on this construction, we establish the global convergence of the entire sequence $\{\bm{x}^k\}$ in Section \ref{sec-gloconv} and derive convergence-rate estimates in Section \ref{sec-convrate} by relying only on the classical KL property and its associated exponent in Definitions \ref{defKLfun} and \ref{defKLexpo}, rather than on the strengthened variants used in \cite{hl2023convergence,lmq2023convergence,qmlm2024kl}. Consequently, our analysis provides a new KL-based route for deriving convergence rates under the approximate sufficient descent property and may also offer useful insights for studying other inexact methods with similar perturbed descent relations.

}

We now start the KL-based convergence analysis by introducing additional assumptions.

\begin{assumption}\label{assumB}
The function $f$ is locally Lipschitz continuous on $\mathrm{dom}\,f$, i.e., for any $\bm{x}\in\mathrm{dom}\,f$, there exists a neighborhood $\mathcal{O}$ of $\bm{x}$ and a constant $L_x>0$ such that $|f(\bm{u})-f(\bm{v})|\leq L_x\|\bm{u}-\bm{v}\|_2$ holds for any $\bm{u},\,\bm{v}\in\mathcal{O}\cap\mathrm{dom}\,f$.
\end{assumption}

\begin{assumption}\label{assumC}
The function $g$ is continuously differentiable on $\Omega$ with a locally Lipschitz continuous gradient.
\end{assumption}

\begin{assumption}\label{assumD}
Assume that there exists an integer $K_0>0$ such that $\delta_{k}\equiv0$ for all $k \geq K_0$ in \eqref{inexcondH-x}.
\end{assumption}

Assumptions \ref{assumB} and \ref{assumC} are common technical assumptions used in the global convergence analysis of proximal algorithms for solving fractional optimization problems; see, for example, \cite{bdl2022extrapolated,lszz2022proximal,zl2022first}. Note that, since $f$ is convex in our setting, it follows from \cite[Theorem 10.4]{r1970convex} that Assumption \ref{assumB} is satisfied when $\mathrm{dom}\,f=\mathbb{R}^n$. Additionally, under Assumption \ref{assumD}, no error $\delta_k$ is allowed in the computation of $\partial f$ after finitely many iterations, while a non-zero error term $\Delta^k$ remains permissible in \eqref{inexcondH-x}. Consequently, even with this assumption, Algorithm \ref{algo-iVPGSA} retains its inexact nature. While Assumption \ref{assumD} looks restrictive, it is indeed achievable in, for example, the $\ell_1/\ell_2$ Lasso problem in Section \ref{sec-num-l12lasso}. Nevertheless, Assumption \ref{assumD} could also limit the applicability of our iVPGSA when addressing a problem with complex constraints, for example, the constrained $\ell_1/\ell_2$ sparse optimization problem in Section \ref{sec-num-l12cso}. Investigating the possibility of eliminating this requirement when establishing the convergence of the whole sequence remains an interesting direction for our future research.

We also summarize key results concerning the objective function $F$ and its subdifferential as follows, which will be useful in the subsequent analysis.

\begin{proposition}\label{prop-obj}
Suppose that Assumption \ref{assumA} holds and there exist $\underline{\gamma}, \,\overline{\gamma}>0$ such that $\frac{L_h}{2}<\underline{\gamma}\leq\overline{\gamma}$ and $\underline{\gamma}I_n \preceq H_k \preceq \overline{\gamma}I_n$ for all $k\geq0$. Let $\{\bm{x}^k\}$ be the sequence generated by the iVPGSA in Algorithm \ref{algo-iVPGSA}. If $\sum_{k=0}^{\infty}\varepsilon_k<+\infty$, then the following statements hold.
\begin{itemize}
\item[{\rm (i)}] \textbf{(Approximate sufficient descent property)} There exists a positive constant $a>0$ such that
    \begin{equation}\label{appdeFval}
    F(\bm{x}^{k+1}) \leq F(\bm{x}^{k})
    - \frac{a}{2}\|\bm{x}^{k+1}-\bm{x}^k\|^2
    + \varepsilon_k, \quad \forall\,k\geq0.
    \end{equation}

\item[{\rm(ii)}] If, in addition, Assumptions \ref{assumB}, \ref{assumC}, \ref{assumD} hold, there exist a positive constant $b>0$ and $\bm{w}^k\in\partial F(\bm{x}^k)$ such that
    \begin{equation}\label{subdiffbd}
    \|\bm{w}^k\| \leq b\left(\|{\bm{x}}^{k}-{\bm{x}}^{k-1}\|
    + \sqrt{\varepsilon_{k-1}}\right), \quad \forall\,k \geq K_0+1,
    \end{equation}
    where $K_0$ is given in Assumption \ref{assumD}.
\end{itemize}
\end{proposition}
\begin{proof}
\textit{Statement (i)}. The desired result readily follows from \eqref{decreaseFVMeps}, $H_k\succeq\underline{\gamma}I_n$ with $\underline{\gamma}>\frac{L_h}{2}$ for all $k\geq0$, and Proposition \ref{prop-iVPGSA}(v).

\textit{Statement (ii)}. We first recall from Proposition \ref{prop-iVPGSA}(ii) that $\{\bm{x}^k\}$ is bounded. This, together with Assumptions \ref{assumB} and \ref{assumC}, implies that $\nabla g$ and $F$ are globally Lipschitz continuous on $\mathcal{X}$, where $\mathcal{X}\subseteq\mathrm{dom}\,F$ is a closed bounded set that contains $\{\bm{x}^k\}$. Let $\ell_g$ and $\ell_F$ be the Lipschitz constants of $\nabla g$ and $F$ on $\mathcal{X}$, respectively.

Next, from condition \eqref{inexcondH-x} and Assumptions \ref{assumC}, \ref{assumD}, we have that, for any $k \geq K_0+1$,
\begin{equation*}
\Delta^{k-1} \in \partial f(\bm{x}^{k}) + \nabla h(\bm{x}^{k-1}) - c_{k-1}\nabla g(\bm{x}^{k-1}) + H_{k-1}(\bm{x}^{k}-\bm{x}^{k-1}).
\end{equation*}
This, together with $g(\bm{x}^k)\neq0$ (since $\bm{x}^k\in\Omega$) for all $k\geq0$, implies that
\begin{equation}\label{subdiffeq1}
-\frac{H_{k-1}(\bm{x}^{k}-\bm{x}^{k-1})}{g(\bm{x}^k)}
-\frac{\nabla h(\bm{x}^{k-1})}{g(\bm{x}^k)}
+\frac{c_{k-1}\nabla g(\bm{x}^{k-1})}{g(\bm{x}^k)}
+\frac{\Delta^{k-1}}{g(\bm{x}^k)}
\in\frac{\partial f(\bm{x}^{k})}{g(\bm{x}^k)}.
\end{equation}
On the other hand, note from the chain rule given in \cite[Proposition 2.2]{zl2022first} together with Assumptions \ref{assumB} and \ref{assumC} that $\widehat{\partial} F = \frac{g(\widehat{\partial}f+\nabla h)-(f+h)\nabla g}{g^2}$. Using this relation, \eqref{subdiffeq1} and $\partial f(\bm{x}^{k})=\widehat{\partial} f(\bm{x}^{k})$ (due to the convexity of $f$), one can verify that $\bm{w}^k\in\widehat{\partial} F(\bm{x}^k)$ with
\begin{equation*}
\bm{w}^k :=
- \frac{H_{k-1}(\bm{x}^{k}-\bm{x}^{k-1})}{g(\bm{x}^k)}
- \frac{\nabla h(\bm{x}^{k-1})-\nabla h(\bm{x}^{k})}{g(\bm{x}^k)}
+ \frac{c_{k-1}\nabla g(\bm{x}^{k-1})-c_{k}\nabla g(\bm{x}^{k})}{g(\bm{x}^k)}
+ \frac{\Delta^{k-1}}{g(\bm{x}^k)}.
\end{equation*}
This, together with $\underline{\gamma}I_n \preceq H_k \preceq \overline{\gamma}I_n$ for all $k\geq0$, Proposition \ref{prop-iVPGSA}(i)\&(v), the global Lipschitz continuity of $\nabla h$, $\nabla g$ and $F$ on $\mathcal{X}$, $\|\Delta^{k-1}\|\leq\sqrt{\varepsilon_{k-1}g(\bm{x}^{k})}$, and $\widehat{\partial} F(\bm{x}^k)\subseteq\partial F(\bm{x}^k)$ (see
\cite[Theorem 8.6]{rw1998variational}), gives \eqref{subdiffbd} and completes the proof.
\end{proof}

\blue{
Building on the estimates in Proposition \ref{prop-obj}, we can derive a standard best-iterate stationarity estimate for the outer iVPGSA iterations. The qualifier ``outer'' emphasizes that the iVPGSA is an inexact algorithmic framework: each outer iteration requires only an approximate subproblem solution and an associated error pair satisfying \eqref{inexcondH-x} and \eqref{stopcritH-x}. Thus, when the subproblem is solved by an iterative method, the practical implementation has a double-loop structure, where the outer iteration controls the global progress while the inner solver determines the cost of producing an admissible approximate subproblem solution. For notational simplicity, let
\begin{equation*}
\textstyle \mathcal{E}:=\sum_{k=0}^{\infty}\varepsilon_k, \quad
F_{\inf}:=\inf\limits_{\bm{x}\in\mathbb{R}^n}F(\bm{x}), \quad
\mathcal{D}_0:=\frac{2}{a}\left(F(\bm{x}^0)-F_{\inf}\right)
+\left(1+\frac{2}{a}\right)\mathcal{E},
\end{equation*}
where $a$ is the constant in Proposition \ref{prop-obj}(i). By Assumption \ref{assumA} and $\bm{x}^0 \in \Omega\cap\mathrm{dom}\,f$, we have that $0\leq F_{\inf}\leq F(\bm{x}^0)<+\infty$.

\begin{corollary}\label{cor-outer-complexity}
Suppose that Assumptions \ref{assumA}, \ref{assumB}, \ref{assumC}, \ref{assumD} hold. Assume further that there exist $\underline{\gamma}, \,\overline{\gamma}>0$ such that $\frac{L_h}{2}<\underline{\gamma}\leq\overline{\gamma}$ and $\underline{\gamma}I_n \preceq H_k \preceq \overline{\gamma}I_n$ for all $k\geq0$, and that $\sum_{k=0}^{\infty}\varepsilon_k<+\infty$. Let $\{\bm{x}^k\}$ be the sequence generated by the iVPGSA in Algorithm \ref{algo-iVPGSA}. Then, for any $K\geq K_0+1$,
\begin{equation*}
\min_{K_0+1\leq k\leq K}\mathrm{dist}\big(0,\,\partial F(\bm{x}^k)\big)
\leq \sqrt{\frac{2b^2\mathcal{D}_0}{K-K_0}},
\end{equation*}
where $b$ is the constant in Proposition \ref{prop-obj}(ii) and $K_0$ is given in Assumption \ref{assumD}. Consequently, for any $\eta>0$, there exists $K_0+1\leq \bar{k} \leq K$ such that
\begin{equation*}
\mathrm{dist}(0,\,\partial F(\bm{x}^{\bar{k}}))\leq\eta
\quad \text{whenever} \quad
K \geq \left\lceil\frac{2b^2\mathcal{D}_0}{\eta^2}\right\rceil + K_0.
\end{equation*}
\end{corollary}
\begin{proof}
Summing \eqref{appdeFval} and using $F(\bm{x}^K)\geq F_{\inf}$, we obtain that
\begin{equation*}
\sum_{k=0}^{K-1}\|\bm{x}^{k+1}-\bm{x}^k\|^2
\leq \frac{2}{a}\left(F(\bm{x}^0)-F_{\inf}+\sum_{k=0}^{K-1}\varepsilon_k\right)
\leq \frac{2}{a}\left(F(\bm{x}^0)-F_{\inf}+\mathcal{E}\right).
\end{equation*}
Moreover, Proposition \ref{prop-obj}(ii) implies that
\begin{equation*}
\begin{aligned}
\sum_{k=K_0+1}^{K}\mathrm{dist}^2\big(0,\,\partial F(\bm{x}^k)\big)
&\leq \sum_{k=K_0+1}^{K}\|\bm{w}^k\|^2
\leq 2b^2\sum_{k=K_0+1}^{K}
\left(\|\bm{x}^{k}-\bm{x}^{k-1}\|^2+\varepsilon_{k-1}\right)  \\
&\leq 2b^2\left(\sum_{k=0}^{K-1}\|\bm{x}^{k+1}-\bm{x}^k\|^2+\mathcal{E}\right)
\leq 2b^2\mathcal{D}_0.
\end{aligned}
\end{equation*}
Then, dividing by $K-K_0$ and taking the square root proves the first assertion. The remaining assertion follows immediately.
\end{proof}

Corollary \ref{cor-outer-complexity} gives an $\mathcal{O}(\eta^{-2})$ outer iteration-complexity estimate in the standard best-iterate sense: within this order of outer iterations, at least one generated iterate satisfies the $\eta$-stationarity condition. In the special case where each subproblem is solved exactly, the inexactness errors vanish, and the same argument yields the corresponding $\mathcal{O}(\eta^{-2})$ best-iterate complexity for the exact version of Algorithm \ref{algo-iVPGSA}. However, when an iterative method is used to compute an admissible approximate subproblem solution, the total complexity of the resulting outer-inner method depends on the selected inner solver and on additional assumptions imposed on the subproblems. Such a solver-dependent overall complexity analysis is beyond the scope of the present paper and is left for future work. Our main theoretical focus is instead the inexact KL-based analysis framework developed below, which establishes global convergence and convergence rates under accumulated subproblem errors by relying only on the classical KL property and its associated exponent.

}

\subsection{Global sequential convergence}\label{sec-gloconv}

Our analysis makes use of the following auxiliary function with some $\tau>1$:
\begin{equation}\label{defpofun}
\Phi_{\tau}(\bm{x},\,t) := F(\bm{x}) + \frac{t^{\tau}}{\tau}, \quad\forall\,(\bm{x},\,t)\in\mathrm{dom}\,F \times \mathbb{R}_+,
\end{equation}
where $F$ is defined in \eqref{defFun}. Moreover, we define the accumulation of errors as
\begin{equation}\label{defaccerr}
s_k := \tau\cdot\sum^{\infty}_{i=k}\varepsilon_i, \quad \forall\,k\geq0.
\end{equation}
Clearly, $\{s_k\}$ is non-increasing and $s_k\to0$ if $\sum_{k=0}^{\infty}\varepsilon_k<+\infty$. With all the above preparations, we are now ready to establish the global sequential convergence of our iVPGSA. It is worth noting that the summable error condition $\sum_{k=0}^{\infty}\varepsilon_k<+\infty$ in \eqref{epssumcond} is redundant, as it automatically follows from $\sum_{k=0}^{\infty}\sqrt{\varepsilon_k}<+\infty$. However, for ease of future reference, we retain it in \eqref{epssumcond}.

\begin{theorem}\label{thm-gloconv}
Suppose that Assumptions \ref{assumA}, \ref{assumB}, \ref{assumC}, \ref{assumD} hold and there exist $\underline{\gamma}, \,\overline{\gamma}>0$ such that $\frac{L_h}{2}<\underline{\gamma}\leq\overline{\gamma}$ and $\underline{\gamma}I_n \preceq H_k \preceq \overline{\gamma}I_n$ for all $k\geq0$. Let $\{\bm{x}^k\}$ be the sequence generated by the iVPGSA in Algorithm \ref{algo-iVPGSA} and let $\tau>1$. If $\Phi_{\tau}$ defined in \eqref{defpofun} is a KL function and
\begin{equation}\label{epssumcond}
\sum_{k=0}^{\infty}\varepsilon_k<+\infty, \quad
\sum_{k=0}^{\infty}\sqrt{\varepsilon_k}<+\infty, \quad
\sum_{k=0}^{\infty}\left(\sum^{\infty}_{i=k}\varepsilon_i\right)^{1-\frac{1}{\tau}}
< +\infty,
\end{equation}
then $\{\bm{x}^k\}$ converges to a critical point of $F$.
\end{theorem}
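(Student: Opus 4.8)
The plan is to recover a genuine sufficient descent property by lifting the analysis to the auxiliary function $\Phi_{\tau}$ in \eqref{defpofun}, whose extra coordinate is used to absorb the accumulated errors. Concretely, I would set $t_k := s_k^{1/\tau}$ with $s_k$ as in \eqref{defaccerr}, so that $\frac{t_k^{\tau}}{\tau} = \sum_{i=k}^{\infty}\varepsilon_i$ and hence
\begin{equation*}
\Phi_{\tau}(\bm{x}^k,\,t_k) = F(\bm{x}^k) + \sum_{i=k}^{\infty}\varepsilon_i.
\end{equation*}
Combining this identity with the approximate sufficient descent property in Proposition \ref{prop-obj}(i), the error term $\varepsilon_k$ cancels against the difference $\sum_{i\geq k}\varepsilon_i - \sum_{i\geq k+1}\varepsilon_i$, yielding a \emph{true} monotone descent
\begin{equation*}
\Phi_{\tau}(\bm{x}^{k+1},\,t_{k+1}) + \tfrac{a}{2}\|\bm{x}^{k+1}-\bm{x}^k\|^2 \leq \Phi_{\tau}(\bm{x}^k,\,t_k).
\end{equation*}
This is the crux of the approach: the lifted sequence $\{\Phi_{\tau}(\bm{x}^k,t_k)\}$ decreases monotonically even though $\{F(\bm{x}^k)\}$ need not.

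Next I would establish the relative-error (subgradient) bound for $\Phi_{\tau}$. Since $\partial\Phi_{\tau}(\bm{x},t) = \partial F(\bm{x})\times\{t^{\tau-1}\}$ for $t>0$, taking $\bm{w}^k\in\partial F(\bm{x}^k)$ from Proposition \ref{prop-obj}(ii) gives, for all large $k$,
\begin{equation*}
\mathrm{dist}\big(0,\,\partial\Phi_{\tau}(\bm{x}^k,t_k)\big) \leq \|\bm{w}^k\| + t_k^{\tau-1} \leq b\big(\|\bm{x}^k-\bm{x}^{k-1}\| + \sqrt{\varepsilon_{k-1}}\big) + s_k^{1-\frac{1}{\tau}}.
\end{equation*}
I would also record the limit facts needed to invoke the uniformized KL property of Proposition \ref{uniKL}: since $s_k\to0$ we have $t_k\to0$, so the accumulation set of $\{(\bm{x}^k,t_k)\}$ is $\mathcal{A}\times\{0\}$ with $\mathcal{A}$ the compact accumulation set of $\{\bm{x}^k\}$ (Proposition \ref{prop-iVPGSA}(ii)), and $\Phi_{\tau}\equiv c^*$ on this set because $F(\bm{x}^*)=c^*$ at every accumulation point by Proposition \ref{prop-iVPGSA}(iv), while $\Phi_{\tau}(\bm{x}^k,t_k)\downarrow c^*$.

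With monotone descent, the relative-error bound, and the uniformized KL inequality in hand, I would run the standard finite-length argument of Attouch--Bolte--Svaiter type. Writing $d_k:=\|\bm{x}^{k+1}-\bm{x}^k\|$ and applying the concavity of $\varphi$ together with the two bounds above yields, for large $k$,
\begin{equation*}
d_k^2 \leq \tfrac{2}{a}\big[\varphi(\Phi_{\tau}(\bm{x}^k,t_k)-c^*) - \varphi(\Phi_{\tau}(\bm{x}^{k+1},t_{k+1})-c^*)\big]\big(b\,d_{k-1} + b\sqrt{\varepsilon_{k-1}} + s_k^{1-\frac{1}{\tau}}\big).
\end{equation*}
Taking square roots, invoking the arithmetic--geometric-mean inequality $\sqrt{uv}\leq\frac{1}{2}(\lambda u + \lambda^{-1}v)$ with $\lambda$ chosen so that the coefficient of $d_{k-1}$ is strictly less than $1$, and telescoping the $\varphi$-differences, I would sum over $k$ to obtain $\sum_k d_k<\infty$. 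Here the three summability conditions in \eqref{epssumcond} are exactly what render the residuals summable: $\sum\sqrt{\varepsilon_k}<\infty$ controls the $\sqrt{\varepsilon_{k-1}}$ contribution, and $\sum(\sum_{i\geq k}\varepsilon_i)^{1-1/\tau}<\infty$ controls the $s_k^{1-1/\tau}$ contribution. Finite length forces $\{\bm{x}^k\}$ to be Cauchy, hence convergent to some $\bm{x}^*$, which is a critical point of $F$ by Theorem \ref{Thm-subseqconv}.

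The main obstacle, as always in KL arguments, is guaranteeing that the iterates remain inside the uniformized KL neighborhood so that the inequality may legitimately be applied at every step. I would dispatch this by the customary bootstrapping scheme: choose an index $K$ large enough that $(\bm{x}^K,t_K)$ lies in the neighborhood and $\Phi_{\tau}(\bm{x}^K,t_K)-c^*$ is small, then show by induction --- using the very path-length estimate being derived --- that the cumulative displacement stays below $\mu$, so $(\bm{x}^k,t_k)$ never escapes. The point specific to the inexact setting is that the per-step error terms $b\sqrt{\varepsilon_{k-1}}+s_k^{1-1/\tau}$ must be folded into this bookkeeping without destroying the geometric contraction, which is precisely where \eqref{epssumcond} is indispensable. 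A minor case to handle separately is $\Phi_{\tau}(\bm{x}^k,t_k)=c^*$ at some finite $k$, which by the descent inequality forces $d_k=0$ thereafter and makes convergence immediate.
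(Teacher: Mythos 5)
Your proposal is correct and follows essentially the same route as the paper's proof: lifting to $\Phi_{\tau}$ with $t_k=s_k^{1/\tau}$ to restore exact monotone descent, bounding $\mathrm{dist}\big(0,\partial\Phi_{\tau}(\bm{x}^k,t_k)\big)$ via Proposition \ref{prop-obj}(ii), invoking the uniformized KL property (Proposition \ref{uniKL}) on the accumulation set, applying the AM-GM/telescoping finite-length argument under \eqref{epssumcond}, and treating the finite-termination case $\Phi_{\tau}(\bm{x}^k,t_k)=c^*$ separately. The only deviation is your final bootstrapping step, which is unnecessary: since $\Gamma$ is the full accumulation set of the bounded lifted sequence, $\mathrm{dist}\big((\bm{x}^k,t_k),\Gamma\big)\to0$ automatically, and together with $\Phi_{\tau}(\bm{x}^k,t_k)\downarrow c^*$ this keeps all sufficiently large iterates inside the uniformized KL region without any induction, which is exactly how the paper proceeds.
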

\begin{proof}
In view of Theorem \ref{Thm-subseqconv}, we only need to show that $\{\bm{x}^k\}$ is convergent. We start by noting from Proposition \ref{prop-obj}(i) that
\begin{equation*}
F(\bm{x}^{k+1}) + \sum^{\infty}_{i=k+1}\varepsilon_i
\leq F(\bm{x}^{k}) + \sum^{\infty}_{i=k}\varepsilon_i
- \frac{a}{2}\|\bm{x}^{k+1}-\bm{x}^k\|^2, \quad \forall\,k\geq0,
\end{equation*}
which, by using the definitions of $\Phi_{\tau}$ and $s_k$ in \eqref{defpofun} and \eqref{defaccerr}, respectively, can be expressed as
\begin{equation}\label{Hvalineq}
\Phi_{\tau}\big(\bm{x}^{k+1}, \,\sqrt[\tau]{s_{k+1}}\big)
\leq \Phi_{\tau}\big(\bm{x}^{k}, \,\sqrt[\tau]{s_{k}}\big)
- \frac{a}{2}\|\bm{x}^{k+1}-\bm{x}^k\|^2, \quad \forall\,k\geq0.
\end{equation}
This implies that $\left\{\Phi_{\tau}\big(\bm{x}^{k}, \,\sqrt[\tau]{s_{k}}\big)\right\}$ is non-increasing. Note from Proposition \ref{prop-iVPGSA}(i) that $c^*:=\lim\limits_{k\rightarrow\infty}{c_k}= \lim\limits_{k\rightarrow\infty}{F(\bm{x}^k)}$ exists. This then together with $s_k\to0$ implies that  $\left\{\Phi_{\tau}\big(\bm{x}^{k}, \,\sqrt[\tau]{s_{k}}\big)\right\}$ is also convergent and converges to $c^*$. In the following, we will consider two cases.

\vspace{1mm}
\textit{Case 1}. Suppose first that $\Phi_{\tau}\big(\bm{x}^{\bar{k}}, \,\sqrt[\tau]{s_{\bar{k}}}\big)=c^*$ for some $\bar{k}\geq0$. Since $\left\{\Phi_{\tau}\big(\bm{x}^{k}, \sqrt[\tau]{s_{k}}\big)\right\}$ is non-increasing, we must have that $\Phi_{\tau}\big(\bm{x}^{k}, \,\sqrt[\tau]{s_{k}}\big)\equiv c^*$ for all $k\geq\bar{k}$. This together with \eqref{Hvalineq} implies that $\bm{x}^{\bar{k}+t}=\bm{x}^{\bar{k}}$ for all $t\geq0$, and hence the sequence $\{\bm{x}^k\}$ converges finitely.

\vspace{1mm}
\textit{Case 2}. We consider the case where $\Phi_{\tau}\big(\bm{x}^{k}, \,\sqrt[\tau]{s_{k}}\big)>c^*$ for all $k\geq0$. In this case, we will divide the proof into three steps: (1) we first prove that $\Phi_{\tau}$ is constant on the set of cluster points of the sequence $\left\{\left(\bm{x}^k,\sqrt[\tau]{s_{k}}\right)\right\}$ and then apply the uniformized KL property; (2) we bound the distance from 0 to $\partial \Phi_{\tau}\big(\bm{x}^k,\sqrt[\tau]{s_{k}}\big)$; (3) we show that the sequence $\{\bm{x}^k\}$ is a Cauchy sequence and hence is convergent. The complete proof is presented as follows.

\textit{Step 1}. Since $\{s_k\}$ is non-increasing and $s_k\to0$, we see that the set of accumulation points of  $\left\{\left(\bm{x}^k,\sqrt[\tau]{s_{k}}\right)\right\}$ can be described as $\Gamma:=\big\{(\bm{x},0):\bm{x}\in\Lambda\big\}$ with $\Lambda$ being the set of accumulation points of $\{\bm{x}^k\}$. Since $\{\bm{x}^k\}$ is bounded as shown in Proposition \ref{prop-iVPGSA}(ii) and $s_k\to0$, the set $\Gamma$ is nonempty and compact. Moreover, for any $(\bm{x}^*,0) \in \Gamma$ with $\bm{x}^*\in\Lambda$, we have that $\Phi_{\tau}(\bm{x}^*,0)=F(\bm{x}^*)$. Thus, we can conclude from Proposition \ref{prop-iVPGSA}(iv) that $\Phi_{\tau}\equiv c^*$ on $\Gamma$. This fact together with our assumption that $\Phi_{\tau}$ is a KL function and Proposition \ref{uniKL} implies that, there exist $\mu, \eta>0$, and $\varphi\in\Xi_\eta$ such that
\begin{equation*}
\varphi'\left(\Phi_{\tau}(\bm{x},t)-c^*\right)\mathrm{dist}\left(0,\,\partial \Phi_{\tau}(\bm{x},t)\right)\geq1,
\end{equation*}
for all $(\bm{x},t)$ satisfying $\mathrm{dist}((\bm{x},t),\,\Gamma)<\mu$ and $c^*<\Phi_{\tau}(\bm{x},t)<c^*+\eta$. On the other hand, since $\lim\limits_{k\to\infty}\mathrm{dist}\big((\bm{x}^k,\sqrt[\tau]{s_{k}}),\,\Gamma\big)=0$ and $\left\{\Phi_{\tau}\big(\bm{x}^{k}, \sqrt[\tau]{s_{k}}\big)\right\}$ is non-increasing and converges to $c^*$, then for such $\mu$ and $\eta$, there exists $K_1\geq0$ such that $\mathrm{dist}\left((\bm{x}^k,\sqrt[\tau]{s_{k}}),\,\Gamma\right)<\mu$ and $c^*<\Phi_{\tau}\big(\bm{x}^{k}, \sqrt[\tau]{s_{k}}\big)<c^*+\eta$ for all $k\geq K_1$. Thus, we have
\begin{equation}\label{KLIC2}
\varphi'\big(\Phi_{\tau}\big(\bm{x}^{k}, \sqrt[\tau]{s_{k}}\big)-c^*\big)\mathrm{dist}\big(0,\, \partial \Phi_{\tau}\big(\bm{x}^{k}, \sqrt[\tau]{s_{k}}\big)\big)\geq1, \quad \forall\,k \geq K_1.
\end{equation}

\textit{Step 2}. We next consider the subdifferential of $\Phi_{\tau}$ at $(\bm{x}^k, \sqrt[\tau]{s_{k}})$ for any $k\geq \max\{K_0,K_1\}+1$, where $K_0$ is given in Assumption \ref{assumD}. Note from Proposition \ref{prop-obj}(ii) that, there exist a constant $b>0$ and $\bm{w}^k\in\partial F(\bm{x}^k)$ such that $\|\bm{w}^k\| \leq b\left(\|{\bm{x}}^{k}-{\bm{x}}^{k-1}\| + \sqrt{\varepsilon_{k-1}}\right)$ for any $k \geq K_0+1$. Then, for any $k \geq \max\{K_0,K_1\}+1$,
\begin{equation*}
\partial \Phi_{\tau}(\bm{x}^k,\sqrt[\tau]{s_{k}})
=\left[\partial F(\bm{x}^k),
\,s_k^{1-\frac{1}{\tau}} \right]
\ni\left[\bm{w}^k, \,s_k^{1-\frac{1}{\tau}}\right],
\end{equation*}
which implies that, for any $k \geq \max\{K_0,K_1\}+1$,
\begin{equation}\label{subdiffbdIC2}
\mathrm{dist}\big(0,\,\partial \Phi_{\tau}(\bm{x}^k,\sqrt[\tau]{s_{k}})\big)
\leq \|\bm{w}^k\| + s_k^{1-\frac{1}{\tau}}
\leq b\left(\|{\bm{x}}^{k}-{\bm{x}}^{k-1}\|
+ \sqrt{\varepsilon_{k-1}}\right)
+ s_k^{1-\frac{1}{\tau}}.
\end{equation}

\textit{Step 3}. We are now ready to prove the sequential convergence. For notational simplicity, let $K_2:=\max\{K_0,K_1\}+1$ and
\begin{equation*}
\Delta_{\varphi}^k
:=\varphi\left(\Phi_{\tau}\big(\bm{x}^{k}, \sqrt[\tau]{s_{k}}\big)-c^*\right)
-\varphi\left(\Phi_{\tau}\big(\bm{x}^{k+1}, \sqrt[\tau]{s_{k+1}}\big)-c^*\right).
\end{equation*}
Then, we have that, for any $k\geq K_2$,
\begin{equation*}
\begin{aligned}
&\quad \Delta_{\varphi}^k\cdot\mathrm{dist}\big(0,\,\partial \Phi_{\tau}(\bm{x}^k,\sqrt[\tau]{s_{k}})\big)  \\
&\geq \varphi'\big(\Phi_{\tau}\big(\bm{x}^{k}, \sqrt[\tau]{s_{k}}\big)-c^*\big)
\,\mathrm{dist}\big(0,\,\partial \Phi_{\tau}(\bm{x}^k,\sqrt[\tau]{s_{k}})\big)
\cdot\left(\Phi_{\tau}\big(\bm{x}^{k}, \sqrt[\tau]{s_{k}}\big)
-\Phi_{\tau}\big(\bm{x}^{k+1}, \sqrt[\tau]{s_{k+1}}\big)\right)  \\
&\geq \Phi_{\tau}\big(\bm{x}^{k}, \sqrt[\tau]{s_{k}}\big)
-\Phi_{\tau}\big(\bm{x}^{k+1}, \sqrt[\tau]{s_{k+1}}\big)
\geq \frac{a}{2}\|\bm{x}^{k+1} -\bm{x}^{k}\|^2,
\end{aligned}
\end{equation*}
where the first inequality follows from the concavity of $\varphi$, the second inequality follows from \eqref{KLIC2} and the non-increasing property of $\{\Phi_{\tau}\big(\bm{x}^{k}, \sqrt[\tau]{s_{k}}\big)\}$, and the last inequality follows from \eqref{Hvalineq}. Combining the above inequality and \eqref{subdiffbdIC2}, we further obtain that,
\begin{equation}\label{fklIC2}
\|\bm{x}^{k+1} -\bm{x}^{k}\|^2
\leq \frac{2b}{a}\Delta_{\varphi}^k\cdot
\left(\|{\bm{x}}^{k}-{\bm{x}}^{k-1}\|
+ \sqrt{\varepsilon_{k-1}}
+ \frac{1}{b}s_k^{1-\frac{1}{\tau}}\right), \quad \forall\,k \geq K_2.
\end{equation}
Taking the square root of \eqref{fklIC2} and using the inequality $\sqrt{uv}\leq\frac{u+v}{2}$ for $u,v\geq0$, we see that
\begin{equation}\label{diffbdeps}
\begin{aligned}
\|\bm{x}^{k+1} -\bm{x}^{k}\|
&\leq \sqrt{\frac{2b}{a}\Delta_{\varphi}^k} \cdot
\sqrt{\|{\bm{x}}^{k}-{\bm{x}}^{k-1}\|
+ \sqrt{\varepsilon_{k-1}}
+ \frac{1}{b}s_k^{1-\frac{1}{\tau}}}  \\
&\leq \frac{b}{a}\Delta_{\varphi}^k
+ \frac{1}{2}\|{\bm{x}}^{k}-{\bm{x}}^{k-1}\|
+ \frac{1}{2}\sqrt{\varepsilon_{k-1}}
+ \frac{1}{2b}s_k^{1-\frac{1}{\tau}},
\quad \forall\,k \geq K_2,
\end{aligned}
\end{equation}
which, by simple reformulations, implies that
\begin{equation*}
\begin{aligned}
\|\bm{x}^{k+1} -\bm{x}^{k}\|
\leq \frac{2b}{a}\Delta_{\varphi}^k
+ \|{\bm{x}}^{k}-{\bm{x}}^{k-1}\|-\|\bm{x}^{k+1} -\bm{x}^{k}\|
+ \sqrt{\varepsilon_{k-1}}
+ \frac{1}{b}s_k^{1-\frac{1}{\tau}}.
\end{aligned}
\end{equation*}
Then, summing the above relation from $k=K_2$ to $\infty$, we have
\begin{equation*}
\begin{aligned}
\sum_{k=K_2}^{\infty}\|\bm{x}^{k+1} -\bm{x}^{k}\|
\leq \frac{2b}{a}\,\varphi\left(\Phi_{\tau}\big(\bm{x}^{K_2}, \sqrt[\tau]{s_{K_2}}\big)-c^*\right)
+ \|\bm{x}^{K_2}-\bm{x}^{K_2-1}\|
+ \sum_{k=K_2}^{\infty}\sqrt{\varepsilon_{k-1}}
+ \frac{1}{b}\sum_{k=K_2}^{\infty}s_k^{1-\frac{1}{\tau}}.
\end{aligned}
\end{equation*}
Using this relation, together with the summable error conditions in \eqref{epssumcond}, we conclude that $\sum_{k=0}^{\infty}\|\bm{x}^{k+1}-\bm{x}^{k}\|<\infty$ and hence $\{\bm{x}^{k}\}$ is convergent. This completes the proof.
\end{proof}

\begin{remark}[\textbf{Comments on the tolerance parameter sequence $\{\varepsilon_k\}$}]\label{remark-epsk}
Note that the global sequential convergence in Theorem \ref{thm-gloconv} requires suitable summable error conditions on the tolerance parameter sequence $\{\varepsilon_k\}$, as given in \eqref{epssumcond}. While these conditions may seem restrictive, they can be met by appropriately selecting $\varepsilon_k$ using either an exponential or a polynomial function. Specifically, let $\tau>1$ and $\varepsilon_0>0$. The tolerance parameter sequence $\{\varepsilon_k\}$ satisfies the summable error conditions in \eqref{epssumcond} when it is specified using one of the following ways:
\begin{itemize}
\item[{\rm (i)}] \blue{$\varepsilon_k=\varepsilon_0 \zeta^k$ with $0<\zeta<1$};
\item[{\rm (ii)}] $\varepsilon_k=\frac{\varepsilon_0}{(k+1)^q}$ with $q>\frac{2\tau-1}{\tau-1}$.
\end{itemize}
\blue{Here, $\zeta$ denotes the geometric decay factor in the exponential case, while $q$ denotes the decay exponent in the polynomial case.} The detailed verification is relegated to Appendix \ref{apd-remark-sumcond} for self-containedness. Moreover, if $\Phi_{\tau}$ is a KL function for any $\tau>1$, as established in Section \ref{sec-KLpofun}, then any $q>2$ is valid in the polynomial case.
\end{remark}

\subsection{Convergence rate analysis}\label{sec-convrate}

In this subsection, we analyze the local convergence rate of $\{\bm{x}^k\}$ under the assumption that the auxiliary function $\Phi_{\tau}$, for some $\tau>1$, is a KL function with an exponent $\theta_{\tau}\in[0,1)$ (see Definition \ref{defKLexpo}). Local convergence rate results of this type have been extensively studied for various algorithms; see, for example, \cite{ab2009on,fgp2015splitting,wcp2017a,y2024proximal}. While our analysis builds upon these existing insights, it introduces additional complexities due to the presence of errors in the iterations. Indeed, unlike exact algorithms, where the descent in the objective function is well-controlled, inexact algorithms like our iVPGSA involve perturbations at each iteration, making the KL-based convergence rate analysis more challenging. To overcome these difficulties, we carefully construct suitable recursive inequalities and regulate the error terms to ensure that they remain manageable throughout the iterations. By leveraging the auxiliary function $\Phi_{\tau}$, our analysis provides detailed insights into how the KL exponent $\theta_{\tau}$ influences the convergence rate in the presence of inexactness. 

To proceed, we first gather four technical lemmas that will be used in our convergence rate analysis. These lemmas pertain to two specific choices of the tolerance parameter sequence $\{\varepsilon_k\}$, as described in Remark \ref{remark-epsk}. Since their proofs follow directly from elementary arguments, we omit them for brevity.

\begin{lemma}\label{lemma-orderofseries}
Let $\tau>1$, $\varepsilon_0>0$ and $s_k := \tau\sum^{\infty}_{i=k}\varepsilon_i$.
\begin{itemize}
\item [{\rm (i)}] If \blue{$\varepsilon_k = \varepsilon_0 \zeta^k$ with $0<\zeta<1$}, then
    \begin{equation*}
    \blue{s_k = l_1 \zeta^k},  \quad
    \blue{{\textstyle\sum^{\infty}_{i=k}}\sqrt{\varepsilon_{i-1}} = l_2 \zeta^{\frac{k}{2}}},  \quad
    \blue{{\textstyle\sum^{\infty}_{i=k}}s_i^{1-\frac{1}{\tau}} = l_3 \zeta^{\left(1-\frac{1}{\tau}\right)k}},
    \end{equation*}
    where \blue{$l_1:=\frac{\tau\varepsilon_0}{1-\zeta}$},
    \blue{$l_2:=\frac{\sqrt{\varepsilon_0}}{\sqrt{\zeta}\left(1-\sqrt{\zeta}\right)}$}, and \blue{$l_3:= l_1^{1-\frac{1}{\tau}} \left(1-\zeta^{1-\frac{1}{\tau}}\right)^{-1}$}.

\item [{\rm (ii)}] If $\varepsilon_k = \frac{\varepsilon_0}{(k+1)^q}$ with $q>\frac{2\tau-1}{\tau-1}$, then, \blue{for every $k\geq1$},
    \begin{equation*}
    s_k \leq \frac{e_1}{k^{q-1}},   \quad
    {\textstyle\sum^{\infty}_{i=k}}\sqrt{\varepsilon_{i-1}}
    \leq \frac{e_2}{k^{\frac{q}{2}-1}},  \quad
    {\textstyle\sum^{\infty}_{i=k}}s_i^{1-\frac{1}{\tau}} \leq  \frac{e_3}{k^{\left(1-\frac{1}{\tau}\right)(q-1)-1}},
    \end{equation*}
    where $e_1:=\frac{\tau\varepsilon_0}{q-1}$,  \blue{$e_2:=\frac{q\sqrt{\varepsilon_0}}{q-2}$}, and, \blue{with $r:=\left(1-\frac{1}{\tau}\right)(q-1)>1$},
    \blue{$e_3:=e_1^{1-\frac{1}{\tau}}\frac{r}{r-1}$}.
\end{itemize}
\end{lemma}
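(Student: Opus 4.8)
The plan is to treat both parts as direct evaluations (part (i)) or upper estimates (part (ii)) of the three explicit tail series $s_k=\tau\sum_{i\ge k}\varepsilon_i$, $\sum_{i\ge k}\sqrt{\varepsilon_{i-1}}$ and $\sum_{i\ge k}s_i^{1-\frac1\tau}$, once the closed form of $\varepsilon_k$ is substituted in. The whole argument is elementary; the only points that require a little care are the index shifts in the second series, the chaining of the first bound into the third, and the identification of the exact summability threshold on $q$ that governs convergence.

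For part (i) with $\varepsilon_k=\varepsilon_0 q^k$, every sum is geometric. First, $s_k=\tau\varepsilon_0\sum_{i\ge k}q^i=\frac{\tau\varepsilon_0}{1-q}q^k=l_1q^k$ by the geometric series formula, which uses $0<q<1$. For the second, I would write $\sqrt{\varepsilon_{i-1}}=\sqrt{\varepsilon_0}\,(\sqrt q)^{\,i-1}$ and sum the geometric series in $\sqrt q$ starting at $i=k$; absorbing the shift via $q^{(k-1)/2}=q^{-1/2}q^{k/2}$ yields exactly $l_2q^{k/2}$. For the third, substituting the just-derived identity $s_i=l_1q^i$ gives $s_i^{1-\frac1\tau}=l_1^{1-\frac1\tau}\big(q^{1-\frac1\tau}\big)^i$, a geometric series with ratio $q^{1-\frac1\tau}\in(0,1)$ (since $0<q<1$ and $1-\frac1\tau\in(0,1)$), which sums to $l_3q^{(1-\frac1\tau)k}$. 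All three are exact equalities, so no estimation is needed here.

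For part (ii) with $\varepsilon_k=\varepsilon_0(k+1)^{-q}$, I would reduce each tail to a $p$-series tail $\sum_{i\ge k}i^{-p}$ and bound it by the integral test, namely $\sum_{i\ge k}i^{-p}=O\big(k^{-(p-1)}\big)$ with leading constant $\frac1{p-1}$ for $p>1$. Concretely, an index shift gives $s_k=\tau\varepsilon_0\sum_{j\ge k+1}j^{-q}\le\tau\varepsilon_0\int_k^\infty x^{-q}\,dx=\frac{\tau\varepsilon_0}{q-1}k^{-(q-1)}$, which is $e_1/k^{q-1}$; next $\sum_{i\ge k}\sqrt{\varepsilon_{i-1}}=\sqrt{\varepsilon_0}\sum_{i\ge k}i^{-q/2}=O\big(k^{-(q/2-1)}\big)$ with leading constant $\tfrac{\sqrt{\varepsilon_0}}{q/2-1}=\tfrac{2\sqrt{\varepsilon_0}}{q-2}=e_2$; and for the third I would first insert the bound $s_i\le e_1\,i^{-(q-1)}$ already obtained to get $s_i^{1-\frac1\tau}\le e_1^{1-\frac1\tau}\,i^{-(q-1)(1-\frac1\tau)}$, then apply the same tail estimate with $p=(q-1)\big(1-\frac1\tau\big)$, producing $e_3/k^{(1-\frac1\tau)(q-1)-1}$.

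The one genuine thing to verify — and what I regard as the crux of this otherwise routine lemma — is the convergence of each series, i.e.\ that $p>1$ in each case. For $s_k$ this reads $q>1$, for the square-root sum $q>2$, and for the third sum $(q-1)\big(1-\frac1\tau\big)>1$; rearranging this last inequality gives precisely $q>\frac{2\tau-1}{\tau-1}$, which is exactly the hypothesis of part (ii). Since $\frac{2\tau-1}{\tau-1}=2+\frac{1}{\tau-1}>2>1$, this threshold is the strongest of the three and therefore simultaneously guarantees summability of all three tails and, via the substitution $s_i\le e_1 i^{-(q-1)}$, the correct decay exponent of the third. Beyond checking this threshold, the remaining work is bookkeeping of the index shifts needed to recover the exact leading constants $e_1,e_2,e_3$, which is why the statement may be asserted without a detailed proof.
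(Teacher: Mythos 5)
The paper gives no proof of this lemma at all (it is dismissed as following "directly from elementary arguments"), so the comparison is with the intended elementary argument, and your route --- exact geometric summation for (i), integral comparison for (ii) --- is clearly that argument. Part (i) is fully correct: all three expressions are exact geometric series, your index-shift bookkeeping for $l_2$ is right, and chaining the identity $s_i=l_1q^i$ into the third sum is exactly what is needed. The first estimate of part (ii) is also airtight, precisely because $s_k=\tau\varepsilon_0\sum_{j\ge k+1}j^{-q}$ starts at $j=k+1$, so each term $j^{-q}$ is dominated by $\int_{j-1}^{j}x^{-q}\,dx$ and the comparison with $\int_k^\infty x^{-q}\,dx$ is legitimate. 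Your identification of $(q-1)(1-\tfrac1\tau)>1$, i.e.\ $q>\tfrac{2\tau-1}{\tau-1}$, as the binding summability threshold is also correct.

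There is, however, a genuine flaw in the second and third estimates of part (ii): they rest on the comparison $\sum_{i\ge k}i^{-p}\le\int_k^\infty x^{-p}\,dx=\tfrac{1}{p-1}k^{-(p-1)}$, and this inequality runs the wrong way. Since $x^{-p}<i^{-p}$ on $(i,i+1]$, each term satisfies $i^{-p}>\int_i^{i+1}x^{-p}\,dx$, so $\sum_{i\ge k}i^{-p}>\int_k^\infty x^{-p}\,dx$ --- this is the very reason your first estimate needed the sum to start at $k+1$. A tail starting at $i=k$ must instead be compared with $\int_{k-1}^\infty x^{-p}\,dx=\tfrac{1}{p-1}(k-1)^{-(p-1)}$, which yields the stated decay orders but with $k-1$ in place of $k$, equivalently with $e_2,e_3$ inflated by a factor of at most $2^{p-1}$ for $k\ge2$. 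In fact the lemma's second inequality is false exactly as written, since $\sum_{i\ge k}\sqrt{\varepsilon_{i-1}}=\sqrt{\varepsilon_0}\sum_{i\ge k}i^{-q/2}$ strictly exceeds $\sqrt{\varepsilon_0}\int_k^\infty x^{-q/2}\,dx=e_2\,k^{-(q/2-1)}$ for every $k$; the same index issue blocks your derivation of the $e_3$ bound. This blemish is inherited from the statement itself rather than introduced by you, and it is immaterial downstream: Lemma \ref{lemma-orderofseries}(ii) is invoked in Theorems \ref{thm-convrate} and \ref{thm-convrate-Fexpo} only to produce $\mathcal{O}(\cdot)$ rate estimates, for which any constant of the correct order suffices. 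Still, a complete proof should either perform the tail comparison from $k-1$ (adjusting $e_2,e_3$ or writing $k-1$ in the denominators) or note explicitly that the constants are to be read up to such an adjustment.
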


\begin{lemma}\label{lem-closed-psi1}
Let $\tau>1$ and $q>\frac{2\tau-1}{\tau-1}$. Define $\Xi(\tau,q)
:=\frac{\tau-1}{\tau}(q-1)$ and
\begin{equation}\label{defpsi1}
\psi_1(q,\tau)
:=\min\left\{\,\frac{q}{2},~\Xi(\tau,q)\,\right\}.
\end{equation}
Then, $\Xi(\tau,q)>1$ and $\psi_1(q,\tau)$ has the following closed-form expression:
\begin{equation*}
\psi_1(q,\tau)
=\left\{\begin{aligned}
&\textstyle{\Xi(\tau,q)},
&&\text{if}\quad 1<\tau\leq2~~\text{and}~~\textstyle{q>\frac{2\tau-1}{\tau-1}}, \\[3pt]
&\textstyle{\Xi(\tau,q)},
&&\text{if}\quad\tau>2~~\text{and}~~\textstyle{\frac{2\tau-1}{\tau-1}<q<\frac{2(\tau-1)}{\tau-2}}, \\[3pt]
&\textstyle{\frac{q}{2}},
&&\text{if}\quad\tau>2~~\text{and}~~\textstyle{q\geq\frac{2(\tau-1)}{\tau-2}}.
\end{aligned}\right.
\end{equation*}
\end{lemma}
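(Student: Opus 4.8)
The plan is to prove the two assertions separately, both by elementary algebra, and to organize the comparison between $\frac{q}{2}$ and $\Xi(\tau,q)$ into cases governed by the sign of $\tau-2$.

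First I would establish $\Xi(\tau,q)>1$. Since $\tau>1$, the inequality $\frac{\tau-1}{\tau}(q-1)>1$ is equivalent, after multiplying through by $\tau>0$ and dividing by $\tau-1>0$, to $q-1>\frac{\tau}{\tau-1}$, i.e. $q>1+\frac{\tau}{\tau-1}=\frac{2\tau-1}{\tau-1}$. This is precisely the standing hypothesis, so $\Xi(\tau,q)>1$ follows; in fact the chain of equivalences shows that $q>\frac{2\tau-1}{\tau-1}$ is exactly the condition characterizing $\Xi(\tau,q)>1$.

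Next, to obtain the closed form of $\psi_1(q,\tau)=\min\{q/2,\,\Xi(\tau,q)\}$, I would compare the two quantities directly. Clearing the positive denominator $2\tau$ in $\Xi(\tau,q)\leq\frac{q}{2}$ gives $2(\tau-1)(q-1)\leq\tau q$, which rearranges to $q(\tau-2)\leq 2(\tau-1)$; the direction of the resulting bound on $q$ depends on the sign of $\tau-2$, and this is where the three-way split originates. When $1<\tau\leq 2$ the coefficient $\tau-2\leq0$ while $2(\tau-1)>0$, so $q(\tau-2)\leq 2(\tau-1)$ holds automatically for every $q>0$; hence $\Xi(\tau,q)\leq\frac{q}{2}$ and $\psi_1=\Xi(\tau,q)$ throughout this range. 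When $\tau>2$ the coefficient $\tau-2>0$, so the inequality is equivalent to $q\leq\frac{2(\tau-1)}{\tau-2}$, which yields $\psi_1=\Xi(\tau,q)$ on $\frac{2\tau-1}{\tau-1}<q\leq\frac{2(\tau-1)}{\tau-2}$ and $\psi_1=\frac{q}{2}$ for larger $q$, matching the stated formula. The boundary $q=\frac{2(\tau-1)}{\tau-2}$ may be assigned to either branch, since $\Xi(\tau,q)=\frac{q}{2}$ there.

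The only point needing a small extra verification, rather than a genuine obstacle, is to confirm that the middle case is non-vacuous when $\tau>2$, i.e. that $\frac{2\tau-1}{\tau-1}<\frac{2(\tau-1)}{\tau-2}$. Cross-multiplying by the positive denominators reduces this to $(2\tau-1)(\tau-2)<2(\tau-1)^2$, i.e. $2\tau^2-5\tau+2<2\tau^2-4\tau+2$, which simplifies to $\tau>0$ and therefore always holds. With this, the three cases partition the admissible range $q>\frac{2\tau-1}{\tau-1}$, and the closed-form expression for $\psi_1(q,\tau)$ follows.
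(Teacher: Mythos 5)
Your proof is correct: the paper omits this lemma's proof as following ``directly from elementary arguments,'' and your case analysis on the sign of $\tau-2$ in the inequality $q(\tau-2)\leq 2(\tau-1)$, together with the equivalence $\Xi(\tau,q)>1\iff q>\frac{2\tau-1}{\tau-1}$ and the non-vacuity check $\frac{2\tau-1}{\tau-1}<\frac{2(\tau-1)}{\tau-2}$ for $\tau>2$, is exactly the intended elementary argument. No gaps.
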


\begin{lemma}\label{lem-closed-psi2}
Let $\theta_{\tau}\in\left(\frac{1}{2}, 1\right)$, $\theta_{\tau}\leq\vartheta<1$, $\tau>1$ and $q>\frac{2\tau-1}{\tau-1}$. Define
\begin{equation}\label{defpsi2}
\psi_2(q,\tau,\vartheta)
:=\min\left\{
\,\frac{q}{2},~\Xi(\tau,q),
~\frac{\vartheta}{1-\vartheta}\left(\frac{q}{2}-1\right),
~\frac{\vartheta}{1-\vartheta}\,\big(\Xi(\tau,q)-1\big)
\,\right\}.
\end{equation}
where $\Xi(\tau,q):=\frac{\tau-1}{\tau}(q-1)$. Then, $\psi_2(q,\tau,\vartheta)$ has the following closed-form expression.
\begin{itemize}
\item[{\rm (i)}] If $1<\tau\leq2$ and $q>\frac{2\tau-1}{\tau-1}$; or $\tau>2$ and $\frac{2\tau-1}{\tau-1}<q<\frac{2(\tau-1)}{\tau-2}$, we have that
    \begin{equation*}
    \begin{aligned}
    \psi_2(q,\tau,\vartheta)
    &=\left\{\begin{aligned}
    &{\textstyle\frac{\vartheta}{1-\vartheta}\left(\Xi(\tau,q)-1\right)}, &&\text{if}\quad\theta_{\tau}\leq\vartheta
    <{\textstyle\frac{\Xi(\tau,q)}{2\Xi(\tau,q)-1}}, \\[3pt]
    &\Xi(\tau,q),
    &&\text{if}\quad{\textstyle\frac{\Xi(\tau,q)}{2\Xi(\tau,q)-1}} \leq\vartheta<1.
    \end{aligned}\right.
    \end{aligned}
    \end{equation*}


\item[{\rm (ii)}] If $\tau>2$ and $q\geq\frac{2(\tau-1)}{\tau-2}$, we have that
    \begin{equation*}
    \psi_2(q,\tau,\vartheta)
    =\left\{\begin{aligned}
    &{\textstyle\frac{\vartheta}{1-\vartheta}\left(\frac{q}{2}-1\right)}, &&\text{if}\quad\theta_{\tau}\leq\vartheta<{\textstyle\frac{q}{2q-2}}, \\[3pt]
    &{\textstyle\frac{q}{2}},
    &&\text{if}\quad{\textstyle\frac{q}{2q-2}}\leq\vartheta<1.
    \end{aligned}\right.
    \end{equation*}
\end{itemize}
\end{lemma}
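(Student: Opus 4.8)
The plan is to collapse the four-way minimum defining $\psi_2$ into a two-way minimum and then read off the two stated cases directly from Lemma \ref{lem-closed-psi1}. First I would record three elementary facts. Since $q>\frac{2\tau-1}{\tau-1}=2+\frac{1}{\tau-1}>2$, we have $\frac{q}{2}>1$; and $\Xi(\tau,q)>1$ is exactly the bound already proved in Lemma \ref{lem-closed-psi1}. Next, abbreviating $r:=\frac{\vartheta}{1-\vartheta}$ and $m:=\min\{\frac{q}{2},\,\Xi(\tau,q)\}=\psi_1(q,\tau)$, the hypotheses $\theta_\tau>\frac12$ and $\vartheta\geq\theta_\tau$ give $\vartheta>\frac12$, hence $r>1$ and in particular $r>0$.

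The crucial observation is that the third and fourth arguments of the minimum both have the form $r(X-1)$ with $X\in\{\frac{q}{2},\,\Xi(\tau,q)\}$ and $X-1>0$. Because multiplication by the positive constant $r$ preserves order and $\min\{X_1-1,X_2-1\}=\min\{X_1,X_2\}-1$, we obtain
\[
\min\Big\{r\big(\tfrac{q}{2}-1\big),\,r\big(\Xi(\tau,q)-1\big)\Big\}=r\,(m-1),
\]
and therefore $\psi_2(q,\tau,\vartheta)=\min\{m,\,r(m-1)\}$. This reduces the entire problem to one scalar comparison.

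I would then carry out that comparison. Clearing the positive denominator $1-\vartheta$, a short manipulation gives
\[
r(m-1)\leq m \iff \vartheta(m-1)\leq m(1-\vartheta) \iff \vartheta(2m-1)\leq m \iff \vartheta\leq\frac{m}{2m-1},
\]
where the final step uses $2m-1>0$ (valid since $m>1$). Hence $\psi_2=\frac{\vartheta}{1-\vartheta}(m-1)$ when $\vartheta\leq\frac{m}{2m-1}$, and $\psi_2=m$ otherwise. It now remains to substitute the value of $m$ supplied by Lemma \ref{lem-closed-psi1}: in the regime of part (i) that lemma yields $m=\Xi(\tau,q)$, so the threshold is $\frac{\Xi(\tau,q)}{2\Xi(\tau,q)-1}$ and the two branches appear as stated; in the regime of part (ii) it yields $m=\frac{q}{2}$, so the threshold is $\frac{q/2}{q-1}=\frac{q}{2q-2}$, again matching. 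As a consistency check one notes that $t\mapsto\frac{t}{2t-1}$ is decreasing on $(1,\infty)$ with limits $1$ and $\frac12$, so $\frac{m}{2m-1}\in(\frac12,1)$ and the partition of the admissible interval $[\theta_\tau,1)$ is well posed.

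I do not expect a genuine obstacle here: the argument is purely structural, and the only care required is to make the order-preservation step and the sign of $2m-1$ explicit. The one point deserving a sentence is the reduction $\min\{C,D\}=r(m-1)$, since it is what links $\psi_2$ to $\psi_1$ and lets the two cases of the present lemma inherit the case split already resolved in Lemma \ref{lem-closed-psi1}.
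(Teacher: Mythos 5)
Your proposal is correct: the reduction $\psi_2=\min\{m,\tfrac{\vartheta}{1-\vartheta}(m-1)\}$ with $m=\psi_1(q,\tau)$, followed by the single scalar comparison $\tfrac{\vartheta}{1-\vartheta}(m-1)\le m \iff \vartheta\le\tfrac{m}{2m-1}$ (using $2m-1>0$ from $m>1$), yields both cases of the lemma upon substituting the value of $m$ from Lemma \ref{lem-closed-psi1}, and the two branches agree at the threshold so the boundary assignment is immaterial. The paper omits the proof entirely, describing it as elementary; your argument is exactly the kind of elementary case analysis intended, organized cleanly so that the case split of Lemma \ref{lem-closed-psi1} is inherited rather than redone.
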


\begin{lemma}\label{lem-closed-psi3}
Let $\theta_{\tau}\in\left(\frac{1}{2}, 1\right)$, $\theta_{\tau}\leq\vartheta<1$, $\tau>1$ and $q>\frac{2\tau-1}{\tau-1}$. Define
\begin{equation}\label{defpsi3}
\begin{aligned}
\psi_3(q,\tau,\vartheta)
&:=\min\left\{\psi_2(q,\tau,\vartheta),
~\frac{\vartheta}{2\vartheta-1}\right\}, \\
\psi_3^{(q,\tau,*)}
&:=\max\limits_{\vartheta}\big\{\psi_3(q,\tau,\vartheta):\theta_{\tau}\leq\vartheta<1\big\}, \\
\vartheta^{(q,\tau,*)}
&:=\arg\max\limits_{\vartheta}\big\{\psi_3(q,\tau,\vartheta):\theta_{\tau}\leq\vartheta<1\big\}
\end{aligned}
\end{equation}
Then, $\psi_3(q,\tau,\vartheta)$, $\psi_3^{(q,\tau,*)}$, and $\vartheta^{(q,\tau,*)}$ have the following closed-form expressions.
\begin{itemize}
\item[{\rm (i)}] If $1<\tau\leq2$ and $q>\frac{2\tau-1}{\tau-1}$; or $\tau>2$ and $\frac{2\tau-1}{\tau-1}<q<\frac{2(\tau-1)}{\tau-2}$, we have that
    \begin{equation*}
    \begin{aligned}
    &\psi_3(q,\tau,\vartheta)
    =\left\{\begin{aligned}
    &{\textstyle\frac{\vartheta}{1-\vartheta}\left(\Xi(\tau,q)-1\right)}, &&\text{if}\quad\theta_{\tau}\leq\vartheta
    <{\textstyle\frac{\Xi(\tau,q)}{2\Xi(\tau,q)-1}}, \\[3pt]
    &{\textstyle\frac{\vartheta}{2\vartheta-1}},
    &&\text{if}\quad{\textstyle\frac{\Xi(\tau,q)}{2\Xi(\tau,q)-1}} \leq\vartheta<1,
    \end{aligned}\right. \\[5pt]
    &\psi_3^{(q,\tau,*)}
    =\left\{\begin{aligned}
    &\Xi(\tau,q),
    &&\text{if}\quad q<{\textstyle\frac{\tau}{\tau-1}\cdot\frac{\theta_{\tau}}{2\theta_{\tau}-1}+1}, \\
    &{\textstyle\frac{\theta_{\tau}}{2\theta_{\tau}-1}},
    &&\text{otherwise},
    \end{aligned}\right.
    ~~\vartheta^{(q,\tau,*)}
    =\left\{\begin{aligned}
    &{\textstyle\frac{\Xi(\tau,q)}{2\Xi(\tau,q)-1}},
    &&\text{if}\quad q<{\textstyle\frac{\tau}{\tau-1}\cdot\frac{\theta_{\tau}}{2\theta_{\tau}-1}+1}, \\[3pt]
    &\theta_{\tau},
    &&\text{otherwise}.
    \end{aligned}\right.
    \end{aligned}
    \end{equation*}

\item[{\rm (ii)}] If $\tau>2$ and $q\geq\frac{2(\tau-1)}{\tau-2}$, we have that
    \begin{equation*}
    \begin{aligned}
    &\psi_3(q,\tau,\vartheta)
    =\left\{\begin{aligned}
    &{\textstyle\frac{\vartheta}{1-\vartheta}\left(\frac{q}{2}-1\right)},
    &&\text{if}\quad\theta_{\tau}\leq\vartheta<{\textstyle\frac{q}{2q-2}},  \\[3pt]
    &{\textstyle\frac{\vartheta}{2\vartheta-1}},
    &&\text{if}\quad{\textstyle\frac{q}{2q-2}}\leq\vartheta<1,
    \end{aligned}\right.  \\[5pt]
    &\psi_3^{(q,\tau,*)}
    =\left\{\begin{aligned}
    &{\textstyle\frac{q}{2}},
    &&\text{if}\quad q<{\textstyle\frac{2\theta_{\tau}}{2\theta_{\tau}-1}}, \\
    &{\textstyle\frac{\theta_{\tau}}{2\theta_{\tau}-1}},
    &&\text{otherwise},
    \end{aligned}\right.
    ~~\vartheta^{(q,\tau,*)}
    =\left\{\begin{aligned}
    &{\textstyle\frac{q}{2q-2}},
    &&\text{if}\quad q<{\textstyle\frac{2\theta_{\tau}}{2\theta_{\tau}-1}}, \\[3pt]
    &\theta_{\tau},
    &&\text{otherwise}.
    \end{aligned}\right.
    \end{aligned}
    \end{equation*}

\end{itemize}
\end{lemma}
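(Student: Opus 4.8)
The plan is to handle both parameter regimes uniformly by writing the constant value of the second branch of $\psi_2$ as $c$, where $c:=\Xi(\tau,q)$ in regime (i) and $c:=\frac{q}{2}$ in regime (ii). In either regime, Lemma \ref{lem-closed-psi2} gives $\psi_2$ the common shape
\begin{equation*}
\psi_2(q,\tau,\vartheta)=
\begin{cases}
\frac{\vartheta}{1-\vartheta}\,(c-1), & \theta_{\tau}\leq\vartheta<\frac{c}{2c-1},\\[3pt]
c, & \frac{c}{2c-1}\leq\vartheta<1,
\end{cases}
\end{equation*}
with $c>1$ (since $\Xi(\tau,q)>1$ and $\frac{q}{2}>1$). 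The gluing point is $\frac{c}{2c-1}$: this is literally $\frac{\Xi(\tau,q)}{2\Xi(\tau,q)-1}$ in regime (i), and in regime (ii) the threshold $\frac{q}{2q-2}$ of Lemma \ref{lem-closed-psi2} equals $\frac{c}{2c-1}$ with $c=\frac{q}{2}$. Since $\frac{\vartheta}{1-\vartheta}$ is strictly increasing and a short continuity check at $\frac{c}{2c-1}$ gives value $c$, the function $\psi_2$ rises to $c$ and then stays flat.

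Next I would record that $r(\vartheta):=\frac{\vartheta}{2\vartheta-1}$ is strictly decreasing on $(\tfrac12,1)$, running from $+\infty$ down to $1$, and that $r\bigl(\frac{c}{2c-1}\bigr)=c$. The heart of computing $\psi_3=\min\{\psi_2,r\}$ is then to locate where $\psi_2\leq r$. Because all denominators are positive for $\vartheta\in(\tfrac12,1)$, dividing by $\vartheta$ reduces $\frac{\vartheta}{1-\vartheta}(c-1)\leq\frac{\vartheta}{2\vartheta-1}$ to $(c-1)(2\vartheta-1)\leq 1-\vartheta$, i.e.\ $(2c-1)\vartheta\leq c$, i.e.\ $\vartheta\leq\frac{c}{2c-1}$. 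Hence on the first branch $\psi_2\leq r$ (so $\psi_3=\psi_2$), while on the flat branch $r\leq c=\psi_2$ (so $\psi_3=r$); this yields the stated two-piece formula for $\psi_3$ in both regimes, with the switch occurring exactly at the gluing point.

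For the maximization, I would note that $\psi_3$ is strictly increasing on its first piece and strictly decreasing on its second, hence strictly unimodal with unconstrained peak value $c$ attained at $\vartheta=\frac{c}{2c-1}$. Maximizing over the admissible set $[\theta_{\tau},1)$ splits according to whether the peak is feasible: if $\theta_{\tau}\leq\frac{c}{2c-1}$ the maximum is $c$ at $\frac{c}{2c-1}$, and otherwise $\psi_3$ is decreasing on $[\theta_{\tau},1)$ so the maximum is $r(\theta_{\tau})=\frac{\theta_{\tau}}{2\theta_{\tau}-1}$ at $\theta_{\tau}$. Solving $\frac{\theta_{\tau}}{2\theta_{\tau}-1}=c$ for the dividing value and substituting $c=\Xi(\tau,q)=\frac{\tau-1}{\tau}(q-1)$ gives the regime-(i) threshold $q=\frac{\tau}{\tau-1}\cdot\frac{\theta_{\tau}}{2\theta_{\tau}-1}+1$, while $c=\frac{q}{2}$ gives the regime-(ii) threshold $q=\frac{2\theta_{\tau}}{2\theta_{\tau}-1}$. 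Since $\frac{c}{2c-1}$ is decreasing in $c$ and $c$ is increasing in $q$, the peak is feasible precisely when $q$ lies below the corresponding threshold, which reproduces the displayed case splits for $\psi_3^{(q,\tau,*)}$ and $\vartheta^{(q,\tau,*)}$.

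The computation carries no genuine conceptual obstacle — it is a unimodal one-dimensional optimization — so the only real care is bookkeeping: keeping the half-open endpoints and the ``otherwise'' (equality) cases consistent across the two regimes, and checking that the thresholds derived from $c=\Xi(\tau,q)$ and $c=\frac{q}{2}$ match the stated boundaries. The one mild subtlety worth flagging is the coincidence $r\bigl(\frac{c}{2c-1}\bigr)=c$ together with the equivalence $\psi_2\leq r\Leftrightarrow\vartheta\leq\frac{c}{2c-1}$; it is exactly this alignment of the gluing point of $\psi_2$ with the level set $\{r=c\}$ that makes the two-piece description of $\psi_3$ clean and its unimodality transparent.
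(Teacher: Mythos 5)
Your proof is correct: the unified treatment via $c$ (with $c=\Xi(\tau,q)$ in regime (i) and $c=\frac{q}{2}$ in regime (ii)), the equivalence $\frac{\vartheta}{1-\vartheta}(c-1)\leq\frac{\vartheta}{2\vartheta-1}\Leftrightarrow\vartheta\leq\frac{c}{2c-1}$, the identity $r\bigl(\frac{c}{2c-1}\bigr)=c$, and the resulting unimodal maximization all check out, including the consistency of both case formulas at the boundary $q=\frac{\tau}{\tau-1}\cdot\frac{\theta_{\tau}}{2\theta_{\tau}-1}+1$ (resp.\ $q=\frac{2\theta_{\tau}}{2\theta_{\tau}-1}$), where the involution property of $\vartheta\mapsto\frac{\vartheta}{2\vartheta-1}$ makes the two branches agree. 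The paper itself omits the proof of this lemma (it declares the proofs of Lemmas \ref{lemma-orderofseries}--\ref{lem-closed-psi3} elementary and skips them), and your argument is exactly the elementary case analysis the authors had in mind, written out cleanly.
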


We further derive key recursive inequalities under two specific choices of the tolerance parameter sequence $\{\varepsilon_k\}$, as described in Remark \ref{remark-epsk}. These recursions play a crucial role in our subsequent convergence rate analysis.

\begin{lemma}\label{lemma-midkeyineq}
Suppose that Assumptions \ref{assumA}, \ref{assumB}, \ref{assumC}, \ref{assumD} hold, there exist $\underline{\gamma}, \,\overline{\gamma}>0$ such that $\frac{L_h}{2}<\underline{\gamma}\leq\overline{\gamma}$ and $\underline{\gamma}I_n \preceq H_k \preceq \overline{\gamma}I_n$ for all $k\geq0$, and the summable error conditions in \eqref{epssumcond} hold. Moreover, let $\tau>1$ and suppose that $\Phi_{\tau}$ defined in \eqref{defpofun} is a KL function with an exponent $\theta_{\tau}$. Let $\{\bm{x}^k\}$ be the sequence generated by the iVPGSA in Algorithm \ref{algo-iVPGSA}, and let
\begin{equation*}
\Upsilon_k:={\textstyle\sum^{\infty}_{i=k}}\,\|\bm{x}^{i+1}-\bm{x}^i\|,
\end{equation*}
which is well-defined due to Theorem \ref{thm-gloconv}. Then, for all sufficiently large $k$, the following statements hold.
\begin{itemize}
\item[{\rm (i)}] If $\theta_{\tau}\in\left(0,\frac{1}{2}\right]$, there exist $\rho\in(0,1)$ and $\alpha_1,\alpha_2>0$ such that
    \begin{equation}\label{KLineqrate2case1}
    \Upsilon_k \leq \rho \Upsilon_{k-1}
    + \alpha_1{\textstyle\sum^{\infty}_{i=k}}\sqrt{\varepsilon_{i-1}}
    + \alpha_2{\textstyle\sum^{\infty}_{i=k}}\,s_i^{1-\frac{1}{\tau}}.
    \end{equation}

\item[{\rm (ii)}] If $\theta_{\tau}\in\left(\frac{1}{2},1\right)$, then for any $\theta_{\tau}\leq\vartheta<1$, there exist $\beta_1,\beta_2,\beta_3,\beta_4,\beta_5>0$ such that
    \begin{equation}\label{KLineqrate2case2}
    \begin{aligned}
    \Upsilon_k^{\frac{\vartheta}{1-\vartheta}}
    &\leq \beta_1\left(\Upsilon_{k-1}-\Upsilon_k\right)
    + \beta_2 \sqrt{\varepsilon_{k-1}}
    + \beta_3 s_k^{1-\frac{1}{\tau}}  \\
    &\qquad + \beta_4 \Big({\textstyle\sum^{\infty}_{i=k}}\sqrt{\varepsilon_{i-1}} \Big)^{\frac{\vartheta}{1-\vartheta}}
    + \beta_5 \Big({\textstyle\sum^{\infty}_{i=k}}\,s_i^{1-\frac{1}{\tau}} \Big)^{\frac{\vartheta}{1-\vartheta}}.
    \end{aligned}
    \end{equation}
\end{itemize}
\end{lemma}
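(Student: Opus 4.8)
The plan is to derive both recursions from a single, clean ``master inequality'' obtained by post-processing estimate \eqref{diffbdeps} in the proof of Theorem \ref{thm-gloconv}. Write $r_k := \Phi_{\tau}(\bm{x}^k,\sqrt[\tau]{s_k})-c^*$ and recall that $\Delta_{\varphi}^k = \varphi(r_k)-\varphi(r_{k+1})$ telescopes. First I would sum \eqref{diffbdeps} over the tail $i=k,k+1,\dots$: the left-hand side gives $\Upsilon_k$, the $\Delta_\varphi$-terms collapse to $\varphi(r_k)$ (using $r_i\to0$ and $\varphi(0)=0$), and a reindexing gives $\sum_{i\ge k}\|\bm{x}^i-\bm{x}^{i-1}\|=\Upsilon_{k-1}=\|\bm{x}^k-\bm{x}^{k-1}\|+\Upsilon_k$. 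Splitting the factor $\tfrac12\Upsilon_{k-1}$ accordingly and absorbing the resulting $\tfrac12\Upsilon_k$ into the left-hand side yields, for all large $k$, the relation
\[
\Upsilon_k \;\le\; \tfrac{2b}{a}\,\varphi(r_k) + (\Upsilon_{k-1}-\Upsilon_k) + P_k + \tfrac{1}{b}Q_k, \qquad P_k:=\textstyle\sum_{i\ge k}\sqrt{\varepsilon_{i-1}},\;\; Q_k:=\textstyle\sum_{i\ge k}s_i^{1-\frac1\tau},
\]
where I used $\|\bm{x}^k-\bm{x}^{k-1}\|=\Upsilon_{k-1}-\Upsilon_k$. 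The constants $a,b$ come from Proposition \ref{prop-obj}(i)--(ii).

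Next I would feed in the KL exponent. Combining the uniformized KL inequality \eqref{KLIC2} with $\varphi(t)=\tilde a\,t^{1-\theta_\tau}$ (so $\varphi'(t)=\tilde a(1-\theta_\tau)t^{-\theta_\tau}$) and the subdifferential bound \eqref{subdiffbdIC2} gives $r_k^{\theta_\tau}\le \tilde a(1-\theta_\tau)D_k$, where $D_k:=b(\|\bm{x}^k-\bm{x}^{k-1}\|+\sqrt{\varepsilon_{k-1}})+s_k^{1-\frac1\tau}$. Substituting back, $\varphi(r_k)=\tilde a\,(r_k^{\theta_\tau})^{\frac{1-\theta_\tau}{\theta_\tau}}\le M\,D_k^{\frac{1-\theta_\tau}{\theta_\tau}}$ for a constant $M>0$ independent of $k$. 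Everything in sight ($r_k$, $D_k$, $\Upsilon_{k-1}-\Upsilon_k$, and the error terms) tends to $0$ by Theorem \ref{thm-gloconv} and $s_k\to0$, which is what lets me compare different powers for large $k$.

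For statement (i), $\theta_\tau\le\tfrac12$ gives $\tfrac{1-\theta_\tau}{\theta_\tau}\ge1$, so $D_k^{(1-\theta_\tau)/\theta_\tau}\le D_k$ once $D_k\le1$, i.e. $\varphi(r_k)\le M D_k$ is \emph{linear} in the one-step quantities. Plugging this into the master inequality, replacing $\|\bm{x}^k-\bm{x}^{k-1}\|$ by $\Upsilon_{k-1}-\Upsilon_k$, and bounding the single error terms by their tails ($\sqrt{\varepsilon_{k-1}}\le P_k$, $s_k^{1-\frac1\tau}\le Q_k$) produces $(1+N)\Upsilon_k\le N\Upsilon_{k-1}+c_1P_k+c_2Q_k$ for suitable $N,c_1,c_2>0$; dividing through gives \eqref{KLineqrate2case1} with $\rho=\tfrac{N}{1+N}\in(0,1)$. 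For statement (ii), $\theta_\tau>\tfrac12$ makes $p:=\tfrac{1-\vartheta}{\vartheta}\in(0,1)$ for every $\vartheta\in[\theta_\tau,1)$, and $\vartheta\ge\theta_\tau$ yields $p\le\tfrac{1-\theta_\tau}{\theta_\tau}$, so for large $k$ (where $D_k\le1$) I may weaken $\varphi(r_k)\le M D_k^{(1-\theta_\tau)/\theta_\tau}\le M D_k^{p}$. Applying subadditivity of $t\mapsto t^p$ to $D_k^p$ and absorbing the linear term via $\Upsilon_{k-1}-\Upsilon_k\le(\Upsilon_{k-1}-\Upsilon_k)^p$ (valid since the base is eventually $\le1$) gives a bound of the shape $\Upsilon_k\le C\big[(\Upsilon_{k-1}-\Upsilon_k)^p+\varepsilon_{k-1}^{p/2}+s_k^{(1-\frac1\tau)p}\big]+P_k+\tfrac1bQ_k$. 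Raising both sides to the power $1/p=\tfrac{\vartheta}{1-\vartheta}>1$ and invoking $(\sum_i a_i)^{1/p}\le C'\sum_i a_i^{1/p}$, each power-$p$ term cancels exactly and $P_k,Q_k$ acquire the exponent $\tfrac{\vartheta}{1-\vartheta}$, reproducing the five terms of \eqref{KLineqrate2case2}.

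The hard part will be the exponent bookkeeping in (ii): one must apply subadditivity in the forward direction on the \emph{aggregated} quantity $D_k^{p}$ and then the reverse (power-mean) inequality after raising to $1/p$, all while using the vanishing of $D_k$, $\Upsilon_{k-1}-\Upsilon_k$, and the errors both to convert the fixed exponent $\tfrac{1-\theta_\tau}{\theta_\tau}$ into the \emph{tunable} one $\tfrac{1-\vartheta}{\vartheta}$ (this is precisely what introduces the free parameter $\vartheta$) and to absorb lower-order terms. A secondary point requiring care is that the constants $M,C,C'$ and the contraction factor $\rho$ stay independent of $k$; this follows from Proposition \ref{prop-obj} together with the uniform bounds $\underline{\gamma}I_n\preceq H_k\preceq\overline{\gamma}I_n$ and $\underline{g}\le g(\bm{x}^k)\le\overline{g}$ from Proposition \ref{prop-iVPGSA}(v).
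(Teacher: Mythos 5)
Your proposal is correct and follows essentially the same route as the paper: your ``master inequality'' is exactly the paper's \eqref{Upineq1} (pre-rearranged via $\|\bm{x}^k-\bm{x}^{k-1}\|=\Upsilon_{k-1}-\Upsilon_k$), your bound $\varphi(r_k)\le M D_k^{(1-\theta_\tau)/\theta_\tau}$ is the paper's \eqref{KLineqrate2}, and the two cases are handled with the same exponent manipulations (linearization for $\theta_\tau\le\tfrac12$; exponent-weakening to $\tfrac{1-\vartheta}{\vartheta}$, subadditivity, and Jensen after raising to $\tfrac{\vartheta}{1-\vartheta}$ for $\theta_\tau>\tfrac12$). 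The only omission is the trivial degenerate case where $\Phi_\tau\big(\bm{x}^{\bar k},\sqrt[\tau]{s_{\bar k}}\big)=c^*$ for some $\bar k$ (so the KL inequality \eqref{KLIC2} need not apply), which the paper dispatches in one line via finite convergence.
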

\begin{proof}
First, we know from \eqref{Hvalineq} and the subsequent arguments that $\{\Phi_{\tau}\big(\bm{x}^{k}, \sqrt[\tau]{s_{k}}\big)\}$ is non-increasing and converges to $c^*$. Thus, if $\Phi_{\tau}\big(\bm{x}^{\bar{k}}, \sqrt[\tau]{s_{\bar{k}}}\big)=c^*$ holds for some $\bar{k}\geq0$, $\{\bm{x}^k\}$ is finitely convergent and the desired conclusions hold trivially. In the following, we only need to consider the case when $\Phi_{\tau}\big(\bm{x}^{k}, \,\sqrt[\tau]{s_{k}}\big)>c^*$ for all $k\geq0$. Using \eqref{diffbdeps}, we have that, for any $k \geq K_2$,
\begin{equation}\label{Upineq1}
\begin{aligned}
\Upsilon_k
&\leq
{\textstyle\frac{1}{2}\sum^{\infty}_{i=k}\|\bm{x}^{i}-\bm{x}^{i-1}\|
+ \frac{b}{a}\sum^{\infty}_{i=k}\Delta_{\varphi}^i
+ \frac{1}{2}\sum^{\infty}_{i=k}\sqrt{\varepsilon_{i-1}}
+ \frac{1}{2b}\sum^{\infty}_{i=k}s_i^{1-\frac{1}{\tau}}} \\
&\leq
{\textstyle\frac{1}{2}\Upsilon_{k-1} + \frac{b}{a}\varphi(U_k)
+ \frac{1}{2}\sum^{\infty}_{i=k}\sqrt{\varepsilon_{i-1}}
+ \frac{1}{2b}\sum^{\infty}_{i=k}s_i^{1-\frac{1}{\tau}}},
\end{aligned}
\end{equation}
where $U_k:=\Phi_{\tau}\big(\bm{x}^{k}, \,\sqrt[\tau]{s_{k}}\big)-c^*$. On the other hand, it follows from the KL inequality \eqref{KLIC2} with $\varphi(t)=\tilde{a}t^{1-\theta_{\tau}}$ for some $\tilde{a}>0$ that, for all $k \geq K_1$,
\begin{equation*}
\tilde{a}(1-\theta_{\tau})(U_k)^{-\theta_{\tau}}\mathrm{dist}\big(0,\, \partial \Phi_{\tau}\big(\bm{x}^{k}, \sqrt[\tau]{s_{k}}\big)\big)\geq1.
\end{equation*}
Moreover, using \eqref{subdiffbdIC2} and the definition of $\Upsilon_k$, we see that for all $k \geq \max\{K_0,K_1\}+1$,
\begin{equation*}
\mathrm{dist}\big(0,\,\partial \Phi_{\tau}(\bm{x}^k,\sqrt[\tau]{s_{k}})\big)
\leq b\big(\Upsilon_{k-1}-\Upsilon_k\big)
+ b\sqrt{\varepsilon_{k-1}}
+ s_k^{1-\frac{1}{\tau}}.
\end{equation*}
Then, combining the above two inequalities yields that, for all $k \geq \max\{K_0,K_1\}+1$,
\begin{equation*}
(U_k)^{\theta_{\tau}} \leq
\tilde{a}b(1-\theta_{\tau})\big(\Upsilon_{k-1}-\Upsilon_k\big)
+ \tilde{a}b(1-\theta_{\tau})\sqrt{\varepsilon_{k-1}}
+ \tilde{a}(1-\theta_{\tau})s_k^{1-\frac{1}{\tau}}.
\end{equation*}
Raising to a power of $\frac{1-\theta_{\tau}}{\theta_{\tau}}$ to both sides of the above inequality, scaling both sides by $\tilde{a}$ and recalling that $\varphi(U_k)=\tilde{a}(U_k)^{1-\theta_{\tau}}$, we obtain that
\begin{equation}\label{KLineqrate2}
\varphi(U_k)
\leq \tilde{a}\,\Big(
\tilde{a}b(1-\theta_{\tau})\big(\Upsilon_{k-1}-\Upsilon_k\big)
+ \tilde{a}b(1-\theta_{\tau})\sqrt{\varepsilon_{k-1}}
+ \tilde{a}(1-\theta_{\tau})s_k^{1-\frac{1}{\tau}}\Big)^{\frac{1-\theta_{\tau}}{\theta_{\tau}}}.
\end{equation}
Next, we consider two cases: $\theta_{\tau}\in\left(0,\frac{1}{2}\right]$ and $\theta_{\tau}\in\left(\frac{1}{2},1\right)$.

\textit{Case (i)}. Suppose that $\theta_{\tau}\in\left(0,\frac{1}{2}\right]$ and hence $\frac{1-\theta_{\tau}}{\theta_{\tau}}\geq1$. Since $\|\bm{x}^{k}-{\bm{x}}^{k-1}\|\to0$ (and hence $\Upsilon_{k-1}-\Upsilon_k\to0$), $\sqrt{\varepsilon_{k-1}}\to0$ and $ s_k^{1-\frac{1}{\tau}}\to0$, we see from \eqref{KLineqrate2} that, for all sufficiently large $k$,
\begin{equation*}
\begin{aligned}
\varphi(U_k)
&\leq
\tilde{a}^2b(1-\theta_{\tau})\big(\Upsilon_{k-1}-\Upsilon_k\big)
+ \tilde{a}^2b(1-\theta_{\tau})\sqrt{\varepsilon_{k-1}}
+ \tilde{a}^2(1-\theta_{\tau})s_k^{1-\frac{1}{\tau}}  \\
&\leq \tilde{a}^2b(1-\theta_{\tau})\big(\Upsilon_{k-1}-\Upsilon_k\big)
+ \tilde{a}^2b(1-\theta_{\tau}){\textstyle\sum^{\infty}_{i=k}}\sqrt{\varepsilon_{i-1}}
+ \tilde{a}^2(1-\theta_{\tau}){\textstyle\sum^{\infty}_{i=k}}\,s_i^{1-\frac{1}{\tau}}.
\end{aligned}
\end{equation*}
Combining this inequality with \eqref{Upineq1} induces that, for all sufficiently large $k$,
\begin{equation*}
\begin{aligned}
\Upsilon_k
&\leq{\textstyle\frac{1}{2}}\Upsilon_{k-1}
+ {\textstyle\frac{\tilde{a}^2b^2(1-\theta_{\tau})}{a}}
\big(\Upsilon_{k-1}-\Upsilon_k\big) \\
&\qquad + {\textstyle\left(\frac{\tilde{a}^2b^2(1-\theta_{\tau})}{a}+\frac{1}{2}\right)
\sum^{\infty}_{i=k}\sqrt{\varepsilon_{i-1}}
+ \left(\frac{\tilde{a}^2b(1-\theta_{\tau})}{a}+\frac{1}{2b}\right)
\sum^{\infty}_{i=k}s_i^{1-\frac{1}{\tau}}},
\end{aligned}
\end{equation*}
which, by simple reformulations, implies that
\begin{equation*}
\Upsilon_k \leq \rho \Upsilon_{k-1}
+ \alpha_1{\textstyle\sum^{\infty}_{i=k}}\sqrt{\varepsilon_{i-1}}
+ \alpha_2{\textstyle\sum^{\infty}_{i=k}}\,s_i^{1-\frac{1}{\tau}},
\end{equation*}
where $\alpha_0:=1+\frac{\tilde{a}^2b^2(1-\theta_{\tau})}{a}>0$,  $\alpha_1:=\alpha_0^{-1}\left(\frac{\tilde{a}^2b^2(1-\theta_{\tau})}{a}+\frac{1}{2}\right)>0$, $\alpha_2:=\alpha_0^{-1}\left(\frac{\tilde{a}^2b(1-\theta_{\tau})}{a}+\frac{1}{2b}\right)>0$, and $\rho:=\alpha_0^{-1}\left(\frac{1}{2}+\frac{\tilde{a}^2b^2(1-\theta_{\tau})}{a}\right)\in(0,1)$. This proves statement (i).

\textit{Case (ii)}. Suppose that $\theta_{\tau}\in\left(\frac{1}{2},1\right)$. In this case, it is easy to verify that $\frac{1-\vartheta}{\vartheta}\leq\frac{1-\theta_{\tau}}{\theta_{\tau}}<1$ for any $\theta_{\tau}\leq\vartheta<1$. Since $\|\bm{x}^{k}-{\bm{x}}^{k-1}\|\to0$ (and hence $\Upsilon_{k-1}-\Upsilon_k\to0$), $\sqrt{\varepsilon_{k-1}}\to0$ and $ s_k^{1-\frac{1}{\tau}}\to0$, one can see from \eqref{KLineqrate2} that, for all sufficiently large $k$, it holds that
\begin{equation*}
\begin{aligned}
\varphi(U_k)
&\leq\tilde{a}\,\Big(
\tilde{a}b\big(\Upsilon_{k-1}-\Upsilon_k\big)
+ \tilde{a}b\sqrt{\varepsilon_{k-1}}
+ \tilde{a}s_k^{1-\frac{1}{\tau}}\Big)^{\frac{1-\theta_{\tau}}{\theta_{\tau}}} \\
&\leq\tilde{a}\,\Big(
\tilde{a}b\big(\Upsilon_{k-1}-\Upsilon_k\big)
+ \tilde{a}b\sqrt{\varepsilon_{k-1}}
+ \tilde{a}s_k^{1-\frac{1}{\tau}}\Big)^{\frac{1-\vartheta}{\vartheta}} \\
&\leq \tilde{a}\left(
\tilde{a}b\big(\Upsilon_{k-1}-\Upsilon_k\big)\right)^{\frac{1-\vartheta}{\vartheta}}
+ \tilde{a}\left(\tilde{a}b\sqrt{\varepsilon_{k-1}}\right)^{\frac{1-\vartheta}{\vartheta}}
+ \tilde{a}\Big(\tilde{a}s_k^{1-\frac{1}{\tau}}\Big)^{\frac{1-\vartheta}{\vartheta}}
\end{aligned}
\end{equation*}
for any $\theta_{\tau}\leq\vartheta<1$. Combining this inequality with \eqref{Upineq1}, we obtain that, for all sufficiently large $k$,
\begin{equation*}
\begin{aligned}
\Upsilon_k
&\leq {\textstyle\frac{1}{2}\Upsilon_{k-1}
+ \frac{b}{a}\tilde{a}\left(\tilde{a}b\right)^{\frac{1-\vartheta}{\vartheta}}
\left(\Upsilon_{k-1}-\Upsilon_k\right)^{\frac{1-\vartheta}{\vartheta}}
+ \left(\frac{b}{a}\tilde{a}\left(\tilde{a}b\right)^{\frac{1-\vartheta}{\vartheta}}\right)   \left(\sqrt{\varepsilon_{k-1}}\right)^{\frac{1-\vartheta}{\vartheta}}} \\
&\qquad\qquad~
+ {\textstyle\left(\frac{b}{a}\tilde{a}^{\frac{1}{\vartheta}}\right)   \Big(s_k^{1-\frac{1}{\tau}}\Big)^{\frac{1-\vartheta}{\vartheta}}
+ \frac{1}{2}\sum^{\infty}_{i=k}\sqrt{\varepsilon_{i-1}}
+ \frac{1}{2b}\sum^{\infty}_{i=k}s_i^{1-\frac{1}{\tau}}},
\end{aligned}
\end{equation*}
which further implies that
\begin{equation*}
\begin{aligned}
\Upsilon_k
&\leq \left(\Upsilon_{k-1}-\Upsilon_k\right)
+ 2\bar{\beta}_1\left(\Upsilon_{k-1}-\Upsilon_k\right)^{\frac{1-\vartheta}{\vartheta}}
+ 2\bar{\beta}_2\left(\sqrt{\varepsilon_{k-1}}\right)^{\frac{1-\vartheta}{\vartheta}}
+ 2\bar{\beta}_3\Big(s_k^{1-\frac{1}{\tau}}\Big)^{\frac{1-\vartheta}{\vartheta}}
+ {\textstyle\sum^{\infty}_{i=k}\sqrt{\varepsilon_{i-1}}
+ \frac{1}{b}\sum^{\infty}_{i=k}s_i^{1-\frac{1}{\tau}}} \\
&\leq \left(2\bar{\beta}_1+1\right)\left(\Upsilon_{k-1}-\Upsilon_k\right)^{\frac{1-\vartheta}{\vartheta}}
+ 2\bar{\beta}_2 \left(\sqrt{\varepsilon_{k-1}}\right)^{\frac{1-\vartheta}{\vartheta}}
+ 2\bar{\beta}_3 \Big(s_k^{1-\frac{1}{\tau}}\Big)^{\frac{1-\vartheta}{\vartheta}}
+ {\textstyle\sum^{\infty}_{i=k}\sqrt{\varepsilon_{i-1}}
+ \frac{1}{b}\sum^{\infty}_{i=k}s_i^{1-\frac{1}{\tau}}},
\end{aligned}
\end{equation*}
where $\bar{\beta}_1:=\frac{b}{a}\tilde{a}\left(\tilde{a}b\right)^{\frac{1-\vartheta}{\vartheta}}$, $\bar{\beta}_2:=\frac{b}{a}\tilde{a}\left(\tilde{a}b\right)^{\frac{1-\vartheta}{\vartheta}}$, $\bar{\beta}_3:=\frac{b}{a}\tilde{a}^{\frac{1}{\vartheta}}$, and the last inequality follows from $\Upsilon_{k-1}-\Upsilon_k\leq\left(\Upsilon_{k-1}-\Upsilon_k\right)^{\frac{1-\vartheta}{\vartheta}}$ when $\Upsilon_{k-1}-\Upsilon_k\leq1$ for all sufficiently large $k$. Then, raising to a power of $\frac{\vartheta}{1-\vartheta}$ (which is larger than 1 for any $\theta_{\tau}\leq\vartheta<1$ in this case) to both sides of the above inequality, we see further that, for all sufficiently large $k$,
\begin{equation*}
{\small\begin{aligned}
\Upsilon_k^{\frac{\vartheta}{1-\vartheta}}
&\leq \left(\left(2\bar{\beta}_1+1\right)\left(\Upsilon_{k-1}
-\Upsilon_k\right)^{\frac{1-\vartheta}{\vartheta}}
+ 2\bar{\beta}_2 \left(\sqrt{\varepsilon_{k-1}}\right)^{\frac{1-\vartheta}{\vartheta}}
+ 2\bar{\beta}_3 \Big(s_k^{1-\frac{1}{\tau}}\Big)^{\frac{1-\vartheta}{\vartheta}}
+ \sum^{\infty}_{i=k}\sqrt{\varepsilon_{i-1}}
+ \frac{1}{b}\sum^{\infty}_{i=k}s_i^{1-\frac{1}{\tau}} \right)^{\frac{\vartheta}{1-\vartheta}} \\
&\leq \beta_1\left(\Upsilon_{k-1}-\Upsilon_k\right)
+ \beta_2 \sqrt{\varepsilon_{k-1}}
+ \beta_3 s_k^{1-\frac{1}{\tau}}
+ \beta_4 \Big({\textstyle\sum^{\infty}_{i=k}}\sqrt{\varepsilon_{i-1}} \Big)^{\frac{\vartheta}{1-\vartheta}}
+ \beta_5 \left({\textstyle\sum^{\infty}_{i=k}}s_i^{1-\frac{1}{\tau}} \right)^{\frac{\vartheta}{1-\vartheta}},
\end{aligned}}
\end{equation*}
where the last inequality follows from Jensen's inequality with $\beta_1:= 5^{\frac{2\vartheta-1}{1-\vartheta}}\left(2\bar{\beta}_1+1\right)^{\frac{\vartheta}{1-\vartheta}}$,
$\beta_2:=5^{\frac{2\vartheta-1}{1-\vartheta}} \left(2\bar{\beta}_2\right)^{\frac{\vartheta}{1-\vartheta}}$,
$\beta_3:=5^{\frac{2\vartheta-1}{1-\vartheta}} \left(2\bar{\beta}_3\right)^{\frac{\vartheta}{1-\vartheta}}$,
$\beta_4:=5^{\frac{2\vartheta-1}{1-\vartheta}}$,
and $\beta_5:=5^{\frac{2\vartheta-1}{1-\vartheta}}b^{-\frac{\vartheta}{1-\vartheta}}$. This proves (ii).
\end{proof}

With the above preparations in place, we are now ready to establish explicit convergence rate estimates for the sequence $\{\bm{x}^k\}$ generated by our iVPGSA, based on the KL exponent $\theta_{\tau}$ of the auxiliary function $\Phi_{\tau}$ for some $\tau>1$. These estimates provide a precise characterization of the algorithm's convergence behavior under the given assumptions.

\begin{theorem}\label{thm-convrate}
Suppose that Assumptions \ref{assumA}, \ref{assumB}, \ref{assumC}, \ref{assumD} hold, there exist $\underline{\gamma}, \,\overline{\gamma}>0$ such that $\frac{L_h}{2}<\underline{\gamma}\leq\overline{\gamma}$ and $\underline{\gamma}I_n \preceq H_k \preceq \overline{\gamma}I_n$ for all $k\geq0$, and the summable error conditions in \eqref{epssumcond} hold. Moreover, let $\tau>1$ and suppose that $\Phi_{\tau}$ defined in \eqref{defpofun} is a KL function with an exponent $\theta_{\tau}$. Let $\{\bm{x}^k\}$ be the sequence generated by the iVPGSA in Algorithm \ref{algo-iVPGSA} and let $\bm{x}^*$ be the limit point. Then, the following statements hold.
\begin{itemize}
\item[{\rm (i)}] If $\theta_{\tau}=0$, the sequence $\{\bm{x}^k\}$ converges to $\bm{x}^*$ finitely.

\item[{\rm (ii)}] If $\theta_{\tau}\in\left(0,\frac{1}{2}\right]$, we have the following results.
    \begin{itemize}
    \item[(a)] For \blue{$\varepsilon_k=\varepsilon_0\zeta^k$ with $0<\zeta<1$} and $\varepsilon_0>0$, there exists $\varrho\in(0,1)$ such that
        \begin{equation*}
        \|\bm{x}^k-\bm{x}^*\|\leq\mathcal{O}\big(\varrho^k\big)
        \end{equation*}
        for all sufficiently large $k$.

    \vspace{1mm}
    \item[(b)] For $\varepsilon_k=\frac{\varepsilon_0}{(k+1)^q}$ with $q>\frac{2\tau-1}{\tau-1}$ and $\varepsilon_0>0$, it holds for all sufficiently large $k$ that
        \begin{equation*}
        \|\bm{x}^k-\bm{x}^*\|\leq
        \left\{\begin{aligned}
        &\mathcal{O}\left(k^{-\left[\frac{\tau-1}{\tau}(q-1)-1\right]}\right),
        &&\text{if}~~1<\tau\leq2~~\text{and}~~\textstyle{q>\frac{2\tau-1}{\tau-1}}, \\
        &\mathcal{O}\left(k^{-\left[\frac{\tau-1}{\tau}(q-1)-1\right]}\right),
        &&\text{if}~~\tau>2~~\text{and}~~\textstyle{\frac{2\tau-1}{\tau-1}<q<\frac{2(\tau-1)}{\tau-2}}, \\
        &\mathcal{O}\left(k^{-\left(\frac{q}{2}-1\right)}\right),
        &&\text{if}~~\tau>2~~\text{and}~~\textstyle{q\geq\frac{2(\tau-1)}{\tau-2}}.
        \end{aligned}\right.
        \end{equation*}
    \end{itemize}

\item[{\rm (iii)}] If $\theta_{\tau}\in\left(\frac{1}{2}, 1\right)$, we have the following results for all sufficiently large $k$.
    \begin{itemize}
    \item[(a)] For \blue{$\varepsilon_k=\varepsilon_0\zeta^k$ with $0<\zeta<1$} and $\varepsilon_0>0$, it holds that
        \begin{equation*}
        \|\bm{x}^k-\bm{x}^*\|\leq \mathcal{O}\left(k^{-\frac{1-\theta_{\tau}}{2\theta_{\tau}-1}}\right).
        \end{equation*}

    \item[(b)] For $\varepsilon_k=\frac{\varepsilon_0}{(k+1)^q}$ with $q>\frac{2\tau-1}{\tau-1}$ and $\varepsilon_0>0$, it holds that \vspace{1mm}
        \begin{itemize}
        \item[(b1)] if $1<\tau\leq2$ and $q>\frac{2\tau-1}{\tau-1}$; or $\tau>2$ and $\frac{2\tau-1}{\tau-1}<q<\frac{2(\tau-1)}{\tau-2}$, we have that
            \begin{equation*}
            \hspace{-0.5cm}
            \|\bm{x}^k-\bm{x}^*\|
            \leq\left\{\begin{aligned}
            &\mathcal{O}\left(k^{-\left(\frac{\tau-1}{\tau}(q-1)-1\right)}\right),
            &&\text{if}\quad q<{\textstyle\frac{\tau}{\tau-1}\cdot\frac{\theta_{\tau}}{2\theta_{\tau}-1}+1},  \\
            &\mathcal{O}\left(k^{-\frac{1-\theta_{\tau}}{2\theta_{\tau}-1}}\right),
            &&\text{otherwise}.
            \end{aligned}\right.
            \end{equation*}

        \item[(b2)] if $\tau>2$ and $q\geq\frac{2(\tau-1)}{\tau-2}$, we have that
            \begin{equation*}
            \hspace{-0.5cm}
            \|\bm{x}^k-\bm{x}^*\|
            \leq\left\{\begin{aligned}
            &\mathcal{O}\left(k^{-\left(\frac{q}{2}-1\right)}\right),
            &&\text{if}\quad q<{\textstyle\frac{2\theta_{\tau}}{2\theta_{\tau}-1}},  \\
            &\mathcal{O}\left(k^{-\frac{1-\theta_{\tau}}{2\theta_{\tau}-1}}\right),
            &&\text{otherwise}.
            \end{aligned}\right.
            \end{equation*}
        \end{itemize}
    \end{itemize}
\end{itemize}
\end{theorem}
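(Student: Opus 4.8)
The plan is to bound $\|\bm{x}^k-\bm{x}^*\|$ by the tail length $\Upsilon_k=\sum_{i=k}^{\infty}\|\bm{x}^{i+1}-\bm{x}^i\|$, since $\|\bm{x}^k-\bm{x}^*\|=\big\|\sum_{i=k}^{\infty}(\bm{x}^{i+1}-\bm{x}^i)\big\|\leq\Upsilon_k$; every stated rate then follows once the corresponding decay of $\Upsilon_k$ is extracted from the recursive inequalities in Lemma \ref{lemma-midkeyineq}. For statement (i) with $\theta_{\tau}=0$, I would argue by contradiction: the KL inequality with exponent $0$ forces $\mathrm{dist}\big(0,\partial\Phi_{\tau}(\bm{x}^k,\sqrt[\tau]{s_k})\big)\geq a$ for a constant $a>0$ whenever $\Phi_{\tau}(\bm{x}^k,\sqrt[\tau]{s_k})>c^*$, whereas the subdifferential bound \eqref{subdiffbdIC2}, together with $\|\bm{x}^k-\bm{x}^{k-1}\|\to0$, $\varepsilon_k\to0$ and $s_k\to0$, shows that this distance tends to $0$. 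Hence $\Phi_{\tau}(\bm{x}^k,\sqrt[\tau]{s_k})=c^*$ for some finite index, and the finite-convergence argument of Theorem \ref{thm-gloconv} (Case 1) applies.

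For statement (ii) with $\theta_{\tau}\in(0,\tfrac12]$, I would start from the linear recursion \eqref{KLineqrate2case1}, namely $\Upsilon_k\leq\rho\Upsilon_{k-1}+\alpha_1\sum_{i\geq k}\sqrt{\varepsilon_{i-1}}+\alpha_2\sum_{i\geq k}s_i^{1-1/\tau}$ with $\rho\in(0,1)$, and substitute the orders of the two error series from Lemma \ref{lemma-orderofseries}. For the exponential choice $\varepsilon_k=\varepsilon_0q^k$, both error series decay geometrically, so a routine geometric-series estimate gives $\Upsilon_k=\mathcal{O}(\varrho^k)$ for any $\varrho$ slightly larger than $\max\{\rho,\sqrt{q},q^{1-1/\tau}\}<1$, yielding (ii)(a). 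For the polynomial choice $\varepsilon_k=\varepsilon_0/(k+1)^q$, the two error tails are $\mathcal{O}(k^{-(q/2-1)})$ and $\mathcal{O}(k^{-(\Xi(\tau,q)-1)})$, so their sum is $\mathcal{O}\big(k^{-(\psi_1(q,\tau)-1)}\big)$ with $\psi_1=\min\{q/2,\Xi(\tau,q)\}$; applying Lemma \ref{lemma-convrate}(i) then gives $\Upsilon_k=\mathcal{O}\big(k^{-(\psi_1(q,\tau)-1)}\big)$, and inserting the closed form of $\psi_1$ from Lemma \ref{lem-closed-psi1} produces the three regimes in (ii)(b).

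For statement (iii) with $\theta_{\tau}\in(\tfrac12,1)$, I would work from the super-linear recursion \eqref{KLineqrate2case2}, which holds for every $\vartheta\in[\theta_{\tau},1)$. Substituting the orders from Lemma \ref{lemma-orderofseries}, the four error contributions collapse to $\mathcal{O}\big(k^{-\psi_2(q,\tau,\vartheta)}\big)$ (the closed form being supplied by Lemma \ref{lem-closed-psi2}), so that $\Upsilon_k^{\vartheta/(1-\vartheta)}\leq\beta_1(\Upsilon_{k-1}-\Upsilon_k)+\mathcal{O}\big(k^{-\psi_2(q,\tau,\vartheta)}\big)$. The next step is to convert this into a polynomial decay estimate via a recursive-inequality lemma of the type in Lemma \ref{lemma-convrate}(ii); balancing the intrinsic KL rate $\tfrac{1-\vartheta}{2\vartheta-1}$ (encoded through $\tfrac{\vartheta}{2\vartheta-1}$) against the error-induced rate $\psi_2$ yields $\Upsilon_k=\mathcal{O}\big(k^{-(\psi_3(q,\tau,\vartheta)-1)}\big)$ with $\psi_3=\min\{\psi_2,\tfrac{\vartheta}{2\vartheta-1}\}$. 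Since $\vartheta$ is free, I would finally maximize the exponent over $\vartheta\in[\theta_{\tau},1)$, invoking the closed forms of $\psi_3^{(q,\tau,*)}$ and $\vartheta^{(q,\tau,*)}$ in Lemma \ref{lem-closed-psi3} to obtain $\mathcal{O}\big(k^{-(\psi_3^{(q,\tau,*)}-1)}\big)$, which unpacks into the cases of (iii)(b). For the exponential choice in (iii)(a), the error tails are negligible against any polynomial, effectively forcing $\psi_3=\tfrac{\vartheta}{2\vartheta-1}$, maximized at $\vartheta=\theta_{\tau}$, giving $\mathcal{O}\big(k^{-\frac{1-\theta_{\tau}}{2\theta_{\tau}-1}}\big)$.

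The main obstacle is the conversion step in case (iii): turning the super-linear recursion $\Upsilon_k^{\vartheta/(1-\vartheta)}\leq\beta_1(\Upsilon_{k-1}-\Upsilon_k)+\mathcal{O}(k^{-\psi_2})$ into a clean polynomial rate while tracking how the perturbation competes with the intrinsic descent. Unlike the exact setting, one cannot simply telescope the descent: the error term must be regulated so that it neither destroys the contraction nor dominates prematurely, and the careful bookkeeping of the free exponent $\vartheta$ through the auxiliary quantities $\psi_2,\psi_3$ is precisely what allows the optimal rate to be extracted. The remaining work—substituting the series orders and unpacking the closed forms of $\psi_1,\psi_2,\psi_3$—is routine given Lemmas \ref{lemma-orderofseries}–\ref{lem-closed-psi3}.
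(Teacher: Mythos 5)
Your plan follows the paper's own route step by step: the reduction $\|\bm{x}^k-\bm{x}^*\|\leq\Upsilon_k$, the contradiction argument for (i) pitting the KL inequality with exponent $0$ against the vanishing bound \eqref{subdiffbdIC2}, and, for (ii), substituting Lemma \ref{lemma-orderofseries} into \eqref{KLineqrate2case1} and invoking Lemma \ref{lemma-convrate}(i) together with Lemma \ref{lem-closed-psi1}. Parts (i) and (ii) of your proposal are complete and correct as written, and coincide with the paper's argument.

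The gap is in part (iii), precisely at the step you yourself flag as ``the main obstacle'': your proposal asserts the outcome of the conversion (``balancing \dots yields $\Upsilon_k=\mathcal{O}(k^{-(\psi_3-1)})$'') but supplies no mechanism for it, and the assertion cannot be obtained by directly citing Lemma \ref{lemma-convrate}(ii). That lemma requires a recursion of the form $u_{k+1}\leq\big(1-\frac{q}{k+b}\big)u_k+\frac{d}{(k+b)^{p+1}}$, whereas \eqref{KLineqrate2case2} bounds the \emph{superlinear} quantity $\Upsilon_k^{\vartheta/(1-\vartheta)}$ by the \emph{difference} $\Upsilon_{k-1}-\Upsilon_k$ plus errors; there is no contraction factor in sight. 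The paper manufactures one by a linearization trick: since $t\mapsto t^{\vartheta/(1-\vartheta)}$ is convex (as $\vartheta>\frac12$), the gradient inequality \eqref{hcvxineq} applied at the comparison point $t=\xi k^{-\frac{1-\vartheta}{2\vartheta-1}}$ gives
\begin{equation*}
\Upsilon_k^{\frac{\vartheta}{1-\vartheta}}
\geq \frac{\vartheta\xi^{\frac{2\vartheta-1}{1-\vartheta}}}{(1-\vartheta)\,k}
\left(\Upsilon_k-\xi k^{-\frac{1-\vartheta}{2\vartheta-1}}\right)
+\xi^{\frac{\vartheta}{1-\vartheta}}k^{-\frac{\vartheta}{2\vartheta-1}},
\end{equation*}
which, combined with \eqref{KLineqrate2case2} and after dividing by $\beta_1+\frac{\vartheta\xi^{(2\vartheta-1)/(1-\vartheta)}}{(1-\vartheta)k}$, produces exactly the required form $\Upsilon_k\leq\big(1-\frac{\bar\beta_8}{\bar\beta_8+k}\big)\Upsilon_{k-1}+\mathcal{O}\big(k^{-\psi_3(q,\tau,\vartheta)}\big)$ with $\bar\beta_8=\frac{\vartheta\xi^{(2\vartheta-1)/(1-\vartheta)}}{(1-\vartheta)\beta_1}$. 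This is where the exponent $\frac{\vartheta}{2\vartheta-1}$ (hence the $\min$ defining $\psi_3$ in \eqref{defpsi3}) actually enters the analysis, and it is also where the free parameter $\xi$ is indispensable: Lemma \ref{lemma-convrate}(ii) needs its hypothesis $q>p$, i.e.\ $\bar\beta_8>\psi_3^{(q,\tau,*)}-1$, and this is secured only because $\xi$ can be taken arbitrarily large. Note that the same device is needed for (iii)(a), not only (iii)(b): there the perturbation after linearization is $\mathcal{O}\big(k^{-\theta_\tau/(2\theta_\tau-1)}\big)$, but the contraction factor must still be constructed the same way. Once this linearization is inserted, the rest of your plan---optimizing over $\vartheta$ via Lemma \ref{lem-closed-psi3} and unpacking the closed forms---goes through exactly as in the paper.
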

\begin{proof}
\textit{Statement (i)}. First, based on \eqref{Hvalineq} and the subsequent arguments, we know that the sequence $\{\Phi_{\tau}\big(\bm{x}^{k}, \sqrt[\tau]{s_{k}}\big)\}$ is non-increasing and converges to $c^*$. We next claim that, if $\theta_{\tau}=0$, there must exist $\bar{k}\geq0$ such that $\Phi_{\tau}\big(\bm{x}^{\bar{k}}, \sqrt[\tau]{s_{\bar{k}}}\big)=c^*$. Suppose to the contrary that $\Phi_{\tau}\big(\bm{x}^{k}, \,\sqrt[\tau]{s_{k}}\big)>c^*$ for all $k\geq0$. We then have from the definition of the KL exponent (see Definition \ref{defKLexpo}) that, for all sufficiently large $k$,
\begin{equation*}
\mathrm{dist}\left(0,\, \partial \Phi_{\tau}\big(\bm{x}^{k}, \sqrt[\tau]{s_{k}}\big)\right) \geq a,
\end{equation*}
which contradicts \eqref{subdiffbdIC2} due to $\|\bm{x}^{k}-{\bm{x}}^{k-1}\|\to0$, $\sqrt{\varepsilon_{k-1}}\to0$ and $ s_k^{1-\frac{1}{\tau}}\to0$. Therefore, we have that $\Phi_{\tau}\big(\bm{x}^{\bar{k}}, \,\sqrt[\tau]{s_{\bar{k}}}\big)=c^*$ for some $\bar{k}\geq 0$. Since $\{\Phi_{\tau}\big(\bm{x}^{k}, \sqrt[\tau]{s_{k}}\big)\}$ is non-increasing, we must have $\Phi_{\tau}\big(\bm{x}^{k}, \,\sqrt[\tau]{s_{k}}\big) \equiv c^*$ for all $k\geq\bar{k}$. This together with \eqref{Hvalineq} implies that $\bm{x}^{\bar{k}+t}=\bm{x}^{\bar{k}}$ for all $t\geq0$, and hence $\{\bm{x}^k\}$ must converge to $\bm{x}^*$ finitely.

\vspace{2mm}
\textit{Statement (ii)}. Suppose that $\theta_{\tau}\in\left(0,\frac{1}{2}\right]$. We consider the following two cases.

We first consider \blue{$\varepsilon_k=\varepsilon_0\zeta^k$ with $0<\zeta<1$} and $\varepsilon_0>0$. Substituting \blue{$\varepsilon_k=\varepsilon_0 \zeta^k$} in \eqref{KLineqrate2case1} and applying Lemma \ref{lemma-orderofseries}(i), we have that
\begin{equation*}
\Upsilon_k
\leq \rho \Upsilon_{k-1}
+ \alpha_1{\textstyle\sum^{\infty}_{i=k}}\sqrt{\varepsilon_{i-1}}
+ \alpha_2{\textstyle\sum^{\infty}_{i=k}}\,s_i^{1-\frac{1}{\tau}}
= \rho \Upsilon_{k-1}
+ \alpha_1 l_2 \blue{\zeta^{\frac{k}{2}}}
+ \alpha_2 l_3 \blue{\zeta^{\left(1-\frac{1}{\tau}\right)k}}
\end{equation*}
for all $k \geq k_1$ with some $k_1\geq0$. Further, by recursion, we obtain that,
\begin{equation*}
\begin{aligned}
\Upsilon_k
&\leq \rho^{k-k_1} \Upsilon_{k_1-1}
+ \alpha_1 l_2 {\textstyle\sum_{i=k_1}^k} \rho^{k-i} \blue{(\sqrt{\zeta})^{i}}
+ \alpha_2 l_3 {\textstyle\sum_{i=k_1}^k} \rho^{k-i} \blue{\big(\zeta^{1-\frac{1}{\tau}}\big)^i}    \\
&\leq \big(\rho^{-k_1}\Upsilon_{k_1-1}\big)\rho^{k}
+ \alpha_1 l_2 k \blue{\big(\max\{\rho, \sqrt{\zeta}\}\big)^k}
+ \alpha_2 l_3 k \blue{\big(\max\{\rho, \zeta^{1-\frac{1}{\tau}}\}\big)^k} \\
&\leq \big(\rho^{-k_1}\Upsilon_{k_1-1}\big)\rho^{k}
+ \alpha_1 l_2 \blue{\left(\max\{\rho, \sqrt{\zeta}\}\right)^{\frac{k}{2}}}
+ \alpha_2 l_3 \blue{\left(\max\{\rho, \zeta^{1-\frac{1}{\tau}}\}\right)^{\frac{k}{2}}},
\end{aligned}
\end{equation*}
where the last inequality holds for all sufficiently large $k$, because \blue{$0<\max\{\rho, \sqrt{\zeta}\}<1$ and $0<\max\{\rho, \zeta^{1-\frac{1}{\tau}}\}<1$}, and hence both \blue{$k\left(\max\{\rho, \sqrt{\zeta}\}\right)^{\frac{k}{2}}<1$ and $k\big(\max\{\rho, \zeta^{1-\frac{1}{\tau}}\}\big)^{\frac{k}{2}}<1$} hold for all sufficiently large $k$. Thus, using the above relation, there exists $\varrho\in(0,1)$ such that $\Upsilon_k\leq\mathcal{O}\big(\varrho^k\big)$ for all sufficiently large $k$. This then together with $\|\bm{x}^k-\bm{x}^*\|\leq\Upsilon_k$ gives the desired result in statement (ii)(a).

\vspace{1mm}
\textit{Statement (ii)(b)}. We next consider $\varepsilon_k=\frac{\varepsilon_0}{(k+1)^q}$ with $q>\frac{2\tau-1}{\tau-1}$ and $\varepsilon_0>0$. Substituting $\varepsilon_k = \frac{\varepsilon_0}{(k+1)^q}$ in \eqref{KLineqrate2case1} and applying Lemma \ref{lemma-orderofseries}(ii), we have
\begin{equation*}
\begin{aligned}
\Upsilon_k
\leq \rho \Upsilon_{k-1}
+ \frac{\alpha_1e_2}{k^{\frac{q}{2}-1}}
+ \frac{\alpha_2e_3}{k^{\frac{\tau-1}{\tau}(q-1)-1}}
\leq\rho\Upsilon_{k-1}+C\frac{1}{k^{\psi_1(q,\tau)-1}},
\end{aligned}
\end{equation*}
where $C:=\max\{\alpha_1e_2,\,\alpha_2e_3\}$ and $\psi_1(q,\tau)$ is defined in \eqref{defpsi1}. Using the above relation and applying Lemma \ref{lemma-convrate}(i) and Lemma \ref{lem-closed-psi1}, we conclude that
\begin{equation*}
\Upsilon_k \leq \mathcal{O}\left(\frac{1}{k^{\psi_1(q,\tau)-1}}\right)
=\left\{\begin{aligned}
&\mathcal{O}\left(k^{-\left[\frac{\tau-1}{\tau}(q-1)-1\right]}\right),
&&\text{if}~~1<\tau\leq2~~\text{and}~~\textstyle{q>\frac{2\tau-1}{\tau-1}}, \\[3pt]
&\mathcal{O}\left(k^{-\left[\frac{\tau-1}{\tau}(q-1)-1\right]}\right),
&&\text{if}~~\tau>2~~\text{and}~~\textstyle{\frac{2\tau-1}{\tau-1}<q<\frac{2(\tau-1)}{\tau-2}}, \\[3pt]
&\mathcal{O}\left(k^{-\left(\frac{q}{2}-1\right)}\right),
&&\text{if}~~\tau>2~~\text{and}~~\textstyle{q\geq\frac{2(\tau-1)}{\tau-2}}.
\end{aligned}\right.
\end{equation*}
This then, together with $\|\bm{x}^k-\bm{x}^*\|\leq\Upsilon_k$, gives the desired result in statement (ii)(b).

\vspace{2mm}
\textit{Statement (iii)}. Suppose that $\theta_{\tau}\in\left(\frac{1}{2},1\right)$. We consider the following two cases.

\vspace{1mm}
\textit{Statement (iii)(a)}. We first consider \blue{$\varepsilon_k=\varepsilon_0\zeta^k$ with $0<\zeta<1$} and $\varepsilon_0>0$. Since $\sqrt{\varepsilon_{k-1}} \leq \sum^{\infty}_{i=k}\sqrt{\varepsilon_{i-1}}\to0$, $s_k^{1-\frac{1}{\tau}}\leq \sum^{\infty}_{i=k}s_i^{1-\frac{1}{\tau}}\to0$ and $\frac{\theta_{\tau}}{1-\theta_{\tau}}>1$, one can further obtain from \eqref{KLineqrate2case2} with $\vartheta=\theta_{\tau}$ that
\begin{equation*}
\Upsilon_k^{\frac{\theta_{\tau}}{1-\theta_{\tau}}}
\leq \beta_1\left(\Upsilon_{k-1}-\Upsilon_k\right)
+ (\beta_2+\beta_4){\textstyle\sum^{\infty}_{i=k}}\sqrt{\varepsilon_{i-1}}
+ (\beta_3+\beta_5){\textstyle\sum^{\infty}_{i=k}}\,s_i^{1-\frac{1}{\tau}}
\end{equation*}
for all sufficiently large $k$. Substituting \blue{$\varepsilon_k=\varepsilon_0 \zeta^k$} in the above relation and applying Lemma \ref{lemma-orderofseries}(i), we have that
\begin{equation}\label{ine-rate-large-expo-1}
\begin{aligned}
\Upsilon_k^{\frac{\theta_{\tau}}{1-\theta_{\tau}}}
&\leq \beta_1\left(\Upsilon_{k-1}-\Upsilon_k\right)
+ (\beta_2+\beta_4)l_2\,\blue{\zeta^{\frac{k}{2}}}
+ (\beta_3+\beta_5)l_3\,\blue{\zeta^{\frac{\tau-1}{\tau}k}} \\
&\leq \beta_1\left(\Upsilon_{k-1}-\Upsilon_k\right)
+ \beta_6\,\blue{\zeta^{\min\left\{\frac{1}{2},\,\frac{\tau-1}{\tau}\right\}k}},
\end{aligned}
\end{equation}
where $\beta_6:=(\beta_2+\beta_4)l_2+(\beta_3+\beta_5)l_3$. Now, we define a function \blue{$\chi_{\theta_{\tau}}(t):=t^{\frac{\theta_{\tau}}{1-\theta_{\tau}}}$} for $t>0$. Since $\frac{\theta_{\tau}}{1-\theta_{\tau}}>1$, \blue{$\chi_{\theta_{\tau}}$} is convex on $(0,+\infty)$ and hence it holds that
\begin{equation}\label{hcvxineq}
\blue{\chi_{\theta_{\tau}}(v) - \chi_{\theta_{\tau}}(t)}
\geq {\textstyle\frac{\theta_{\tau}}{1-\theta_{\tau}}}
\,t^{\frac{2 \theta_{\tau}-1}{1-\theta_{\tau}}}(v-t),
\quad \forall\,t,v>0.
\end{equation}
Let $\xi>0$ be an arbitrary positive scalar. Then, using \eqref{hcvxineq} with $\Upsilon_{k}$ and $\xi k^{-\frac{1-\theta_{\tau}}{2\theta_{\tau}-1}}$ in place of $v$ and $t$, respectively, we obtain that
\begin{equation}\label{ine-rate-large-expo-2}
\Upsilon_{k}^{\frac{\theta_{\tau}}{1-\theta_{\tau}}}-\left(\xi k^{-\frac{1-\theta_{\tau}}{2 \theta_{\tau}-1}}\right)^{\frac{\theta_{\tau}}{1-\theta_{\tau}}}
\geq \frac{\theta_{\tau} \xi^{\frac{2 \theta_{\tau}-1}{1-\theta_{\tau}}}}{(1-\theta_{\tau}) k}\left(\Upsilon_{k}-\xi k^{-\frac{1-\theta_{\tau}}{2 \theta_{\tau}-1}}\right).
\end{equation}
Thus, combining \eqref{ine-rate-large-expo-1} and \eqref{ine-rate-large-expo-2}, we deduce that
\begin{equation*}
\beta_1\left(\Upsilon_{k-1}-\Upsilon_k\right)
+ \beta_6\,\blue{\zeta^{\min\left\{\frac{1}{2},\,\frac{\tau-1}{\tau}\right\}k}}
\geq \Upsilon_k^{\frac{\theta_{\tau}}{1-\theta_{\tau}}}
\geq \frac{\theta_{\tau} \xi^{\frac{2 \theta_{\tau}-1}{1-\theta_{\tau}}}}{(1-\theta_{\tau}) k}\left(\Upsilon_{k}-\xi k^{-\frac{1-\theta_{\tau}}{2 \theta_{\tau}-1}}\right)
+ \left(\xi k^{-\frac{1-\theta_{\tau}}{2 \theta_{\tau}-1}}\right)^{\frac{\theta_{\tau}}{1-\theta_{\tau}}},
\end{equation*}
which further implies that
\begin{equation*}
\begin{aligned}
{\textstyle\left(\frac{\theta_{\tau}\xi^{\frac{2\theta_{\tau}-1}{1-\theta_{\tau}}}}{(1-\theta_{\tau}) k}+\beta_1\right)}\Upsilon_k
\leq \beta_1\Upsilon_{k-1}
+ \beta_6\,\blue{\zeta^{\min\left\{\frac{1}{2},\,\frac{\tau-1}{\tau}\right\}k}}
+ \beta_7\,k^{-\frac{\theta_{\tau}}{2\theta_{\tau}-1}}
\leq \beta_1\Upsilon_{k-1}
+ (\beta_6+\beta_7)\,k^{-\frac{\theta_{\tau}}{2\theta_{\tau}-1}},
\end{aligned}
\end{equation*}
where $\beta_7:=\frac{2\theta_{\tau}-1}{1-\theta_{\tau}} \xi^{\frac{\theta_{\tau}}{1-\theta_{\tau}}}$ and the last inequality holds for all sufficiently large $k$. Dividing both sides of the above inequality by $\frac{\theta_{\tau}\xi^{\frac{2\theta_{\tau}-1}{1-\theta_{\tau}}}}{(1-\theta_{\tau}) k}+\beta_1$ and rearranging the terms, we then get that
\begin{equation*}
\begin{aligned}
\Upsilon_k
&\leq {\textstyle\left(\frac{\theta_{\tau}\xi^{\frac{2\theta_{\tau}-1}{1-\theta_{\tau}}}}{(1-\theta_{\tau}) k}+\beta_1\right)^{-1}}\beta_1\Upsilon_{k-1}
+ {\textstyle\left(\frac{\theta_{\tau}\xi^{\frac{2\theta_{\tau}-1}{1-\theta_{\tau}}}}{(1-\theta_{\tau}) k}+\beta_1\right)^{-1}}(\beta_6+\beta_7)\,k^{-\frac{\theta_{\tau}}{2\theta_{\tau}-1}} \\
&\leq \left(1-\frac{\beta_8}{\beta_8+k}\right) \Upsilon_{k-1}
+ \beta_1^{-1}(\beta_6+\beta_7)\,k^{-\frac{\theta_{\tau}}{2\theta_{\tau}-1}},
\end{aligned}
\end{equation*}
where $\beta_8:=\frac{\theta_{\tau}\xi^{\frac{2\theta_{\tau}-1}{1-\theta_{\tau}}}}{(1-\theta_{\tau})\beta_1}$. Note that $\xi>0$ can be an arbitrary positive scalar. Therefore, one can choose a $\xi>0$ such that $\beta_8>\frac{\theta_{\tau}}{2\theta_{\tau}-1}-1=\frac{1-\theta_{\tau}}{2\theta_{\tau}-1}$. Then, by applying Lemma \ref{lemma-convrate}(ii), we conclude that $\Upsilon_k \leq \mathcal{O}\left(k^{-\frac{1-\theta_{\tau}}{2\theta_{\tau}-1}}\right)$ for all sufficiently large $k$. This then together with $\|\bm{x}^k-\bm{x}^*\|\leq\Upsilon_k$ gives the desired result in statement (iii)(a).

\vspace{1mm}
\textit{Statement (iii)(b)}. We next consider $\varepsilon_k=\frac{\varepsilon_0}{(k+1)^q}$ with $q>\frac{2\tau-1}{\tau-1}$ and $\varepsilon_0>0$. Substituting $\varepsilon_k = \frac{\varepsilon_0}{(k+1)^q}$ in \eqref{KLineqrate2case2} and applying Lemma \ref{lemma-orderofseries}(ii), we have that, for any $\theta_{\tau}\leq\vartheta<1$,
\begin{equation*}
\begin{aligned}
\Upsilon_k^{\frac{\vartheta}{1-\vartheta}}
&\leq \beta_1\left(\Upsilon_{k-1}-\Upsilon_k\right)
{\textstyle+ \frac{\beta_2\sqrt{\varepsilon_0}}{k^{\frac{q}{2}}}
+ \frac{\beta_3e_1^{1-\frac{1}{\tau}}}{k^{\Xi(\tau,q)}}
+ \frac{\beta_4e_2^{\frac{\vartheta}{1-\vartheta}}}
{k^{\frac{\vartheta}{1-\vartheta}\left(\frac{q}{2}-1\right)}}
+ \frac{\beta_5e_3^{\frac{\vartheta}{1-\vartheta}} }
{k^{\frac{\vartheta}{1-\vartheta}\left[\Xi(\tau,q)-1\right]}}} \\
&\leq \beta_1\left(\Upsilon_{k-1}-\Upsilon_k\right)
+ \bar{\beta}_6\,k^{-\psi_2(q,\tau,\vartheta)},
\end{aligned}
\end{equation*}
where $\Xi(\tau,q):=\frac{\tau-1}{\tau}(q-1)$, $\bar{\beta}_6:=\beta_2\sqrt{\varepsilon_0}+\beta_3e_1^{1-\frac{1}{\tau}}
+\beta_4e_2^{\frac{\vartheta}{1-\vartheta}}+\beta_5e_3^{\frac{\vartheta}{1-\vartheta}}$, and $\psi_2(q,\tau,\vartheta)$ is defined in \eqref{defpsi2}. Then, using this relation and \eqref{ine-rate-large-expo-2} with $\vartheta$ in place of $\theta_{\tau}$, we obtain that
\begin{equation*}
\begin{aligned}
\beta_1\left(\Upsilon_{k-1}-\Upsilon_k\right)
+ {\textstyle\frac{\bar{\beta}_6}{k^{\psi_2(q,\tau,\vartheta)}}}
&\geq \Upsilon_k^{\frac{\vartheta}{1-\vartheta}}
\geq {\textstyle\frac{\vartheta\xi^{\frac{2\vartheta-1}{1-\vartheta}}}{(1-\vartheta) k}\left(\Upsilon_{k}-\xi k^{-\frac{1-\vartheta}{2\vartheta-1}}\right)
+ \left(\xi k^{-\frac{1-\vartheta}{2\vartheta-1}}\right)^{\frac{\vartheta}{1-\vartheta}}},
\end{aligned}
\end{equation*}
which further implies that
\begin{equation*}
\begin{aligned}
{\textstyle\left(\frac{\vartheta\xi^{\frac{2\vartheta-1}{1-\vartheta}}}{(1-\vartheta)k}+\beta_1\right)} \Upsilon_k
\leq \beta_1\Upsilon_{k-1}
+ \bar{\beta}_6\,k^{-\psi_2(q,\tau,\vartheta)}
+ \bar{\beta}_7\,k^{-\frac{\vartheta}{2\vartheta-1}}
\leq \beta_1\Upsilon_{k-1}
+ (\bar{\beta}_6+\bar{\beta}_7)\,k^{-\psi_3(q,\tau,\vartheta)},
\end{aligned}
\end{equation*}
where $\bar{\beta}_7:=\frac{2\vartheta-1}{1-\vartheta} \xi^{\frac{\vartheta}{1-\vartheta}}$ and $\psi_3(q,\tau,\vartheta)$ is defined in \eqref{defpsi3}. Dividing both sides of the above inequality by $\frac{\vartheta\xi^{\frac{2\vartheta-1}{1-\vartheta}}}{(1-\vartheta) k}+\beta_1$ and rearranging the terms, we then get that
\begin{equation*}
\begin{aligned}
\Upsilon_k
&\leq {\textstyle\left(\frac{\vartheta\xi^{\frac{2\vartheta-1}{1-\vartheta}}}{(1-\vartheta) k}+\beta_1\right)^{-1}}\beta_1\Upsilon_{k-1}
+ {\textstyle\left(\frac{\vartheta\xi^{\frac{2\vartheta-1}{1-\vartheta}}}{(1-\vartheta) k}+\beta_1\right)^{-1}}(\bar{\beta}_6+\bar{\beta}_7)\,k^{-\psi_3(q,\tau,\vartheta)}  \\
&\leq {\textstyle\left(1-\frac{\bar{\beta}_8}{\bar{\beta}_8+k}\right)}\Upsilon_{k-1}
+ \beta_1^{-1}(\bar{\beta}_6+\bar{\beta}_7)\,k^{-\psi_3(q,\tau,\vartheta)},
\end{aligned}
\end{equation*}
where $\bar{\beta}_8:=\frac{\vartheta\xi^{\frac{2\vartheta-1}{1-\vartheta}}}{(1-\vartheta)\beta_1}$. Notably, the above relation holds for any $\theta_{\tau}\leq\vartheta<1$ and the exponent $\psi_3(q,\tau,\vartheta)$ determines the final convergence rate. Thus, by applying Lemma \ref{lem-closed-psi3}, we can select the optimal $\vartheta^{(q,\tau,*)}$ to achieve the best rate exponent
$\psi_3^{(q,\tau,*)}
:=\max\limits_{\vartheta}\left\{\psi_3(q,\tau,\vartheta):\theta_{\tau}\leq\vartheta<1\right\}$.
Furthermore, note that $\xi>0$ can be an arbitrary positive scalar. Thus, one can select a $\xi>0$ such that $\bar{\beta}_8>\psi_3^{(q,\tau,*)}-1$. Then, by applying Lemma \ref{lemma-convrate}(ii), we conclude that $\Upsilon_k \leq \mathcal{O}\left(k^{-(\psi_3^{(q,\tau,*)}-1)}\right)$ for all sufficiently large $k$. Using this result along with Lemma \ref{lem-closed-psi3} and $\|\bm{x}^k-\bm{x}^*\|\leq\Upsilon_k$, we can derive the desired result in statement (iii)(b).
\end{proof}

\subsection{Further discussions on the KL exponent of $\Phi_{\tau}$}\label{sec-KLpofun}

Our convergence analysis above relies on the KL property and exponent of the auxiliary function $\Phi_{\tau}$ in \eqref{defpofun} for some $\tau>1$. It is worth noting that the function \blue{$t\mapsto\tau^{-1}t^{\tau}$} is semialgebraic for any rational $\tau>1$. Consequently, when the objective function $F$ in \eqref{defFun} is semialgebraic, $\Phi_{\tau}$ with any rational $\tau>1$ (which is now the sum of two semialgebraic functions) is also semialgebraic and hence it is a KL function; see, for example,
\cite[Section 4.3]{abrs2010proximal} and \cite[Section 2]{bdl2007the}. Moreover, we can rigorously establish the connection between the KL exponents of $F$ and $\Phi_{\tau}$ in the following proposition.

\begin{proposition}\label{prop-klexpo}
Suppose that $F$ defined in \eqref{defFun} has the KL property at $\tilde{\bm{x}}\in{\rm dom}\,\partial F$ with an exponent $\theta_F\in[0,1)$. Then, for any $\tau>1$, the auxiliary function $\Phi_{\tau}$ defined in \eqref{defpofun} has the KL property at $(\tilde{\bm{x}},0)$ with an exponent $\theta_{\tau}:=\max\left\{\theta_F,\,\frac{\tau-1}{\tau}\right\}$.
\end{proposition}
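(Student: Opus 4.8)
The plan is to reduce the KL inequality for $\Phi_\tau$ at $(\tilde{\bm{x}},0)$ to the known KL inequality for $F$ at $\tilde{\bm{x}}$ together with an elementary estimate for the added term $\tau^{-1}t^{\tau}$. First I would record the subdifferential of the auxiliary function: since $\Phi_\tau$ is separable and $t\mapsto\tau^{-1}t^{\tau}$ is continuously differentiable on $\mathbb{R}_+$ with derivative $t^{\tau-1}$ (in particular with derivative $0$ at $t=0$ because $\tau>1$), one has $\partial\Phi_\tau(\bm{x},t)=\partial F(\bm{x})\times\{t^{\tau-1}\}$ for $t>0$, while for $t=0$ the $t$-component contributes nothing to the distance from $0$. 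Consequently
\begin{equation*}
\mathrm{dist}\big(0,\,\partial\Phi_\tau(\bm{x},t)\big)
=\sqrt{\mathrm{dist}(0,\partial F(\bm{x}))^2+t^{2(\tau-1)}}
\geq\max\big\{\mathrm{dist}(0,\partial F(\bm{x})),\,t^{\tau-1}\big\}.
\end{equation*}
I would also note $\Phi_\tau(\tilde{\bm{x}},0)=F(\tilde{\bm{x}})$, and abbreviate $A:=F(\bm{x})-F(\tilde{\bm{x}})$ and $B:=\tau^{-1}t^{\tau}\geq0$, so that the target quantity is $\Phi_\tau(\bm{x},t)-\Phi_\tau(\tilde{\bm{x}},0)=A+B$.

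Let $a_F,\mu_F,\nu_F>0$ be the KL constants of $F$ at $\tilde{\bm{x}}$ for the exponent $\theta_F$. I would fix $\mu:=\mu_F$ and $\nu:=\min\{\nu_F,1\}$, and consider any $(\bm{x},t)$ with $\|(\bm{x},t)-(\tilde{\bm{x}},0)\|\leq\mu$ and $0<A+B<\nu$; the constraint $\nu\leq1$ guarantees $A+B\leq 1$, hence $B\leq1$ and (when positive) $A\leq1$. The key step is a case split according to which summand dominates, because $A$ may be nonpositive (points where $F(\bm{x})\leq F(\tilde{\bm{x}})$ but $t$ is large enough that $\Phi_\tau$ still exceeds its value at $(\tilde{\bm{x}},0)$) — this is exactly the situation the plain KL inequality for $F$ cannot reach, and I expect it to be the main obstacle. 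Since $A+B>0$, at least one of $A\geq\tfrac12(A+B)$ or $B\geq\tfrac12(A+B)$ holds.

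In the case $B\geq\tfrac12(A+B)>0$ I would use the $t$-component alone: $t^{\tau-1}=\tau^{\frac{\tau-1}{\tau}}B^{\frac{\tau-1}{\tau}}\geq\tau^{\frac{\tau-1}{\tau}}2^{-\frac{\tau-1}{\tau}}(A+B)^{\frac{\tau-1}{\tau}}\geq\tau^{\frac{\tau-1}{\tau}}2^{-\frac{\tau-1}{\tau}}(A+B)^{\theta_\tau}$, where the last inequality uses $A+B\leq1$ and $\theta_\tau\geq\frac{\tau-1}{\tau}$. In the case $A\geq\tfrac12(A+B)>0$ I would invoke the KL inequality for $F$ (legitimate since then $0<A\leq A+B<\nu\leq\nu_F$ and $\|\bm{x}-\tilde{\bm{x}}\|\leq\mu_F$), giving $\mathrm{dist}(0,\partial F(\bm{x}))\geq a_F A^{\theta_F}\geq a_F A^{\theta_\tau}\geq a_F2^{-\theta_\tau}(A+B)^{\theta_\tau}$, where $A^{\theta_F}\geq A^{\theta_\tau}$ follows from $A\leq1$ and $\theta_F\leq\theta_\tau$. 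Combining both cases with the displayed lower bound on $\mathrm{dist}(0,\partial\Phi_\tau(\bm{x},t))$ and setting $a_\Phi:=\min\{\tau^{\frac{\tau-1}{\tau}}2^{-\frac{\tau-1}{\tau}},\,a_F2^{-\theta_\tau}\}>0$ yields $\mathrm{dist}(0,\partial\Phi_\tau(\bm{x},t))\geq a_\Phi(A+B)^{\theta_\tau}$ throughout the neighborhood, which is precisely the KL property of $\Phi_\tau$ at $(\tilde{\bm{x}},0)$ with exponent $\theta_\tau=\max\{\theta_F,\frac{\tau-1}{\tau}\}$. The only delicate points are the subdifferential computation at $t=0$ and the exponent conversions, both of which rely on the normalization $A+B\leq1$ secured by the choice $\nu\leq1$.
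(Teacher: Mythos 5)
Your proof is correct, and it follows a genuinely different route from the paper's. The paper first \emph{linearizes} the KL inequality of $F$ by raising it to the power $1/\theta_{\tau}$, obtaining $[\mathrm{dist}(0,\partial F(\bm{x}))]^{1/\theta_{\tau}}\geq a^{1/\theta_{\tau}}\left(F(\bm{x})-F(\tilde{\bm{x}})\right)$, an inequality that remains \emph{trivially} valid when $F(\bm{x})\leq F(\tilde{\bm{x}})$; it then invokes the norm-equivalence estimate of Lemma \ref{normineq1} to split $[\mathrm{dist}(0,\partial\Phi_{\tau}(\bm{x},t))]^{1/\theta_{\tau}}$ into the additive contributions of the $\bm{x}$-block and the $t$-block, and adds the two bounds to reconstruct $\Phi_{\tau}(\bm{x},t)-\Phi_{\tau}(\tilde{\bm{x}},0)$. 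You replace both ingredients by the elementary bound $\mathrm{dist}(0,\partial\Phi_{\tau}(\bm{x},t))\geq\max\{\mathrm{dist}(0,\partial F(\bm{x})),\,t^{\tau-1}\}$ combined with the dichotomy $A\geq\tfrac12(A+B)$ or $B\geq\tfrac12(A+B)$; the problematic region where $A=F(\bm{x})-F(\tilde{\bm{x}})\leq0$ is absorbed automatically by the $B$-dominant branch, exactly as you observe, so the plain KL inequality of $F$ is only ever invoked where it legitimately applies. What each approach buys: the paper's additive/linearized scheme is the standard technique in the calculus of KL exponents and extends to sums of several blocks without case proliferation, whereas your domination argument is more elementary --- it needs no norm-equivalence lemma and no exponentiation of the KL inequality, and it makes transparent that the $t^{\tau}/\tau$ term alone must certify the KL inequality wherever $F$ does not increase --- at the modest cost of a factor-of-two case split in the constants.

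One minor blemish, which does not affect the argument: from $A+B\leq1$ you cannot conclude $B\leq1$, because $A$ may be negative and you set $\mu:=\mu_F$ without capping it at $1$, so $t$ (hence $B=t^{\tau}/\tau$) need not be small. Fortunately this side claim is never used: the $B$-dominant branch only needs $0<A+B\leq1$ for the exponent conversion $(A+B)^{\frac{\tau-1}{\tau}}\geq(A+B)^{\theta_{\tau}}$, and the $A$-dominant branch only needs $0<A\leq A+B\leq1$, both of which you have. If you prefer the remark to be literally true, take $\mu:=\min\{\mu_F,1\}$ (as the paper implicitly does by assuming $\mu,\nu\leq1$), which also keeps $t\leq1$.
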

\begin{proof}
By the hypothesis, there exist $a,\mu,\nu$ such that for any $\bm{x}$ satisfying $\bm{x}\in\mathrm{dom}\,\partial F$, $\|\bm{x}-\tilde{\bm{x}}\|\leq\mu$ and
$F(\tilde{\bm{x}})<F(\bm{x})<F(\tilde{\bm{x}})+\nu$, the following holds:
\begin{equation}\label{KLineq-add}
\mathrm{dist}(0, \,\partial F(\bm{x}))
\geq a\left(F(\bm{x})-F(\tilde{\bm{x}})\right)^{\theta_F}.
\end{equation}
Without loss of generality, we further assume that $\mu,\nu\leq1$. In the following, let $\theta_{\tau}:=\max\left\{\theta_F,\,\frac{\tau-1}{\tau}\right\}$ and consider any $(\bm{x},t)$ satisfying $\bm{x}\in\mathrm{dom}\,\partial F$, $\|\bm{x}-\tilde{\bm{x}}\|\leq\mu$, $0\leq t\leq\mu$, and $\Phi_{\tau}(\tilde{\bm{x}}, 0)<\Phi_{\tau}(\bm{x}, t)<\Phi_{\tau}(\tilde{\bm{x}}, 0)+\nu$. Note that, for such $(\bm{x},t)$, we have that
\begin{equation*}
F(\bm{x})
\leq \Phi_{\tau}(\bm{x}, t)
< \Phi_{\tau}(\tilde{\bm{x}}, 0) + \nu
= F(\tilde{\bm{x}}) + \nu.
\end{equation*}
Thus, \eqref{KLineq-add} holds for these $\bm{x}$ provided that $F(\bm{x})>F(\tilde{\bm{x}})$ in addition.

We next claim that, for any $\tau>1$, it holds that
\begin{equation}\label{KLineq-add-new}
\left[\mathrm{dist}(0, \,\partial F(\bm{x}))\right]^{\frac{1}{\theta_{\tau}}}
\geq a^{\frac{1}{\theta_{\tau}}}\left(F(\bm{x})-F(\tilde{\bm{x}})\right).
\end{equation}
Notably, if $F(\bm{x}) \leq F(\tilde{\bm{x}})$, \eqref{KLineq-add-new} holds trivially. We then consider the case where $F(\tilde{\bm{x}})<F(\bm{x})<F(\tilde{\bm{x}})+\nu$ with $\nu\leq1$. When $\theta_F=0$, it follows from \eqref{KLineq-add} that
\begin{equation*}
\left[\mathrm{dist}(0, \,\partial F(\bm{x}))\right]^{\frac{1}{\theta_{\tau}}} \geq a^{\frac{1}{\theta_{\tau}}} \geq a^{\frac{1}{\theta_{\tau}}}\left(F(\bm{x})-F(\tilde{\bm{x}})\right),
\end{equation*}
where the last inequality follows from $0<F(\bm{x})-F(\tilde{\bm{x}})\leq1$. Hence, \eqref{KLineq-add-new} holds. Moreover, when $\theta_F\in(0,1)$, it follows from \eqref{KLineq-add} that
\begin{equation*}
\left[\mathrm{dist}(0, \,\partial F(\bm{x}))\right]^{\frac{1}{\theta_{\tau}}}
\geq a^{\frac{1}{\theta_{\tau}}}\left(F(\bm{x})-F(\tilde{\bm{x}})\right)^{\frac{\theta_F}{\theta_{\tau}}}
\geq a^{\frac{1}{\theta_{\tau}}}\left(F(\bm{x})-F(\tilde{\bm{x}})\right),
\end{equation*}
where the last inequality follows from $0<\frac{\theta_F}{\theta_{\tau}}\leq1$ and $0<F(\bm{x})-F(\tilde{\bm{x}})\leq1$. Thus, the claim is proven for all cases.

\blue{Now, set $\widehat{a}:=\min\{a^{\frac{1}{\theta_{\tau}}},\,\tau\}$. By the claim above, we have $\inf\limits_{\bm{\xi}\in\partial F(\bm{x})}\|\bm{\xi}\|^{\frac{1}{\theta_{\tau}}}\geq \widehat{a}\left(F(\bm{x})-F(\tilde{\bm{x}})\right)$.}
Then, we see that
\begin{equation*}
\begin{aligned}
\left[\,\operatorname{dist}\left(0, \,\partial \Phi_\tau(\bm{x},t)\right)\,\right]^{\frac{1}{\theta_{\tau}}}
&= \left[\,\inf_{\bm{\xi}\in\partial F(\bm{x})}\left\|\left(\bm{\xi},\,t^{\tau-1}\right)\right\|\,\right]^{\frac{1}{\theta_{\tau}}}
\geq \left[\,\inf_{\bm{\xi}\in\partial F(\bm{x})} b_0\left(\|\bm{\xi}\|^{\frac{1}{\theta_{\tau}}}
+\left(t^{\tau-1}\right)^{\frac{1}{\theta_{\tau}}}\right)^{\theta_{\tau}}\,\right]^{\frac{1}{\theta_{\tau}}} \\
&= b_0\left(t^{\tau-1}\right)^{\frac{1}{\theta_{\tau}}}
+ b_0\inf_{\bm{\xi}\in\partial F(\bm{x})}\|\bm{\xi}\|^{\frac{1}{\theta_{\tau}}}
\geq b_0\,t^\tau
+ b_0\,\blue{\widehat{a}}
\left(F(\bm{x})-F(\tilde{\bm{x}})\right)  \\
&\blue{= b_0\widehat{a}\left(F(\bm{x})-F(\tilde{\bm{x}})+{\textstyle\frac{1}{\tau}}t^\tau\right)}
\blue{+ b_0\left(1-{\textstyle\frac{\widehat{a}}{\tau}}\right)t^\tau}  \\
&\blue{\geq b_1\left(\Phi_\tau(\bm{x}, t)-\Phi_\tau(\tilde{\bm{x}}, 0)\right) > 0},
\end{aligned}
\end{equation*}
where the first inequality follows from Lemma \ref{normineq1}, and \blue{the last inequality follows by setting $b_1:=b_0\widehat{a}$ and using $\widehat{a}\leq\tau$}. From the above relation, we obtain the desired result.
\end{proof}

\blue{From Proposition \ref{prop-klexpo}, we see that if the objective function $F$ defined in \eqref{defFun} is a KL function with exponent $\theta_F$, then, for any $\tau>1$, the auxiliary function $\Phi_{\tau}$ defined in \eqref{defpofun} is also a KL function with exponent
$\theta_{\tau}:=\max\left\{\theta_F,\,\frac{\tau-1}{\tau}\right\}$. This result establishes an explicit connection between the KL geometries of the original objective $F$ and the auxiliary function $\Phi_{\tau}$. Building on this connection, we can choose $\tau$ appropriately and apply Theorem \ref{thm-convrate} to derive convergence-rate estimates for the iVPGSA directly in terms of the classical KL exponent $\theta_F$ of $F$.
}

\begin{theorem}\label{thm-convrate-Fexpo}
Suppose that Assumptions \ref{assumA}, \ref{assumB}, \ref{assumC}, \ref{assumD} hold, there exist $\underline{\gamma}, \,\overline{\gamma}>0$ such that $\frac{L_h}{2}<\underline{\gamma}\leq\overline{\gamma}$ and $\underline{\gamma}I_n \preceq H_k \preceq \overline{\gamma}I_n$ for all $k\geq0$, the summable error conditions in \eqref{epssumcond} hold, and $F$ defined in \eqref{defFun} is a KL function with an exponent $\theta_{F}\in[0,1)$. Let $\{\bm{x}^k\}$ be the sequence generated by the iVPGSA in Algorithm \ref{algo-iVPGSA} and let $\bm{x}^*$ be the limit point. Let $\varepsilon_0>0$. Then, the following statements hold.
\begin{itemize}
\item[{\rm (i)}] If $\theta_{F}=0$, the following results hold for all sufficiently large $k$.
    \begin{itemize}
    \item[(a)] For \blue{$\varepsilon_k=\varepsilon_0\zeta^k$ with $0<\zeta<1$}, there exists a $\varrho_1\in(0,1)$ such that $\|\bm{x}^k-\bm{x}^*\|\leq\mathcal{O}\big(\varrho_1^k\big)$.

    \item[(b)] For $\varepsilon_k=\frac{\varepsilon_0}{(k+1)^q}$ with $q>2$, it holds that $\|\bm{x}^k-\bm{x}^*\| \leq\mathcal{O}\left(k^{-\left(\frac{q}{2}-1\right)}\right)$.
    \end{itemize}

\item[{\rm (ii)}] If $\theta_{F}\in\left(0,\frac{1}{2}\right]$, the following results hold for all sufficiently large $k$.
    \begin{itemize}
    \item[(a)] For \blue{$\varepsilon_k=\varepsilon_0\zeta^k$ with $0<\zeta<1$}, there exists a $\varrho_2\in(0,1)$ such that $\|\bm{x}^k-\bm{x}^*\|\leq\mathcal{O}\big(\varrho_2^k\big)$.

    \item[(b)] For $\varepsilon_k=\frac{\varepsilon_0}{(k+1)^q}$ with $q>2$, it holds that $\|\bm{x}^k-\bm{x}^*\| \leq\mathcal{O}\left(k^{-\left(\frac{q}{2}-1\right)}\right)$.
    \end{itemize}

\item[{\rm (iii)}] If $\theta_{F}\in\left(\frac{1}{2}, 1\right)$, the following results hold for all sufficiently large $k$.
    \begin{itemize}
    \item[(a)] For \blue{$\varepsilon_k=\varepsilon_0\zeta^k$ with $0<\zeta<1$}, it holds that
        $\|\bm{x}^k-\bm{x}^*\|\leq \mathcal{O}\left(k^{-\frac{1-\theta_{F}}{2\theta_{F}-1}}\right)$.

    \item[(b)] For $\varepsilon_k=\frac{\varepsilon_0}{(k+1)^q}$ with $q>2$, it holds that
        \begin{equation*}
        \hspace{-0.5cm}
        {\small \|\bm{x}^k-\bm{x}^*\|
        \leq\left\{\begin{aligned}
        &\mathcal{O}\left(k^{-\left(\frac{q}{2}-1\right)}\right),
        &&\text{if}\quad q<\frac{2\theta_{F}}{2\theta_{F}-1},  \\
        &\mathcal{O}\left(k^{-\frac{1-\theta_{F}}{2\theta_{F}-1}}\right),
        &&\text{otherwise}.
        \end{aligned}\right.}
        \end{equation*}
    \end{itemize}
\end{itemize}
\end{theorem}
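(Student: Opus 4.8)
The plan is to deduce every estimate from Theorem~\ref{thm-convrate}, which is already phrased in terms of the KL exponent $\theta_\tau$ of the auxiliary function $\Phi_\tau$, by exploiting the freedom to choose $\tau>1$. The bridge is Proposition~\ref{prop-klexpo}: since $F$ is a KL function with exponent $\theta_F$, for \emph{every} $\tau>1$ the function $\Phi_\tau$ has the KL property with exponent $\theta_\tau=\max\{\theta_F,\,\frac{\tau-1}{\tau}\}$ at each accumulation point $(\tilde{\bm{x}},0)$, which is precisely the hypothesis consumed by the proof of Theorem~\ref{thm-convrate}. So for each prescribed error schedule I would single out a convenient value of $\tau$, check via Remark~\ref{remark-epsk} that the summable error conditions \eqref{epssumcond} hold for that $\tau$ (no constraint in the geometric case, and $q>\frac{2\tau-1}{\tau-1}$ in the polynomial case), and then read off the matching branch of Theorem~\ref{thm-convrate}.

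First I would dispose of the geometric schedule $\varepsilon_k=\varepsilon_0q^k$, where \eqref{epssumcond} holds for all $\tau>1$. If $\theta_F\le\frac12$, take $\tau=2$ (indeed any $\tau\in(1,2]$), so that $\theta_\tau=\max\{\theta_F,\frac12\}\in(0,\frac12]$ and Theorem~\ref{thm-convrate}(ii)(a) gives the linear rate $\mathcal{O}(\varrho^k)$; this also explains why $\theta_F=0$ yields only linear (not finite) convergence, since $\theta_\tau=\frac{\tau-1}{\tau}>0$ is unavoidable. If $\theta_F\in(\frac12,1)$, take $\tau=\frac{1}{1-\theta_F}$ (or any $\tau\le\frac{1}{1-\theta_F}$), so that $\frac{\tau-1}{\tau}\le\theta_F$ and hence $\theta_\tau=\theta_F$; then Theorem~\ref{thm-convrate}(iii)(a) delivers $\mathcal{O}(k^{-\frac{1-\theta_F}{2\theta_F-1}})$. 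This settles parts (i)(a), (ii)(a), (iii)(a).

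The substantive part is the polynomial schedule $\varepsilon_k=\varepsilon_0/(k+1)^q$ with $q>2$, and the crux is that a single $q$-dependent choice works across all three ranges of $\theta_F$. I would take $\tau=\frac{2(q-1)}{q-2}$. A short computation then yields the three governing identities $\frac{\tau-1}{\tau}=\frac{q}{2(q-1)}$, $\frac{2(\tau-1)}{\tau-2}=q$, and $\frac{2\tau-1}{\tau-1}=3-\frac2q$; since $(q-1)(q-2)>0$ we get $q>3-\frac2q$, so \eqref{epssumcond} is legitimate. Moreover $\tau>2$ and the boundary identity $q=\frac{2(\tau-1)}{\tau-2}$ place us in branch (b2) of Theorem~\ref{thm-convrate}(iii), with $\theta_\tau=\max\{\theta_F,\frac{q}{2(q-1)}\}\in(\frac12,1)$. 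The dichotomy now emerges automatically. Because $\frac{q}{2(q-1)}\ge\frac12$, we have $\theta_\tau=\frac{q}{2(q-1)}$ whenever $\theta_F\le\frac12$, and in the range $\theta_F\in(\frac12,1)$ whenever $q\le\frac{2\theta_F}{2\theta_F-1}$; in that case $\frac{2\theta_\tau}{2\theta_\tau-1}=q$, so the ``otherwise'' clause of (b2), together with the simplification $\frac{1-\theta_\tau}{2\theta_\tau-1}=\frac{q}{2}-1$, produces $\mathcal{O}(k^{-(q/2-1)})$. In the remaining range $\theta_F\in(\frac12,1)$ with $q>\frac{2\theta_F}{2\theta_F-1}$ we instead have $\theta_\tau=\theta_F$ and $\frac{2\theta_\tau}{2\theta_\tau-1}=\frac{2\theta_F}{2\theta_F-1}<q$, and the same ``otherwise'' clause yields $\mathcal{O}(k^{-\frac{1-\theta_F}{2\theta_F-1}})$. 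This reproduces (i)(b), (ii)(b), and the dichotomy in (iii)(b).

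The main obstacle is exactly this polynomial bookkeeping: at the tuned value $\tau=\frac{2(q-1)}{q-2}$ one must confirm that the case boundary of Theorem~\ref{thm-convrate}(iii)(b2) is hit at equality ($q=\frac{2(\tau-1)}{\tau-2}=\frac{2\theta_\tau}{2\theta_\tau-1}$ when $\theta_\tau=\frac{q}{2(q-1)}$), that the ``otherwise'' subcase is the active one, and that its rate exponent collapses to $\frac{q}{2}-1$ through $\frac{1-\theta_\tau}{2\theta_\tau-1}=\frac{q}{2}-1$. The algebra is elementary, but the exponents are sensitive, so the care lies in ensuring that each chosen $\tau$ simultaneously satisfies $q>\frac{2\tau-1}{\tau-1}$, keeps $\theta_\tau$ in the interval demanded by the invoked branch, and aligns the internal threshold $\frac{2\theta_\tau}{2\theta_\tau-1}$ with the target threshold $\frac{2\theta_F}{2\theta_F-1}$; a single sign slip in any of these would misroute the argument into the wrong subcase.
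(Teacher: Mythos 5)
Your proposal is correct and follows essentially the same route as the paper: it uses Proposition \ref{prop-klexpo} to transfer the KL exponent $\theta_F$ of $F$ to $\Phi_\tau$ and then reads off Theorem \ref{thm-convrate} with the same tuned choices of $\tau$ — in particular your $\tau=\frac{2(q-1)}{q-2}$ for the polynomial schedule is exactly the paper's $\tau=2+\frac{2}{q-2}$, and your boundary identities $q=\frac{2(\tau-1)}{\tau-2}=\frac{2\theta_\tau}{2\theta_\tau-1}$ and $\frac{1-\theta_\tau}{2\theta_\tau-1}=\frac{q}{2}-1$ are the ones the paper uses to land in branch (iii)(b2) and collapse the rate. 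The only cosmetic difference is taking $\tau=2$ instead of $\tau=\frac{1}{1-\theta_F}$ in case (ii)(a); both choices route to Theorem \ref{thm-convrate}(ii)(a) and give the same linear rate.
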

\begin{proof}
First, we know from Proposition \ref{prop-klexpo} that $\Phi_{\tau}$ with any $\tau>1$ is also a KL function with an exponent
\begin{equation*}
\theta_{\tau}:=\max\left\{\theta_F,\,\frac{\tau-1}{\tau}\right\}
=\left\{\begin{aligned}
&\theta_F, &&\text{if}~~1<\tau\leq\textstyle{\frac{1}{1-\theta_F}}, \\
&\textstyle{\frac{\tau-1}{\tau}}, &&\text{otherwise}.
\end{aligned}\right.
\end{equation*}

\textit{Statement (i)}. Suppose that $\theta_{F}=0$. We consider the following two cases.

We first consider \blue{$\varepsilon_k=\varepsilon_0\zeta^k$ with $0<\zeta<1$}. In this case, we consider setting $\tau=2$, and hence $\theta_{\tau}=\frac{\tau-1}{\tau}=\frac{1}{2}$. Then, we can apply Theorem \ref{thm-convrate}(ii)(a) to obtain the desired result.

We next consider $\varepsilon_k=\frac{\varepsilon_0}{(k+1)^q}$ with $q>2$. In this case, we consider setting $\tau:=2+\frac{2}{q-2}>2$, and hence $\theta_{\tau}=\frac{\tau-1}{\tau}\in\left(\frac{1}{2},1\right)$. Moreover, we have that $q=\frac{2(\tau-1)}{\tau-2}=\frac{2\theta_{\tau}}{2\theta_{\tau}-1}>\frac{2\tau-1}{\tau-1}$. Then, we can apply Theorem \ref{thm-convrate}(iii)(b2), together with $\frac{1-\theta_{\tau}}{2\theta_{\tau}-1}=\frac{q}{2}-1$, to obtain the desired result.

\textit{Statement (ii)}. Suppose that $\theta_{F}\in\left(0, \frac{1}{2}\right]$. We consider the following two cases.

We first consider \blue{$\varepsilon_k=\varepsilon_0\zeta^k$ with $0<\zeta<1$}. In this case, we consider setting $\tau:=\frac{1}{1-\theta_F}\in(1,2]$, and hence $\theta_{\tau}=\theta_F\in\left(0, \frac{1}{2}\right]$. Then, we can apply Theorem \ref{thm-convrate}(ii)(a) to obtain the desired result.

We next consider $\varepsilon_k=\frac{\varepsilon_0}{(k+1)^q}$ with $q>2$. In this case, we consider setting $\tau:=2+\frac{2}{q-2}>2\geq\frac{1}{1-\theta_F}$, and hence $\theta_{\tau}=\frac{\tau-1}{\tau}\in\left(\frac{1}{2},1\right)$. Moreover, we have $q=\frac{2(\tau-1)}{\tau-2}=\frac{2\theta_{\tau}}{2\theta_{\tau}-1}>\frac{2\tau-1}{\tau-1}$. Then, we apply Theorem \ref{thm-convrate}(iii)(b2), together with $\frac{1-\theta_{\tau}}{2\theta_{\tau}-1}=\frac{q}{2}-1$, to get the desired result.

\textit{Statement (iii)}. Suppose that $\theta_{F}\in\left(\frac{1}{2},1\right)$. We consider the following two cases.

We first consider \blue{$\varepsilon_k=\varepsilon_0\zeta^k$ with $0<\zeta<1$}. In this case, we consider setting $\tau:=\frac{1}{1-\theta_F}>2$, and hence $\theta_{\tau}=\theta_F\in\left(\frac{1}{2},1\right)$. Then, we can apply Theorem \ref{thm-convrate}(iii)(a) to obtain the desired result.

We next consider $\varepsilon_k=\frac{\varepsilon_0}{(k+1)^q}$ with $q>2$. In this case, we consider setting $\tau:=2+\frac{2}{q-2}$, and hence
\begin{equation*}
{\small\theta_{\tau}
=\left\{\begin{aligned}
&\theta_F, &&\text{if}~~\tau:=2+\frac{2}{q-2}\leq\frac{1}{1-\theta_F}, \\
&\frac{\tau-1}{\tau}, &&\text{otherwise},
\end{aligned}\right.
=\left\{\begin{aligned}
&\theta_F, &&\text{if}~~q\geq\frac{2\theta_F}{2\theta_F-1}, \\
&\frac{\tau-1}{\tau}, &&\text{otherwise}.
\end{aligned}\right.}
\end{equation*}
Moreover, we have $q=\frac{2(\tau-1)}{\tau-2}>\frac{2\tau-1}{\tau-1}$. Then, we can apply Theorem \ref{thm-convrate}(iii)(b2) to obtain the desired result.
\end{proof}

\section{Numerical experiments}\label{sec-num}

In this section, we conduct some numerical experiments to evaluate the performance of our iVPGSA in Algorithm \ref{algo-iVPGSA} for solving the $\ell_1/\ell_2$ Lasso problem \eqref{prob_l12lasso} and the constrained $\ell_1/\ell_2$ sparse optimization problem \eqref{prob_l12cso}. All experiments are run in {\sc Matlab} R2023a on a PC with an Intel processor i7-12700K@3.60GHz (with 12 cores and 20 threads) and 64GB of RAM, equipped with a Windows OS.

\subsection{The $\ell_1/\ell_2$ Lasso problem}\label{sec-num-l12lasso}

In this section, we consider the $\ell_1/\ell_2$ Lasso problem (see, e.g., \cite{lszz2022proximal,zl2022first}):
\begin{equation}\label{prob_l12lasso}
\min\limits_{\bm{x}\in\mathbb{R}^n}~~\frac{\lambda\|\bm{x}\|_1
+ \frac{1}{2}\|A\bm{x}-\bm{b}\|^2}{\|\bm{x}\|} \qquad
\mathrm{s.t.} \qquad \bm{\alpha} \leq \bm{x} \leq \bm{\beta},
\end{equation}
where $A\in \mathbb{R}^{m\times n}$, $\bm{b}\in\mathbb{R}^m$, $\lambda>0$ is the regularization parameter, and $\bm{\alpha}, \bm{\beta}\in\mathbb{R}^n$ are the given lower and upper bounds, respectively. To apply our iVPGSA in Algorithm \ref{algo-iVPGSA} for solving problem \eqref{prob_l12lasso}, we consider the following reformulation:
\begin{equation*}
\textstyle \min\limits_{\bm{x}\in\Omega}~~
F_{\mathrm{lasso}}(\bm{x}):=
\frac{\overbrace{\lambda\|\bm{x}\|_1
+ \iota_{[\bm{\alpha},\,\bm{\beta}]}(\bm{x})
+ \frac{1}{2}\|A\bm{x}-\bm{b}\|^2}^{f(\bm{x})} ~~
+ \,\overbrace{0}^{h(\bm{x})}}{\underbrace{\|\bm{x}\|}_{g(\bm{x})}},
\end{equation*}
where $\Omega:=\mathbb{R}^n\backslash\{0\}$ and $\iota_{[\bm{\alpha},\,\bm{\beta}]}$ denotes the indicator function on the set $\left\{\bm{x}\in\mathbb{R}^n : \bm{\alpha}\leq\bm{x}\leq\bm{\beta}\right\}$. As mentioned in Introduction, the above reformulation conforms to the form of \eqref{mainpro} and satisfies Assumption \ref{assumA}, and hence our iVPGSA is applicable. We will choose $H_k=\gamma_k I$ and the associated subproblem at the $k$-th iteration ($k\geq0$) is given as follows:
\begin{equation}\label{subprob_l12lasso}
\min\limits_{\bm{x}\in\mathbb{R}^n} ~~
r(\bm{x}) + \frac{1}{2}\|A\bm{x}-\bm{b}\|^2
- \langle c_k\bm{y}^{k},\,\bm{x}-\bm{x}^k\rangle
+ \frac{\gamma_k}{2} \Vert \bm{x}-\bm{x}^{k}\Vert^2,
\end{equation}
where $r(\bm{x}):=\lambda\|\bm{x}\|_1+\iota_{[\bm{\alpha},\,\bm{\beta}]}(\bm{x})$, $c_k=\frac{f(\bm{x}^k)+h(\bm{x}^k)}{g(\bm{x}^k)}$ and $\bm{y}^k\in\partial\|\bm{x}^k\|$.

\subsubsection{A dual semi-smooth Newton method for solving the subproblem \eqref{subprob_l12lasso}}\label{sec_l12lasso_ssn}

We next discuss how to efficiently solve \eqref{subprob_l12lasso} via a dual semi-smooth Newton ({\sc Ssn}) method to find a point $\bm{x}^{k+1}$ associated with an error pair $(\Delta^k,\,\delta_k)$ satisfying the inexact condition \eqref{inexcondH-x} and the error criterion \eqref{stopcritH-x}. Specifically, we solve the following equivalent problem:
\begin{equation}\label{subprobref_l12lasso}
\begin{aligned}
&\min\limits_{\bm{x}\in\mathbb{R}^n,\,\bm{y}\in\mathbb{R}^m}~r(\bm{x})
- \langle c_k\bm{y}^{k},\,\bm{x}\rangle+\frac{1}{2}\|\bm{y}-\bm{b}\|^2
+\frac{\gamma_k}{2}\|\bm{x}-\bm{x}^k\|^2 \\
&\quad~~\,\mathrm{s.t.}\qquad A\bm{x} = \bm{y}.
\end{aligned}
\end{equation}
By some manipulations, one can show that the dual problem of problem \eqref{subprobref_l12lasso} can be equivalently given by (in a minimization form)
\begin{equation}\label{subprobdual_l12lasso}
\hspace{-2mm}
\min\limits_{\bm{z}\in\mathbb{R}^m}
\left\{\begin{aligned}
&\Psi_k^{\text{lasso}}(\bm{z}):=
{\textstyle\frac{1}{2}\|\bm{z}\|^2 + \langle\bm{z},\,\bm{b}\rangle
-\lambda\left\|\texttt{prox}_{\gamma_k^{-1}r}\!
\left(\bm{v}_k(\bm{z})\right)\right\|_1}  \\[3pt]
&~~{\textstyle-\,\frac{\gamma_k}{2} \left\|\texttt{prox}_{\gamma_k^{-1}r}\!
\left(\bm{v}_k(\bm{z})\right)
- \bm{v}_k(\bm{z})\right\|^2
+ \frac{\gamma_k}{2}\left\|\bm{v}_k(\bm{z})\right\|^2
- \frac{\gamma_k}{2} \|\bm{x}^k\|^2}
\end{aligned}\right\},
\end{equation}
where $\bm{z}\in\mathbb{R}^m$ is the dual variable and
$\bm{v}_k(\bm{z}) := \bm{x}^k + \gamma_k^{-1}\left(c_k\bm{y}^{k}
-A^{\top}\bm{z}\right)$. From the property of the Moreau envelope of $\gamma_k^{-1}r$ (see, e.g., \cite[Proposition 12.29]{bc2011convex}), we see that $\Psi_k^{\text{lasso}}$ is strongly convex and continuously differentiable with the gradient
\begin{equation*}
\nabla\Psi_k^{\text{lasso}}(\bm{z})
= - A\texttt{prox}_{\gamma_k^{-1}r}\left(\bm{v}_k(\bm{z})\right)
+ \bm{z} + \bm{b}.
\end{equation*}
Note that the proximal mapping $\texttt{prox}_{\gamma_k^{-1}r}(\cdot)$ can be easily computed as follows:
\begin{equation*}
\texttt{prox}_{\gamma_k^{-1}r}\left(\bm{u}\right)
= \min\left\{\max\left\{\mathrm{sign}(\bm{u})
\odot\max\left\{|\bm{u}|-\gamma_k^{-1}\lambda, \,0\right\},\,\bm{\alpha}\right\},\,\bm{\beta}\right\}, \quad \forall\,\bm{u}\in\mathbb{R}^{n},
\end{equation*}
where $\odot$ denotes the entrywise multiplication between two vectors. Then, from the first-order optimality condition, solving problem \eqref{subprobdual_l12lasso} is equivalent to solving the following non-smooth equation:
\begin{equation}\label{subdualequa_l12lasso}
\nabla \Psi_k^{\text{lasso}}(\bm{z})=0.
\end{equation}
In view of the nice properties of $\texttt{prox}_{\gamma_k^{-1}r}$, we then follow \cite{lst2018highly,twst2020sparse,zts2023learning} to apply a globally convergent and locally superlinearly convergent {\sc Ssn} method to solve \eqref{subdualequa_l12lasso}. To this end, we define a multifunction $\widehat{\partial}^2 \Psi_k^{\text{lasso}}: \mathbb{R}^m \rightrightarrows \mathbb{R}^{m \times m}$ as follows:
\begin{equation*}
\widehat{\partial}^2\Psi_k^{\text{lasso}}(\bm{z}) := I + \gamma_k^{-1}A\,\partial\texttt{prox}_{\gamma_k^{-1}r}\left(\bm{v}_k(\bm{z})\right)A^{\top},
\end{equation*}
where $\partial\texttt{prox}_{\gamma_k^{-1}r}\left(\bm{v}_k(\bm{z})\right)$ is the Clarke subdifferential of the Lipschitz continuous mapping $\texttt{prox}_{\gamma_k^{-1}r}$ at $\bm{v}_k(\bm{z})$, defined as follows:
\begin{equation*}
{\small\partial\texttt{prox}_{\gamma_k^{-1}r}(\bm{u})
:= \left\{\mathrm{Diag}(\bm{d}) \,:\, \bm{d}\in\mathbb{R}^n, ~d_i\in
\left\{\begin{aligned}
&\{0\},   &&  \mathrm{if}~~|u_i|<\gamma_k^{-1}\lambda~~\mathrm{or}~~\hat{u}_i\notin(\alpha_i,\beta_i),    \\[3pt]
&[0,\,1], &&  \mathrm{if}~~|u_i|=\gamma_k^{-1}\lambda~~\mathrm{and}~~\hat{u}_i\in(\alpha_i,\beta_i),  \\[3pt]
&\{1\},   &&  \mathrm{if}~~|u_i|>\gamma_k^{-1}\lambda~~\mathrm{and}~~\hat{u}_i\in(\alpha_i,\beta_i),
\end{aligned}\right.~~\right\},}
\end{equation*}
where $\hat{u}_i=\mathrm{sign}\left(u_i\right)\max\left\{\left|u_i\right|-\gamma_k^{-1}\lambda,\,0\right\}$.
It is clear that all elements in $\widehat{\partial}^2\Psi_k^{\text{lasso}}(\bm{z})$ are positive definite. We are now ready to present the {\sc Ssn} method for solving equation \eqref{subdualequa_l12lasso} in Algorithm \ref{algo:SSN} and refer readers to \cite[Theorem 3.6]{lst2018highly} for its convergence.

\begin{algorithm}[htb!]
\caption{A semi-smooth Newton ({\sc Ssn}) method for solving equation \eqref{subdualequa_l12lasso}}\label{algo:SSN}
 	
\textbf{Initialization:} Choose $\bar{\eta}\in(0,1)$, $\tau\in(0,1]$, $\mu\in(0,1/2)$, $\delta\in(0,1)$, and an initial point $\bm{z}^{k,0}\in\mathbb{R}^m$. Set $t=0$. Repeat until a termination criterion is met. \vspace{-1mm}
\begin{itemize}[leftmargin=1.6cm]
\item[\textbf{Step 1.}] Compute $\nabla\Psi_k^{\text{lasso}}(\bm{z}^{k,t})$ and select an element $H^{k,t}\in\widehat{\partial}^2\Psi_k^{\text{lasso}}(\bm{z}^{k,t})$. Solve the linear system $H^{k,t}\bm{d}=-\nabla\Psi_k^{\text{lasso}}(\bm{z}^{k,t})$ nearly exactly by the (sparse) Cholesky factorization with forward and backward substitutions, \textit{or} approximately by the preconditioned conjugate gradient method to find $\bm{d}^{k,t}$ such that $\big\|H^{k,t}\bm{d}^{k,t} + \nabla\Psi_k^{\text{lasso}}(\bm{z}^{k,t})\big\| \leq \min\big(\bar{\eta}, \,\|\nabla\Psi_k^{\text{lasso}}(\bm{z}^{k,t})\|^{1+\tau}\big)$.
	
\item[\textbf{Step 2.}] (\textbf{Inexact line search}) Find a step size $\alpha_t:=\delta^{i_t}$, where $i_t$ is the smallest nonnegative integer $i$ for which $\Psi_k^{\text{lasso}}(\bm{z}^{k,t} + \delta^i\bm{d}^{k,t})
    \leq \Psi_k^{\text{lasso}}(\bm{z}^{k,t}) + \mu\, \delta^{i}\langle\nabla\Psi_k^{\text{lasso}}(\bm{z}^{k,t}), \,\bm{d}^{k,t}\rangle$.

\item[\textbf{Step 3.}] Set $\bm{z}^{k,t+1} = \bm{z}^{k,t} + \alpha_t\bm{d}^{k,t}$, $t=t+1$, and go to \textbf{Step 1}.
\end{itemize}
\end{algorithm}

We next show that our inexact condition \eqref{inexcondH-x} and the associated error criterion \eqref{stopcritH-x} can be met through some appropriate manipulations based on the dual sequence generated by the {\sc Ssn} method. Specifically, at the $k$-th iteration, we apply the {\sc Ssn} method for solving equation \eqref{subdualequa_l12lasso}, which generates a dual sequence $\{\bm{z}^{k,t}\}$. We then have the following proposition, whose proof is relegated to Appendix \ref{apd-pro-l12lasso}.

\begin{proposition}\label{pro-scnew-l12lasso}
Let $\bm{w}^{k,t}:=\texttt{prox}_{\gamma_k^{-1}r}
\left(\bm{x}^k+\gamma_k^{-1}\left(c_k\bm{y}^k-A^{\top}\bm{z}^{k,t}\right)\right)$ and $\bm{e}^{k,t}:=\nabla\Psi_k^{\text{lasso}}\big(\bm{z}^{k,t}\big)$. Then, we have that
\begin{equation*}
- A^\top \bm{e}^{k,t} \in \partial r(\bm{w}^{k,t}) + A^\top (A\bm{w}^{k,t}-\bm{b})
- c_k\bm{y}^k  + \gamma_k(\bm{w}^{k,t}-\bm{x}^k).
\end{equation*}
Thus, if $(\bm{w}^{k,t}, \bm{e}^{k,t})$ satisfies
\begin{equation}\label{checkcond_l12lasso}
\|A^\top\bm{e}^{k,t}\|^2 + |\langle A^\top\bm{e}^{k,t},\,\bm{w}^{k,t}-\bm{x}^{k} \rangle| \leq \varepsilon_k\,\|\bm{w}^{k,t}\|,
\end{equation}
then the error criterion \eqref{stopcritH-x} holds for $\bm{x}^{k+1}:=\bm{w}^{k,t}$, $\Delta^k:=-A^{\top}\bm{e}^{k,t}$ and $\delta_k:=0$.
\end{proposition}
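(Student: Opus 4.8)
The plan is to combine the explicit gradient formula for $\Psi_k^{\text{lasso}}$ with the first-order optimality condition of the proximal subproblem that defines $\bm{w}^{k,t}$, and then match the resulting inclusion term-by-term with the inexact condition \eqref{inexcondH-x}. First I would evaluate the stated gradient $\nabla\Psi_k^{\text{lasso}}(\bm{z}) = -A\,\texttt{prox}_{\gamma_k^{-1}r}(\bm{v}_k(\bm{z})) + \bm{z} + \bm{b}$ at $\bm{z}^{k,t}$. Since $\bm{w}^{k,t} = \texttt{prox}_{\gamma_k^{-1}r}(\bm{v}_k(\bm{z}^{k,t}))$ and $\bm{e}^{k,t} = \nabla\Psi_k^{\text{lasso}}(\bm{z}^{k,t})$, this gives $\bm{e}^{k,t} = \bm{z}^{k,t} + \bm{b} - A\bm{w}^{k,t}$, so that $\bm{z}^{k,t} = \bm{e}^{k,t} + A\bm{w}^{k,t} - \bm{b}$ and hence $A^{\top}\bm{z}^{k,t} = A^{\top}\bm{e}^{k,t} + A^{\top}A\bm{w}^{k,t} - A^{\top}\bm{b}$.

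Next I would invoke the optimality condition for the proximal map. By definition $\bm{w}^{k,t}$ minimizes $r(\cdot) + \frac{\gamma_k}{2}\|\cdot - \bm{v}_k(\bm{z}^{k,t})\|^2$, and since $r$ is convex this is equivalent to $\gamma_k\big(\bm{v}_k(\bm{z}^{k,t}) - \bm{w}^{k,t}\big) \in \partial r(\bm{w}^{k,t})$. Substituting the definition $\bm{v}_k(\bm{z}^{k,t}) = \bm{x}^k + \gamma_k^{-1}(c_k\bm{y}^{k} - A^{\top}\bm{z}^{k,t})$ and then the expression for $A^{\top}\bm{z}^{k,t}$ obtained above, a direct rearrangement of $\gamma_k\bm{x}^k + c_k\bm{y}^k - A^{\top}\bm{z}^{k,t} - \gamma_k\bm{w}^{k,t} \in \partial r(\bm{w}^{k,t})$ isolates $-A^{\top}\bm{e}^{k,t}$ on the left and yields exactly the claimed inclusion $-A^{\top}\bm{e}^{k,t} \in \partial r(\bm{w}^{k,t}) + A^{\top}(A\bm{w}^{k,t}-\bm{b}) - c_k\bm{y}^k + \gamma_k(\bm{w}^{k,t}-\bm{x}^k)$.

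For the second assertion I would use $f(\bm{x}) = r(\bm{x}) + \frac{1}{2}\|A\bm{x}-\bm{b}\|^2$ together with the convex sum rule, so that $\partial r(\bm{w}^{k,t}) + A^{\top}(A\bm{w}^{k,t}-\bm{b}) = \partial f(\bm{w}^{k,t})$. Since here $h\equiv 0$, $H_k = \gamma_k I$ and $\delta_k = 0$ (whence $\partial_{\delta_k}f = \partial f$), the derived inclusion is precisely \eqref{inexcondH-x} with $\Delta^k = -A^{\top}\bm{e}^{k,t}$ and $\bm{x}^{k+1} = \bm{w}^{k,t}$. Finally, plugging $\Delta^k = -A^{\top}\bm{e}^{k,t}$, $\delta_k = 0$ and $g(\bm{x}) = \|\bm{x}\|$ into \eqref{stopcritH-x}, and using $|\langle -A^{\top}\bm{e}^{k,t},\,\bm{w}^{k,t}-\bm{x}^{k}\rangle| = |\langle A^{\top}\bm{e}^{k,t},\,\bm{w}^{k,t}-\bm{x}^{k}\rangle|$, shows that \eqref{stopcritH-x} is identical to the assumed criterion \eqref{checkcond_l12lasso}.

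This argument is essentially algebraic bookkeeping rather than a deep obstacle; the one step requiring genuine care is the chained substitution of $\bm{v}_k(\bm{z}^{k,t})$ and $A^{\top}\bm{z}^{k,t}$ into the proximal optimality condition, where sign and transpose errors are easy to introduce. I would also remark in passing that the box indicator contained in $r$ guarantees $\bm{w}^{k,t}\in\mathrm{dom}\,f$ automatically, and that the criterion \eqref{checkcond_l12lasso} is only meaningful when $\bm{w}^{k,t}\neq 0$, i.e. $\bm{w}^{k,t}\in\Omega$, so that $g(\bm{w}^{k,t})>0$; these feasibility facts are what make $\bm{x}^{k+1}=\bm{w}^{k,t}$ an admissible iterate for Algorithm \ref{algo-iVPGSA}.
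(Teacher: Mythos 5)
Your proposal is correct and follows essentially the same route as the paper's proof: derive the prox optimality condition $\gamma_k(\bm{v}_k(\bm{z}^{k,t})-\bm{w}^{k,t})\in\partial r(\bm{w}^{k,t})$, eliminate $\bm{z}^{k,t}$ via the gradient identity $\bm{e}^{k,t}=-A\bm{w}^{k,t}+\bm{z}^{k,t}+\bm{b}$, rearrange to obtain the stated inclusion, and then identify it with \eqref{inexcondH-x} (using $\partial f=\partial r+A^{\top}(A\cdot-\bm{b})$, $h\equiv0$, $H_k=\gamma_kI$, $\delta_k=0$) so that \eqref{checkcond_l12lasso} becomes \eqref{stopcritH-x}. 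Your added remarks on feasibility of $\bm{w}^{k,t}$ are a harmless elaboration of details the paper leaves implicit.
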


From Proposition \ref{pro-scnew-l12lasso}, we see that our inexact condition \eqref{inexcondH-x} and its associated error criterion \eqref{stopcritH-x} are indeed verifiable and can be satisfied as long as inequality \eqref{checkcond_l12lasso} holds. Indeed, when the dual sequence $\{\bm{z}^{k,t}\}$ generated by the {\sc Ssn} method is convergent, we have that $\bm{e}^{k,t}\to0$ and $\{\bm{w}^{k,t}\}$ is convergent to the optimal solution $\bm{x}^{k,*}$ of the $k$-th subproblem \eqref{subprob_l12lasso}. Consequently, the left-hand-side term in \eqref{checkcond_l12lasso} approaches zero, i.e., $\|A^{\top}\bm{e}^{k,t}\|^2 +
|\langle A^{\top} \bm{e}^{k,t}, \,\bm{w}^{k,t}-\bm{x}^{k}\rangle|\to0$. On the other hand, based on the discussions following \eqref{suffdessubopt}, under the nonnegativity of $f+h$ on $\mathrm{dom}\,f$ (by Assumption \ref{assumA}4) and $H_k\succ\frac{L_h}{2}I_n$, if $\bm{x}^k\in\Omega\cap\mathrm{dom}\,f$ is not the solution of the $k$-th subproblem and $c_k>0$, we have that $\|\bm{x}^{k,*}\|>0$. Thus, the right-hand-side term in \eqref{checkcond_l12lasso} converges to a positive value. Therefore, inequality \eqref{checkcond_l12lasso} must hold after finitely many iterations.

\subsubsection{Comparison results}\label{sec-comp-l12lasso}

We compare iVPGSA with the proximal-gradient-subgradient algorithm with backtracked extrapolation (denoted by PGSA\_BE) proposed in \cite{lszz2022proximal} for solving problem \eqref{prob_l12lasso}. For iVPGSA, we set $\varepsilon_k=\max\left\{(k+1)^{-2.01},\,10^{-8}\right\}$ and $\gamma_k=\max\left\{(k+1)^{-\frac{1}{2}},\,0.01\right\}$, where the floor $10^{-8}$ is used in the numerical tests to avoid oversolving the inner subproblems. Moreover, for the {\sc Ssn} in Algorithm \ref{algo:SSN}, we set $\mu=10^{-4}$, $\delta=0.5$, $\bar{\eta}=10^{-3}$ and $\tau=0.2$. We will initialize {\sc Ssn} with the origin at the first outer iteration and then employ a \textit{warm-start} strategy thereafter. Specifically, at each outer iteration, we initialize {\sc Ssn} with the approximate solution obtained by the {\sc Ssn} method in the previous outer iteration. Finally, for the PGSA\_BE, we use the parameter settings recommended
in \cite{lszz2022proximal}.

We initialize two algorithms with a point $\bm{x}^0$ obtained by applying the fast iterative shrinkage-thresholding algorithm (FISTA) with backtracking \cite{bt2009a} for solving the classical $\ell_1$ Lasso problem: $\min\big\{\lambda\|\bm{x}\|_1+\frac{1}{2}\|A\bm{x}-\bm{b}\|^2\big\}$, using 200 iterations. Finally, we terminate them when the number of iterations reaches 50000 or the following holds for 3 consecutive iterations:
\begin{equation*}
\max\left\{\frac{\|\bm{x}^k-\bm{x}^{k-1}\|}{1+\|\bm{x}^k\|},\, \frac{|F_{\text{lasso}}(\bm{x}^k)-F_{\text{lasso}}(\bm{x}^{k-1})|}{1+|F_{\text{lasso}}(\bm{x}^k)|}\right\} < 10^{-7}
~~ \mbox{or} ~~
\frac{|F_{\text{lasso}}(\bm{x}^k)-F_{\text{lasso}}(\bm{x}^{k-1})|}{1+|F_{\text{lasso}}(\bm{x}^k)|} < 10^{-10}.
\end{equation*}

In our experiments, we consider $\lambda \in \{0.001, 0.01, 0.1, 1\}$ and $(m,n,s)=(100i,1000i,20i)$ for $i\in\{5,10,15,20,25\}$. For each triple $(m, n, s)$, we randomly generate a trial as follows. First, we generate a matrix $A \in \mathbb{R}^{m\times n}$ with i.i.d. standard Gaussian entries. We then choose a subset $\mathcal{S}\subset\{1, \cdots, n\}$ of size $s$ uniformly at random and generate an $s$-sparse vector $\bm{x}_{\text{orig}}\in\mathbb{R}^{n}$, which has i.i.d. standard Gaussian entries on $\mathcal{S}$ and zeros on the complement set $\mathcal{S}^c$. Finally, we generate the vector $\bm{b} \in \mathbb{R}^{m}$ by setting $\bm{b}=A\bm{x}_{\text{orig}} + 0.01\cdot\widehat{\bm{n}}$, where $\widehat{\bm{n}}\in\mathbb{R}^m$ is a random vector with i.i.d. standard Gaussian entries. We then set $\bm{\alpha} = -5\times\textbf{1}_n$ and $\bm{\beta} = 5\times\textbf{1}_n$, where $\textbf{1}_n$ denotes the $n$-dimensional vector with all entries being $1$.

The average computational results for each triple $(m,n,s)$ across 10 instances are presented in Table \ref{Tab:l1ol2lasso}. From the results, one can observe that our iVPGSA exhibits superior numerical performance compared to PGSA\_BE, particularly when the regularization parameter $\lambda$ is small or the problem size is large. For example, in cases where $\lambda\in\{0.01,0.001\}$ and $n\geq10000$, our iVPGSA is consistently at least 4 times faster than PGSA\_BE, while achieving comparable or even better objective function values.

We also test two algorithms with two instances $(A,\bm{b})$ obtained from the data sets \texttt{mpg7} and \texttt{gisette} in the UCI data repository \cite{UCIdata}. Here, the \texttt{mpg7} data set is an expanded version of its original version, with the last digit signifying that an order 7 polynomial is used to generate the basis functions; see \cite[Section 4.1]{lst2018highly} for more details. For \texttt{mpg7}, the matrix $A$ is of size $392\times3432$ with $\lambda_{\max}(A^{\top}A)\approx1.28\times10^4$. For \texttt{gisette}, the matrix $A$ is of size $1000\times4971$ with $\lambda_{\max}(A^{\top}A)\approx3.36\times10^6$. For each data set, we choose $\lambda=\lambda_c\|A^{\top}\bm{b}\|_{\infty}$ with $\lambda_c\in\{10^{-4},10^{-5}\}$. Figure \ref{Fig:L1oL2Lasso} presents the numerical results of iVPGSA and PGSA\_BE, where the objective value $F_{\text{lasso}}(\bm{x}^k)$ is plotted against the computational time. From the results, we see that PGSA\_BE exhibits a slower rate of reduction in the objective value, possibly due to the large Lipschitz constant associated with the real data. In contrast, our iVPGSA continues to demonstrate promising performance on both real data sets.

\begin{table}[ht]
\caption{The average computational results for each triple $(m,n,s)$ from 10 instances on the $\ell_1/\ell_2$ Lasso problem with $\lambda\in\left\{0.001,0.01,0.1,1\right\}$, where ``\texttt{obj}" denotes the objective function value, ``\texttt{iter}" denotes the number of iterations (the total number of the {\sc Ssn} iterations in iVPGSA is also reported in the bracket), ``\texttt{time}" denotes the computational time, and ``\texttt{t0}" denotes the computational time used to obtain an initial point by FISTA using 200 iterations.}\label{Tab:l1ol2lasso}
\centering \tabcolsep 5pt
\scalebox{0.8}{
\renewcommand\arraystretch{1.2}
\begin{tabular}{|c|c|clcc|clcc|}
\hline
$(m,n,s)$ & \texttt{method} & \texttt{obj} & \texttt{iter} & \texttt{time} & \texttt{t0} & \texttt{obj} & \texttt{iter} & \texttt{time} & \texttt{t0}   \\
\hline
&&\multicolumn{4}{c|}{$\lambda=0.001$} & \multicolumn{4}{c|}{$\lambda=0.01$}  \\
\hline
\multirow{2}{*}{(500,5000,100)}
&PGSA\_BE & 7.98e-03 & 36674 & 14.25 & 0.16 & 7.98e-02 & 9504 & 3.62 & 0.15 \\
&iVPGSA   & 7.98e-03 & 271 (808) & 8.90 & 0.16 & 7.98e-02 & 83 (359) & 4.65 & 0.15 \\
\hline
\multirow{2}{*}{(1000,10000,200)}
&PGSA\_BE & 1.14e-02 & 50000 & 375.83 & 3.02 & 1.13e-01 & 14022 & 105.30 & 3.02 \\
&iVPGSA   & 1.13e-02 & 252 (863) & 41.50 & 3.02 & 1.13e-01 & 62 (403) & 24.30 & 3.02 \\
\hline
\multirow{2}{*}{(1500,15000,300)}
&PGSA\_BE & 1.41e-02 & 50000 & 976.40 & 7.88 & 1.38e-01 & 18199 & 355.45 & 7.91 \\
&iVPGSA   & 1.38e-02 & 255 (926) & 103.42 & 7.88 & 1.38e-01 & 71 (456) & 65.39 & 7.91 \\
\hline
\multirow{2}{*}{(2000,20000,400)}
&PGSA\_BE & 1.67e-02 & 50000 & 1696.92 & 13.74 & 1.61e-01 & 21383 & 730.86 & 13.82 \\
&iVPGSA   & 1.61e-02 & 245 (936) & 204.38 & 13.74 & 1.61e-01 & 63 (479) & 133.89 & 13.82 \\
\hline
\multirow{2}{*}{(2500,25000,500)}
&PGSA\_BE & 1.88e-02 & 50000 & 2684.69 & 21.69 & 1.79e-01 & 24634 & 1320.26 & 21.70 \\
&iVPGSA   & 1.79e-02 & 241 (953) & 356.54 & 21.69 & 1.79e-01 & 61 (507) & 235.91 & 21.70 \\
\hline
&&\multicolumn{4}{c|}{$\lambda=0.1$} & \multicolumn{4}{c|}{$\lambda=1$}  \\
\hline
\multirow{2}{*}{(500,5000,100)}
&PGSA\_BE & 7.98e-01 & 2724 & 1.06 & 0.15 & 7.97e+00 & 991 & 0.40 & 0.16 \\
&iVPGSA   & 7.98e-01 & 71 (261) & 3.77 & 0.15 & 7.97e+00 & 9 (79) & 1.16 & 0.16 \\
\hline
\multirow{2}{*}{(1000,10000,200)}
&PGSA\_BE & 1.13e+00 & 3873 & 29.08 & 3.02 & 1.13e+01 & 1299 & 9.77 & 3.03 \\
&iVPGSA   & 1.13e+00 & 33 (310) & 23.13 & 3.02 & 1.13e+01 & 14 (125) & 8.66 & 3.03 \\
\hline
\multirow{2}{*}{(1500,15000,300)}
&PGSA\_BE & 1.38e+00 & 4975 & 97.18 & 7.91 & 1.38e+01 & 1652 & 32.31 & 7.90 \\
&iVPGSA   & 1.38e+00 & 45 (398) & 70.91 & 7.91 & 1.38e+01 & 12 (155) & 26.66 & 7.90 \\
\hline
\multirow{2}{*}{(2000,20000,400)}
&PGSA\_BE & 1.61e+00 & 5734 & 196.09 & 13.87 & 1.61e+01 & 1771 & 60.54 & 13.85 \\
&iVPGSA   & 1.61e+00 & 34 (427) & 145.19 & 13.87 & 1.61e+01 & 10 (179) & 57.18 & 13.85 \\
\hline
\multirow{2}{*}{(2500,25000,500)}
&PGSA\_BE & 1.79e+00 & 6574 & 353.06 & 21.80 & 1.79e+01 & 1984 & 106.32 & 21.73 \\
&iVPGSA   & 1.79e+00 & 30 (478) & 269.82 & 21.80 & 1.79e+01 & 12 (199) & 105.08 & 21.73 \\
\hline
\end{tabular}
}
\end{table}

\begin{figure}[ht]
\centering
\subfigure[\texttt{mpg7}, where $A$ is of size $392\times3432$ with $\lambda_{\max}(A^{\top}A)\approx1.28\times10^4$]{
\includegraphics[width=7cm]{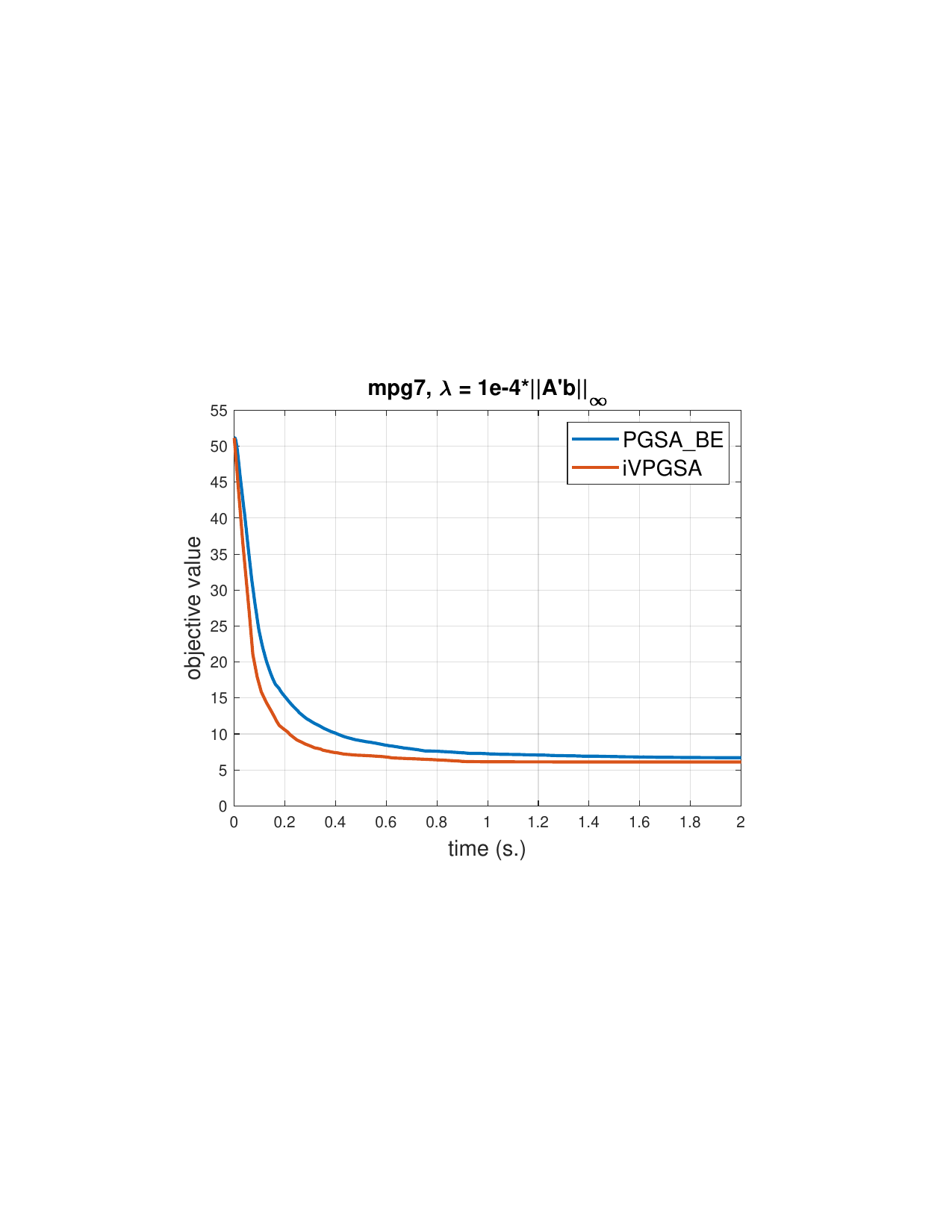}
\includegraphics[width=7cm]{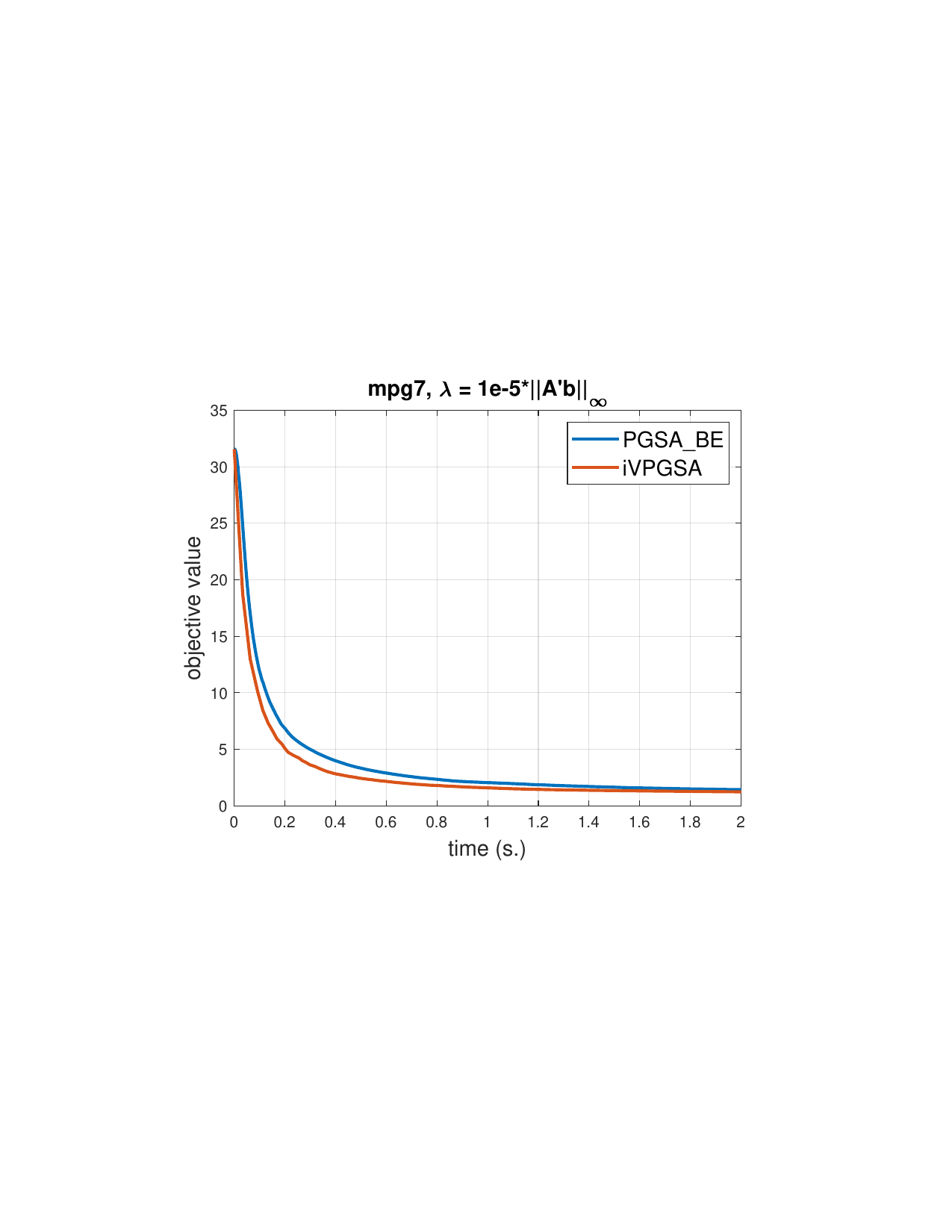}
}
\subfigure[\texttt{gisette}, where $A$ is of size $1000\times4971$ with $\lambda_{\max}(A^{\top}A)\approx3.36\times10^6$]{
\includegraphics[width=7cm]{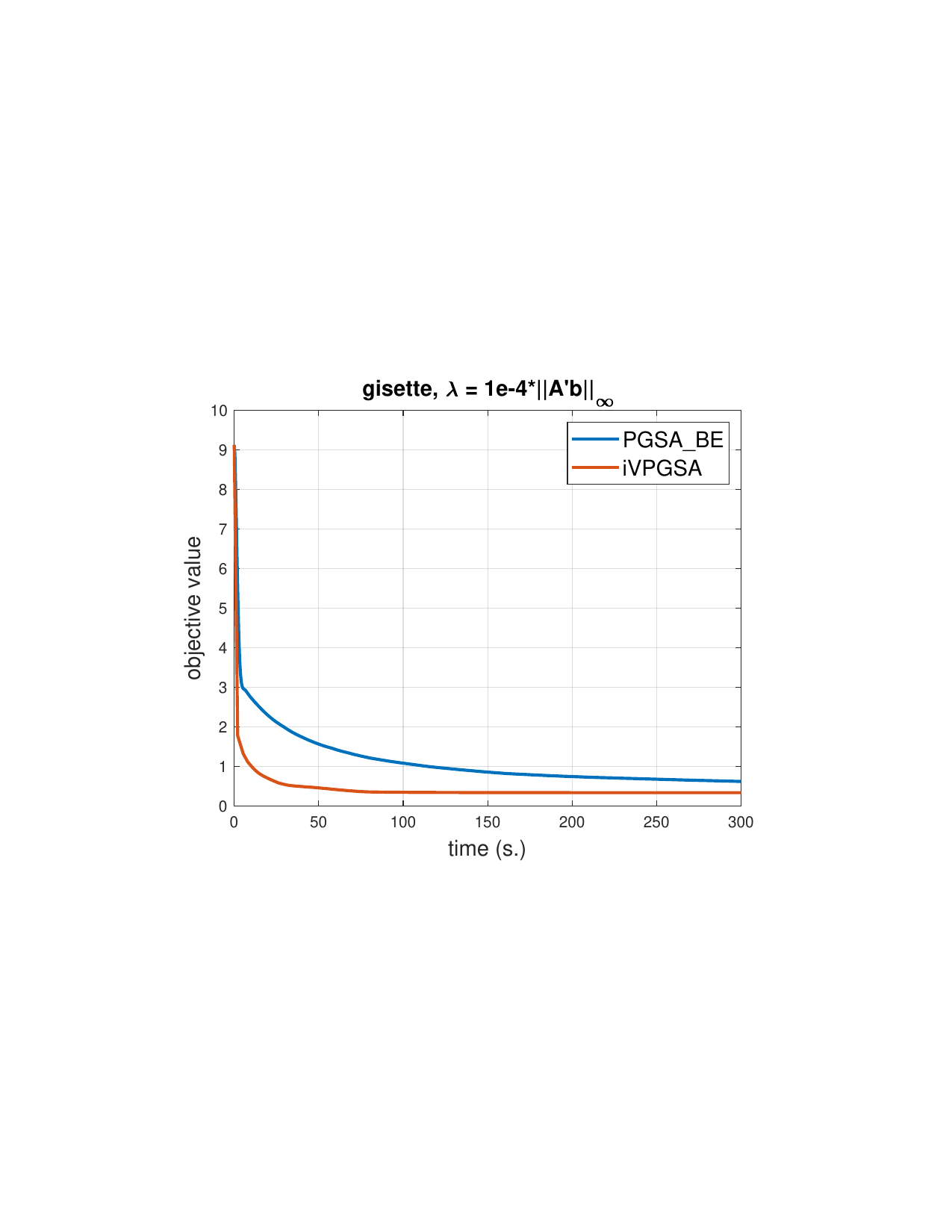}
\includegraphics[width=7cm]{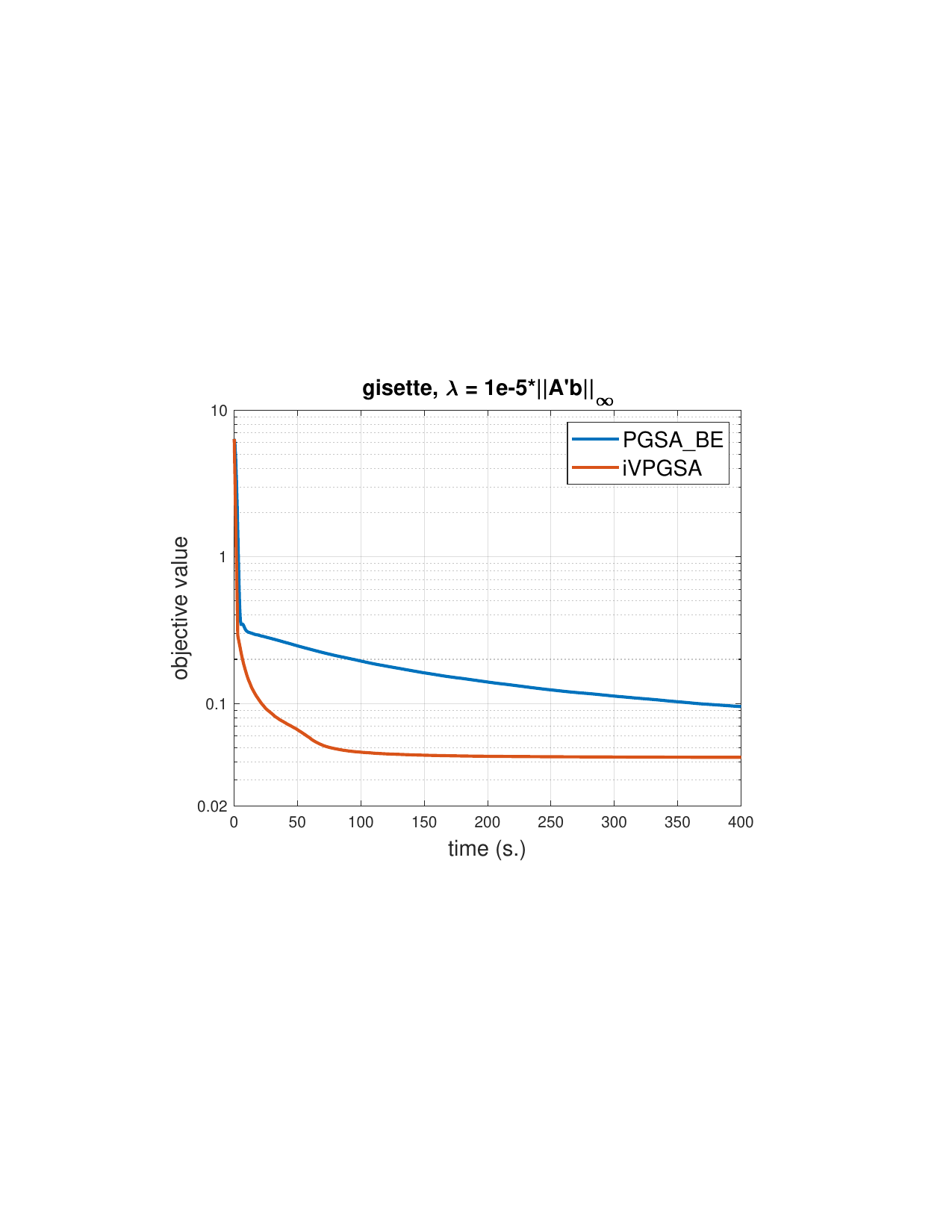}
}
\caption{Numerical results of PGSA\_BE and iVPGSA for solving the $\ell_1/\ell_2$ Lasso problem on \texttt{mpg7} and \texttt{gisette} from the UCI data repository.}\label{Fig:L1oL2Lasso}
\end{figure}

\begin{remark}[\textbf{Comments on difference between iVPGSA and PGSA\_BE}]\label{rek-diff}
By allowing the inexact minimization of the subproblem, our iVPGSA has more flexibility in practical implementations when it is applied to a specific problem. Indeed, for problem \eqref{prob_l12lasso}, the PGSA\_BE developed in \cite{lszz2022proximal} is applied to the reformulation shown on the left below so that the subproblem admits an easy-to-compute solution, while our iVPGSA can be applied to the reformulation shown on the right below, as implemented in our experiments.
\begin{equation*}
{\small\begin{aligned}
\quad
&\hspace{3cm}{\text{PGSA\_BE}} \\
&\hspace{3.5cm}\Downarrow \\
&\min\limits_{\bm{x}\in\Omega}~
\frac{\overbrace{\lambda\|\bm{x}\|_1
+ \iota_{[\bm{\alpha},\,\bm{\beta}]}(\bm{x})}^{f(\bm{x})} ~~
+ \,\overbrace{ \frac{1}{2}\|A\bm{x}-\bm{b}\|^2}^{h(\bm{x})}}{\underbrace{\|\bm{x}\|}_{g(\bm{x})}}
\end{aligned}
\quad~~ vs ~~\quad
\begin{aligned}
&\hspace{3.7cm}{\text{iVPGSA}} \\
&\hspace{4.1cm}\Downarrow \\
&\min\limits_{\bm{x}\in\Omega}~
\frac{\overbrace{\lambda\|\bm{x}\|_1
+ \iota_{[\bm{\alpha},\,\bm{\beta}]}(\bm{x})
+ \frac{1}{2}\|A\bm{x}-\bm{b}\|^2}^{f(\bm{x})} ~~
+ \,\overbrace{0}^{h(\bm{x})}}{\underbrace{\|\bm{x}\|}_{g(\bm{x})}}.
\end{aligned}}	
\end{equation*}
These two reformulations lead to entirely distinct algorithmic frameworks with significantly different numerical performances. Since PGSA\_BE also approximates the smooth convex part $\frac{1}{2}\|A\bm{x}-\bm{b}\|^2$ by a quadratic majorant, its performance could be heavily influenced by the Lipschitz constant $L_h:=\lambda_{\max}(A^{\top}A)$. This sensitivity to the Lipschitz constant is indeed a well-recognized limitation of first-order methods and may lead to the inferior performance of PGSA\_BE for solving large-scale problems, as observed in Table \ref{Tab:l1ol2lasso} and Figure \ref{Fig:L1oL2Lasso}. This is because, for a randomly generated matrix $A$, the largest eigenvalue of $A^{\top}A$ tends to increase with the problem size, and for the data sets \texttt{mpg7} and \texttt{gisette}, the largest eigenvalues of $A^{\top}A$ are around $1.28\times10^4$ and $3.36\times10^6$, respectively. In contrast, our iVPGSA can sidestep the majorization of the least squares term and choose instead to solve the subproblem via a highly efficient second-order {\sc Ssn} method. The results in Table \ref{Tab:l1ol2lasso} and Figure \ref{Fig:L1oL2Lasso} illustrate the promising performance of this implementation strategy. Moreover, we should emphasize that these achievements are made possible by allowing the inexact minimization of the subproblem. This highlights the motivation for developing an efficient inexact algorithmic framework in this work.
\end{remark}

\subsection{The constrained $\ell_1/\ell_2$ sparse optimization problem}\label{sec-num-l12cso}

In this section, we consider the constrained $\ell_1/\ell_2$ sparse optimization problem with Gaussian noise, as studied in \cite{zyp2021analysis}:
\begin{equation}\label{prob_l12cso}
\min\limits_{\bm{x}\in\mathbb{R}^n}
~~\frac{\|\bm{x}\|_1}{\|\bm{x}\|}
\qquad \mathrm{s.t.} \qquad
\|A\bm{x}-\bm{b}\|\leq\sigma,
\end{equation}
where $\sigma>0$, $A\in\mathbb{R}^{m\times n}$ has full row rank, and $\bm{b}\in\mathbb{R}^m$ satisfies $\|\bm{b}\|>\sigma$.
To apply our iVPGSA in Algorithm \ref{algo-iVPGSA} for solving problem \eqref{prob_l12cso}, we consider the following reformulation:
\begin{equation*}
\min\limits_{\bm{x}\in\Omega}~~F_{\text{con}}(\bm{x}):=
\frac{\overbrace{\|\bm{x}\|_1+\iota_{\sigma}(A\bm{x}-\bm{b})}^{f(\bm{x})}
~~+\, \overbrace{0}^{h(\bm{x})}}{\underbrace{\|\bm{x}\|}_{g(\bm{x})}},
\end{equation*}
where $\Omega:=\mathbb{R}^n\backslash\{0\}$ and $\iota_{\sigma}$ denotes the indicator function on the set $\left\{\bm{y}\in\mathbb{R}^m:\|\bm{y}\|\leq\sigma\right\}$. It is easy to see that the above reformulation conforms to the form of \eqref{mainpro} and satisfies Assumptions \ref{assumA}1--4. The level-boundedness requirement in Assumption \ref{assumA}5 can be easily ensured by imposing a box constraint $\|\bm{x}\|_{\infty}\leq M$ with a sufficiently large $M>0$ in problem \eqref{prob_l12cso}. However, to remain consistent with the formulation studied in \cite{zyp2021analysis} and facilitate the direct application of the algorithm developed therein, we omit this box constraint for simplicity. Moreover, to apply our iVPGSA, we choose $H_k=\gamma_k(I+A^{\top}A)$ and the associated subproblem at the $k$-th iteration ($k\geq0$) is given as follows:
\begin{equation}\label{subprob_l12cso}
\min\limits_{\bm{x}\in\mathbb{R}^n}~~\|\bm{x}\|_1
+ \iota_{\sigma}(A\bm{x}-\bm{b})
- \langle c_k\bm{y}^{k},\,\bm{x}-\bm{x}^k\rangle
+ \frac{\gamma_k}{2}\|\bm{x}-\bm{x}^{k}\|^2
+ \frac{\gamma_k}{2}\|A\bm{x} - A\bm{x}^{k}\|^2,
\end{equation}
where $c_k=\frac{f(\bm{x}^k)+h(\bm{x}^k)}{g(\bm{x}^k)}$ and $\bm{y}^k\in\partial\|\bm{x}^k\|$.

\blue{In this example, the metric $H_k=\gamma_k(I+A^{\top}A)$ exploits the linear-operator structure and leads to a structured dual semismooth equation derived below. Then, the subproblem \eqref{subprob_l12cso} can be solved by the {\sc Ssn} method up to the accuracy required by the verifiable criterion \eqref{checkcond_l12cos}. Overall, the variable metric yields a subproblem suitable for a second-order solver, while the error criterion controls the accuracy required by the outer iVPGSA iteration.}

\subsubsection{A dual semi-smooth Newton method for solving the subproblem \eqref{subprob_l12cso}}\label{sec_l12cso_ssn}

Similar to the implementations as described in subsection \ref{sec_l12lasso_ssn}, we discuss how to efficiently solve \eqref{subprob_l12cso} via a dual {\sc Ssn} method to find a point $\bm{x}^{k+1}$ associated with an error pair $(\Delta^k,\,\delta_k)$ satisfying the inexact condition \eqref{inexcondH-x} and the error criterion \eqref{stopcritH-x}. Specifically, we are dedicated to solving the following equivalent reformulation:
\begin{equation}\label{subprobref_l12cos}
\begin{aligned}
&\min\limits_{\bm{x}\in\mathbb{R}^n,\,\bm{y}\in\mathbb{R}^m}~
\|\bm{x}\|_1
{\textstyle + \frac{\gamma_k}{2}\left\|\bm{x}-\big(\bm{x}^k+\gamma_k^{-1}c_k\bm{y}^{k}\big)\right\|^2
+ \iota_{\sigma}(\bm{y})
+ \frac{\gamma_k}{2}\left\|\bm{y} - \big(A\bm{x}^k-\bm{b}\big)\right\|^2}  \\
&\quad~~\,\mathrm{s.t.}\qquad A\bm{x} - \bm{y} = \bm{b}.
\end{aligned}
\end{equation}
For notational simplicity, let $\bm{s}^k:=\bm{x}^k+\gamma_k^{-1}c_k\bm{y}^{k}$ and $\bm{b}^k:=A\bm{x}^k-\bm{b}$. By some manipulations, it can be shown that the dual problem of \eqref{subprobref_l12cos} admits the following equivalent minimization form:
\begin{equation}\label{dualprobleml12con}
\hspace{-2mm}
\min_{\bm{z}\in\mathbb{R}^m}~
\left\{\begin{aligned}
\Psi_k^{\text{con}}(\bm{z})
&:= {\textstyle\langle\bm{z}, \bm{b}\rangle
+\frac{\gamma_k}{2}\left\|\bm{s}^k-\gamma_k^{-1} A^{\top} \bm{z}\right\|^2
-\left\|\texttt{prox}_{\gamma_k^{-1}\|\cdot\|_1}\left(\bm{s}^k-\gamma_k^{-1} A^{\top}\bm{z}\right)\right\|_1}  \\
&\qquad {\textstyle-\,\frac{\gamma_k}{2}\left\|\texttt{prox}_{\gamma_k^{-1}\|\cdot\|_1}\left(\bm{s}^k-\gamma_k^{-1} A^{\top} \bm{z}\right)
-\left(\bm{s}^k-\gamma_k^{-1}A^{\top}\bm{z}\right)\right\|^2} \\
&\qquad {\textstyle+\,\frac{\gamma_k}{2}\left\|\bm{b}^k+\gamma_k^{-1}\bm{z}\right\|^2
-\frac{\gamma_k}{2}\left\|\Pi_{\sigma}\left(\bm{b}^k+\gamma_k^{-1}\bm{z}\right)
-\left(\bm{b}^k+\gamma_k^{-1}\bm{z}\right)\right\|^2}   \\[3pt]
&\qquad {\textstyle-\,\frac{\gamma_k}{2}\|\bm{s}^k\|^2
-\frac{\gamma_k}{2}\|\bm{b}^k\|^2}
\end{aligned}\right\},
\end{equation}
where $\bm{z}\in\mathbb{R}^m$ is the dual variable and $\Pi_{\sigma}$ is the projection operator over $\left\{\bm{y} \in \mathbb{R}^m:\|\bm{y}\|\leq\sigma\right\}$. From the property of the Moreau envelope of $\gamma_k^{-1}\|\cdot\|_1$ and $\gamma_k^{-1}\iota_{\sigma}$ (see, e.g.,
\cite[Proposition 12.29]{bc2011convex}), we see that $\Psi_k^{\text{con}}$ is convex and continuously differentiable with the gradient:
\begin{equation*}
\nabla \Psi_k^{\text{con}}(\bm{z})
= -A \texttt{prox}_{\gamma_k^{-1}\|\cdot\|_1}{\textstyle\left(\bm{s}^k-\gamma_k^{-1} A^{\top} \bm{z}\right)}
+ \Pi_{\sigma}{\textstyle\left(\bm{b}^k+\gamma_k^{-1} \bm{z}\right)} + \bm{b}.
\end{equation*}
Thus, from the first-order optimality condition, solving problem \eqref{dualprobleml12con} is equivalent to solving the following non-smooth equation:
\begin{equation}\label{subdualequa_l12con}
\nabla \Psi_k^{\text{con}}(\bm{z})=0.
\end{equation}
Similar to the implementations in subsection \ref{sec_l12lasso_ssn}, to apply the {\sc Ssn} method, we define a multifunction $\widehat{\partial}^2\Psi_k^{\text{con}}:\mathbb{R}^m \rightrightarrows \mathbb{R}^{m \times m}$ as follows:
\begin{equation*}
\widehat{\partial}^2\Psi_k^{\text{con}}(\bm{z}) := \gamma_k^{-1}A\partial\texttt{prox}_{\gamma_k^{-1}\|\cdot\|_1}
{\textstyle\left(\bm{s}^k-\gamma_k^{-1}A^{\top}\bm{z}\right)}A^{\top}
+ \gamma_k^{-1}\partial\Pi_{\sigma}
{\textstyle\left(\bm{b}^k+\gamma_k^{-1} \bm{z}\right)},
\end{equation*}
where $\partial\texttt{prox}_{\gamma_k^{-1}\|\cdot\|_1}\!
\left(\bm{s}^k-\gamma_k^{-1}A^{\top}\bm{z}\right)$ is the Clarke subdifferential of the Lipschitz continuous mapping $\texttt{prox}_{\gamma_k^{-1}\|\cdot\|_1}(\cdot)$ at $\bm{s}^k-\gamma_k^{-1}A^{\top}\bm{z}$, defined as follows:
\begin{equation*}
\partial\texttt{prox}_{\alpha^{-1}\|\cdot\|_1}(\bm{u})
:= \left\{\mathrm{Diag}(\bm{d}) \,:\, \bm{d}\in\mathbb{R}^n, ~d_i\in
\left\{\begin{aligned}
&\{1\}, && \mathrm{if}~~|u_i|>\alpha^{-1}, \\[3pt]
&[0,1], && \mathrm{if}~~|u_i|=\alpha^{-1}, \\[3pt]
&\{0\}, && \mathrm{if}~~|u_i|<\alpha^{-1},
\end{aligned}\right.~~\right\},
\end{equation*}
and $\partial\Pi_{\sigma}\left(\bm{b}^k+\gamma_k^{-1}\bm{z}\right)$ is the Clarke subdifferential of the Lipschitz continuous mapping $\Pi_{\sigma}(\cdot)$ at $\bm{b}^k+\gamma_k^{-1}\bm{z}$, defined as follows (see, e.g., \cite[Remark 3.1]{zzst2020efficient}):
\begin{equation*}
\partial\Pi_{\sigma}(\bm{u})
=\left\{\begin{aligned}
&\{I\}, &&~~\mathrm{if}~~\|\bm{u}\| < \sigma, \\
&\Big\{I - {\textstyle\frac{t}{\sigma^2}\bm{u}\bm{u}^{\top}} \mid t\in[0,1]\Big\}, &&~~ \mathrm{if}~~\|\bm{u}\| = \sigma, \\
&\Big\{ {\textstyle\frac{\sigma}{\|\bm{u}\|}\big(I - \frac{1}{\|\bm{u}\|^2}\bm{u}\bm{u}^{\top}\big)} \Big\}, &&~~ \mathrm{if}~~\|\bm{u}\| > \sigma.
\end{aligned}\right.
\end{equation*}
Note that the elements in $\widehat{\partial}^2\Psi_k^{\text{con}}(\bm{z})$ may only be positive semidefinite. We then need to incorporate a small adaptive regularization term when solving the linear system in the {\sc Ssn} method. We present the whole iterative framework in Algorithm \ref{algo:SSNcon} and refer readers to \cite[Theorems 3.4 and 3.5]{zst2010newton} for its convergence results.

\begin{algorithm}[htb!]
\caption{A semi-smooth Newton ({\sc Ssn}) method for solving equation \eqref{subdualequa_l12con}}\label{algo:SSNcon}
 	
\textbf{Initialization:} Choose $\bar{\eta}\in(0,1)$, $\gamma\in(0,1]$, $\mu\in(0,1/2)$, $\delta\in(0,1)$, $\tau_1,\tau_2\in(0,1)$, and an initial point $\bm{z}^{k,0}\in\mathbb{R}^m$. Set $t=0$. Repeat until a termination criterion is met. \vspace{-1mm}
\begin{itemize}[leftmargin=1.6cm]
\item[\textbf{Step 1.}] Compute $\nabla\Psi_k^{\text{con}}(\bm{z}^{k,t})$, select an element $H^{k,t}\in\widehat{\partial}^2\Psi_k^{\text{con}}(\bm{z}^{k,t})$, and let $\nu_t:=\tau_1\min\big\{\tau_2,\,\|\nabla\Psi_k^{\text{con}}(\bm{z}^{k,t})\|\big\}$. Solve the linear system $\left(H^{k,t}+\nu_t I_m\right)\bm{d} = -\nabla\Psi_k^{\text{con}}(\bm{z}^{k,t})$ nearly exactly by the (sparse) Cholesky factorization with forward and backward substitutions, \textit{or} approximately by the preconditioned conjugate gradient method to find $\bm{d}^{k,t}$ such that $\big\|\left(H^{k,t}+\nu_t I_m\right)\bm{d}^{k,t} + \nabla\Psi_k^{\text{con}}(\bm{z}^{k,t})\big\|
    \leq \min\big(\bar{\eta}, \,\|\nabla\Psi_k^{\text{con}}(\bm{z}^{k,t})\|^{1+\gamma}\big)$.
	
\item[\textbf{Step 2.}] (\textbf{Inexact line search}) Find a step size $\alpha_t:=\delta^{i_t}$, where $i_t$ is the smallest nonnegative integer $i$ for which $\Psi_k^{\text{con}}(\bm{z}^{k,t} + \delta^i\bm{d}^{k,t})
    \leq \Psi_k^{\text{con}}(\bm{z}^{k,t}) + \mu \delta^{i}\langle\nabla\Psi_k^{\text{con}}(\bm{z}^{k,t}), \,\bm{d}^{k,t}\rangle$.

\item[\textbf{Step 3.}] Set $\bm{z}^{k,t+1} = \bm{z}^{k,t} + \alpha_t\bm{d}^{k,t}$, $t=t+1$, and go to \textbf{Step 1}.
\end{itemize}
\end{algorithm}

We next show that our inexact condition \eqref{inexcondH-x} and error criterion \eqref{stopcritH-x} can be achieved through some appropriate manipulations based on the dual sequence generated by the {\sc Ssn} method in Algorithm \ref{algo:SSNcon}. To this end, we first assume that a strictly feasible point $\bm{x}^{\text{feas}}$ satisfying $\|A\bm{x}^{\text{feas}}-\bm{b}\|<\sigma$ is available on hand. Indeed, such a point can be simply obtained by setting $\bm{x}^{\text{feas}}:=A^\dag\bm{b}$. Then, at the $k$-th iteration, we apply the {\sc Ssn} method for solving equation \eqref{subdualequa_l12con}, which generates a dual sequence $\{\bm{z}^{k,t}\}$. Let
\begin{equation*}
\bm{w}^{k,t}:=\texttt{prox}_{\gamma_k^{-1}\|\cdot\|_1}
\big(\bm{x}^k+\gamma_k^{-1}c_k\bm{y}^k-\gamma_k^{-1}A^{\top}\bm{z}^{k,t}\big)
\quad \mbox{and} \quad
\bm{e}^{k,t}:= \nabla\Psi_k^{\text{con}}\big(\bm{z}^{k,t}\big).
\end{equation*}
Note that the terminating approximate primal solution $\bm{w}^{k,t}$ may not be exactly feasible to the subproblem \eqref{subprob_l12cso} and thus the inexact condition \eqref{inexcondH-x} and error criterion \eqref{stopcritH-x} cannot be verified at $\bm{w}^{k,t}$ in this scenario. Therefore, we further adapt a retraction strategy with the aid of the strictly feasible point $\bm{x}^{\text{feas}}$. Specifically, we define
\begin{equation*}
\widetilde{\bm{w}}^{k,t}:=\rho_{k,t}\bm{w}^{k,t} + (1-\rho_{k,t})\bm{x}^{\text{feas}}
~~\text{with}~~
\rho_{k,t}:=\left\{
\begin{aligned}
&1, &&\|A\bm{w}^{k,t}-\bm{b}\|\leq\sigma,\\
&{\textstyle\frac{\sigma-\|A\bm{x}^{\text{feas}}-\bm{b}\|}{\|A\bm{w}^{k,t}-\bm{b}\|-\|A\bm{x}^{\text{feas}}-\bm{b}\|}}, &&\|A\bm{w}^{k,t}-\bm{b}\|>\sigma.\\
\end{aligned}\right.
\end{equation*}
Then, one can easily verify that $\widetilde{\bm{w}}^{k,t}$ is a feasible point of the subproblem \eqref{prob_l12cso}, namely, it satisfies that $\|A\widetilde{\bm{w}}^{k,t}-\bm{b}\|\leq\sigma$. Thus, $\widetilde{\bm{w}}^{k,t}$ is a feasible candidate that can be used in the verifications of the inexact condition \eqref{inexcondH-x} and error criterion \eqref{stopcritH-x}. Indeed, we have the following proposition, whose proof is relegated to Appendix \ref{apd-pro-l12cos}.

\begin{proposition}\label{pro-scnew-l12con}
Let
\begin{equation*}
\begin{aligned}
\Delta^{k,t}
&:= -\gamma_kA^{\top}\bm{e}^{k,t}
+ \gamma_k(\widetilde{\bm{w}}^{k,t}-\bm{w}^{k,t})
+ \gamma_k A^{\top}A(\widetilde{\bm{w}}^{k,t}-\bm{w}^{k,t}), \\
\bm{d}_1^{k,t}
&:= \gamma_k\left[\big({\bm{x}}^{k}+\gamma_k^{-1} c_k\bm{y}^k-\gamma_k^{-1}A^{\top} {\bm{z}}^{k,t}\big)-\bm{w}^{k,t}\right], \\
\bm{d}_2^{k,t}
&:= \gamma_k\left[\big(A{\bm{x}}^k-\bm{b}+\gamma_k^{-1}\bm{z}^{k,t}\big)
- \Pi_{\sigma}\big(A\bm{x}^k-\bm{b}+\gamma_k^{-1}\bm{z}^{k,t}\big)\right], \\
\delta^1_{k,t}
&:= \|\widetilde{\bm{w}}^{k,t}\|_1 - \|\bm{w}^{k,t}\|_1
- \langle{\bm{d}}_1^{k,t},\,\widetilde{\bm{w}}^{k,t}-\bm{w}^{k,t}\rangle, \\
\delta_{k,t}^2
&:= \big|\langle \bm{e}^{k,t} - A(\widetilde{\bm{w}}^{k,t}-\bm{w}^{k,t}), \,\bm{d}_2^{k,t} \rangle\big|.
\end{aligned}
\end{equation*}
Then, we have that
\begin{equation*}
\Delta^{k,t}
\in\partial_{\delta_{k,t}^1+\delta_{k,t}^2}
\Big(\|\cdot\|_1+\iota_{\sigma}(A\cdot-\bm{b})\Big)(\widetilde{\bm{w}}^{k,t})
- c_k\bm{y}^k + \gamma_k\big(\widetilde{\bm{w}}^{k,t}-\bm{x}^{k}\big)
+ \gamma_kA^{\top}A\big(\widetilde{\bm{w}}^{k,t}-\bm{x}^{k}\big).
\end{equation*}
If $(\widetilde{\bm{w}}^{k,t},\Delta^{k,t},\delta^1_{k,t},\delta^2_{k,t})$ satisfies
\begin{equation}\label{checkcond_l12cos}
\|\Delta^{k,t}\|^2
+ |\langle\Delta^{k,t}, \,\widetilde{\bm{w}}^{k,t}-\bm{x}^{k}\rangle|
+ \delta^1_{k,t} + \delta^2_{k,t}
\leq \varepsilon_k\,\|\widetilde{\bm{w}}^{k,t}\|,
\end{equation}
then the error criterion \eqref{stopcritH-x} holds for $\bm{x}^{k+1}:=\widetilde{\bm{w}}^{k,t}$, $\Delta^k:=\Delta^{k,t}$, and $\delta_k:=\delta^1_{k,t} + \delta^2_{k,t}$.
\end{proposition}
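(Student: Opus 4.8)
The plan is to mirror the argument behind Proposition \ref{pro-scnew-l12lasso}, but to accommodate the hard constraint $\iota_{\sigma}(A\cdot-\bm{b})$ via the retracted iterate $\widetilde{\bm{w}}^{k,t}$. The whole statement will follow once I establish two $\delta$-subdifferential inclusions \emph{at the feasible point} $\widetilde{\bm{w}}^{k,t}$ --- one for $\|\cdot\|_1$ carrying the error $\delta^1_{k,t}$, and one for $\iota_{\sigma}(A\cdot-\bm{b})$ carrying the error $\delta^2_{k,t}$ --- and then merge them with the elementary sum rule $\partial_{\delta_1}\phi_1(\bm{x})+\partial_{\delta_2}\phi_2(\bm{x})\subseteq\partial_{\delta_1+\delta_2}(\phi_1+\phi_2)(\bm{x})$.

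First I would read off exact subgradients from the prox/projection structure. The defining optimality condition of $\bm{w}^{k,t}=\texttt{prox}_{\gamma_k^{-1}\|\cdot\|_1}(\bm{s}^k-\gamma_k^{-1}A^{\top}\bm{z}^{k,t})$ gives precisely $\bm{d}_1^{k,t}\in\partial\|\cdot\|_1(\bm{w}^{k,t})$. Writing $\bm{y}^{k,t}:=\Pi_{\sigma}(A\bm{x}^k-\bm{b}+\gamma_k^{-1}\bm{z}^{k,t})$, the projection is characterized by $\bm{d}_2^{k,t}\in N_{\mathbb{B}_{\sigma}}(\bm{y}^{k,t})=\partial\iota_{\sigma}(\bm{y}^{k,t})$, where $\mathbb{B}_{\sigma}:=\{\bm{y}\in\mathbb{R}^m:\|\bm{y}\|\leq\sigma\}$. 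Evaluating the stated formula for $\nabla\Psi_k^{\text{con}}$ at $\bm{z}^{k,t}$ yields the bookkeeping identity $\bm{y}^{k,t}=\bm{e}^{k,t}+A\bm{w}^{k,t}-\bm{b}$, which I will use repeatedly. Next I transfer these two facts to $\widetilde{\bm{w}}^{k,t}$. For $\|\cdot\|_1$ the general principle that an exact subgradient at one point is a $\delta$-subgradient at any other point (with $\delta$ equal to the convexity gap) gives immediately $\bm{d}_1^{k,t}\in\partial_{\delta^1_{k,t}}\|\cdot\|_1(\widetilde{\bm{w}}^{k,t})$ with $\delta^1_{k,t}\geq0$, since $\delta^1_{k,t}$ is exactly that gap. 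For the constraint I would verify the $\delta$-subgradient inequality directly: infeasible test points are trivial, while for any feasible $\bm{v}$ the normal-cone inequality $\langle\bm{d}_2^{k,t},(A\bm{v}-\bm{b})-\bm{y}^{k,t}\rangle\leq0$ combined with the identity above gives $\langle A^{\top}\bm{d}_2^{k,t},\bm{v}-\widetilde{\bm{w}}^{k,t}\rangle\leq\langle\bm{d}_2^{k,t},\bm{y}^{k,t}-(A\widetilde{\bm{w}}^{k,t}-\bm{b})\rangle=\delta^2_{k,t}$, i.e. $A^{\top}\bm{d}_2^{k,t}\in\partial_{\delta^2_{k,t}}\iota_{\sigma}(A\cdot-\bm{b})(\widetilde{\bm{w}}^{k,t})$.

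With both inclusions in hand, the sum rule yields $\bm{d}_1^{k,t}+A^{\top}\bm{d}_2^{k,t}\in\partial_{\delta^1_{k,t}+\delta^2_{k,t}}\big(\|\cdot\|_1+\iota_{\sigma}(A\cdot-\bm{b})\big)(\widetilde{\bm{w}}^{k,t})$, so the first assertion reduces to the purely algebraic identity $\bm{d}_1^{k,t}+A^{\top}\bm{d}_2^{k,t}=\Delta^{k,t}+c_k\bm{y}^k-\gamma_k(\widetilde{\bm{w}}^{k,t}-\bm{x}^k)-\gamma_kA^{\top}A(\widetilde{\bm{w}}^{k,t}-\bm{x}^k)$. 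I would check this by substituting the definitions of $\bm{d}_1^{k,t}$ and $\bm{d}_2^{k,t}$ (again using $\bm{y}^{k,t}=\bm{e}^{k,t}+A\bm{w}^{k,t}-\bm{b}$) and observing that the $A^{\top}\bm{z}^{k,t}$ terms cancel and the remaining terms telescope. The resulting inclusion is literally condition \eqref{inexcondH-x} with $H_k=\gamma_k(I+A^{\top}A)$, $\nabla h\equiv0$, $f=\|\cdot\|_1+\iota_{\sigma}(A\cdot-\bm{b})$, $\bm{x}^{k+1}=\widetilde{\bm{w}}^{k,t}$, $\Delta^k=\Delta^{k,t}$, and $\delta_k=\delta^1_{k,t}+\delta^2_{k,t}$. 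The second assertion is then immediate: under these identifications and $g(\bm{x})=\|\bm{x}\|$, the error criterion \eqref{stopcritH-x} coincides verbatim with \eqref{checkcond_l12cos}.

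I expect the main obstacle to be the second inclusion --- pinning down the correct error $\delta^2_{k,t}$ and, above all, establishing $\delta^2_{k,t}\geq0$, which is what makes it a bona fide $\delta$-subdifferential. This is precisely the step that justifies the retraction: the raw recovered primal point $\bm{w}^{k,t}$ need not satisfy $\|A\bm{w}^{k,t}-\bm{b}\|\leq\sigma$, so neither \eqref{inexcondH-x} nor \eqref{stopcritH-x} can be tested at $\bm{w}^{k,t}$. Only after moving to the feasible $\widetilde{\bm{w}}^{k,t}$ does the membership $A\widetilde{\bm{w}}^{k,t}-\bm{b}\in\mathbb{B}_{\sigma}$, together with $\bm{d}_2^{k,t}\in N_{\mathbb{B}_{\sigma}}(\bm{y}^{k,t})$, force $\langle\bm{d}_2^{k,t},\bm{y}^{k,t}-(A\widetilde{\bm{w}}^{k,t}-\bm{b})\rangle\geq0$, delivering the required nonnegativity. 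The $\|\cdot\|_1$ part, by contrast, is a routine transfer whose error is automatically nonnegative by convexity.
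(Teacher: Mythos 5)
Your proof is correct and follows essentially the same route as the paper's: extract $\bm{d}_1^{k,t}\in\partial\|\cdot\|_1(\bm{w}^{k,t})$ and the normal-cone membership of $\bm{d}_2^{k,t}$ from the prox/projection optimality conditions, transfer both to the retracted point $\widetilde{\bm{w}}^{k,t}$ as $\delta$-subgradients with errors $\delta^1_{k,t},\delta^2_{k,t}$, combine via the elementary sum rule, and observe that the resulting inclusion is exactly \eqref{inexcondH-x} with $H_k=\gamma_k(I_n+A^{\top}A)$ and $\nabla h\equiv 0$, so that \eqref{checkcond_l12cos} becomes \eqref{stopcritH-x} verbatim. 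One refinement worth noting: you prove $\delta^2_{k,t}\geq 0$ directly, from $\bm{d}_2^{k,t}$ being normal to the ball $\{\bm{y}:\|\bm{y}\|\leq\sigma\}$ at $\Pi_{\sigma}(A\bm{x}^k-\bm{b}+\gamma_k^{-1}\bm{z}^{k,t})$ together with feasibility of $A\widetilde{\bm{w}}^{k,t}-\bm{b}$, whereas the paper's appendix instead wraps that inner product in an absolute value (creating a slight mismatch with the statement, which defines $\delta^2_{k,t}$ without one); your argument is the cleaner justification of the proposition exactly as stated.
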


Similar to discussions at the end of subsection \ref{sec_l12lasso_ssn}, one can also see from Proposition \ref{pro-scnew-l12con} that our inexact condition \eqref{inexcondH-x} and error criterion \eqref{stopcritH-x} are indeed verifiable and can be satisfied as long as inequality \eqref{checkcond_l12cos} holds. Indeed, when the dual sequence $\{\bm{z}^{k,t}\}$ generated by the {\sc Ssn} method is convergent under proper conditions (see \cite[Theorems 3.4 and 3.5]{zst2010newton}), we have that $\bm{e}^{k,t}\to0$, $\{\bm{w}^{k,t}\}$ is convergent to the optimal solution $\bm{x}^{k,*}$ of the $k$-th subproblem \eqref{subprob_l12cso}, and $\widetilde{\bm{w}}^{k,t}-\bm{w}^{k,t}\to0$. Consequently, the left-hand-side term in \eqref{checkcond_l12cos} approaches zero, i.e., $\|\Delta^{k,t}\|^2 + |\langle\Delta^{k,t}, \,\widetilde{\bm{w}}^{k,t}-\bm{x}^{k}\rangle|
+ \delta^1_{k,t} + \delta^2_{k,t}\to0$. On the other hand, based on the discussions following \eqref{suffdessubopt}, under the nonnegativity of $f+h$ on $\mathrm{dom}\,f$ (by Assumption \ref{assumA}4) and $H_k\succ\frac{L_h}{2}I_n$, if $\bm{x}^k\in\Omega\cap\mathrm{dom}\,f$ is not the solution of the $k$-th subproblem and $c_k>0$, we have that $\|\bm{x}^{k,*}\|>0$. Thus, the right-hand-side term in \eqref{checkcond_l12cos} converges to a positive value. Therefore, \eqref{checkcond_l12cos} must hold after finitely many iterations.

\subsubsection{Comparison results}

We evaluate the performance of our iVPGSA, using the same parameter settings as described in the first paragraph of subsection \ref{sec-comp-l12lasso} with additional parameters $\tau_1$, $\tau_2$ for the {\sc Ssn} in Algorithm \ref{algo:SSNcon} being set to $\tau_1=0.99$ and $\tau_2=10^{-6}$. We also compare our method with a moving-balls-approximation based algorithm (denoted by MBA) proposed in \cite{zyp2021analysis}. For MBA, we use default parameter settings provided in the accompanying codes\footnote{The {\sc Matlab} codes of the MBA for solving \eqref{prob_l12cso} are available at \url{https://www.polyu.edu.hk/ama/profile/pong/MBA_l1vl2/}.}.

We initialize two methods with a point $\bm{x}^0$ generated as follows: we first apply the popular solver SPGL1 (version 2.1) \cite{vf2009probing} for solving the classical constrained $\ell_1$ sparse optimization problem: $\min\big\{\|\bm{x}\|_1:\|A \bm{x}-\bm{b}\|\leq\sigma\big\}$, using 200 iterations to obtain an approximate solution $\bm{x}_{\text{spgl1}}$. Since $\bm{x}_{\text{spgl1}}$ may violate the constraint slightly, we then adapt the same retraction strategy as described before Proposition \ref{pro-scnew-l12con} to retract $\bm{x}_{\text{spgl1}}$ and then set the resulting point as $\bm{x}^0$. Additionally, we terminate both methods when the following holds for 3 consecutive iterations:
\begin{equation*}
\frac{|F_{\text{con}}(\bm{x}^k)-F_{\text{con}}(\bm{x}^{k-1})|}{1+|F_{\text{con}}(\bm{x}^k)|} < 10^{-7}.
\end{equation*}
We also terminate our iVPGSA when the total number of {\sc Ssn} iterations reaches 1000, and terminate the MBA when its number of iterations reaches 30000.

In the following experiments, we set $(m,n,s)=(100i,1000i,20i)$ for $i\in\{5,10,15,20,25\}$. For each triple $(m, n, s)$, a random instance is generated using the same way as in subsection \ref{sec-comp-l12lasso}. Moreover, we choose $\sigma=\texttt{nf}\cdot\|0.01\cdot\widehat{\bm{n}}\|$ with $\texttt{nf}\in\{1.2,\,2\}$ (the first choice is also used in \cite[Section 7.3]{zyp2021analysis}), and present the average computational results for each triple $(m, n, s)$ across 10 instances in Table \ref{Table:l12cos}. From the results, one can see that our iVPGSA significantly outperforms MBA in both objective function value and computational efficiency.

Finally, we test two methods with two instances $(A,\bm{b})$ obtained from the data sets \texttt{mpg7} and \texttt{gisette} in the UCI data repository. For each data set, we choose $\sigma=\sigma_c\|\bm{b}\|$ with $\sigma_c\in\{0.05, 0.1\}$. The numerical results are reported in Figure \ref{Fig:l12cso}, where we plot the objective value $F_{\text{con}}(\bm{x}^k)$ against the computational time. The results further demonstrate the encouraging performance of our iVPGSA on real data sets.

\begin{table}[ht]
\caption{The average computational results for each triple $(m, n, s)$ from 10 instances on the constrained $\ell_1/\ell_2$ sparse optimization problem with $\sigma=\texttt{nf}\cdot\|0.01\cdot\widehat{\bm{n}}\|$ with $\texttt{nf}\in\{1.2,\,2\}$. In the table, ``\texttt{obj}" denotes the objective function value, ``\texttt{feas}" denotes the violation of the constraint $\|A\bm{x}^k-\bm{b}\|-\sigma$, ``\texttt{iter}" denotes the number of iterations (the total number of the {\sc Ssn} iterations in iVPGSA is also reported in the bracket), ``\texttt{time}" denotes the computational time, and ``\texttt{t0}" denotes the computational time used to obtain an initial point by SPGL1 using 200 iterations.}\label{Table:l12cos}
\centering \tabcolsep 6pt
\scalebox{0.9}
{\renewcommand\arraystretch{1}
\begin{tabular}[!]{l|c|l|llllll}
\toprule
\texttt{nf} & $(m,n,s)$ & {\tt method} & \texttt{obj} & \texttt{feas}
& \texttt{iter} & \texttt{time} & \texttt{t0} \\
\midrule
\multirow{10}{*}{1.2}&(500,5000,100)
&MBA     & 9.27e+00 & -6.27e-05 & 14504 & 29.57 & 0.11 \\
&&iVPGSA & 9.14e+00 & -9.99e-05 & 46 (540) & 3.19 & 0.11 \\[3pt]
&(1000,10000,200)
&MBA     & 1.38e+01 & -2.15e-04 & 15465 & 192.74 & 1.31 \\
&&iVPGSA & 1.30e+01 & 8.00e-09 & 48 (557) & 17.88 & 1.31 \\[3pt]
&(1500,15000,300)
&MBA     & 1.90e+01 & -4.12e-03 & 21408 & 594.46 & 3.42 \\
&&iVPGSA & 1.68e+01 & -9.03e-06 & 62 (759) & 58.75 & 3.42 \\[3pt]
&(2000,20000,400)
&MBA     & 2.18e+01 & -5.81e-03 & 18584 & 852.60 & 6.09 \\
&&iVPGSA & 1.87e+01 & -6.94e-06 & 57 (685) & 106.72 & 6.09 \\[3pt]
&(2500,25000,500)
&MBA     & 2.70e+01 & -1.26e-02 & 24237 & 1681.98 & 9.56 \\
&&iVPGSA & 2.22e+01 & -2.53e-05 & 71 (838) & 220.74 & 9.56 \\[3pt]
\midrule
\multirow{10}{*}{2}&(500,5000,100)
&MBA     & 9.19e+00 & -1.31e-04 & 10849 & 22.35 & 0.11 \\
&&iVPGSA & 8.95e+00 & 8.26e-09 & 49 (443) & 2.60 & 0.11 \\[3pt]
&(1000,10000,200)
&MBA     & 1.21e+01 & -3.60e-04 & 6397 & 78.61 & 1.32 \\
&&iVPGSA & 1.20e+01 & -1.67e-05 & 26 (270) & 8.93 & 1.32 \\[3pt]
&(1500,15000,300)
&MBA     & 1.48e+01 & -2.46e-03 & 6412 & 181.31 & 3.24 \\
&&iVPGSA & 1.47e+01 & 1.94e-09 & 25 (304) & 25.64 & 3.24 \\[3pt]
&(2000,20000,400)
&MBA     & 1.96e+01 & -2.05e-03 & 16111 & 741.80 & 6.14 \\
&&iVPGSA & 1.83e+01 & -8.12e-05 & 48 (601) & 91.73 & 6.14 \\[3pt]
&(2500,25000,500)
&MBA     & 2.24e+01 & -3.40e-03 & 15361 & 1059.93 & 9.68 \\
&&iVPGSA & 2.05e+01 & -1.74e-05 & 49 (589) & 148.76 & 9.68 \\[3pt]
\bottomrule
\end{tabular}
}
\end{table}

\begin{figure}[ht]
\centering
\subfigure[\texttt{mpg7}, where $A$ is of size $392\times3432$ with $\lambda_{\max}(A^{\top}A)\approx1.28\times10^4$]{
\includegraphics[width=7cm]{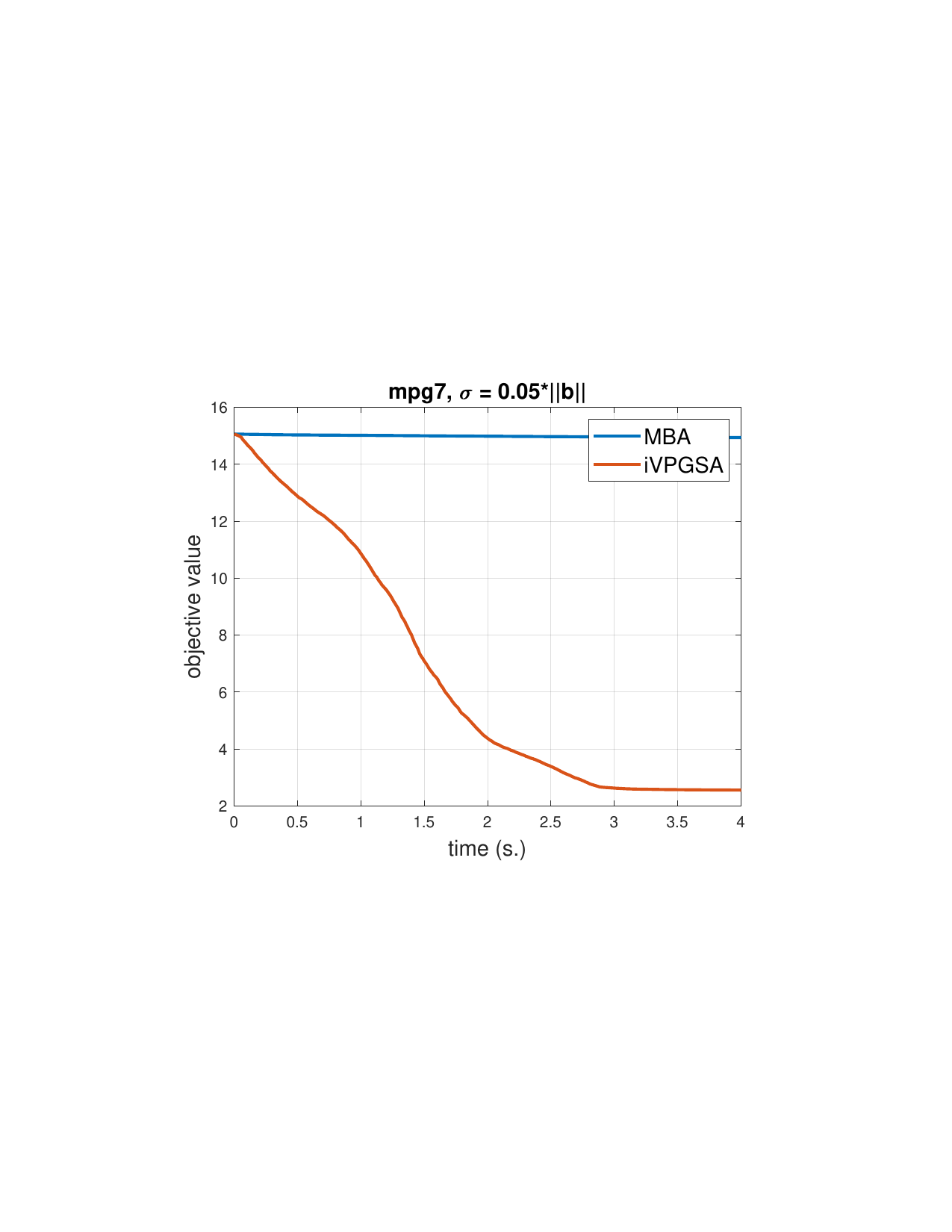}
\includegraphics[width=7cm]{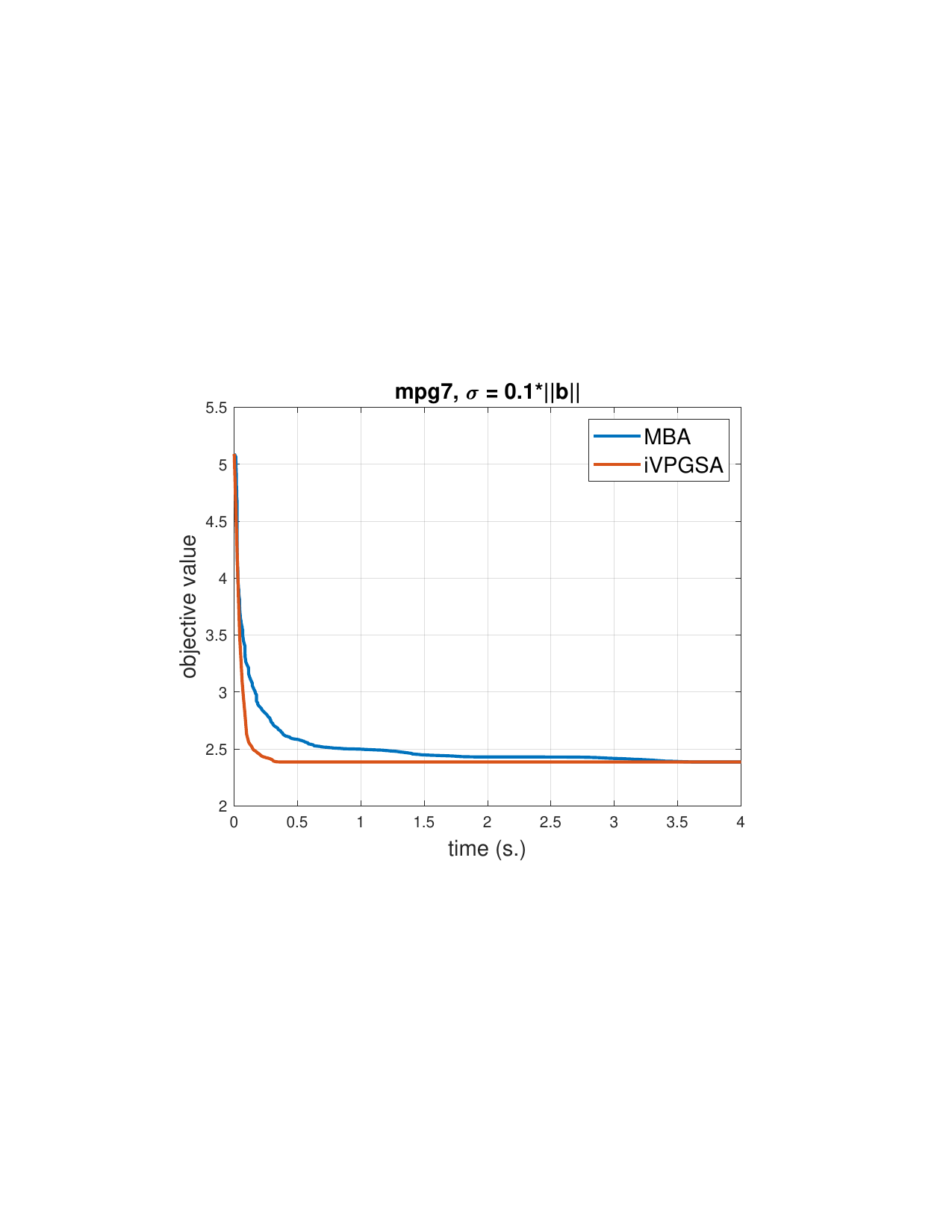}
}
\subfigure[\texttt{gisette}, where $A$ is of size $1000\times4971$ with $\lambda_{\max}(A^{\top}A)\approx3.36\times10^6$]{
\includegraphics[width=7cm]{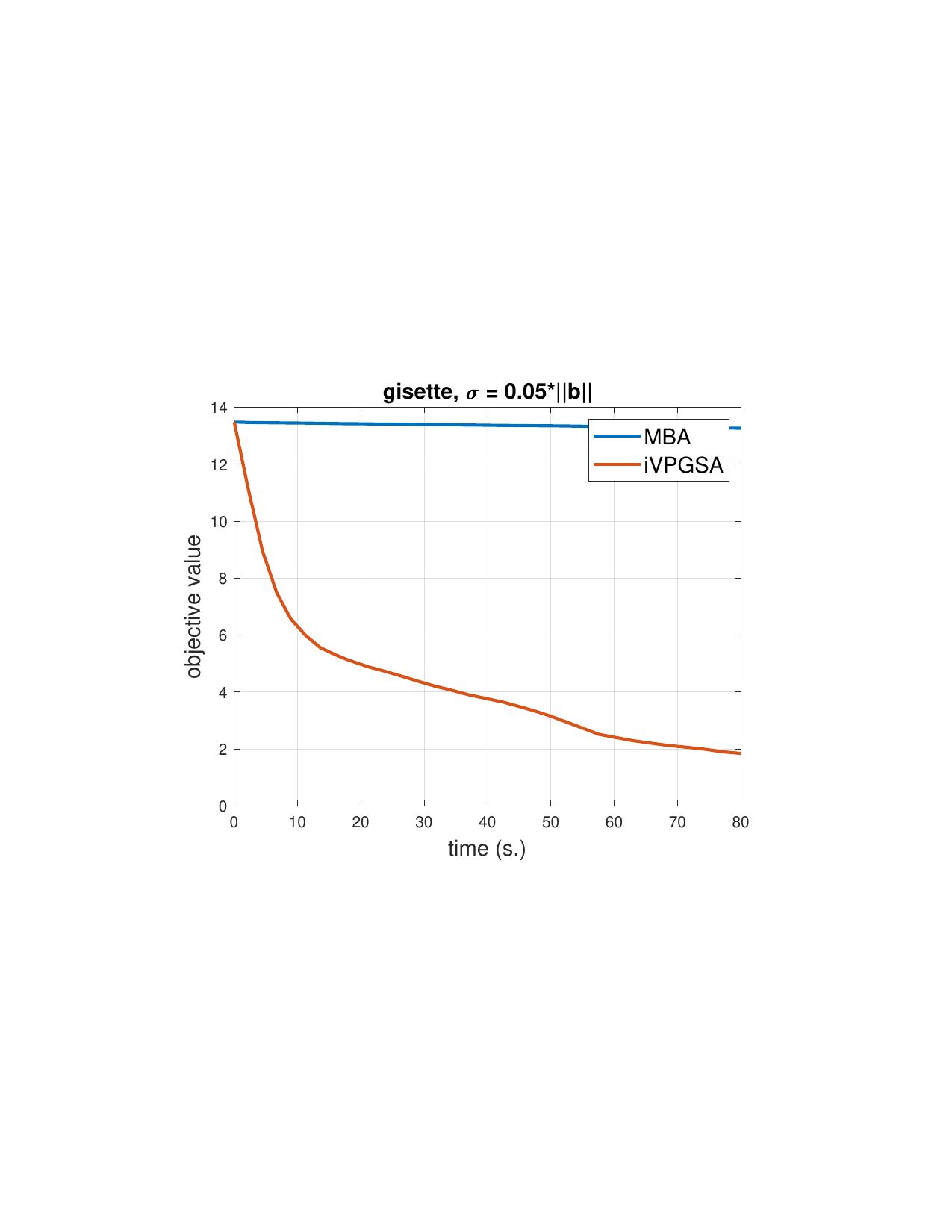}
\includegraphics[width=7cm]{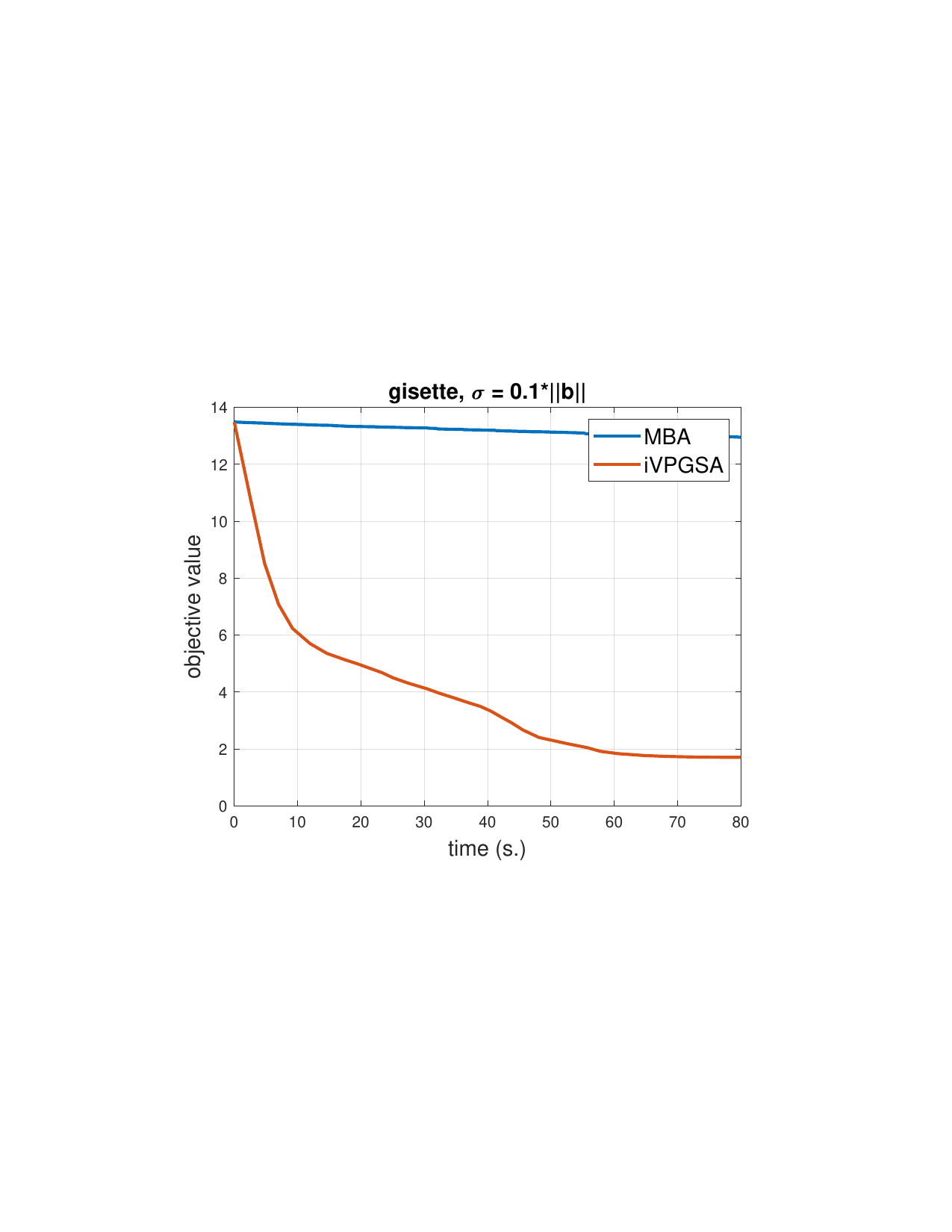}
}
\caption{Numerical results of MBA and iVPGSA for solving the constrained $\ell_1/\ell_2$ sparse optimization problem on \texttt{mpg7} and \texttt{gisette} from the UCI data repository.}\label{Fig:l12cso}
\end{figure}

\section{Conclusions}\label{sec-conc}

In this paper, we developed an inexact variable-metric proximal gradient-subgradient algorithm (iVPGSA) for solving a class of fractional optimization problems of the form \eqref{mainpro}. By combining a flexible error criterion with a variable metric, the proposed method allows approximate subproblem solutions while exploiting problem structure in practical implementations. Moreover, we conducted a comprehensive convergence analysis of iVPGSA, including subsequential convergence, global convergence of the entire sequence, and convergence-rate estimates. \blue{In particular, we developed a new KL-based analysis framework, relying only on the classical KL property and its associated exponent, to handle the accumulated errors caused by inexact subproblem solutions without requiring a strict sufficient descent property. Our results clarify how the original KL geometry and the error decay jointly influence the convergence rate. The proposed analysis may also provide useful insights for studying other inexact methods with similar perturbed descent relations.} Finally, numerical experiments demonstrated the computational efficiency and advantages of our approach.

\appendix

\section{Proof of Lemma \ref{lemma-convrate}(i)}\label{apd-lem-convrate}
\begin{proof}
Let $a_k:=k^p u_k-\frac{d}{1-\rho}$. Then, we have that
\begin{equation*}
\begin{aligned}
a_{k+1}
&={\textstyle (k+1)^p u_{k+1}-\frac{d}{1-\rho}
~\leq~ k^p\left(1+\frac{1}{k}\right)^p\left[\rho u_k+\frac{d}{k^p}\right]-\frac{d}{1-\rho}} \\
&={\textstyle \rho k^p u_k\left(1+\frac{1}{k}\right)^p+d\left(1+\frac{1}{k}\right)^p
-\frac{d}{1-\rho}} \\
&={\textstyle\rho\left(a_k+\frac{d}{1-\rho}\right) \left(1+\frac{1}{k}\right)^p +d\left(1+\frac{1}{k}\right)^p-\frac{d}{1-\rho}}  \\
&= {\textstyle a_{k}\left[\rho\left(1+\frac{1}{k}\right)^{p}\right]
+ \frac{d}{1-\rho}\left(1+\frac{1}{k}\right)^{p}
- \frac{d}{1-\rho}}  \\
&= {\textstyle a_{k}\left[\rho\left(1+\frac{1}{k}\right)^{p}\right]
+ \frac{d}{1-\rho}\left[1+\frac{p}{k}+o\left(\frac{1}{k}\right)\right]
- \frac{d}{1-\rho}} \\
&= {\textstyle a_{k}\left[\rho\left(1+\frac{1}{k}\right)^{p}\right]
+ \frac{dp}{1-\rho}\cdot\frac{1}{k}
+ o\left(\frac{1}{k}\right)
~\leq~ \tilde{\rho} a_{k} + \frac{dp}{1-\rho}\cdot\frac{1}{k}
+ o\left(\frac{1}{k}\right)},
\end{aligned}
\end{equation*}
where the last inequality holds for some $\rho<\tilde{\rho}<1$ for all large $k$. Using this relation and \cite[Lemma 3 in Section 2.2]{p1987introduction}, we obtain that $\limsup\limits_{k\rightarrow\infty}\,a_k\leq0$. Thus, for any $\epsilon>0$, we have that $a_k:=k^pu_k-\frac{d}{1-\rho}\leq\frac{\epsilon}{1-\rho}$ and hence $u_k \leq \frac{d+\epsilon}{1-\rho}\cdot\frac{1}{k^p}$ for all sufficiently large $k$.
\end{proof}

\section{Verification of the choices on $\{\varepsilon_k\}$ in Remark \ref{remark-epsk}}\label{apd-remark-sumcond}

\textit{Case (i)}. \blue{$\varepsilon_k=\varepsilon_0 \zeta^k$ with $0<\zeta<1$}. In this case, it is easy to verify that the first two conditions in \eqref{epssumcond} hold. We next consider the third condition in \eqref{epssumcond}. Note that
\begin{equation*}
\left(\sum_{i=k}^{\infty}\varepsilon_i\right)^{1-\frac{1}{\tau}}
= \blue{\left(\frac{\varepsilon_0 \zeta^k}{1-\zeta}\right)^{1-\frac{1}{\tau}}}
= \blue{\left(\frac{\varepsilon_0}{1-\zeta}\right)^{1-\frac{1}{\tau}} \zeta^{\left(1-\frac{1}{\tau}\right)k}},
\end{equation*}
and for a given $\tau>1$, we have \blue{$\zeta^{1-\frac{1}{\tau}}\in\left(0,1\right)$ for any $0<\zeta<1$}. Then, we see that
\begin{equation*}
\sum_{k=0}^{\infty}\left(\sum^{\infty}_{i=k}\varepsilon_i\right)^{1-\frac{1}{\tau}}
= \blue{\left(\frac{\varepsilon_0}{1-\zeta}\right)^{1-\frac{1}{\tau}}}
\blue{\sum_{k=0}^{\infty}\zeta^{\left(1-\frac{1}{\tau}\right)k}} < +\infty.
\end{equation*}
Thus, the third condition in \eqref{epssumcond} holds.

\textit{Case (ii)}. $\varepsilon_k=\frac{\varepsilon_0}{(k+1)^q}$ with $q>\frac{2\tau-1}{\tau-1}$. In this case, since $q>\frac{2\tau-1}{\tau-1}=2+\frac{1}{\tau-1}$, it is easy to verify that the first two conditions in \eqref{epssumcond} hold. We next consider the third condition. For every $k\geq1$,
\begin{equation*}
\left(\sum_{i=k}^{\infty}\varepsilon_i\right)^{1-\frac{1}{\tau}}
=\left(\sum_{i=k}^{\infty} \frac{\varepsilon_0}{(i+1)^{q}}\right)^{1-\frac{1}{\tau}}
\leq\left(\sum_{i=k}^{\infty}\int_i^{i+1}\frac{\varepsilon_0}{t^{q}}\,{\rm d}t\right)^{1-\frac{1}{\tau}}
=\left(\frac{\varepsilon_0}{q-1}\right)^{1-\frac{1}{\tau}} \cdot \frac{1}{k^{(q-1)(1-\frac{1}{\tau})}},
\end{equation*}
and for a given $\tau>1$, we have $(q-1)(1-\frac{1}{\tau})>1$ for any $q>\frac{2\tau-1}{\tau-1}$. Then, we see that
\begin{equation*}
\sum_{k=0}^{\infty}\left(\sum^{\infty}_{i=k}\varepsilon_i\right)^{1-\frac{1}{\tau}}
\leq \left(\sum_{i=0}^{\infty}\varepsilon_i\right)^{1-\frac{1}{\tau}}
+ \left(\frac{\varepsilon_0}{q-1}\right)^{1-\frac{1}{\tau}}
\sum_{k=1}^{\infty}\frac{1}{k^{(q-1)(1-\frac{1}{\tau})}}
< +\infty.
\end{equation*}
Thus, the third condition in \eqref{epssumcond} holds.

\section{Proofs for Section \ref{sec-num}}

\subsection{Proof of Proposition \ref{pro-scnew-l12lasso}}\label{apd-pro-l12lasso}
\begin{proof}
From the definition of $\bm{w}^{k,t}$ and the property of the proximal mapping $\texttt{prox}_{\gamma_k^{-1}r}$, we have
\begin{equation*}
0 \in \gamma_k^{-1}\partial r(\bm{w}^{k,t}) + \bm{w}^{k,t}
- (\bm{x}^k+\gamma_k^{-1}(c_k\bm{y}^k-A^{\top}\bm{z}^{k,t})),
\end{equation*}
which, together with $\bm{e}^{k,t}:=\nabla\Psi_k^{\text{lasso}}(\bm{z}^{k,t})
= -A\bm{w}^{k,t}+\bm{z}^{k,t}+\bm{b}$, deduces that
\begin{equation*}
\begin{aligned}
&0 \in \partial r(\bm{w}^{k,t}) + \gamma_k(\bm{w}^{k,t}-\bm{x}^k) - c_k\bm{y}^k + A^\top(\bm{e}^{k,t} + A\bm{w}^{k,t} - \bm{b}), \\
&\Longrightarrow ~~- A^\top \bm{e}^{k,t} \in \partial r(\bm{w}^{k,t}) + A^\top (A\bm{w}^{k,t}-\bm{b})
- c_k\bm{y}^k + \gamma_k(\bm{w}^{k,t}-\bm{x}^k).
\end{aligned}
\end{equation*}
It is then easy to see that $\bm{w}^{k,t}$ and the error pair $(-A^{\top}\bm{e}^{k,t}, \,0)$ satisfy condition \eqref{inexcondH-x}. Using this relation, we can obtain the desired results.
\end{proof}

\subsection{Proof of Proposition \ref{pro-scnew-l12con}}
\label{apd-pro-l12cos}

\begin{proof}
From the definition of $\bm{w}^{k,t}$ and the property of the proximal mapping $\texttt{prox}_{\gamma_k^{-1}\|\cdot\|_1}$, we have
\begin{equation*}
0 \in \partial \|\bm{w}^{k,t}\|_1+\gamma_k\left[\bm{w}^{k,t}-\left({\bm{x}}^{k}+\gamma_k^{-1} c_k\bm{y}^k-\gamma_k^{-1} A^{\top} {\bm{z}}^{k,t}\right)\right].
\end{equation*}
Let $\bm{d}_1^{k,t}:=\gamma_k\left[\big({\bm{x}}^{k}+\gamma_k^{-1} c_k\bm{y}^k-\gamma_k^{-1} A^{\top}\bm{z}^{k,t}\big)-\bm{w}^{k,t}\right]$. Clearly, $\bm{d}_1^{k,t} \in \partial\|\bm{w}^{k,t}\|_1$. Then, for any $\bm{u}\in\mathbb{R}^n$, we have that
\begin{equation*}
\begin{aligned}
\quad \|\bm{u}\|_1
&\geq \|\bm{w}^{k,t}\|_1 + \langle{\bm{d}}_1^{k,t}, \,\bm{u}-\bm{w}^{k,t}\rangle \\
&\geq \|\widetilde{\bm{w}}^{k,t}\|_1 + \langle{\bm{d}}_1^{k,t}, \,\bm{u}-\widetilde{\bm{w}}^{k,t}\rangle
- \big( \|\widetilde{\bm{w}}^{k,t}\|_1 - \|\bm{w}^{k,t}\|_1
- \langle{\bm{d}}_1^{k,t},\,\widetilde{\bm{w}}^{k,t}-\bm{w}^{k,t}\rangle\big),
\end{aligned}
\end{equation*}
which implies that $\bm{d}_1^{k,t} \in \partial_{\delta_{k,t}^1} \|\widetilde{\bm{w}}^{k,t}\|_1$ with $\delta^1_{k,t} := \|\widetilde{\bm{w}}^{k,t}\|_1 - \|\bm{w}^{k,t}\|_1
- \langle{\bm{d}}_1^{k,t},\,\widetilde{\bm{w}}^{k,t}-\bm{w}^{k,t}\rangle\geq0$ (due to the convexity of $\|\cdot\|_1$ and $\bm{d}_1^{k,t} \in \partial\|\bm{w}^{k,t}\|_1$). Thus, we have
\begin{equation}\label{d1partialg}
0 \in \partial_{\delta_{k,t}^1} \|\widetilde{\bm{w}}^{k,t}\|_1
- c_k\bm{y}^k + A^{\top}\bm{z}^{k,t} + \gamma_k(\bm{w}^{k,t}-\bm{x}^{k}).
\end{equation}
On the other hand, we see from the property of projection onto the convex set $\{\bm{x} \in \mathbb{R}^n:\|\bm{x}\|\leq \sigma\}$ that, for any $\bm{u}$ satisfying $\|\bm{u}\|\leq\sigma$,
\begin{equation*}
\big\langle \bm{u}-\Pi_{\sigma}\big(A \bm{x}^k-\bm{b}+\gamma_k^{-1}\bm{z}^{k,t}\big), \,\big(A{\bm{x}}^k-\bm{b}+\gamma_k^{-1}\bm{z}^{k,t}\big)
- \Pi_{\sigma}\big(A\bm{x}^k-\bm{b}+\gamma_k^{-1}\bm{z}^{k,t}\big)\big\rangle
\leq 0.
\end{equation*}
Let $\bm{d}_2^{k,t}:=\gamma_k\left[\big(A{\bm{x}}^k-\bm{b}+\gamma_k^{-1}\bm{z}^{k,t}\big)
- \Pi_{\sigma}\big(A\bm{x}^k-\bm{b}+\gamma_k^{-1}\bm{z}^{k,t}\big)\right]$ and recall the definition of $\bm{e}^{k, t}$ that $\Pi_{\sigma}\big(A\bm{x}^k-\bm{b}+\gamma_k^{-1}\bm{z}^{k,t}\big)=A \bm{w}^{k,t}-\bm{b}+\bm{e}^{k,t}$. Then, we can see from the above relation that
\begin{equation*}
\begin{aligned}
0
\geq \big\langle \bm{u} - \big(A\bm{w}^{k,t}-\bm{b}+\bm{e}^{k,t}\big),
\,\bm{d}_2^{k,t}\big\rangle
= \langle \bm{u}-(A\widetilde{\bm{w}}^{k,t}-\bm{b}), \,\bm{d}_2^{k,t}\rangle
- \langle \bm{e}^{k,t} - A(\widetilde{\bm{w}}^{k,t}-\bm{w}^{k,t}), \,\bm{d}_2^{k,t}\rangle,
\end{aligned}
\end{equation*}
and hence
\begin{equation*}
\langle \bm{u}-(A\widetilde{\bm{w}}^{k,t}-\bm{b}), \,\bm{d}_2^{k,t}\rangle
\leq \langle \bm{e}^{k,t} - A(\widetilde{\bm{w}}^{k,t}-\bm{w}^{k,t}), \,\bm{d}_2^{k,t}\rangle
\leq \big|\langle \bm{e}^{k,t} - A(\widetilde{\bm{w}}^{k,t}-\bm{w}^{k,t}), \,\bm{d}_2^{k,t}\rangle\big|.
\end{equation*}
This, together with $\|A\widetilde{\bm{w}}^{k,t}-\bm{b}\|\leq\sigma$, implies that $\bm{d}_2^{k,t} \in \partial_{\delta_{k,t}^2}\iota_{\sigma}(A \widetilde{\bm{w}}^{k,t}-\bm{b})$ with $\delta_{k,t}^2:=|\langle \bm{e}^{k,t} - A(\widetilde{\bm{w}}^{k,t}-\bm{w}^{k,t}), \,\bm{d}_2^{k,t}\rangle|$. Thus, we have
\begin{equation}\label{d2partial-ballsigma}
-\gamma_k\bm{e}^{k,t}
\in \partial_{\delta_{k,t}^2}\iota_{\sigma}(A\widetilde{\bm{w}}^{k,t}-\bm{b})
- \bm{z}^{k,t} + \gamma_kA(\bm{w}^{k,t}-\bm{x}^{k}).
\end{equation}
Now, combining \eqref{d1partialg} and \eqref{d2partial-ballsigma}, we can obtain that
\begin{equation*}
-\gamma_k A^{\top}\bm{e}^{k,t}
\in \partial_{\delta_{k,t}^1}\|\widetilde{\bm{w}}^{k,t}\|_1
+ A^{\top}\partial_{\delta_{k,t}^2}\iota_{\sigma}(A\widetilde{\bm{w}}^{k,t}-\bm{b})
- c_k\bm{y}^k + \gamma_k\big(\bm{w}^{k,t}-\bm{x}^{k}\big)
+ \gamma_kA^{\top}A\big(\bm{w}^{k,t}-\bm{x}^{k}\big),
\end{equation*}
which further implies that
\begin{equation*}
\begin{aligned}
\Delta^{k,t}
&:=-\gamma_kA^{\top}\bm{e}^{k,t}
+ \gamma_k(\widetilde{\bm{w}}^{k,t}-\bm{w}^{k,t})
+ \gamma_k A^{\top}A(\widetilde{\bm{w}}^{k,t}-\bm{w}^{k,t}) \\
&\in \partial_{\delta_{k,t}^1}\|\widetilde{\bm{w}}^{k,t}\|_1
+ A^{\top}\partial_{\delta_{k,t}^2}\iota_{\sigma}(A\widetilde{\bm{w}}^{k,t}-\bm{b})
- c_k\bm{y}^k
+ \gamma_k\big(\widetilde{\bm{w}}^{k,t}-\bm{x}^{k}\big)
+ \gamma_kA^{\top}A\big(\widetilde{\bm{w}}^{k,t}-\bm{x}^{k}\big)  \\
&\subset\partial_{\delta_{k,t}}\big(\|\cdot\|_1+\iota_{\sigma}(A\cdot-\bm{b})\big)(\widetilde{\bm{w}}^{k,t})
- c_k\bm{y}^k + \gamma_k\big(\widetilde{\bm{w}}^{k,t}-\bm{x}^{k}\big)
+ \gamma_kA^{\top}A\big(\widetilde{\bm{w}}^{k,t}-\bm{x}^{k}\big),
\end{aligned}
\end{equation*}
where $\delta_{k,t} := \delta^1_{k,t} + \delta^2_{k,t}$ and the last inclusion can be verified by the definition of the $\delta$-subdifferential of $\|\cdot\|_1+\iota_{\sigma}(A\cdot-\bm{b})$ at $\widetilde{\bm{w}}^{k,t}$. Then, one can see from the above relation that the point $\widetilde{\bm{w}}^{k,t}$ associated with the error pair $(\Delta^{k,t}, \,\delta_{k,t})$ satisfies condition \eqref{inexcondH-x}. Using this relation, we can readily obtain the desired results.
\end{proof}

\section*{Acknowledgments}

\noindent \blue{We thank the editor and referees for their valuable suggestions and comments, which have helped to improve the quality of this paper.}


\bibliographystyle{plain}
\bibliography{Ref_iVPGSA}

@article{zpl2024vmipg,
  title={A {VMiPG} method for composite optimization with nonsmooth term having no closed-form proximal mapping},
  author={Zhang, T. and Pan, S. and Liu, R.},
  journal={Journal of Scientific Computing},
  volume={101},
  pages={69},
  year={2024}
}

@article{s2017variable,
  title={The variable metric forward-backward splitting algorithm under mild differentiability assumptions},
  author={Salzo, S.},
  journal={SIAM Journal on Optimization},
  volume={27},
  number={4},
  pages={2153--2181},
  year={2017}
}

@article{bpr2016variable,
  title={A variable metric forward-backward method with extrapolation},
  author={Bonettini, S. and Porta, F. and Ruggiero, V.},
  journal={SIAM Journal on Scientific Computing},
  volume={38},
  number={4},
  pages={A2558--A2584},
  year={2016}
}

@article{n1983a,
  title={A method of solving a convex programming problem with convergence rate {$O(1/k^2)$}},
  author={Nesterov, Y.},
  journal={Soviet Mathematics Doklady},
  volume={27},
  number={2},
  pages={372--376},
  year={1983}
}

@book{n2004introductory,
  title={Introductory Lectures on Convex Optimization: A Basic Course},
  author={Nesterov, Y.},
  year={2004},
  publisher={Kluwer Academic Publishers},
  address={Boston}
}

@article{n2013gradient,
  title={Gradient methods for minimizing composite functions},
  author={Nesterov, Y.},
  journal={Mathematical Programming},
  volume={140},
  number={1},
  pages={125--161},
  year={2013}
}

@article{sun2021sequence,
  title={Sequence convergence of inexact nonconvex and nonsmooth algorithms with more realistic assumptions},
  author={Sun, T.},
  journal={Numerical Functional Analysis and Optimization},
  volume={42},
  number={2},
  pages={234--250},
  year={2021}
}

@article{lmq2023convergence,
  title={Convergence of random reshuffling under the {K}urdyka--{{\L}}ojasiewicz inequality},
  author={Li, X. and Milzarek, A. and Qiu, J.},
  journal={SIAM Journal on Optimization},
  volume={33},
  number={2},
  pages={1092--1120},
  year={2023},
  publisher={SIAM}
}

@article{qmlm2024kl,
  title={A {KL}-based analysis framework with applications to non-descent optimization methods},
  author={Qiu, J. and Ma, B. and Li, X. and Milzarek, A.},
  journal={arXiv preprint arXiv:2406.02273},
  year={2024}
}

@article{hl2023convergence,
  title={The convergence properties of infeasible inexact proximal alternating linearized minimization},
  author={Hu, Y. and Liu, X.},
  journal={Science China Mathematics},
  volume={66},
  number={10},
  pages={2385--2410},
  year={2023}
}

@article{UCIdata,
  title={The {UCI} machine learning repository},
  author={Kelly, M. and Longjohn, R. and Nottingham, K.},
  journal={https://archive.ics.uci.edu},
  year={}
}

@article{zzst2020efficient,
  title={An efficient {H}essian based algorithm for solving large-scale sparse group {L}asso problems},
  author={Zhang, Y. and Zhang, N. and Sun, D. and Toh, K.-C.},
  journal={Mathematical Programming},
  volume={179},
  number={1},
  pages={223--263},
  year={2020}
}

@article{ab2009on,
  title={On the convergence of the proximal algorithm for nonsmooth functions involving analytic features},
  author={Attouch, H. and Bolte, J.},
  journal={Mathematical Programming},
  volume={116},
  number={1},
  pages={5--16},
  year={2009}
}

@article{abrs2010proximal,
  title={Proximal alternating minimization and projection methods for nonconvex problems: {A}n approach based on the {K}urdyka-{{\L}}ojasiewicz inequality},
  author={Attouch, H. and Bolte, J. and Redont, P. and Soubeyran, A.},
  journal={Mathematics of Operations Research},
  volume={35},
  number={2},
  pages={438--457},
  year={2010}
}

@article{abs2013convergence,
  title={Convergence of descent methods for semi-algebraic and tame problems: {P}roximal algorithms, forward--backward splitting, and regularized {G}auss--{S}eidel methods},
  author={Attouch, H. and Bolte, J. and Svaiter, B.F.},
  journal={Mathematical Programming},
  volume={137},
  number={1},
  pages={91--129},
  year={2013}
}

@book{bc2011convex,
  title={Convex Analysis and Monotone Operator Theory in Hilbert Spaces},
  author={Bauschke, H.H. and Combettes, P.L.},
  volume={408},
  year={2011},
  publisher={Springer}
}

@article{bt2009a,
  title={A fast iterative shrinkage-thresholding algorithm for linear inverse problems},
  author={Beck, A. and Teboulle, M.},
  journal={SIAM Journal on Imaging Sciences},
  volume={2},
  number={1},
  pages={183--202},
  year={2009}
}

@article{bdl2007the,
  title={The {{\L}}ojasiewicz inequality for nonsmooth subanalytic functions with applications to subgradient dynamical systems},
  author={Bolte, J. and Daniilidis, A. and Lewis, A.},
  journal={SIAM Journal on Optimization},
  volume={17},
  number={4},
  pages={1205--1223},
  year={2007}
}

@article{bst2014proximal,
  title={Proximal alternating linearized minimization for nonconvex and nonsmooth problems},
  author={Bolte, J. and Sabach, S. and Teboublle, M.},
  journal={Mathematical Programming},
  volume={146},
  number={1},
  pages={459--494},
  year={2014}
}

@article{bc2017proximal,
  title={Proximal-gradient algorithms for fractional programming},
  author={Bo{\c{t}}, R.I. and Csetnek, E.R.},
  journal={Optimization},
  volume={66},
  number={8},
  pages={1383--1396},
  year={2017},
  publisher={Taylor \& Francis}
}

@article{bdl2022extrapolated,
  title={Extrapolated proximal subgradient algorithms for nonconvex and nonsmooth fractional programs},
  author={Bo{\c{t}}, R.I. and Dao, M.N. and Li, G.},
  journal={Mathematics of Operations Research},
  volume={47},
  number={3},
  pages={2415--2443},
  year={2022},
  publisher={INFORMS}
}

@article{bdl2023inertial,
  title={Inertial proximal block coordinate method for a class of nonsmooth sum-of-ratios optimization problems},
  author={Bo{\c{t}}, R.I. and Dao, M.N. and Li, G.},
  journal={SIAM Journal on Optimization},
  volume={33},
  number={2},
  pages={361--393},
  year={2023},
  publisher={SIAM}
}

@article{blt2023full,
  title={Full splitting algorithms for fractional programs with structured numerators and denominators},
  author={Bo{\c{t}}, R.I. and Li, G. and Tao, M.},
  journal={SIAM Journal on Optimization},
  volume={35},
  number={4},
  pages={2623--2653},
  year={2025},
  publisher={SIAM}
}

@article{chz2011all,
  title={When all risk-adjusted performance measures are the same: {I}n praise of the {S}harpe ratio},
  author={Chen, L. and He, S. and Zhang, S.},
  journal={Quantitative Finance},
  volume={11},
  number={10},
  pages={1439--1447},
  year={2011},
  publisher={Taylor \& Francis}
}

@book{cp2021modern,
  title={Modern Nonconvex Nondifferentiable Optimization},
  author={Cui, Y. and Pang, J.-S.},
  year={2021},
  publisher={SIAM}
}

@article{dinkelbach1967nonlinear,
  title={On nonlinear fractional programming},
  author={Dinkelbach, W.},
  journal={Management Science},
  volume={13},
  number={7},
  pages={492--498},
  year={1967},
  publisher={INFORMS}
}

@article{elx2013method,
  title={A method for finding structured sparse solutions to nonnegative least squares problems with applications},
  author={Esser, E. and Lou, Y. and Xin, J.},
  journal={SIAM Journal on Imaging Sciences},
  volume={6},
  number={4},
  pages={2010--2046},
  year={2013},
  publisher={SIAM}
}

@article{fgp2015splitting,
  title={Splitting methods with variable metric for {K}urdyka--{{\L}}ojasiewicz functions and general convergence rates},
  author={Frankel, P. and Garrigos, G. and Peypouquet, J.},
  journal={Journal of Optimization Theory and Applications},
  volume={165},
  number={3},
  pages={874--900},
  year={2015}
}

@book{hl1993convex,
  title={Convex Analysis and Minimization Algorithms II: Advanced Theory and Bundle Methods},
  author={Hiriart-Urruty, J.B. and Lemar{\'e}chal, C.},
  year={1993},
  publisher={Springer}
}

@article{jlsw2012image,
  title={Image deconvolution using a characterization of sharp images in wavelet domain},
  author={Ji, H. and Li, J. and Shen, Z. and Wang, K.},
  journal={Applied and Computational Harmonic Analysis},
  volume={32},
  number={2},
  pages={295--304},
  year={2012},
  publisher={Elsevier}
}

@article{lp2017calculus,
  title={Calculus of the exponent of {K}urdyka--{{\L}}ojasiewicz inequality and its applications to linear convergence of first--order methods},
  author={Li, G. and Pong, T.K.},
  journal={Foundations of Computational Mathematics},
  volume={18},
  number={5},
  pages={1199--1232},
  year={2018}
}

@article{lszz2022proximal,
  title={A proximal algorithm with backtracked extrapolation for a class of structured fractional programming},
  author={Li, Q. and Shen, L. and Zhang, N. and Zhou, J.},
  journal={Applied and Computational Harmonic Analysis},
  volume={56},
  pages={98--122},
  year={2022},
  publisher={Elsevier}
}

@article{lst2018highly,
  title={A highly efficient semismooth {N}ewton augmented {L}agrangian method for solving {L}asso problems},
  author={Li, X. and Sun, D. and Toh, K.-C.},
  journal={SIAM Journal on Optimization},
  volume={28},
  number={1},
  pages={433--458},
  year={2018},
  publisher={SIAM}
}

@book{p1987introduction,
  title={Introduction to Optimization},
  author={Polyak, B.T.},
  year={1987},
  publisher={Optimization Software Inc.},
  address={New York}
}

@article{rwdl2019scale,
  title={A scale-invariant approach for sparse signal recovery},
  author={Rahimi, Y. and Wang, C. and Dong, H. and Lou, Y.},
  journal={SIAM Journal on Scientific Computing},
  volume={41},
  number={6},
  pages={A3649--A3672},
  year={2019},
  publisher={SIAM}
}

@article{rpdcp2015euclid,
  title={Euclid in a taxicab: {S}parse blind deconvolution with smoothed $\ell_1/\ell_2$ regularization},
  author={Repetti, A. and Pham, M.Q. and Duval, L. and Chouzenoux, E. and Pesquet, J.-C.},
  journal={IEEE Signal Processing Letters},
  volume={22},
  number={5},
  pages={539--543},
  year={2015}
}

@book{r1970convex,
  title={Convex Analysis},
  author={Rockafellar, R.T.},
  year={1970},
  publisher={Princeton University Press},
  address={Princeton}
}

@book{rw1998variational,
  title={Variational Analysis},
  author={Rockafellar, R.T. and Wets, R.J.-B.},
  year={1998},
  publisher={Springer}
}

@article{twst2020sparse,
  title={A sparse semismooth {N}ewton based proximal majorization-minimization algorithm for nonconvex square-root-loss regression problems},
  author={Tang, P. and Wang, C. and Sun, D. and Toh, K.-C.},
  journal={Journal of Machine Learning Research},
  volume={21},
  number={226},
  pages={1--38},
  year={2020}
}

@article{tao2022minimization,
  title={Minimization of ${L}_1$ Over ${L}_2$ for Sparse Signal Recovery with Convergence Guarantee},
  author={Tao, M.},
  journal={SIAM Journal on Scientific Computing},
  volume={44},
  number={2},
  pages={A770--A797},
  year={2022},
  publisher={SIAM}
}

@article{tz2023study,
  title={Study on ${L}_1$ over ${L}_2$ Minimization for Nonnegative Signal Recovery},
  author={Tao, M. and Zhang, X.-P.},
  journal={Journal of Scientific Computing},
  volume={95},
  number={3},
  pages={94},
  year={2023}
}

@article{tzx2024partly,
  title={On Partly Smoothness, Activity Identification and Faster Algorithms of ${L}_1$ over ${L}_2$ Minimization},
  author={Tao, M. and Zhang, X.-P. and Xia, Z.-H.},
  journal={IEEE Transactions on Signal Processing},
  volume={72},
  pages={2874--2889},
  year={2024}
}

@article{vf2009probing,
  title={Probing the {P}areto frontier for basis pursuit solutions},
  author={Van Den Berg, E. and Friedlander, M. P.},
  journal={SIAM Journal on Scientific Computing},
  volume={31},
  number={2},
  pages={890--912},
  year={2009},
  publisher={SIAM}
}

@article{wtcnl2022minimizing,
  title={Minimizing ${L}_1$ over ${L}_2$ norms on the gradient},
  author={Wang, C. and Tao, M. and Chuah, C.-N. and Nagy, J.G. and Lou, Y.},
  journal={Inverse Problems},
  volume={38},
  number={6},
  pages={065011},
  year={2022},
  publisher={IOP Publishing}
}

@article{wtnl2021limited,
  title={Limited-angle {CT} reconstruction via the ${L}_1/{L}_2$ minimization},
  author={Wang, C. and Tao, M. and Nagy, J.G. and Lou, Y.},
  journal={SIAM Journal on Imaging Sciences},
  volume={14},
  number={2},
  pages={749--777},
  year={2021},
  publisher={SIAM}
}

@article{wcp2017a,
  title={A proximal difference-of-convex algorithm with extrapolation},
  author={Wen, B. and Chen, X. and Pong, T.K.},
  journal={Computational Optimization and Applications},
  volume={69},
  number={2},
  pages={297--324},
  year={2018}
}

@article{y2024proximal,
  title={Proximal gradient method with extrapolation and line search for a class of nonconvex and nonsmooth problems},
  author={Yang, L.},
  journal={Journal of Optimization Theory and Applications},
  volume={200},
  number={1},
  pages={68--103},
  year={2024}
}

@article{yex2014ratio,
  title={Ratio and difference of $\ell_1 $ and $\ell_2$ norms and sparse representation with coherent dictionaries},
  author={Yin, P. and Esser, E. and Xin, J.},
  journal={Communications in Information and Systems},
  volume={14},
  number={2},
  pages={87--109},
  year={2014},
  publisher={International Press of Boston}
}

@inproceedings{yuan2023coordinate,
  title={Coordinate descent methods for fractional minimization},
  author={Yuan, G.},
  booktitle={Proceedings of the 40th International Conference on Machine Learning},
  pages={40488--40518},
  year={2023},
  organization={PMLR}
}

@inproceedings{yuan2025admm,
  title={{ADMM} for structured fractional minimization},
  author={Yuan, G.},
  booktitle={International Conference on Learning Representations},
  year={2025}
}

@article{zbsj2017globally,
  title={Globally optimal energy-efficient power control and receiver design in wireless networks},
  author={Zappone, A. and Bj{\"o}rnson, E. and Sanguinetti, L. and Jorswieck, E.},
  journal={IEEE Transactions on Signal Processing},
  volume={65},
  number={11},
  pages={2844--2859},
  year={2017},
  publisher={IEEE}
}

@article{zj2015energy,
  title={Energy efficiency in wireless networks via fractional programming theory},
  author={Zappone, A. and Jorswieck, E.},
  journal={Foundations and Trends{\textregistered} in Communications and Information Theory},
  volume={11},
  number={3-4},
  pages={185--396},
  year={2015},
  publisher={Now Publishers, Inc.}
}

@article{zsd2017energy,
  title={Energy-delay efficient power control in wireless networks},
  author={Zappone, A. and Sanguinetti, L. and Debbah, M.},
  journal={IEEE Transactions on Communications},
  volume={66},
  number={1},
  pages={418--431},
  year={2017},
  publisher={IEEE}
}

@article{zyp2021analysis,
  title={Analysis and algorithms for some compressed sensing models based on {L}1/{L}2 minimization},
  author={Zeng, L. and Yu, P. and Pong, T.K.},
  journal={SIAM Journal on Optimization},
  volume={31},
  number={2},
  pages={1576--1603},
  year={2021},
  publisher={SIAM}
}

@article{zts2023learning,
  title={Learning graph {L}aplacian with {MCP}},
  author={Zhang, Y. and Toh, K.-C. and Sun, D.},
  journal={Optimization Methods and Software},
  volume = {39},
  number = {3},
  pages = {569--600},
  year = {2024},
  publisher={Taylor \& Francis}
}

@article{zl2022first,
  title={First-order algorithms for a class of fractional optimization problems},
  author={Zhang, N. and Li, Q.},
  journal={SIAM Journal on Optimization},
  volume={32},
  number={1},
  pages={100--129},
  year={2022}
}

@article{zst2010newton,
  title={A {N}ewton-{CG} augmented {L}agrangian method for semidefinite programming},
  author={Zhao, X.-Y. and Sun, D. and Toh, K.-C.},
  journal={SIAM Journal on Optimization},
  volume={20},
  number={4},
  pages={1737--1765},
  year={2010}
}

@article{zzl2023equivalent,
  title={An equivalent reformulation and multi-proximity gradient algorithms for a class of nonsmooth fractional programming},
  author={Zhou, J. and Zhang, N. and Li, Q.},
  journal={Mathematics of Operations Research},
  year={2024}
}

\end{document}